\newcommand{\overbar}[1]{\mkern 1.5mu\overline{\mkern-1.5mu#1\mkern-1.5mu}\mkern 1.5mu}
\newtheorem{thm}{Theorem}[section]
\newtheorem{cor}[thm]{Corollary}
\newtheorem{prop}[thm]{Proposition}
\newtheorem{lem}[thm]{Lemma}
\theoremstyle{definition}
\newtheorem{defn}[thm]{Definition}
\theoremstyle{remark}
\newtheorem{rem}[thm]{Remark}
\pgfplotsset{compat=1.9}
\newcommand{\RN}[1]{%
  \textup{\uppercase\expandafter{\romannumeral#1}}%
}
\newcounter{x}
\newcommand{\mres}{\mathbin{\vrule height 1.6ex depth 0pt width
0.13ex\vrule height 0.13ex depth 0pt width 1.3ex}}
\numberwithin{equation}{section}
\author[Junfu Yao]{Junfu Yao}
\email{jyao21@jhu.edu}
\title{A Mountain-pass Theorem in Hyperbolic Space and Its Application}
\begin{document}
\begin{abstract}
We develop a min-max theory for certain complete minimal hypersurfaces in hyperbolic space. In particular, we show that given two strictly stable minimal hypersurfaces that are both asymptotic to the same ideal boundary, there is a new one trapped between the two. As an application, we show that under a low entropy condition, all the minimal hypersurfaces asymptotic to the same ideal boundary are isotopic.
\end{abstract}
\maketitle
\section{Introduction}
Let $n\geq2$ and $\mathbb{H}^{n+1}$ be the ($n+1$)-dimensional hyperbolic space. We recall the standard compatification $\overbar{\mathbb{H}}^{n+1}=\mathbb{H}^{n+1}\cup(\mathbb{R}^{n}\times\{0\})\cup \{\infty\}$ determined by the upper half space model. In this model, $\partial_\infty\mathbb{H}^{n+1}=\overbar{\mathbb{H}}^{n+1}\backslash\mathbb{H}^{n+1}$ is called the ideal boundary. It is readily checked that $\overbar{\mathbb{H}}^{n+1}$ and $\partial_\infty\mathbb{H}^{n+1}$ are associated to a unique conformal structure.

A hypersurface in $\mathbb{H}^{n+1}$, i.e., a properly embedded codimension-one submanifold, $\Sigma\subset\mathbb{H}^{n+1}$, is called minimal if it is a critical point to the area functional
\begin{equation*}
    \text{Vol}(\Sigma)=\int_\Sigma1\ d\mathcal{H}^n_\mathbb{H},
\end{equation*}
where $\mathcal{H}^n_\mathbb{H}$ is the $n$-dimensional Hausdorff measure in hyperbolic space.

There are no closed minimal hypersurfaces in $\mathbb{H}^{n+1}$. So, it is natural to consider minimal hypersurfaces that are asymptotic to a submanifold of the ideal boundary (See Section \ref{94} for terminologies). The existence of area-minimizing minimal hypersurfaces with prescribed ideal boundary was proved by Anderson \cite{Anderson1982}. Minimal surfaces in $\mathbb{H}^3$ with prescribed topological types were treated in \cite{Anderson1983}, \cite{Gabai97}, \cite{deOliveiraSoret98}, \cite{MartinWhite2013} and \cite{AlexakisMazzeo}. Some uniqueness and non-uniqueness results were studied in \cite{Coskunuzer2006Generic}, \cite{Coskunuzer2011Number} and \cite{HuangWang2015Counting}. We also refer to the survey paper \cite{CoskunuzerSurvey} on this topic.

We say a minimal hypersurface, $\Sigma$, is stable if for any compactly supported function $g\in C^\infty_c(\Sigma)$ and the corresponding normal variation, $\{\Sigma_s\}$, of $\Sigma$ determined by $g$, the second variation of the area functional,
\begin{equation*}
    \frac{d^2}{dt^2}\Big|_{s=0}\text{Vol}(\Sigma_s)=\int_\Sigma |\nabla_\Sigma g|^2+(n-|A_\Sigma|^2)g^2 d\mathcal{H}^n_\mathbb{H},
\end{equation*}
is non-negative. Here, $A_\Sigma$ is the second fundamental form of $\Sigma$. A stable minimal hypersurface is called strictly stable if the second variation is positive unless $g\equiv0$.

In the first part of the present paper, we prove a mountain pass theorem in hyperbolic space. Specifically, we prove that given two strictly stable minimal hypersurfaces that have the same asymptotic boundary and bound a domain, there is a third minimal hypersurface between the two. See Section 2 for the terminologies.

\begin{thm}\label{main1}
Let $n\geq2$, $\alpha\in(0,1)$ and $k\geq4$ and let $M$ be a $C^{k,\alpha}$ $(n-1)$-dimensional closed submanifold of $\partial_\infty\mathbb{H}^{n+1}$. Suppose $\Gamma_-$ and $\Gamma_+$ are two distinct strictly stable minimal hypersurfaces in $\mathbb{H}^{n+1}$ that are both $C^{k,\alpha}$-asymptotic to $M$ and $\Gamma_-\preceq\Gamma_+$. Then, there is a minimal hypersurface $\Gamma_0\neq\Gamma_\pm$ that has codimension-$7$ singular set and is $C^{k,\alpha}$-asymptotic to $M$.
\end{thm}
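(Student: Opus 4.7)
The plan is a mountain-pass construction in the closed region $\overline{\Omega} \subset \mathbb{H}^{n+1}$ bounded by $\Gamma_-$ and $\Gamma_+$; the two main technical obstacles are the infinite area of the competitors and the preservation of the asymptotic boundary $M$ in the limit. First I would replace area by a renormalized functional $\mathcal{A}$. Given a hypersurface $\Sigma \subset \overline{\Omega}$ asymptotic to $M$, I exhaust $\mathbb{H}^{n+1}$ by geodesic balls $B_R$ and define $\mathcal{A}(\Sigma) = \lim_{R \to \infty}\bigl(\mathrm{Vol}(\Sigma \cap B_R) - \mathrm{Vol}(\Gamma_- \cap B_R)\bigr)$; the $C^{k,\alpha}$ asymptotic regularity causes the leading-order boundary contributions to cancel, so the limit exists and is finite. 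By construction $\mathcal{A}(\Gamma_-) = 0$, and $\Gamma_\pm$ remain critical points of $\mathcal{A}$.

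Next I would produce a continuous sweepout $\{\Sigma_t\}_{t \in [0,1]}$ from $\Gamma_-$ to $\Gamma_+$ inside $\overline{\Omega}$ with each slice asymptotic to $M$; this can be done by interpolating along the Fermi normal flow out of $\Gamma_-$ with a smooth matching to $\Gamma_+$ near infinity, using that $\Omega$ is a topological product away from $M$. Setting $W = \inf \sup_t \mathcal{A}(\Sigma_t)$ over the homotopy class, strict stability of $\Gamma_\pm$ combined with the $C^{k,\alpha}$ asymptotic decay of admissible competitors implies that $\Gamma_\pm$ are strict local minima of $\mathcal{A}$: a cut-off approximation upgrades the coercivity of the Jacobi operator from compactly supported variations to the class of asymptotic variations. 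Hence $W > \max\{\mathcal{A}(\Gamma_-), \mathcal{A}(\Gamma_+)\}$, which is the mountain-pass geometry. I would then run the Almgren-Pitts min-max scheme on the exhaustion $\overline{\Omega} \cap B_R$, with $\Gamma_\pm$ acting as two-sided strict barriers that confine pull-tight sequences to $\overline{\Omega}$, and let $R \to \infty$. Schoen-Simon regularity produces a stationary integral varifold supported on a minimal hypersurface $\Gamma_0 \subset \overline{\Omega}$ that is smooth off a codimension-$7$ singular set and satisfies $\mathcal{A}(\Gamma_0) = W$. The strict width inequality together with the strong maximum principle rules out $\Gamma_0 = \Gamma_\pm$, since any tangential contact would propagate to coincidence.

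The hardest step will be verifying that $\Gamma_0$ is $C^{k,\alpha}$-asymptotic to $M$. The trapping $\Gamma_- \preceq \Gamma_0 \preceq \Gamma_+$ gives $\overline{\Gamma_0} \cap \partial_\infty \mathbb{H}^{n+1} \subset M$, so the remaining issues are to prevent loss at infinity and then to upgrade regularity. For the former I would sandwich $\Gamma_0$ between small catenoidal or horospherical barriers attached to $M$ on each side, constructed from the $C^{k,\alpha}$ asymptotics of $\Gamma_\pm$, which forces the closure of $\Gamma_0$ to reach every point of $M$. For the latter I would reduce, via the conformal rescaling of the upper-half-space model near a boundary point, to a standard boundary Plateau problem and invoke the regularity theory in the spirit of Hardt-Lin, Lin, and Tonegawa to obtain $C^{k,\alpha}$ regularity up to $M$. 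A parallel concern is ruling out multiplicity-two concentration on $\Gamma_\pm$ along the min-max sequence; this is handled by the strict mountain-pass gap combined with an almost-minimizing-in-annuli argument, standard in the Pitts framework, which selects a multiplicity-one limit whenever the sweepout crosses a strictly stable barrier.
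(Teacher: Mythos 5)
Your plan shares the broad architecture with the paper (renormalize the area relative to $\Gamma_-$, sweep out the slab between $\Gamma_\pm$, use the strict stability to create a mountain-pass gap, apply a De Lellis--Tasnady/Pitts min-max and Schoen--Simon plus Hardt--Lin regularity), but two of your key steps have genuine gaps that the paper's machinery is specifically built to circumvent.

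First, the claim that ``a cut-off approximation upgrades the coercivity of the Jacobi operator from compactly supported variations to the class of asymptotic variations,'' which you use to conclude $\Gamma_\pm$ are strict local minima of $\mathcal{A}$, is precisely the step that \emph{fails} in hyperbolic space. The paper flags this explicitly in the remark following Proposition~\ref{20}: unlike the self-expander setting of Bernstein--Wang, here the first eigenvalue of the stability operator $L_{\Gamma_-}=\Delta+|A_{\Gamma_-}|^2-n$ need \emph{not} coincide with the infimum of the quadratic form, so the first eigenfunction may change sign, and the usual spectral route to one-sided barriers (perturb by the principal eigenfunction) is unavailable. The paper's substitute is to construct mean-convex foliations $\{\Gamma_\pm^s\}$ not via the spectrum but by perturbing the \emph{ideal boundary} $M$ and solving the mean curvature equation with the implicit function theorem in the weighted H\"older spaces $y^\mu\Lambda_0^{k,\alpha}$ of Alexakis--Mazzeo. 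The mountain-pass gap (Proposition~\ref{65}) is then proved by a calibration argument against these foliations, not by a spectral coercivity estimate. Your proposal needs something playing this role.

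Second, running Almgren--Pitts on the exhaustion $\overline{\Omega}\cap B_R$ and letting $R\to\infty$ does not, by itself, prevent the min-max mass from escaping to infinity. The paper describes exactly this failure mode just before Proposition~\ref{20}: a ``thin bump'' on $\Gamma_-$ that drifts toward $\partial_\infty\mathbb{H}^{n+1}$ can carry a fixed amount of relative area, so the varifold limit $V$ may satisfy $E_{rel}[V]=0<V[\mathbf{1}]$, and the ``limit minimal hypersurface'' degenerates back to $\Gamma_-$ itself. Your proposed catenoidal/horospherical barriers keep the \emph{support} of $\Gamma_0$ pinched near $M$ but do nothing to rule out this leak of renormalized area, because the escaping bump stays inside the slab. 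The paper's remedy is the class $\mathcal{Y}^-$ of non-compactly supported vector fields (notably $\mathbf{N}_\epsilon$), the associated first-variation inequality in Proposition~\ref{70}, and the conclusion $E_{rel}[V]=V[\mathbf{1}]$ for any $E_{rel}$-minimizing-to-first-order limit. Some analogue of this ``no leak'' argument is essential; the exhaustion-and-barrier route you describe does not supply it.

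A smaller point: your multiplicity-one argument attributes the result to ``crossing a strictly stable barrier'' in the almost-minimizing framework. In the paper multiplicity one comes instead from finiteness of $E_{rel}[V_0]$: if the limit varifold had multiplicity $\geq 2$ on a component asymptotic to a piece of $M$, the boundary contributions near $\partial_\infty\mathbb{H}^{n+1}$ could not cancel against the single copy of $\Gamma_-$, forcing $E_{rel}[V_0]=\infty$, contradicting $E_{rel}[V_0]=m_{rel}(X)<\infty$. Also, the boundary regularity step is more delicate than a straight citation of Hardt--Lin/Lin/Tonegawa: the paper needs Proposition~\ref{73}, which shows the almost-minimizing annuli can be taken at scale comparable to the distance $y(p)$ to the ideal boundary, so that the Schoen--Simon estimate applies uniformly after rescaling; this is a hyperbolic-specific refinement of the standard Pitts argument that your write-up skips.
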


\begin{rem}
Since we need the knowledge of uniformly degenerate operators from \cite{AlexakisMazzeo} (see also \cite{MazzeoEdgeOperator}), $k\geq4$ is to ensure the operator that we shall consider to be at least $C^{2,\alpha}$, so that the theory there can apply.
\end{rem}

To prove Theorem \ref{main1}, we develop a min-max theory in hyperbolic spaces. Our approach is based on De Lellis-Tasnady's \cite{deLellisTasnady} (see also \cite{ColdingdeLellisMinmax} and \cite{DeLellisRamic}) reformulation of the works of Almgren-Pitts \cite{PittsExiReg} and Simon-Smith \cite{SmithExistence2Sphere}. However, as the volume of any hypersurface with nonempty asymptotic boundary is infinite, we follow \cite{BWMountainPass} and work on a certain relative functional. Another point in Theorem \ref{main1} is the asymptotic regularity of the minimal hypersurface produced by the min-max procedure. The asymptotic regularity of area-minimizing currents has been studied in \cite{HardtLinHyperbolicRegularity}, \cite{LinHyperbolicMinimalGraph}, \cite{LinAMCinHighCodim} and \cite{TonegawaCMCinHpSpace}. We follow the idea of Hardt-Lin \cite{HardtLinHyperbolicRegularity} to establish the regularity result in our setting.

The second part is devoted to give a proof to the conjecture proposed by Bernstein \cite[Conjecture 1.6]{BernsteinEntropy}. In their paper, the author introduced the following notion of \textit{hyperbolic entropy} for any hypersurface $\Sigma$ in $\mathbb{H}^{n+1}$:
\begin{equation*}
    \lambda_{\mathbb{H}}[\Sigma]=\sup_{p_0\in\mathbb{H}^{n+1},\ \tau>0}\int_{\Sigma}\Phi_n^{0,p_0}(-\tau,p)d\mathcal{H}^n_{\mathbb{H}}(p),
\end{equation*}
where $\Phi_n^{0,p_0}(-\tau,p)=K_n(\tau,\text{dist}_{\mathbb{H}^{n+1}}(p,p_0))$, and $K_n(t,\rho)$ is a positive function on $\mathbb{R}^1_+\times\mathbb{R}^1_+$ so that the function $H_n(q,t;q_0,t_0)=K_n(t-t_0,\text{dist}_{\mathbb{H}^{n+1}}(q,q_0))$ is the heat kernel of $\mathbb{H}^n$ with singularity at $q=q_0$ and at time $t=t_0$. That is, for any $q_0\in\mathbb{H}^{n}$, we have
\begin{equation}\label{74}\left\{
    \begin{aligned}
        &\left(\frac{\partial}{\partial t}-\Delta_{\mathbb{H}^n}\right)H_n(q,t;q_0,t_0)=0,\ \text{for }t>t_0\\
        &\lim_{t\to t_0^+}H_n(q,t;q_0,t_0)=\delta_{q_0}.
    \end{aligned}\right.
\end{equation}

In \cite{LiYauConformalInv}, the \textit{conformal volume} of an ($n-1$)-dimensional submanifold $M\subset\partial_\infty\mathbb{H}^{n+1}$ is defined by
\begin{equation*}
    \lambda_c[M]=\sup_{\psi\in\text{Mob}(\partial_\infty\mathbb{H}^{n+1})}\text{vol}(\psi(M)),
\end{equation*}
where $\text{Mob}(\partial_\infty\mathbb{H}^{n+1})$ is the group of M\"{o}bius transformations on $\partial_\infty\mathbb{H}^{n+1}$. From the knowledge of \cite{BryantConformal}, we see that $\lambda_c[M]<\infty$ for any ($n-1$)-dimensional $C^2$ submanifold $M\subset\partial_\infty\mathbb{H}^{n+1}$. Bernstein \cite{BernsteinIsoper} (see also \cite{NguyenWeightedMon}) proved an isoperimetric inequality in dimension $3$ relating the conformal volume and the renormalized area studied in \cite{AlexakisMazzeo} (see also \cite{GrahamWitten}).

For a complete minimal hypersurface $\Sigma$ in $\mathbb{H}^{n+1}$ with $C^1$-asymptotic boundary $M$, Bernstein \cite{BernsteinEntropy} proved that $\lambda_c[M]=\text{vol}(\mathbb{S}^{n-1})\lambda_{\mathbb{H}}[\Sigma]$. They also conjectured that if $\gamma\subset\partial_\infty\mathbb{H}^{3}$ is a closed curve with $\lambda_c[\gamma]\leq2\pi\lambda[\mathbb{S}^1\times\mathbb{R}^1]$\footnote{In \cite{BernsteinEntropy}, the assumption on $\gamma\subset\partial_\infty\mathbb{H}^{3}$ is $\lambda_c[\gamma]\leq\lambda[\mathbb{S}^1\times\mathbb{R}^1]$, but apparently, the author should have meant what we state here.}, where $\lambda[\cdot]$ is the Colding-Minicozzi entropy defined in \cite{CMGeneric}, then all the minimal surfaces with $C^1$-asymptotic boundary $\gamma$ are isotopic to each other. We prove that when $2\leq n\leq6$, this conjecture holds in the following sense:

\begin{thm}\label{main2}
Let $2\leq n\leq6$ and $n\neq3$. Let $\alpha\in(0,1)$ and $k\geq4$. Fix an $(n-1)$-dimensional $C^{k+1,\alpha}$ closed submanifold $M$ in $\partial_\infty\mathbb{H}^{n+1}$. Assume that $\lambda_c[M]<\mathrm{Vol}(\mathbb{S}^{n-1})\lambda[\mathbb{S}^{n-1}\times\mathbb{R}^1]$. Then, all the minimal hypersurfaces with $C^{k+1,\alpha}$-asymptotic boundary $M$ are $C^{m,\alpha}$ isotopic to each other, where $m=\min\{k,n\}$.
\end{thm}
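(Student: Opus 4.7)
\noindent\textbf{Proof strategy for Theorem~\ref{main2}.}
The plan is to argue by contradiction, combining Theorem~\ref{main1} with a mean-curvature-flow argument that uses the low-entropy hypothesis in an essential way. Suppose $\Sigma^0$ and $\Sigma^1$ are two minimal hypersurfaces $C^{k+1,\alpha}$-asymptotic to $M$ that are not $C^{m,\alpha}$-isotopic.

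\emph{Reduction.} A standard intersection-replacement procedure (taking maxima and minima of currents in a fixed relative homology class and invoking the strong maximum principle in the interior together with the boundary regularity developed in \cite{AlexakisMazzeo}) reduces matters to an ordered, non-isotopic pair $\Sigma^0 \prec \Sigma^1$. A small perturbation of $M$ inside its $C^{k+1,\alpha}$ class, combined with the Fredholm deformation theory of Alexakis--Mazzeo for uniformly degenerate operators, then upgrades both members of the pair to strictly stable while preserving non-isotopy, so that Theorem~\ref{main1} applies.

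\emph{Mountain pass and entropy bound.} Applying Theorem~\ref{main1} to the resulting ordered strictly stable pair produces a minimal hypersurface $\Gamma_0$ with $\Sigma^0 \prec \Gamma_0 \prec \Sigma^1$ and $\partial_\infty \Gamma_0 = M$. Since $2 \le n \le 6$, the codimension-$7$ singular set is empty, so $\Gamma_0$ is smooth, and the min-max construction endows it with Morse index at least one. Bernstein's identity recalled above together with the hypothesis gives
\[
\lambda_{\mathbb{H}}[\Gamma_0] \;=\; \frac{\lambda_c[M]}{\mathrm{vol}(\mathbb{S}^{n-1})} \;<\; \lambda[\mathbb{S}^{n-1}\times\mathbb{R}^1].
\]

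\emph{Flow and convergence.} Push $\Gamma_0$ along an unstable Jacobi eigenfunction to obtain $\Gamma_0^\pm$ with $\Sigma^0 \prec \Gamma_0^- \prec \Gamma_0 \prec \Gamma_0^+ \prec \Sigma^1$, and evolve each by hyperbolic mean curvature flow. A hyperbolic Huisken-type monotonicity keeps $\lambda_{\mathbb{H}}$ strictly below $\lambda[\mathbb{S}^{n-1}\times\mathbb{R}^1]$ along the flow. At any putative singularity, a parabolic blow-up in the asymptotically Euclidean geometry of $\mathbb{H}^{n+1}$ yields a Euclidean self-shrinker whose Colding-Minicozzi entropy is dominated by $\lambda_{\mathbb{H}}[\Gamma_0]$, hence below the threshold; by the Bernstein-Wang classification, available precisely for $2\le n\le 6$ with $n\ne 3$, this self-shrinker must be a multiplicity-one hyperplane. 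No genuine singularity can occur, so both flows are smooth for all time. A comparison argument using $\Sigma^0$ and $\Sigma^1$ as strict barriers, together with their strict stability, forces smooth convergence of the two flows as $t\to\infty$ to $\Sigma^0$ and $\Sigma^1$ respectively with the claimed $C^{m,\alpha}$ regularity (the loss from $k$ to $m=\min\{k,n\}$ coming from parabolic boundary estimates). Concatenating the two trajectories through $\Gamma_0$ produces a $C^{m,\alpha}$-isotopy from $\Sigma^0$ to $\Sigma^1$, contradicting the standing assumption.

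\emph{Main obstacle.} The principal technical difficulty is the flow step: developing a hyperbolic Huisken monotonicity compatible with the asymptotic boundary condition at $M$, together with a tangent-flow analysis that respects the renormalization at infinity, so that the Bernstein-Wang low-entropy classification of Euclidean self-shrinkers can be imported to rule out singularities of the hyperbolic flow. The exclusion $n\ne 3$ and the ceiling $n\le 6$ both track exactly the dimensions in which the relevant Euclidean low-entropy classification is available.
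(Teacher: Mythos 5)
Your high-level strategy — mountain pass to produce an intermediate minimal hypersurface, push off along an unstable Jacobi direction, run mean curvature flow, and exploit the low entropy hypothesis via Huisken monotonicity and a tangent-flow classification to rule out singularities — is genuinely the same engine the paper uses (Section~\ref{85} is precisely the flow argument, culminating in Theorem~\ref{76}). But as written the argument does not close, and there are two substantive gaps plus two misattributions.

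\textbf{The flows need not converge to $\Sigma^0$ and $\Sigma^1$.} When you evolve $\Gamma_0^-$ downward, the barrier argument only says the flow converges (smoothly, by White's local regularity once you have stationarity and multiplicity one) to \emph{some} stable minimal hypersurface $\Sigma'$ with $\Sigma^0\preceq\Sigma'\prec\Gamma_0$, not to $\Sigma^0$ itself; likewise upward you land on some $\Sigma''\preceq\Sigma^1$. If there are other stable minimal hypersurfaces in the region between $\Sigma^0$ and $\Sigma^1$, you produce an isotopy from $\Sigma'$ to $\Sigma''$ but you are left with the same problem (now for the pair $\Sigma^0,\Sigma'$), and the recursion has no a priori reason to terminate. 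The paper closes this via Proposition~\ref{84} (finiteness of stable minimal hypersurfaces asymptotic to a regular value $M$, proved by compactness plus the Fredholm property of $\Pi$) and the explicit induction in Proposition~\ref{96}, which works from a fixed stable $\Gamma$ toward the greatest element $\Gamma_G$ of Proposition~\ref{11} one adjacent pair at a time. Without the finiteness, the argument does not terminate.

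\textbf{Instability of $\Gamma_0$ is not given by Theorem~\ref{main1}.} You assert that the min-max output has Morse index at least one, so that a first unstable Jacobi eigenfunction exists to push off along. Theorem~\ref{main1} only asserts $\Gamma_0\neq\Gamma_\pm$; it could a priori be a third stable minimal hypersurface, in which case the push-off direction does not exist. The paper sidesteps this by applying the mountain-pass theorem only to an \emph{adjacent} ordered pair — $\Gamma_1$ is chosen as a minimal element of $\mathcal{S}_{\Gamma,\Gamma_G}$ in Proposition~\ref{96} — so that anything strictly in between is forced to be unstable. Your ``reduction'' step would have to build in this adjacency, and this again requires the finiteness input.

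\textbf{Two misattributions.} The exclusion $n\ne 3$ is not about the dimensional range of the Euclidean low-entropy self-shrinker classification (the argument in Lemma~\ref{13} uses \cite{CIMW} and \cite{CMGeneric}, which need $n\le 6$ but impose no $n\ne 3$ restriction); it is a boundary-regularity constraint from \cite{TonegawaCMCinHpSpace}, as the paper's remark after the theorem states. Similarly, the regularity loss from $k$ to $m=\min\{k,n\}$ does not come from parabolic boundary estimates for the flow (the flow in Theorem~\ref{76} preserves $C^{k,\alpha}$); it comes from the weighted H\"older-space embedding $y^{\mu_0+1}\Lambda^{\ell,\alpha}_0\hookrightarrow C^{\ell,\alpha}(\overbar{\Gamma})$ in the uniformly degenerate operator theory of \cite{AlexakisMazzeo}, which is only valid for $\ell\le n$ and which enters in the $M\to M'$ perturbation step (the second half of the paper's Section~9 proof), a step your ``reduction'' alludes to but does not carry out. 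That perturbation step is itself nontrivial and splits into strictly stable and weakly stable cases, the latter requiring a projection $P_{\Gamma^i}$ onto the orthogonal complement of the Jacobi kernel and a further appeal to Theorem~\ref{86}.
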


\begin{rem}
The reason we require $n\neq3$ is that, as pointed out in \cite{TonegawaCMCinHpSpace}, when $n=3$, the regularity we require ($k\geq4$) is greater than what can be ensured for a general ideal boundary, so the result may not hold in many cases of interest.
\end{rem}

\begin{rem}
The reason we can not have $C^{k,\alpha}$ isotopies when $k>n$ is that the functions considered in the theory of uniformly degenerate operators introduced in \cite{AlexakisMazzeo} do not have good decay for high order derivatives when approaching the ideal boundary, and this prevents the isotopy from being as smooth as its initial data.
\end{rem}

Roughly speaking, we say $\Gamma_1$ is $C^{m,\alpha}$ isotopic to $\Gamma_2$ if $\Gamma_1$ can be continuously deformed into $\Gamma_2$ in the $C^{m,\alpha}$ topology of hypersurfaces in $\overbar{\mathbb{H}}^{n+1}$. See Section \ref{94} for the precise definition.

The proof of Theorem \ref{main2} is inspired by the work of Bernstein-Wang \cite{BWTopologicalUniqueness} (see also \cite{BWClosedHLowEnt} and \cite{BCW}) on constructing isotopies between self-expanders, which builds on a careful study of self-expanders in their previous works: \cite{BWSpace}, \cite{BWSmoothCompactness}, \cite{BWIntegerDegree}, \cite{BWRelativeEntropy}, \cite{BWMountainPass}. We establish the corresponding results that will be used in the setting of minimal hypersurfaces in hyperbolic space. An important observation in \cite{BWTopologicalUniqueness} is that, under the low entropy condition, if we perturb an unstable minimal hypersurface by the first eigenfunction of the stability operator, it can be deformed to a stable minimal hypersurface.

Combining Theorem \ref{main2} with the result of the existence of minimal disks \cite[Theorem 4.1]{Anderson1983} (see also \cite{Gabai97}), we have

\begin{cor}
Let $\gamma\subset\partial_\infty\mathbb{H}^3$ be a disjoint union of smooth curves. If $\lambda_c[\gamma]<2\pi\lambda[\mathbb{S}^1]$, then any minimal surface $\Sigma$ with $\partial_\infty\Sigma=\gamma$ is a collection of disks.
\end{cor}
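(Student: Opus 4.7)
The plan is to combine the isotopic uniqueness provided by Theorem \ref{main2} (specialized to $n=2$) with Anderson's existence theorem for minimal disks \cite{Anderson1983}. Since an isotopy preserves the underlying diffeomorphism type of a surface, it suffices to exhibit one minimal surface with asymptotic boundary $\gamma$ that is a disjoint union of disks; every other such minimal surface will then automatically share this topology.

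First I would verify that the entropy hypothesis matches. The Colding--Minicozzi Gaussian entropy satisfies $\lambda[\Sigma \times \mathbb{R}^1] = \lambda[\Sigma]$ because the one-dimensional Gaussian weight integrates to $1$, so in particular $\lambda[\mathbb{S}^1] = \lambda[\mathbb{S}^1\times\mathbb{R}^1]$. Hence $\lambda_c[\gamma] < 2\pi \lambda[\mathbb{S}^1]$ is exactly the inequality $\lambda_c[M] < \mathrm{Vol}(\mathbb{S}^{n-1}) \lambda[\mathbb{S}^{n-1}\times\mathbb{R}^1]$ appearing in Theorem \ref{main2} with $n=2$ and $M=\gamma$. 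Since $\gamma$ is smooth it is automatically $C^{k+1,\alpha}$ for every $k\geq 4$, so Theorem \ref{main2} applies and yields that any two minimal surfaces with $C^{k+1,\alpha}$-asymptotic boundary $\gamma$ are mutually $C^{2,\alpha}$-isotopic.

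Next I would construct a reference minimal surface realizing the disk topology. Writing $\gamma = \bigsqcup_{i=1}^N \gamma_i$ as a disjoint union of Jordan curves in $\partial_\infty \mathbb{H}^3 \cong \mathbb{S}^2$, \cite[Theorem 4.1]{Anderson1983} (see also \cite{Gabai97}) furnishes, for each $i$, an embedded least-area minimal disk $D_i \subset \mathbb{H}^3$ with $\partial_\infty D_i = \gamma_i$. Under the low-entropy assumption I would argue that the $D_i$ can be chosen to be pairwise disjoint: were two of them to meet transversely, a standard cut-and-paste exchange between overlapping pieces would produce competitors of strictly smaller area, contradicting the least-area property; alternatively, one could bound the combined topology via the conformal-volume--entropy comparison of \cite{BernsteinEntropy} combined with \cite{BernsteinIsoper} and \cite{NguyenWeightedMon} to rule out additional genus or handles. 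With disjointness secured, $\Sigma_0 := \bigsqcup_{i=1}^N D_i$ is a minimal surface with $\partial_\infty \Sigma_0 = \gamma$ which is by construction a disjoint union of embedded disks.

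With $\Sigma_0$ in hand, the conclusion is immediate: for any minimal surface $\Sigma$ with $\partial_\infty \Sigma = \gamma$, Theorem \ref{main2} gives a $C^{2,\alpha}$ isotopy carrying $\Sigma$ to $\Sigma_0$, and such an isotopy preserves the diffeomorphism type of the underlying surface, so $\Sigma$ is diffeomorphic to a disjoint union of disks, which is the desired statement. The main obstacle I anticipate is precisely the disjointness step for the $D_i$'s: this is the point at which the quantitative low-entropy hypothesis does substantive work beyond merely activating the abstract isotopic-uniqueness theorem, while the remainder of the proof is a formal consequence of Theorem \ref{main2} together with the existence result.
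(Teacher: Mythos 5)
Your overall strategy is the same as the paper's: produce one minimal spanning surface that is a disjoint union of disks, and then invoke Theorem \ref{main2} to conclude that every other minimal surface with the same asymptotic boundary is isotopic to it (hence of the same diffeomorphism type). Your observation that $\lambda[\mathbb{S}^1]=\lambda[\mathbb{S}^1\times\mathbb{R}^1]$, so that the hypothesis of the Corollary is precisely the hypothesis of Theorem \ref{main2} with $n=2$, is correct and worth spelling out.

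The step where your argument diverges from the paper's --- and where it breaks down --- is the disjointness of the Anderson disks $D_i$. The ``cut-and-paste exchange'' you invoke is the exchange argument for two area-minimizers of the \emph{same} Plateau problem; here $D_i$ and $D_j$ solve Plateau problems for different ideal boundaries $\gamma_i\neq\gamma_j$, so swapping pieces across the intersection locus produces surfaces whose boundaries at infinity are no longer $\gamma_i$ and $\gamma_j$, and neither is an admissible competitor for either minimization. Your alternative, a topological bound via renormalized area, is not made precise enough to be checkable and would require a substantial separate argument. The paper's route is both sharper and shorter: at any point $p\in\Sigma_i\cap\Sigma_j$, the density of the union $\Sigma=\bigcup_j\Sigma_j$ is at least $2$; but by \cite[Theorem 1.1]{BernsteinEntropy}, $\lambda_\mathbb{H}[\Sigma]=\tfrac{1}{2\pi}\lambda_c[\gamma]<\lambda[\mathbb{S}^1]<2$, and the hyperbolic entropy bounds the density at every point, forcing the density to be strictly less than $2$ everywhere. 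This contradiction gives the disjointness, and it is precisely here --- not in your cut-and-paste --- that the quantitative low-entropy hypothesis is actually used. Replace your disjointness argument with this density-versus-entropy comparison and the rest of your proposal goes through as written.
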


\begin{proof}
Let $\gamma_1$, $\gamma_2$, ..., $\gamma_m$ be the connected components of $\gamma$. We solve the asymptotic Plateau problem independently for each $\gamma_j$, $1\leq j\leq m$. By \cite[Theorem 4.1]{Anderson1983}, we get a collection of minimal disks $\{\Sigma_j\}_{j=1}^m$ with $\Sigma_j$ asymptotic to $\gamma_j$. We claim that $\Sigma_i$ and $\Sigma_j$ are disjoint if $i\neq j$. Otherwise, at any point of the intersection $\Sigma_i\cap\Sigma_j$, the density is at least $2$. On the other hand, if we let $\Sigma=\bigcup_{j=1}^m\Sigma_j$, then by \cite[Theorem 1.1]{BernsteinEntropy} $\lambda_\mathbb{H}[\Sigma]=\frac{1}{2\pi}\lambda_c[\gamma]<\lambda[\mathbb{S}^1]<2$. This implies the density at any point of $\Sigma$ is less than $2$, which leads to a contradiction. Hence, $\Sigma_j\cap\Sigma_i=\emptyset$ when $i\neq j$, and $\Sigma$ is thus a smooth surface. By \ref{main2}, any minimal surface $\Sigma'$ with $\partial_\infty\Sigma'=\gamma$ is isotopic to $\Sigma$. This implies $\Sigma'$ is also a collection of topological disks.
\end{proof}

The significance of this result is that under the low entropy condition, one can solve the corresponding asymptotic Plateau problem for disks.

The renormalized area of a surface $\Sigma$, $\mathcal{A}(\Sigma)$, is introduced in \cite{GrahamWitten}. For any boundary defining function $r$, if we write $\mathcal{H}^2_\mathbb{H}(\Sigma\cap\{r\geq\epsilon\})$ as an expansion in $\epsilon$ as $\epsilon\to0$, then the renormalized area is the constant term in the expansion. Another corollary of Theorem \ref{main2} is the following result which is conjectured in \cite{BernsteinIsoper}:

\begin{cor}\label{99}
Suppose $\Sigma$ and $\Sigma'$ are two minimal surfaces in $\mathbb{H}^3$ with smooth asymptotic boundaries. If $\partial_\infty\Sigma=\partial_\infty\Sigma'$ and $\Sigma'$ is not a disk, then
\begin{equation*}
    -3\pi>-\frac{(2\pi)^{3/2}}{\sqrt{e}}=-2\pi\lambda[\mathbb{S}^1]>\mathcal{A}(\Sigma).
\end{equation*}
\end{cor}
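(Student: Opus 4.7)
The plan is to argue by contradiction using the preceding corollary (low entropy forces disks) together with Bernstein's sharp isoperimetric inequality from \cite{BernsteinIsoper}. Write $\gamma=\partial_\infty\Sigma=\partial_\infty\Sigma'$ and let $\mathcal{A}$ denote the renormalized area.

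First I would record a lower bound on the conformal volume of $\gamma$. Since $\Sigma'$ is not (a collection of) topological disks, the contrapositive of the preceding corollary forces
\begin{equation*}
    \lambda_c[\gamma]\geq 2\pi\lambda[\mathbb{S}^1];
\end{equation*}
otherwise every minimal surface with asymptotic boundary $\gamma$ would be a union of disks, contradicting the hypothesis on $\Sigma'$.

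Next I would invoke the isoperimetric inequality of \cite{BernsteinIsoper} in dimension three, which bounds the renormalized area of a minimal surface by the conformal volume of its asymptotic boundary, namely $\mathcal{A}(\Sigma)\leq -\lambda_c[\gamma]$, with equality precisely when $\Sigma$ is a totally geodesic hemisphere. Chaining this with the previous inequality gives
\begin{equation*}
    \mathcal{A}(\Sigma)\leq -\lambda_c[\gamma]\leq -2\pi\lambda[\mathbb{S}^1].
\end{equation*}
To upgrade to the strict inequality claimed, I would rule out equality: if equality held throughout then $\Sigma$ would be a totally geodesic hemisphere, so $\gamma$ would be a round circle, for which $\lambda_c[\gamma]=2\pi$. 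But $2\pi\lambda[\mathbb{S}^1]=\sqrt{(2\pi)^3/e}>2\pi$, so this contradicts the lower bound on $\lambda_c[\gamma]$ from the first step. Therefore $\mathcal{A}(\Sigma)<-2\pi\lambda[\mathbb{S}^1]$.

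Finally, the leftmost inequality $-3\pi>-2\pi\lambda[\mathbb{S}^1]$ is a direct numerical check: it amounts to $\lambda[\mathbb{S}^1]=\sqrt{2\pi/e}>3/2$, which follows since $\sqrt{2\pi/e}\approx 1.5209$. The identity $2\pi\lambda[\mathbb{S}^1]=(2\pi)^{3/2}/\sqrt{e}$ is just the explicit formula $\lambda[\mathbb{S}^1]=\sqrt{2\pi/e}$ for the Colding--Minicozzi entropy of the standard circle.

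The main obstacle I expect is the careful bookkeeping of the equality case of the isoperimetric inequality of \cite{BernsteinIsoper}; everything else is a contrapositive application of the disk corollary plus elementary arithmetic. An alternative way to handle the equality case, avoiding any invocation of the rigidity statement in \cite{BernsteinIsoper}, is to note that a totally geodesic hemisphere is the unique minimal surface bounding a round circle, which directly contradicts the existence of the non-disk $\Sigma'$.
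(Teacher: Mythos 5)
The paper states this corollary without proof, so there is no paper argument to compare against; your proposal is correct and is precisely the argument implicit in the surrounding discussion: the contrapositive of the preceding corollary gives $\lambda_c[\gamma]\geq 2\pi\lambda[\mathbb{S}^1]$, Bernstein's isoperimetric inequality from \cite{BernsteinIsoper} gives $\mathcal{A}(\Sigma)\leq-\lambda_c[\gamma]$, and the closed form $\lambda[\mathbb{S}^1]=\sqrt{2\pi/e}$ handles the numerics. One small slip: your ``alternative'' treatment of the equality case does not actually bypass the rigidity statement in \cite{BernsteinIsoper}, since you still invoke it to conclude that $\Sigma$ is a geodesic hemisphere and hence that $\gamma$ is a round circle before appealing to uniqueness; a cleaner way to dispose of equality is to note that $\lambda_c[\gamma]\geq 2\pi\lambda[\mathbb{S}^1]>2\pi$ already rules out $\gamma$ being a round circle, so the isoperimetric inequality must be strict.
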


In \cite{AlexakisMazzeo}, Alexakis-Mazzeo showed that the renormalized area, $\mathcal{A}(\Sigma)$, is equal to negative one half of the Willmore energy of the double of $\Sigma$ (defined in a proper sense). From the resolution of the Willmore conjecture by Marques-Neves \cite{MarquesNevesWillmore}, we see that $-2\pi\lambda[\mathbb{S}^1]>-\pi^2\geq\mathcal{A}(\Sigma')$. So, as pointed out in \cite{BernsteinIsoper}, the significance of Corollary \ref{99} is that one can use the boundary geometry to bound the renormalized area of a minimal surface of unspecified topological type.

The paper is organized as follows: In Section 2, we fix the notations for the remainder of the paper and recall some results that will be used later on. In Section 3, we discuss the function space $\mathfrak{Y}$, and introduce a general notion of relative entropy for any element in $\mathfrak{Y}^*_{\mathcal{C}}(\Lambda)$, which consists of the limit of $\partial^*U$ with relative entropy bounded by $\Lambda$ in $\mathfrak{Y}^*$. In Section 4, we discuss a class of non-compactly supported vector fields. In Section 5, we discuss the notion of minimizing to the first order with respect to the flows generated by the vector fields introduced in Section 4. In Section 6, we develop a min-max theory in hyperbolic space and prove Theorem \ref{main1}. In Section 7, we discuss some useful properties for hypersurfaces in $\mathbb{H}^{n+1}$ that will be used in the proof of Theorem \ref{main2}. In Section 8, we show that under the low entropy condition, any unstable minimal hypersurface is isotopic to a stable one. In Section 9, we first apply Theorem \ref{main1} to show that Theorem \ref{main2} holds for a generic class of submanifold $M$, and complete of the proof for a general $M$ by perturbing $M$ to a generic submanifold $M'$ and constructing isotopy between minimal hypersurfaces asymptotic to $M$ and minimal hypersurfaces asymptotic to $M'$. Section 8 and 9 are carried out in a similar scheme as in \cite{BWTopologicalUniqueness}.

\subsection*{Acknowledgement}The author would like to thank his advisor, Jacob Bernstein, for suggesting the problem and for constant encouragement and numerous helpful advice during various stages of the work. The author is grateful to Letian Chen for providing many useful comments and reading a draft version of the paper. He also thanks Chutian Ma for stimulating discussions.

\section{Background and Notation}
In this section, we fix the notations for the remainder of the paper and recall some background knowledge.

\subsection{Basic notions}\label{94}
We will always consider the Poincar\'e half-space model for $\mathbb{H}^{n+1}$. That is, the open upper half-space
\begin{equation*}
    \mathbb{R}^{n+1}_+=\{(\textbf{x},y)\in\mathbb{R}^n\times\mathbb{R}:\ y>0\},
\end{equation*}
equipped with the metric
\begin{equation*}
    g_P=\frac{1}{y^2}(d\textbf{x}\otimes d\textbf{x}+dy\otimes dy)=\frac{1}{y^2}g_{\mathbb{R}^{n+1}}.
\end{equation*}
As mentioned in Section 1, $\partial_\infty\mathbb{H}^{n+1}$ is identified with $\mathbb{R}^n\times\{0\}\cup\{\infty\}$. For any point $\textbf{p}_0\in\mathbb{H}^{n+1}$, there is an isometry $i:\ \mathbb{H}^{n+1}\to\mathbb{R}^{n+1}_+$ satisfying $i(\textbf{p}_0)=(\textbf{0},1)$ and $i^*g_P=g_{\mathbb{H}^{n+1}}$. Such isometries correspond to the M\"obius transformations of $\mathbb{S}^n=\partial_\infty\mathbb{H}^{n+1}$. Let $M$ be as stated in Theorem \ref{main1}. By a suitable transformation, we will always assume that $M$ is contained in $\mathbb{R}^n\times\{0\}\subset\partial_\infty\mathbb{H}^{n+1}$.

A boundary defining function $r$ on $\overbar{\mathbb{H}}^{n+1}$ is a function such that the metric $r^2g_{\mathbb{H}^{n+1}}$ extends smoothly to $\overbar{\mathbb{H}}^{n+1}$. Fix an arbitrary boundary defining function $r$ over $\overbar{\mathbb{H}}^{n+1}$. Let $M$ be an $(n-1)$-dimensional embedded (sufficiently smooth) submanifold in $\partial_\infty\mathbb{H}^{n+1}$. A complete embedded hypersurface $\Sigma\subset\mathbb{H}^{n+1}$ is called $C^{k,\alpha}$-$asymptotic$ to $M$, if $\Sigma\cup M$ (denoted by $\overbar{\Sigma}$) is a $C^{k,\alpha}$ hypersurface in $\overbar{\mathbb{H}}^{n+1}$ equipped with the metric $r^2g_P$. $M$ is called the ideal boundary of $\Sigma$ and is denoted by $\partial_\infty \Sigma$. Note that the $C^{k,\alpha}$-asymptotic boundary, $M$, is independent of the choices of boundary defining functions.

Fix a normal vector to $M$ in $\partial_\infty\mathbb{H}^{n+1}$. For any $C^1$-asymptotic hypersurface $\Gamma$ with $\partial_\infty\Gamma=M$, we fix a normal vector of $\Gamma$ to be compatible with that of $M$. Let $U_\Gamma$ be the open set in $\mathbb{H}^{n+1}$ with $\partial U_\Gamma=\Gamma$, whose outward normal equals that of $\Gamma$. For two hypersurfaces $\Gamma_1$ and $\Gamma_2$ with $\partial_\infty\Gamma_1=\partial_\infty\Gamma_2=M$, we shall write $\Gamma_1\preceq\Gamma_2$ if $U_{\Gamma_1}\subset U_{\Gamma_2}$.

Let $k\geq1$, $\alpha\in(0,1)$ and let $\Gamma_1$, $\Gamma_2$ be two hypersurfaces in $\mathbb{H}^{n+1}$ that have $C^{k,\alpha}$-asymptotic boundaries in $\partial_\infty\mathbb{H}^{n+1}$. Then, $\Gamma_1$ and $\Gamma_2$ are called $C^{k,\alpha}$ isotopic if there is an $\textbf{F}$ mapping from $[0,1]$ to the set of hypersurfaces with $C^{k,\alpha}$-asymptotic boundaries, so that as $t\to t_0$, $\textbf{F}(t)$ converges to $\textbf{F}(t_0)$ as hypersurfaces in the $C^{k,\alpha}$ topology of $(\overbar{\mathbb{H}}^{n+1},r^2g_{\mathbb{H}^{n+1}})$, where $r$ is an arbitrary boundary defining function. It can be checked that the notion of isotopy does not rely on the choice of boundary defining functions.

\subsection{Conventions}
We follow the convention in \cite{YaoRel} that the notations with a bar on the top are understood in ($\overbar{\mathbb{R}^{n+1}_+},g_{\mathbb{R}^{n+1}}$), and those without a bar are understood in the topology of $\mathbb{H}^{n+1}$. For example, for a hypersurface $\Sigma$, $g_\Sigma$ (resp. $\overbar{g_\Sigma}$) is the Riemannian metric induced from $g_P$ (resp. $g_{\mathbb{R}^{n+1}}$); $\text{div}_\Sigma$ (resp. $\overbar{\text{div}}_\Sigma$) is referred to the divergence taken with respect to $g_\Sigma$ (resp. $\overbar{g_\Sigma}$). To avoid confusions, $\overbar{S}$ is referred to the topological closure of a set $S$ in $\overbar{\mathbb{R}^{n+1}_+}$, and the closure taken in $\mathbb{H}^{n+1}$ is denoted by $\text{cl}(S)$. Also, we denote by $\mathcal{H}^n$ the $n$-dimensional Hausdorff measure in Euclidean space, and let $\mathcal{H}^n_\mathbb{H}$ be the Hausdorff measure in $\mathbb{H}^{n+1}$.

\subsection{Relative entropy in hyperbolic space}
Let $\Gamma_-\preceq\Gamma_+$ be two strictly stable minimal hypersurfaces in $\mathbb{H}^{n+1}$ that are both $C^{k,\alpha}$-asymptotic to $M$. A bounded open set $\Omega'\subset\mathbb{R}^{n+1}_+$ is called a \textit{thin neighborhood of $\Gamma_-$} if there are constants $C,\epsilon>0$, so that
\begin{align*}
    \Omega'\cap\{y=r\}\subset\overbar{\mathcal{N}}_{C\cdot r^{n+1}}(\Sigma)
\end{align*}
holds for $r\in(0,\epsilon)$. Here, $\overbar{\mathcal{N}}_\delta(\Sigma)$ is the $\delta$-tubular neighborhood taken in the Euclidean topology. In \cite{YaoRel}, we proved that $\Gamma_+$ is a thin neighborhood of $\Gamma_-$ if $M$ is $C^3$. So, we can choose a thin neighborhood, $\Omega'$, of $\Gamma_-$ which contains $\Gamma_+$. We can further require that $\Omega'$ is enclosed by two hypersurfaces $\Gamma'_-$ and $\Gamma'_+$ (which will be chosen in Proposition \ref{20}) with $\Gamma'_-\preceq\Gamma_-\preceq\Gamma_+\preceq\Gamma'_+$.

Define $\mathcal{C}(\Gamma'_-,\Gamma'_+)$ to be the set of all Caccioppoli sets\footnote{See \cite[Section 1]{GiustiBook} for the terminology.} $U$ with $U_{\Gamma'_-}\subset U\subset U_{\Gamma'_+}$. For any function $\psi=\psi(p,\textbf{v})$ over $\text{cl}(\Omega')\times\mathbb{S}^n$, let
\begin{equation}\label{110}
    E_{rel}[\partial^*U,\Gamma_-;\psi]:=\lim_{\epsilon\to0}\left(\int_{\partial^*U\cap\{y\geq\epsilon\}}\psi(p,\overbar{\textbf{n}}_{\partial^*U}(p)) d\mathcal{H}^n_{\mathbb{H}^{n+1}}-\int_{\Gamma_-\cap\{y\geq\epsilon\}}\psi(p,\overbar{\textbf{n}}_{\Gamma_-}(p)) d\mathcal{H}^n_{\mathbb{H}^{n+1}}\right).
\end{equation}
whenever the limit exists. Since the thin neighborhood $\Omega'$ is bounded in $\mathbb{R}^{n+1}_+$, we can pick a boundary defining function $r_1$ satisfying $r_1(\textbf{x},y)=y$ in a compact subset of $\overbar{\mathbb{R}^{n+1}_+}$ containing $\Omega'$. So, when restricted to $\Omega'$, $y\to0$ simply means approaching the ideal boundary. Observe that the limit (\ref{110}) always exists when Spt($\psi$)$\subset\subset\{y>0\}\times\mathbb{S}^n$.

Let $\mathfrak{X}$ be the set of locally Lipschitz functions on $\text{cl}(\Omega')\times\mathbb{S}^n$ satisfying $\psi(p,\textbf{v})=\psi(p,-\textbf{v})$ and
\begin{equation}
    \|\psi\|_\infty+\|\nabla_{\mathbb{S}^n}\psi\|_\infty+\|\nabla_{\mathbb{S}^n}\nabla_{\mathbb{S}^n}\psi\|_\infty+||y(p)\nabla_{\mathbb{R}^{n+1}_+}\psi\|_\infty+\|y(p)\nabla_{\mathbb{R}^{n+1}_+}\nabla_{\mathbb{S}^n}\psi\|_\infty<\infty,\label{weighted norm}
\end{equation}
It is easy to check that $\mathfrak{X}$ equipped with the norm (\ref{weighted norm}) is a Banach space and is an algebra.

Define a family of smooth cut-off functions $\{\phi_{y_1,\delta}\}_{y_1,\delta}$ such that $\phi_{y_1,\delta}$ only depends on $y=y(p)$. We require that $\phi_{y_1,\delta}$ equals $1$ in $\{y\geq y_1\}$ and vanishes in $\{y\leq y_1-\delta\}$. We can further assume that $\delta^k\phi^{(k)}_{y_1,\delta}\leq C_k$, where $C_k$ is independent of $y_1$ and $\delta$. Let $\phi_{y_1,y_2,\delta}=\phi_{y_1,\delta}-\phi_{y_2+\delta,\delta}$. From the definition of $\Omega'$, we see that $\phi_{y_1,\delta}$ and $\phi_{y_1,y_2,\delta}$ are compactly supported when restricted to $\text{cl}(\Omega')$. We recall the following result from \cite{YaoRel} (see \cite[Theorem 3.1, Theorem 4.1]{YaoRel}):
\begin{prop}\label{35}
Let $U\in\mathcal{C}(\Gamma'_-,\Gamma'_+)$. There are constants $C=C(\Gamma_-,\Omega')$, $\epsilon=\epsilon(\Gamma_-,\Omega')>0$ and $E_0=E_0(\Gamma_-,\Omega')>0$, so that for $(y_1$,$y_2$,$\delta)$ satisfying $0<\frac{y_1}{2}<y_1-\delta<y_1<y_2-\delta$ and $y_2+\delta<2y_2<\epsilon$,
\begin{equation}
    E_{rel}[\partial^*U,\Gamma_-;\phi_{y_1,\delta}]\geq E_{rel}[\partial^*U,\Gamma_-;\phi_{y_2,\delta}]-Cy_2\geq-E_0\label{3}.
\end{equation}
This implies $E_{rel}[\partial^*U,\Gamma_-]:=E_{rel}[\partial^*U,\Gamma_-;\mathbf{1}]\in(-\infty,\infty]$ is well defined. Furthermore, for any $\psi\in\mathfrak{X}$,
\begin{equation}
    |E_{rel}[\partial^*U,\Gamma_-;\phi_{y_1,y_2,\delta}\psi]|\leq C(|E_{rel}[\partial^*U,\Gamma_-;\phi_{y_1,y_2,\delta}]|+y_2)\|\psi\|_{\mathfrak{X}}.\label{quant estimate}
\end{equation}
If $E_{rel}[\partial^*U,\Gamma_-]<\infty$, then
\begin{equation}
    |E_{rel}[\partial^*U,\Gamma_-;\psi]|\leq C(|E_{rel}[\partial^*U,\Gamma_-]|+1)\|\psi\|_{\mathfrak{X}}\label{1}.
\end{equation}
\end{prop}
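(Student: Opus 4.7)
The plan is to use the strict stability of $\Gamma_-$ to produce a calibration-type vector field and then apply the divergence theorem to compare $\partial^*U$ against $\Gamma_-$ in slabs $\{y_1 \leq y \leq y_2\}$, with the cutoff functions $\phi_{y_1,\delta}$ serving to smooth out the indicator of these slabs.

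For inequality (\ref{3}), I would construct a vector field $X$ on a tubular neighborhood of $\Gamma_-$ containing $\Omega'$ with $|X|_{g_P} \leq 1$, $X = \overbar{\textbf{n}}_{\Gamma_-}$ on $\Gamma_-$, and $\text{div}_{\mathbb{H}^{n+1}} X \geq 0$. Such a field arises by foliating near $\Gamma_-$ via normal graphs $\Gamma_t$ of $t \cdot u$, where $u > 0$ is a positive Jacobi function on $\Gamma_-$; the existence of $u$ is guaranteed by strict stability, and its asymptotic behavior at the ideal boundary is controlled via the uniformly degenerate operator theory of \cite{AlexakisMazzeo}. Each leaf $\Gamma_t$ is then mean-convex in the outward direction, so the unit normal field to the foliation has the claimed divergence sign. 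Applying the divergence theorem to $X$ over $(U \triangle U_{\Gamma_-}) \cap \{y_1 \leq y \leq y_2\}$ yields the monotonicity, with the $Cy_2$ correction coming from the boundary slice at $y=y_2$, whose Euclidean $n$-measure is controlled by the thin neighborhood bound $\Omega' \cap \{y=r\} \subset \overbar{\mathcal{N}}_{Cr^{n+1}}(\Sigma)$. Passing from slab indicators to the smoothed cutoff $\phi_{y_j,\delta}$ is a matter of integrating the slab inequality against the derivative of $\phi$. The uniform lower bound $\geq -E_0$ then follows by fixing $y_2$ at a small constant $\epsilon$ and using that both $\partial^*U$ and $\Gamma_-$ have hyperbolic area uniformly bounded in $\{y \geq \epsilon\} \cap \Omega'$ thanks to the sandwich $U_{\Gamma'_-} \subset U \subset U_{\Gamma'_+}$.

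For the quantitative estimate (\ref{quant estimate}), I would decompose pointwise $\psi(p, \overbar{\textbf{n}}) = \psi(p, \overbar{\textbf{n}}_{\Gamma_-}(\pi(p))) + R(p, \overbar{\textbf{n}})$, where $\pi$ denotes nearest-point projection onto $\Gamma_-$ inside the tubular neighborhood. The first term is independent of the surface's normal, so its contribution to $E_{rel}[\partial^*U, \Gamma_-; \phi_{y_1,y_2,\delta}\,\cdot\,]$ is bounded by $\|\psi\|_\infty \cdot |E_{rel}[\partial^*U, \Gamma_-; \phi_{y_1,y_2,\delta}]|$. The remainder $R$ is bounded pointwise by $\|\nabla_{\mathbb{S}^n}\psi\|_\infty$ times the angular deviation $|\overbar{\textbf{n}}_{\partial^*U}(p) - \overbar{\textbf{n}}_{\Gamma_-}(\pi(p))|$, and integrating it separately against each surface picks up an additional factor of $y_2$ since both $\partial^*U$ and $\Gamma_-$ lie in $\Omega'$, forcing the normals to coalesce as $y \to 0$. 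Summing gives (\ref{quant estimate}), and estimate (\ref{1}) then follows by dyadic decomposition $y_2 = 2^{-k}\epsilon$ combined with the monotonicity established in the first part.

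The main obstacle is enforcing uniform control all the way to the ideal boundary: the calibration $X$ is stretched by the degenerate hyperbolic geometry (metric blowup like $y^{-2}$), and the decomposition of $\psi$ must respect the mismatch between Euclidean and hyperbolic norms. The weighted definition of $\mathfrak{X}$, featuring the factors $y(p)\nabla_{\mathbb{R}^{n+1}_+}\psi$ and $y(p)\nabla_{\mathbb{R}^{n+1}_+}\nabla_{\mathbb{S}^n}\psi$, is chosen precisely so that these transverse derivatives carry the correct hyperbolic scaling, and the $C^{k,\alpha}$-asymptotic regularity of $\Gamma_-$ ensures the foliation and the projection $\pi$ extend with sufficient decay to close the estimates.
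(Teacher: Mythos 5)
Note first that the paper does not prove Proposition \ref{35}; it recalls it from \cite{YaoRel} (Theorems 3.1 and 4.1). Your overall scheme---a divergence-theorem comparison against $\Gamma_-$ through a vector field extending $\overbar{\mathbf{n}}_{\Gamma_-}$, with the thinness bound $\Omega'\cap\{y=r\}\subset\overbar{\mathcal{N}}_{Cr^{n+1}}(\Gamma_-)$ absorbing error terms---is the right framework, and matches how the estimate is invoked later (see the calibration steps in Propositions \ref{65} and \ref{70}). But two steps would fail as written. The first is the claim that a positive Jacobi function on $\Gamma_-$ produces a mean-convex foliation: if $L_{\Gamma_-}u=0$ with $u>0$, the normal graph of $tu$ has mean curvature $-tL_{\Gamma_-}u+O(t^2)=O(t^2)$, with no definite sign, and the paper explicitly warns (Remark after Proposition \ref{20}) that eigenfunction-based sign arguments do not carry over to this non-compact setting, constructing mean-convex foliations instead by perturbing the ideal boundary. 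This particular gap is repairable, since for (\ref{3}) one never needs $\mathrm{div}\,X\geq0$: taking $X=\overbar{\mathbf{n}}_{\Gamma_-}\circ\Pi$, the hyperbolic divergence vanishes along $\Gamma_-$, and the thinness estimate already forces $\int_{U\triangle U_{\Gamma_-}}|\mathrm{div}(\phi X)|\,d\mathrm{Vol}_{\mathbb{H}}\leq Cy_2$, which is all the one-sided bound requires.

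The more serious gap is in your treatment of the remainder $R$ in (\ref{quant estimate}). The thin-neighborhood condition controls the position of $\partial^*U$ but says nothing about its measure-theoretic normal: a Caccioppoli set in $\mathcal{C}(\Gamma'_-,\Gamma'_+)$ can oscillate so that $\overbar{\mathbf{n}}_{\partial^*U}$ is far from $\overbar{\mathbf{n}}_{\Gamma_-}\circ\Pi$ arbitrarily close to the ideal boundary; ``the normals coalesce as $y\to0$'' is simply false. Even granting the pointwise estimate $|R|\leq\|\nabla_{\mathbb{S}^n}\psi\|_\infty\,|\overbar{\mathbf{n}}_{\partial^*U}-\overbar{\mathbf{n}}_{\Gamma_-}\circ\Pi|$, Cauchy--Schwarz only yields $\big(\mathcal{H}^n_\mathbb{H}(\partial^*U\cap\mathrm{spt}\,\phi_{y_1,y_2,\delta})\big)^{1/2}\big(|E_{rel}[\partial^*U,\Gamma_-;\phi_{y_1,y_2,\delta}]|+Cy_2\big)^{1/2}$, and the first factor is not $O(1)$ as $y_1\to0$; linearity in $|E_{rel}[\partial^*U,\Gamma_-;\phi_{y_1,y_2,\delta}]|$ is lost. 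The missing idea is to Taylor-expand $\psi(p,\cdot)$ in the normal variable to second order around $\overbar{\mathbf{n}}_{\Gamma_-}\circ\Pi(p)$: the first-order term $\int_{\partial^*U}\phi\,\mathbf{W}\cdot\overbar{\mathbf{n}}_{\partial^*U}$ with $\mathbf{W}=\nabla_{\mathbb{S}^n}\psi(\cdot,\overbar{\mathbf{n}}_{\Gamma_-}\circ\Pi)\perp\overbar{\mathbf{n}}_{\Gamma_-}\circ\Pi$ is converted to a bulk integral by a second application of the divergence theorem (the corresponding integral over $\Gamma_-$ vanishes since $\mathbf{W}\perp\overbar{\mathbf{n}}_{\Gamma_-}$), and the second-order remainder is bounded pointwise by $\|\nabla_{\mathbb{S}^n}\nabla_{\mathbb{S}^n}\psi\|_\infty\big(1-\langle\overbar{\mathbf{n}}_{\partial^*U},\overbar{\mathbf{n}}_{\Gamma_-}\circ\Pi\rangle\big)$, which does integrate linearly against $|E_{rel}[\partial^*U,\Gamma_-;\phi_{y_1,y_2,\delta}]|+y_2$. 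This is precisely why $\mathfrak{X}$ carries the second-order weighted norms $\|\nabla_{\mathbb{S}^n}\nabla_{\mathbb{S}^n}\psi\|_\infty$ and $\|y\nabla_{\mathbb{R}^{n+1}_+}\nabla_{\mathbb{S}^n}\psi\|_\infty$, which your argument never uses.
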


\subsection{Banach manifold structure for minimal hypersurfaces}
We shall also use certain results from \cite{AlexakisMazzeo}. Let $\Lambda^{k,\alpha}_0(\Gamma_-)$ be the H\"older space on $\Gamma_-$, where the derivatives and difference quotients are measured with respect to $y\partial_y$ and $y\partial_x$. Here, $(x,y)$ is the local chart for $\Gamma_-$ near the ideal boundary such that $y$ is the ($n+1$)-th coordinate in $\mathbb{R}^{n+1}_+$ and $x$ restricts to coordinates along $M$. For any $\mu$, let $y^\mu\Lambda^{k,\alpha}_0:=\{f:\frac{f}{y^\mu}\in\Lambda^{k,\alpha}_0\}$. By \cite{AlexakisMazzeo} (see also \cite{MazzeoEdgeOperator}), the stability operator over $\Gamma_-$:
\begin{equation*}
L_{\Gamma_-}:=\Delta+|A_{\Gamma_-}|^2-n:\ y^\mu\Lambda^{k+2,\alpha}_0\longrightarrow y^\mu\Lambda^{k,\alpha}_0
\end{equation*}
is Fredholm of index zero for any $\mu\in(-1,n)$. Moreover, the kernel of $L_{\Gamma_-}$ is contained in $y^n\Lambda^{k,\alpha}_0$ for any such $\mu$. $\Gamma_-$ is called $\textit{non-degenerate}$ if Ker($L_{\Gamma_-}$) is trivial. We record the following results of the Banach manifold structure for minimal surfaces in $\mathbb{H}^3$.

\begin{prop}[Theorem 4.1 of \cite{AlexakisMazzeo}]\label{21}
Let $\mathcal{M}_g$ be the set of all minimal surfaces of genus $g$ in $\mathbb{H}^3$ that are $C^{3,\alpha}$-asymptotic to the ideal boundary. Then, $\mathcal{M}_g$ is a Banach manifold.
\end{prop}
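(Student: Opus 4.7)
The plan is to realize $\mathcal{M}_g$ locally as the zero set of a smooth Fredholm map and then invoke an implicit function theorem. Fix $\Sigma_0 \in \mathcal{M}_g$ with ideal boundary $M_0 \subset \partial_\infty \mathbb{H}^3$. Since $M_0$ is itself allowed to move, I would first build a Banach space model $\mathcal{B}$ for small perturbations of $\Sigma_0$ inside $\overbar{\mathbb{H}}^3$, encoding both (i) a $C^{3,\alpha}$ variation of the closed curve $M_0$ in $\partial_\infty \mathbb{H}^3$ and (ii) an interior normal variation living in the weighted H\"older space $y^\mu \Lambda^{3,\alpha}_0(\Sigma_0)$ for some $\mu \in (0,2)$. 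Concretely, I would fix a smooth reference family of $C^{3,\alpha}$-asymptotic hypersurfaces $\overbar{\Sigma}_M$ depending smoothly on $M$ near $M_0$, and then write each nearby $C^{3,\alpha}$-asymptotic surface $\Sigma'$ uniquely as the normal graph of some $u \in y^\mu \Lambda^{3,\alpha}_0$ over $\overbar{\Sigma}_M$.

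Next, I would introduce the mean curvature map
\begin{equation*}
F \colon \mathcal{B} \longrightarrow y^\mu \Lambda^{1,\alpha}_0(\Sigma_0),
\end{equation*}
sending $(M, u)$ to the mean curvature of the resulting hypersurface, pulled back to $\Sigma_0$. Smoothness of $F$ as a map of Banach spaces should follow from the uniformly degenerate calculus of Mazzeo, which respects the $y^\mu \Lambda^{k,\alpha}_0$ scale. The partial derivative in $u$ at $(M_0,0)$ is the stability operator $L_{\Sigma_0} = \Delta + |A_{\Sigma_0}|^2 - 2$, which is Fredholm of index zero from $y^\mu \Lambda^{3,\alpha}_0$ to $y^\mu \Lambda^{1,\alpha}_0$ for $\mu \in (-1,2)$ by the result cited just above. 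If $L_{\Sigma_0}$ is an isomorphism, the standard implicit function theorem immediately gives a chart parameterizing $F^{-1}(0)$ by $M$; in general, one must couple interior variations with boundary ones.

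In the degenerate case, variations of $M_0$ produce polyhomogeneous Jacobi fields with leading behavior of order $y^0$ rather than $y^\mu$, and these leading-order modes are exactly what can compensate the cokernel of $L_{\Sigma_0}$. Combining this with the Fredholmness of $L_{\Sigma_0}$ shows that the total linearization $DF(M_0,0)\colon \mathcal{B} \to y^\mu \Lambda^{1,\alpha}_0$ is Fredholm; the Fredholm form of the implicit function theorem then exhibits $F^{-1}(0)$ as a smooth Banach submanifold in a neighborhood of $\Sigma_0$. Finally, I would restrict to the genus-$g$ component, which is open since the topology of $\overbar{\Sigma'}$ is locally constant under small $C^{3,\alpha}$ perturbations. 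The principal obstacle is verifying that $F$ is actually smooth on the weighted H\"older spaces and that boundary variations decouple cleanly from interior ones; this reduces to a careful application of the indicial analysis of $L_{\Sigma_0}$ at the ideal boundary, with indicial roots $0$ and $n=2$, furnished by the edge calculus of \cite{MazzeoEdgeOperator}.
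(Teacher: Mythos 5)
The paper does not prove this statement itself; it simply cites Theorem~4.1 of \cite{AlexakisMazzeo}, so the comparison is to that reference (and to the adaptation of its technique that the paper carries out in the proof of Proposition~\ref{20} and in Section~9). Your overall plan---parametrize nearby hypersurfaces as normal graphs over an approximate minimal surface built from the boundary datum, study the mean curvature map $F$ on the weighted H\"older scale $y^\mu\Lambda^{k,\alpha}_0$, and use Mazzeo's edge calculus to make the stability operator $L_{\Sigma_0}=\Delta+|A_{\Sigma_0}|^2-2$ Fredholm of index $0$ for $\mu\in(-1,2)$---is indeed the right one, and matches the construction the paper later deploys for Proposition~\ref{20}.

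The final step, however, is not correct as stated, and the gap is substantive rather than cosmetic. You claim the total linearization $DF(M_0,0)\colon\mathcal{B}\to y^\mu\Lambda^{1,\alpha}_0$ is Fredholm and that this, via a ``Fredholm form of the implicit function theorem,'' makes $F^{-1}(0)$ a Banach manifold. There are two issues. First, because $\mathcal{B}$ contains the infinite-dimensional space of $C^{3,\alpha}$ boundary curves, the total linearization has infinite-dimensional kernel (this kernel \emph{is} the tangent space to $\mathcal{M}_g$) and hence is not Fredholm; the Fredholm property belongs to the partial derivative $L_{\Sigma_0}$ alone, and its role is to show that the boundary map $\Pi\colon\mathcal{M}_g\to\mathcal{E}$ is Fredholm of index $0$, which is Proposition~\ref{34}, not~\ref{21}. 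Second, and more importantly, what actually yields a manifold is \emph{surjectivity} of $DF(M_0,0)$, and this is precisely the content of your unproved assertion that the $y^0$-leading-order Jacobi fields produced by deforming $M_0$ compensate the cokernel of $L_{\Sigma_0}$. That assertion is the heart of the theorem and requires a unique continuation argument at the ideal boundary: a cokernel element whose pairing with every boundary variation vanishes must have trivial leading asymptotics, and then indicial analysis plus unique continuation forces it to vanish identically. You gesture at ``indicial analysis'' but do not formulate, let alone prove, this step; until it is supplied, the claim that $\mathcal{M}_g$ is a Banach manifold does not follow.
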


\begin{prop}[Theorem 4.2 of \cite{AlexakisMazzeo}]\label{34}
Let $\mathcal{E}$ be the set of all $C^{3,\alpha}$ embedded curves in $\partial_\infty\mathbb{H}^3$. Let $\Pi:\mathcal{M}_g\to\mathcal{E}$ be the map assigning each minimal surface $\Gamma\in\mathcal{M}_g$ to its asymptotic boundary $\partial_\infty \Gamma\in\mathcal{E}$. Then, $\Pi$ is Fredholm of index $0$.
\end{prop}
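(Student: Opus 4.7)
My plan is to realize $\Pi$ locally as (the restriction to the zero set of a nonlinear operator of) a linear boundary-trace map and to deduce its Fredholm properties from Mazzeo's theory of uniformly degenerate operators, exactly as was used in the preceding Proposition \ref{21}. Fix $\Gamma\in\mathcal{M}_g$ with ideal boundary $\gamma=\partial_\infty\Gamma\in\mathcal{E}$. The chart on $\mathcal{M}_g$ at $\Gamma$ produced in the proof of Proposition \ref{21} identifies a neighborhood of $\Gamma$ with the zero set of the minimal-surface operator $\mathcal{N}_\Gamma$ acting on normal graphs $u$ in a polyhomogeneous weighted H\"older space modeled on $\Lambda^{k+2,\alpha}_0(\Gamma)$ plus a finite-dimensional boundary-data piece; the leading $y^0$ coefficient of $u$ in the local chart near $\gamma$ precisely parametrizes the deformation of $\gamma$ inside $\partial_\infty\mathbb{H}^3$. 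In these coordinates $\Pi$ is literally the map taking $u$ to the leading coefficient of its expansion, which is already linear, so $d\Pi_\Gamma$ is obtained by restricting this boundary-trace map to the kernel of the linearization $\mathcal{N}'_\Gamma(0)=L_\Gamma$.

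The core step is the asymptotic structure theorem for solutions of $L_\Gamma v=0$. The indicial roots of the uniformly degenerate operator $L_\Gamma$ at $\partial_\infty\Gamma$ are $0$ and $n=2$; thus any Jacobi field $v$ in the allowable space admits a polyhomogeneous expansion $v=v_0+y^2 v_2+\cdots$, with $v_0$ a smooth section of the normal bundle of $\gamma$ in $\mathbb{S}^2$ and the remaining terms determined by $v_0$ modulo solutions in $y^2\Lambda^{k+2,\alpha}_0$. So $T_\Gamma\mathcal{M}_g$ splits, as a short exact sequence of Banach spaces, into the decaying Jacobi fields (kernel of $L_\Gamma$ on $y^\mu\Lambda^{k+2,\alpha}_0$, $\mu\in(0,2)$) together with a finite-dimensional complement prescribing $v_0\in T_\gamma\mathcal{E}$. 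In this splitting the differential $d\Pi_\Gamma$ is exactly the projection onto the $v_0$-slot.

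Fredholmness and the index calculation then follow from the Fredholm property of $L_\Gamma$ recorded in the text. The kernel $\ker d\Pi_\Gamma$ consists of Jacobi fields with $v_0=0$, i.e.\ elements of $\ker(L_\Gamma:y^\mu\Lambda^{k+2,\alpha}_0\to y^\mu\Lambda^{k,\alpha}_0)$, which is finite-dimensional. To analyze the cokernel, one must show that for generic $\xi\in T_\gamma\mathcal{E}$ there is a Jacobi field with leading coefficient $\xi$: produce an approximate solution $\tilde v$ with leading term $\xi$ and with $L_\Gamma\tilde v\in y^\mu\Lambda^{k,\alpha}_0$ (by subtracting off finitely many terms of the formal expansion), then correct $\tilde v$ by solving $L_\Gamma w=-L_\Gamma\tilde v$ in $y^\mu\Lambda^{k+2,\alpha}_0$; this is possible modulo the finite-dimensional cokernel of $L_\Gamma$ on that space. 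The codimension of $\operatorname{im}d\Pi_\Gamma$ therefore equals $\dim\operatorname{coker}L_\Gamma$, and since $L_\Gamma$ is Fredholm of index zero, one gets $\operatorname{ind}d\Pi_\Gamma=\dim\ker L_\Gamma-\dim\operatorname{coker}L_\Gamma=0$.

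The main obstacle is the asymptotic/polyhomogeneous analysis in the second and third paragraphs: one needs to justify that the Jacobi fields are genuinely polyhomogeneous with the predicted leading behavior, that the formal series can be truncated to produce an approximate solution with error in the right weighted space, and that no unexpected accidental indicial roots enter in the range $(0,2)$. All of this is exactly the mapping-property content of Mazzeo's edge/$0$-calculus applied to $L_\Gamma$, and is the technical heart on which both Propositions \ref{21} and \ref{34} rely.
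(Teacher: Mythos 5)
The paper does not prove this proposition at all: it is stated as a citation of Theorem~4.2 in Alexakis--Mazzeo, and the author relies on their result as a black box (and in fact the paper only ever uses the higher-dimensional analogue, which is again deferred to a minor modification of the Alexakis--Mazzeo argument). So there is no ``paper's own proof'' to compare against; what you have produced is a sketch of the Alexakis--Mazzeo argument itself.

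As a sketch of that argument, the overall strategy is the right one --- a local graphical parametrization of $\mathcal{M}_g$, identification of $d\Pi_\Gamma$ with the boundary-trace map, and the index computation reduced to the Fredholm property of $L_\Gamma$ in the edge/$0$-calculus --- but there are two points worth flagging. First, your stated indicial roots $0$ and $n$ do not match the paper's own recorded Fredholm range $\mu\in(-1,n)$ for $L_\Gamma$ acting on normal perturbations; the roots there are $-1$ and $n$. (Whether you land at $\{0,n\}$ or $\{-1,n\}$ or $\{0,n+1\}$ depends on whether you conjugate $L_\Gamma$ to act on the normal speed $\phi$, on the Euclidean graph function $u\sim y\phi$, or on $u/y$; be consistent with whichever normalization you use for the boundary trace, since the identification of the $v_0$-slot with $T_\gamma\mathcal{E}$ is exactly the delicate step.) Second, the claim that $\operatorname{codim}\,\operatorname{im}\,d\Pi_\Gamma=\dim\operatorname{coker}L_\Gamma$ is the crux of the index-zero statement and is asserted rather than proved: one has to show that the solvability obstruction for the correction $w$ defines a surjection from $T_\gamma\mathcal{E}$ onto $\operatorname{coker}L_\Gamma$, which in Alexakis--Mazzeo is established via the boundary pairing (Green's identity between the two indicial exponents). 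Without that non-degeneracy statement, you only get $\operatorname{codim}\,\operatorname{im}\,d\Pi_\Gamma\le\dim\operatorname{coker}L_\Gamma$, which gives $\operatorname{ind}d\Pi_\Gamma\ge0$ but not equality.
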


In fact, Alexakis-Mazzeo's results can be easily generalized to higher dimensions. We will illustrate this with more details in the proof of Proposition \ref{20} and in Section 9.

\section{Relative Entropy}
Notice that for any $\psi\in\mathfrak{X}$, since $\psi(p,\textbf{v})=\psi(p,-\textbf{v})$, it can be viewed naturally as a function
over the Grassman $n$-plane bundle of $\Omega'$. However, the function space $\mathfrak{X}$ requires its elements be locally Lipschitz. So, if we add suitable functions to $\mathfrak{X}$ by letting
\begin{equation*}
    \mathfrak{Y}'=\{\psi+\phi:\ \psi\in\mathfrak{X},\ \phi\in C^0(\text{cl}(\Omega')\times\mathbb{S}^n),\ \|y^{-n}\phi\|_\infty<\infty\},
\end{equation*}
then $C^0_c(\text{cl}(\Omega')\times\mathbb{S}^n)$ (the space of compactly supported continuous function in $\text{cl}(\Omega')\times\mathbb{S}^n$) is contained in $\mathfrak{Y}'$. The advantage is that the functionals on $\mathfrak{Y}'$ can be naturally viewed as $n$-varifolds over $\text{cl}(\Omega')$.
$\mathfrak{Y}'$ is a Banach space equipped with the following norm\footnote{See \cite[Chapter 3, Theorem 1.3]{BennettSharpleyBook} for a proof.}:
\begin{equation*}
    \|f\|_{\mathfrak{Y}'}:=\inf\{\|\psi\|_\mathfrak{X}+\|y^{-n}\varphi\|_\infty:\ f=\psi+\varphi\}.
\end{equation*}
The following proposition shows that the estimates in Proposition \ref{35} still holds for weights in $\mathfrak{Y}'$. In particular, we show that for any $U\in\mathcal{C}(\Gamma'_-,\Gamma'_+)$ with $E_{rel}[\partial^*U,\Gamma_-]<\infty$, $E_{rel}[\partial^*U,\Gamma_-;\cdot]$ is a bounded linear functional on $\mathfrak{Y}'$.

\begin{prop}\label{36}
Let $U\in\mathcal{C}(\Gamma'_-,\Gamma'_+)$. The estimate $(\ref{quant estimate})$ holds for weights in $\mathfrak{Y}'$. Moreover, if $E_{rel}[\partial^*U,\Gamma_-]<\infty$, then there is a constant $C=C(\Gamma_-,\Omega')$, so that for any $f\in\mathfrak{Y}'$,
\begin{equation}\label{64}
    \big|E_{rel}[\partial^*U,\Gamma_-;f]\big|\leq C\big(1+\big|E_{rel}[\partial^*U,\Gamma_-]\big|\big)\|f\|_{\mathfrak{Y}'}.
\end{equation}
\end{prop}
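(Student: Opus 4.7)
The plan is to decompose any $f \in \mathfrak{Y}'$ as $f = \psi + \varphi$ with $\psi \in \mathfrak{X}$ and $\varphi \in C^0(\text{cl}(\Omega') \times \mathbb{S}^n)$ satisfying $\|y^{-n}\varphi\|_\infty < \infty$, bound the relative entropy of each piece separately, and take the infimum over all such decompositions to recover the $\mathfrak{Y}'$-norm. Proposition \ref{35} immediately handles the $\psi$-piece, so the content of the proof lies entirely in controlling the $\varphi$-piece. The crucial observation is that the function $y^n$, viewed as independent of the $\mathbb{S}^n$-factor, already lies in $\mathfrak{X}$ with norm bounded by a constant depending only on $\Omega'$: it is bounded on the bounded set $\Omega'$, and the weight $y$ in \eqref{weighted norm} exactly absorbs the $y^{n-1}$ coming from each Euclidean derivative of $y^n$.

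Combined with the pointwise bound $|\varphi(p,\mathbf{v})| \leq \|y^{-n}\varphi\|_\infty y(p)^n$ and the identity $y^n\, d\mathcal{H}^n_\mathbb{H} = d\mathcal{H}^n$ valid on any $n$-dimensional submanifold of $\mathbb{R}^{n+1}_+$, this converts integrals of $\varphi$ against $d\mathcal{H}^n_\mathbb{H}$ into $\|y^{-n}\varphi\|_\infty$ times Euclidean $n$-area, which is then controlled by applying Proposition \ref{35} with $\psi = y^n$. For the extension of \eqref{quant estimate}, Proposition \ref{35} applied to $y^n$ yields
\begin{equation*}
|E_{rel}[\partial^*U,\Gamma_-;\phi_{y_1,y_2,\delta}\, y^n]| \leq C\big(|E_{rel}[\partial^*U,\Gamma_-;\phi_{y_1,y_2,\delta}]| + y_2\big),
\end{equation*}
while the smoothness of $\overbar{\Gamma_-}$ up to $M$ in the Euclidean metric bounds the reference integral $\int_{\Gamma_-}\phi_{y_1,y_2,\delta} y^n\, d\mathcal{H}^n_\mathbb{H}$ -- which is the Euclidean area of a thin slab of $\Gamma_-$ of width of order $y_2$ -- by $C y_2$. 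Combining with the $\psi$-piece bound from Proposition \ref{35} and taking the infimum over decompositions extends \eqref{quant estimate} to $\mathfrak{Y}'$. For \eqref{64}, the same argument plus the finiteness of $\mathcal{H}^n(\Gamma_-)$ (coming from the compactness of $\overbar{\Gamma_-}$ in $\overbar{\mathbb{R}^{n+1}_+}$) gives a uniform bound $\mathcal{H}^n(\partial^*U \cap \{y\geq\epsilon\}) \leq C(1 + |E_{rel}[\partial^*U,\Gamma_-]|)$; this bound together with dominated convergence establishes the existence of the limit defining $E_{rel}[\partial^*U,\Gamma_-;\varphi]$ and yields the desired norm estimate.

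The main subtlety is not any single calculation but the interplay between hyperbolic and Euclidean measures: one must convert hyperbolic integrals of $\varphi$ into Euclidean area, and then back into hyperbolic relative entropies where Proposition \ref{35} applies. The pivot is the single observation that $y^n \in \mathfrak{X}$, which reduces the entire extension to estimates already established in \cite{YaoRel}.
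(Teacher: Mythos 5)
Your proposal follows essentially the same route as the paper's proof: decompose $f=\psi+\varphi$, absorb the $\varphi$-piece via the pointwise bound $|\varphi|\le\|y^{-n}\varphi\|_\infty\,y^n$, note that $y^n\in\mathfrak{X}$ so that Proposition~\ref{35} applies with weight $y^n\phi_{y_1,y_2,\delta}$, use the identity $y^n\,d\mathcal{H}^n_\mathbb{H}=d\mathcal{H}^n$ together with the $C^1$ asymptotic regularity of $\Gamma_-$ to bound the reference integral by $\mathcal{O}(y_2)$, and then take the infimum over all decompositions of $f$. The only minor variation is that you establish existence of $E_{rel}[\partial^*U,\Gamma_-;\varphi]$ via dominated convergence against the finite Euclidean area of $\partial^*U$, whereas the paper invokes Cauchy's criterion through $(\ref{62})$; both are correct.
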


\begin{proof}
For $f\in\mathfrak{Y}'$, we write $f=\psi+\varphi$ with $\psi\in\mathfrak{X}$ and $\|y^{-n}\varphi\|_\infty<\infty$. Then,
\begin{align}
    &\left|\int_{\partial^*U}\phi_{y_1,y_2,\delta}\varphi(p,\overbar{n}_{\partial^*U}(p))d\mathcal{H}^n_{\mathbb{H}}-\int_{\Gamma_-}\phi_{y_1,y_2,\delta}\varphi(p,\overbar{n}_{\Gamma_-}(p))d\mathcal{H}^n_{\mathbb{H}}\right|\label{100}\\
    \leq&\|y^{-n}\varphi\|_\infty\cdot\max\left\{\int_{\partial^*U}y^n\phi_{y_1,y_2,\delta}d\mathcal{H}^n_{\mathbb{H}},\int_{\Gamma_-}y^n\phi_{y_1,y_2,\delta}d\mathcal{H}^n_{\mathbb{H}}\right\}\\
    \leq&\|y^{-n}\varphi\|_\infty\cdot\left(\big|E_{rel}[\partial^*U,\Gamma_-;y^n\phi_{y_1,y_2,\delta}]\big|+2\int_{\Gamma_-}y^n\phi_{y_1,y_2,\delta}d\mathcal{H}^n_{\mathbb{H}}\right).
\end{align}
Since $y^nd\mathcal{H}^n_{\mathbb{H}}=d\mathcal{H}^n$ and $\Gamma_-$ is $C^1$-asymptotic to $M$, we have
\begin{align*}
    \int_{\Gamma_-}y^n\phi_{y_1,y_2,\delta}d\mathcal{H}^n_{\mathbb{H}^{n+1}}\leq \mathcal{H}^n(\Gamma_-\cap\{y\leq2y_2\})=\mathcal{O}(y_2).
\end{align*}
By ($\ref{quant estimate}$), $|E_{rel}[\partial^*U,\Gamma_-;y^n\phi_{y_1,y_2,\delta}]\big|\leq C(|E_{rel}[\partial^*U,\Gamma_-;\phi_{y_1,y_2,\delta}]|+y_2)$. Combining these facts and letting $\delta\to0$, we get
\begin{align}\label{62}
    \big|E_{rel}[\partial^*U,\Gamma_-;\varphi\chi_{\{y_1\leq y\leq y_2\}}]\big|\leq C\|y^{-n}\varphi\|_\infty(|E_{rel}[\partial^*U,\Gamma_-;\chi_{\{y_1\leq y\leq y_2\}}]|+\mathcal{O}(y_2)).
\end{align}

Now, we assume $E_{rel}[\partial^*U,\Gamma_-]<\infty$. By Cauchy's criterion, $\lim_{\epsilon\to0}E_{rel}[\partial^*U,\Gamma_-;\varphi\chi_{\{y\geq\epsilon\}}]$ exists and is a real number. Similarly, if we replace $\phi_{y_1,y_2,\delta}$ by another cut-off function $\phi_{y_1,\delta}$ in (\ref{100}), then
\begin{align*}
    \big|E_{rel}[\partial^*U,\Gamma_-;\varphi\phi_{y_1,\delta}]\big|\leq C\|y^{-n}\varphi\|_\infty(|E_{rel}[\partial^*U,\Gamma_-;\phi_{y_1,\delta}]|+1).
\end{align*}
Letting $\delta\to0$, $y_1\to0^+$ and using ($\ref{1}$) for $\psi=f-\varphi\in\mathfrak{X}$,
\begin{align}
    \big|E_{rel}[\partial^*U,\Gamma_-;f]\big|=&\big|E_{rel}[\partial^*U,\Gamma_-;\psi]\big|+\big|E_{rel}[\partial^*U,\Gamma_-;\varphi]\big|\\
    \leq&C\|\psi\|_{\mathfrak{X}}(|E_{rel}[\partial^*U,\Gamma_-]|+1)+C\|y^{-n}\varphi\|_\infty(|E_{rel}[\partial^*U,\Gamma_-]|+1)\\
    \leq& C(|E_{rel}[\partial^*U,\Gamma_-]|+1)(\|\psi\|_{\mathfrak{X}}+\|y^{-n}\varphi\|_\infty).
\end{align}
Taking the infimum over all such decompositions for $f$, we get the desired estimate.
\end{proof}

Notice that in general, $\mathfrak{Y}'$ is not separable. So, for technical reason, we will restrict ourselves to a separable subspace of $\mathfrak{Y}'$. Let $\pi$ be the projection of $\text{cl}(\Omega')\times\mathbb{S}^n$ onto $\mathbb{S}^n$. Let $\mathfrak{Y}_0:=\{f+\pi^*g:f\in C^0_c(\text{cl}(\Omega')\times\mathbb{S}^n),\,g\in C^2(\mathbb{S}^n), g(\textbf{v})=g(-\textbf{v})\}$, and $\mathfrak{Y}$ be the closure of $\mathfrak{Y}_0$ in $\mathfrak{Y}'$-norm. When it is clear from the context, we also use $g$ to represent $\pi^*g$. It is easy to see that $\mathfrak{Y}$ is an algebra.

\begin{prop}
The function space $\mathfrak{Y}$ is a separable Banach space endowed with $\mathfrak{Y}'$-norm $($will be denoted by $\mathfrak{Y}$-norm hereafter$)$. Moreover, $\{f+\pi^*g:\,f\in C^\infty_c(\text{cl}(\Omega')\times\mathbb{S}^n),\,g\in C^\infty(\mathbb{S}^n),g(\mathbf{v})=g(-\mathbf{v})\}$ is dense in $\mathfrak{Y}$.
\end{prop}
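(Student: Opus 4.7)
The plan is to dispatch the three claims — Banach, separable, smooth-dense — in turn, exploiting two elementary bounds on the $\mathfrak{Y}'$-norm that I will establish at the start.

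\textbf{Two preliminary bounds.} First, for any $\varphi\in C^0_c(\text{cl}(\Omega')\times\mathbb{S}^n)$ whose support is contained in $\{y\geq c\}$, taking the trivial decomposition $\psi\equiv 0$, $\phi=\varphi$ in the definition of $\|\cdot\|_{\mathfrak{Y}'}$ yields
\begin{equation*}
\|\varphi\|_{\mathfrak{Y}'}\leq \|y^{-n}\varphi\|_\infty\leq c^{-n}\|\varphi\|_\infty.
\end{equation*}
Second, for $g\in C^2(\mathbb{S}^n)$ with $g(\mathbf v)=g(-\mathbf v)$, the pullback $\pi^*g$ does not depend on the base point of $\text{cl}(\Omega')$, so $y\nabla_{\mathbb{R}^{n+1}_+}\pi^*g=0$ and also $y\nabla_{\mathbb{R}^{n+1}_+}\nabla_{\mathbb{S}^n}\pi^*g=0$; therefore $\pi^*g\in\mathfrak{X}$ and $\|\pi^*g\|_{\mathfrak{X}}\leq C\|g\|_{C^2(\mathbb{S}^n)}$, giving $\|\pi^*g\|_{\mathfrak{Y}'}\leq C\|g\|_{C^2(\mathbb{S}^n)}$.

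\textbf{Banach structure.} Since $\mathfrak{Y}$ is defined as the closure of $\mathfrak{Y}_0$ in $\mathfrak{Y}'$ and $\mathfrak{Y}'$ is a Banach space, $\mathfrak{Y}$ is automatically a closed subspace of a Banach space, hence a Banach space in the induced norm.

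\textbf{Separability.} It suffices to produce a countable dense subset of $\mathfrak{Y}_0$. Exhaust $\text{cl}(\Omega')\times\mathbb{S}^n$ by an increasing sequence of compact sets $K_j\subset\{y\geq c_j\}$ whose union is all of $\text{cl}(\Omega')\times\mathbb{S}^n$; each $K_j$ is a compact metric space, so $C^0(K_j)$ is separable in the sup-norm, and the first bound above promotes these countable dense subsets to a countable set dense in $C^0_c(\text{cl}(\Omega')\times\mathbb{S}^n)$ in the $\mathfrak{Y}'$-norm (given $\varphi\in C^0_c$ with $\text{supp}(\varphi)\subset K_j$, approximate uniformly within $K_j$ and use $\|\cdot\|_{\mathfrak{Y}'}\leq c_j^{-n}\|\cdot\|_\infty$). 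The even subspace of $C^2(\mathbb{S}^n)$ is a closed subspace of the separable Banach space $C^2(\mathbb{S}^n)$, hence separable, and pulls back to a separable subset of $\mathfrak{Y}'$ by the second bound. Sums of elements from the two countable dense sets form a countable dense subset of $\mathfrak{Y}_0$, and hence of $\mathfrak{Y}$.

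\textbf{Smooth density.} It suffices to approximate each summand separately. Given $f\in C^0_c(\text{cl}(\Omega')\times\mathbb{S}^n)$ supported in $\{y\geq c\}$, standard Euclidean mollification in $\mathbb{R}^{n+1}\times\mathbb{S}^n$ (using a product mollifier, with the $\mathbb{S}^n$-factor mollifier supported in a geodesic ball) produces $f_\epsilon\in C^\infty_c$ with supports eventually in $\{y\geq c/2\}$ and $\|f-f_\epsilon\|_\infty\to 0$, whence $\|f-f_\epsilon\|_{\mathfrak{Y}'}\to 0$ by the first bound. Given $g\in C^2(\mathbb{S}^n)$ even, mollify using a geodesically symmetric, antipodally invariant kernel on $\mathbb{S}^n$ (or symmetrize after mollification) to obtain $g_\epsilon\in C^\infty(\mathbb{S}^n)$ even with $\|g-g_\epsilon\|_{C^2(\mathbb{S}^n)}\to 0$; by the second bound, $\pi^*g_\epsilon\to\pi^*g$ in $\mathfrak{Y}'$. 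Combining yields density of the smooth subset in $\mathfrak{Y}_0$, and therefore in $\mathfrak{Y}$. The only subtlety is ensuring that the mollifications respect the evenness on $\mathbb{S}^n$ and the compact support in the base — both handled by choosing the mollifier appropriately — so there is no serious obstacle, only bookkeeping.
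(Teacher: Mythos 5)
Your argument is correct and proceeds by essentially the same route as the paper: exhaust $C^0_c(\text{cl}(\Omega')\times\mathbb{S}^n)$ by compactly supported pieces with $y$ bounded below, transfer sup-norm approximation to the $\mathfrak{Y}'$-norm via the weight bound $\|\varphi\|_{\mathfrak{Y}'}\leq c^{-n}\|\varphi\|_\infty$, handle the $\pi^*g$ summand via the $C^2(\mathbb{S}^n)$-to-$\mathfrak{X}$ bound, and use mollification for smooth density. The one point you assert that the paper actually proves is the separability of $C^2(\mathbb{S}^n)$ (the paper extends $g$ radially to an annulus and invokes polynomial approximation in $C^2$), but this is a standard fact and your citation of it is fine.
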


\begin{proof}
It is a standard exercise to check $\mathfrak{Y}$ is a Banach space. Observe that
\begin{equation*}
    C^0_c(\text{cl}(\Omega')\times\mathbb{S}^n)=\bigcup_{i=1}^\infty C^0_c(\text{cl}(\Omega'\cap\{y\geq\frac{1}{i}\})\times\mathbb{S}^n).
\end{equation*}
The Stone-Weierstrass theorem implies that each $C^0_c(\text{cl}(\Omega'\cap\{y\geq\frac{1}{i}\})\times\mathbb{S}^n)$ is separable in $C^0$-norm, and that $C^\infty_c(\text{cl}(\Omega'\cap\{y\geq\frac{1}{i}\})\times\mathbb{S}^n)$ is dense in it. On the other hand, for any $f\in C^0_c(\text{cl}(\Omega'\cap\{y\geq\frac{1}{i}\})\times\mathbb{S}^n)$,
\begin{equation*}
    \|f\|_\mathfrak{Y}\leq\|\frac{1}{y^n}f\|_\infty\leq i^n\|f\|_\infty.
\end{equation*}
As a result, $C^0_c(\text{cl}(\Omega')\times\mathbb{S}^n)$ is separable in $\mathfrak{Y}$-norm.

Using a mollifier function, it is easy to check that $\{g\in C^\infty(\mathbb{S}^n):g(\textbf{v})=g(-\textbf{v})\}$ is dense (in $C^2$-norm) in $\{g\in C^2(\mathbb{S}^n):g(\textbf{v})=g(-\textbf{v})\}$. Finally, we show that $C^2(\mathbb{S}^n)$ is separable in $C^2$-norm (which is the same as $\mathfrak{X}$-norm). For every $g\in C^2(\mathbb{S}^n)$, let $\widetilde{g}(\textbf{x})=g(\textbf{x}/|\textbf{x}|)$ be a $C^2_{loc}$ function in $\mathbb{R}^{n+1}\backslash\{0\}$. We know that $\widetilde{g}$ can be approximated by polynomials in $C^2$-norm over any compact set $K\subset\mathbb{R}^{n+1}\backslash\{0\}$ (see, e.g., \cite[Chapter 15-6,Corollary 2]{TrevesTVSBook} for a proof). Since polynomials are separable in $C^2(\overbar{B_2(0)}\backslash B_{\frac{1}{2}}(0))$, This implies $C^2(\mathbb{S}^n)$ is dense.
\end{proof}

Let $\mathfrak{Y}^*$ be the dual space of $\mathfrak{Y}$ and $\mathcal{C}(\Gamma'_-,\Gamma'_+;\Lambda)$ be the set of $U\in\mathcal{C}(\Gamma'_-,\Gamma'_+)$ with $E_{rel}[\partial^*U,\Gamma_-]\leq\Lambda$. Then, Proposition \ref{36} implies $\mathcal{C}(\Gamma'_-,\Gamma'_+;\Lambda)\subset\mathfrak{Y}^*$ for any $\Lambda$. Denote the weak*-closure of $\mathcal{C}(\Gamma'_-,\Gamma'_+;\Lambda)$ by $\mathfrak{Y}_\mathcal{C}^*(\Lambda)$. Since $C^0_c(\text{cl}(\Omega')\times\mathbb{S}^n)\subset\mathfrak{Y}$, it follows from the Riesz representation theorem that every $V$ in $\mathfrak{Y}^*_\mathcal{C}(\Lambda)$ is a Radon measure over $\text{cl}(\Omega')\times\mathbb{S}^n$. The dominated convergence theorem then implies that,
\begin{equation*}
    V[\{y\geq\epsilon\}]=V[\{(\textbf{x},y,\textbf{v})\in\mathbb{R}^n\times\mathbb{R}_+\times\mathbb{S}^n:\ y\geq\epsilon\}]=\lim_{\delta\to0^+}V[\phi_{\epsilon, \delta}]
\end{equation*}
is well defined, where $\phi_{\epsilon, \delta}$ is the cut-off function defined in the paragraph before Proposition \ref{35}. Moreover, we wish to extend the notion of relative entropy to $\mathfrak{Y}_\mathcal{C}^*(\Lambda)$ by formally letting
\begin{equation}
    E_{rel}[V]=\lim_{\epsilon\to0^+}V[{\{y\geq\epsilon\}}].\label{2}
\end{equation}
We prove that this quantity is well defined.

\begin{prop}\label{60}
Let $\Lambda>0$ be a fixed constant and $V$ be an element in $\mathfrak{Y}_\mathcal{C}^*(\Lambda)$. Then, $(\ref{2})$ is well defined and
\begin{equation*}
    -\infty<E_{rel}[V]\leq V[\mathbf{1}]\leq \Lambda<\infty,
\end{equation*}
where $\mathbf{1}$ is the constant function equal to $1$. Furthermore, there is a constant $C=C(\Gamma_-,\Omega',\Lambda)$, so for each $\psi\in\mathfrak{Y}$,
\begin{equation*}
    E_{rel}[V;\psi]:=\lim_{\epsilon\to0}V[\psi\chi_{\{y\geq\epsilon\}}]\text{ exists and }\big|E_{rel}[V;\psi]\big|\leq C\|\psi\|_\mathfrak{Y}.
\end{equation*}
\end{prop}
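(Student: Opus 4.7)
The plan is to realize $V$ as a sequential weak* limit of approximants $\partial^*U_k$ with $U_k \in \mathcal{C}(\Gamma'_-, \Gamma'_+; \Lambda)$ and then push the quantitative estimates from Propositions \ref{35} and \ref{36} through the limit. By (\ref{64}) the set $\mathcal{C}(\Gamma'_-, \Gamma'_+; \Lambda)$ is uniformly bounded in $\mathfrak{Y}^*$, and since $\mathfrak{Y}$ is separable, weak* convergence on its closure is sequential. Each test function $f \in \mathfrak{Y}$ then gives $V[f] = \lim_k E_{rel}[\partial^*U_k, \Gamma_-; f]$; in particular the constant $\mathbf{1} \in \mathfrak{Y}$ yields $V[\mathbf{1}] = \lim_k E_{rel}[\partial^*U_k, \Gamma_-] \leq \Lambda$, and the bound (\ref{64}) passes to $|V[f]| \leq C(1+\Lambda)\|f\|_\mathfrak{Y}$.

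To show that $E_{rel}[V]$ is well-defined and satisfies $-\infty < E_{rel}[V] \leq V[\mathbf{1}]$, I would apply (\ref{3}) to each $\partial^*U_k$, pass $k \to \infty$ using $\phi_{y_1, \delta}, \phi_{y_2, \delta} \in \mathfrak{Y}$, and then let $\delta \to 0$ using that $V$ restricts to a signed Radon measure on $\mathrm{cl}(\Omega') \times \mathbb{S}^n$ (via Riesz, since $\|f\|_\mathfrak{Y} \leq \epsilon^{-n}\|f\|_\infty$ on $C_c^0(\{y \geq \epsilon\})$). This produces
\begin{equation*}
    V[\{y \geq y_1\}] \geq V[\{y \geq y_2\}] - C y_2 \geq -E_0
\end{equation*}
for $0 < y_1 < y_2$ small; taking $\liminf_{y_1 \to 0^+}$ against $\limsup_{y_2 \to 0^+}$ forces existence of $E_{rel}[V] = \lim_{\epsilon \to 0^+} V[\{y \geq \epsilon\}] \in [-E_0, \infty)$. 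For $E_{rel}[V] \leq V[\mathbf{1}]$, I would first let $y_1 \to 0$ inside each $\partial^*U_k$ (legitimate since $E_{rel}[\partial^*U_k, \Gamma_-]$ is by definition exactly this limit) to obtain $E_{rel}[\partial^*U_k, \Gamma_-] \geq E_{rel}[\partial^*U_k, \Gamma_-; \phi_{y_2, \delta}] - C y_2$, then pass $k \to \infty$, $\delta \to 0$, $y_2 \to 0$ successively.

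The existence of $E_{rel}[V; \psi]$ follows from a Cauchy argument built on the extension of (\ref{quant estimate}) to $\mathfrak{Y}$ in Proposition \ref{36}, which after the weak* limit reads $|V[\phi_{y_1, y_2, \delta} \psi]| \leq C(|V[\phi_{y_1, y_2, \delta}]| + y_2) \|\psi\|_\mathfrak{Y}$. Letting $\delta \to 0$ by Radon integration on the compact slab $\{y_1/2 \leq y \leq 2y_2\}$ and invoking that $V[\{y \geq y_1\}] - V[\{y \geq y_2\}] \to 0$ as $y_1, y_2 \to 0$ (which is exactly the existence of $E_{rel}[V]$ just proved) shows $V[\psi \chi_{\{y \geq \epsilon\}}]$ is Cauchy in $\epsilon$, hence convergent.

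For the norm bound I would fix a small $y_0 > 0$ and split $\psi \phi_{\epsilon, \delta} = \psi \phi_{\epsilon, y_0, \delta} + \psi \phi_{y_0 + \delta, \delta}$. The first piece is controlled uniformly in $\epsilon, \delta$ by the displayed quantitative estimate together with $|V[\phi_{\epsilon, y_0, \delta}]| = |V[\phi_{\epsilon, \delta}] - V[\phi_{y_0 + \delta, \delta}]| \leq C$, which is uniform by the previous paragraph. The second piece $\psi \phi_{y_0 + \delta, \delta}$ is compactly supported away from $\{y = 0\}$ with $\mathfrak{Y}$-norm bounded by $C(y_0) \|\psi\|_\mathfrak{Y}$ (using the algebra property of $\mathfrak{Y}$ and the uniform bound on $\|\phi_{y_0 + \delta, \delta}\|_\mathfrak{X}$ for $\delta < y_0/2$), so (\ref{64}) handles it. Sending $\delta \to 0$ then $\epsilon \to 0$ delivers $|E_{rel}[V; \psi]| \leq C \|\psi\|_\mathfrak{Y}$. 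The main technical difficulty throughout will be reconciling $V$ as a functional on all of $\mathfrak{Y}$ with its restriction to a Radon measure on compact subsets of $\{y > 0\}$: the cutoffs $\phi_{y_1, y_2, \delta}$ have $\mathfrak{Y}$-norm blowing up as $\delta \to 0$, so the global bound (\ref{64}) alone cannot control the inner limit, and the compact-subset measure picture has to be used in tandem with the norm bound at the outer layer.
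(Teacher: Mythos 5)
Your proposal follows the same route as the paper: realize $V$ as a sequential weak\*-limit of $\partial^*U_i$ (justified by separability of $\mathfrak{Y}$ and the uniform $\mathfrak{Y}^*$-bound from $(\ref{64})$), pass the monotonicity estimate $(\ref{3})$ through the limit to get well-definedness of $E_{rel}[V]$ and the chain $-E_0\leq E_{rel}[V]\leq V[\mathbf{1}]\leq\Lambda$, and then use the annular estimate $(\ref{63})$ with Cauchy's criterion and $(\ref{64})$ for the existence and bound of $E_{rel}[V;\psi]$. The structure and all but one detail match.

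The one concrete slip is in the justification of the bound for the ``second piece'' $\psi\phi_{y_0+\delta,\delta}$. You assert a ``uniform bound on $\|\phi_{y_0+\delta,\delta}\|_\mathfrak{X}$ for $\delta<y_0/2$,'' but this is false: $\phi_{y_0+\delta,\delta}$ transitions on $\{y_0\leq y\leq y_0+\delta\}$ with $|\partial_y\phi_{y_0+\delta,\delta}|\sim\delta^{-1}$, so the weighted term $\|y\nabla_{\mathbb{R}^{n+1}_+}\phi_{y_0+\delta,\delta}\|_\infty\sim y_0/\delta\to\infty$ as $\delta\to0$. The claim you actually need, and which does hold, is that $\|\phi_{y_0+\delta,\delta}\|_\mathfrak{Y}$ is uniformly bounded: since $\phi_{y_0+\delta,\delta}$ is supported in $\{y\geq y_0\}$, you can take $\psi_1=0$ in the decomposition $f=\psi_1+\varphi$ defining the $\mathfrak{Y}'$-norm and absorb it entirely into the $\varphi$-piece, getting $\|\phi_{y_0+\delta,\delta}\|_\mathfrak{Y}\leq\|y^{-n}\phi_{y_0+\delta,\delta}\|_\infty\leq y_0^{-n}$, uniformly in $\delta$. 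Then the algebra property of $\mathfrak{Y}$ gives $\|\psi\phi_{y_0+\delta,\delta}\|_\mathfrak{Y}\leq C y_0^{-n}\|\psi\|_\mathfrak{Y}$ and $(\ref{64})$ applies as you intended. A related imprecision in your closing remark: for fixed $y_1>0$ the annular cut-off $\phi_{y_1,y_2,\delta}$ also has bounded $\mathfrak{Y}$-norm (again $\lesssim y_1^{-n}$, by the $\varphi$-component trick); it is the $\mathfrak{X}$-norm that blows up as $\delta\to0$, and the $\mathfrak{Y}$-norm that blows up only as $y_1\to0$. This is exactly why $(\ref{quant estimate})$ and $(\ref{63})$ — which bound the annular piece intrinsically, without reference to $\|\phi_{y_1,y_2,\delta}\|_\mathfrak{X}$ or $\|\phi_{y_1,y_2,\delta}\|_\mathfrak{Y}$ — are the right tools near the ideal boundary.
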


\begin{rem}
Note that $V[\cdot]$ and $E_{rel}[V;\cdot]$ are not equal in general. However, if $V[\textbf{1}]=E_{rel}[V]$, then these two functionals are the same.
\end{rem}

\begin{proof}
Let $\{U_i\}_i$ be a sequence in $\mathcal{C}(\Gamma'_-,\Gamma'_+)$ so that $\partial^*U_i$ converges to $V$ in the weak*-topology. If we replace $U$ by $U_i$ in (\ref{3}) and take $i\to\infty$, $\delta\to0^+$, then
\begin{equation}
    V[\{y\geq y_1\}]\geq V[\{y\geq y_2\}]-Cy_2.\label{4}
\end{equation}
Taking the lower limit on the left hand side over $y_1$ and the upper limit on the right over $y_2$,
\begin{equation*}
    \liminf_{\epsilon\to0^+}V[\{y\geq \epsilon\}]\geq\limsup_{\epsilon\to0^+}V[\{y\geq \epsilon\}].
\end{equation*}
This proves (\ref{2}) is well defined. If we replace $U$ by $U_i$ in (\ref{3}) and let $y_1\to0^+$, then
\begin{equation*}
    E_{rel}[\partial^*U_i,\Gamma_-]\geq E_{rel}[\partial^*U_i,\Gamma_-,\phi_{y_2,\delta}]-Cy_2.
\end{equation*}
If we let $i\to\infty$ notice that $E_{rel}[\partial^*U_i;\Gamma_-]=E_{rel}[\partial^*U_i;\Gamma_-;\mathbf{1}]$, then
\begin{equation*}
    V[\mathbf{1}]\geq V[\phi_{y_2,\delta}]-Cy_2.
\end{equation*}
This gives $V[\mathbf{1}]\geq E_{rel}[V]$. $V[\mathbf{1}]\leq\Lambda$ is immediate by the definition of weak*-limit, and $E_{rel}[V]>-\infty$ can be obtained by fixing $y_2$ and letting $y_1\to0^+$ in (\ref{4}).

By (\ref{quant estimate}) and (\ref{62}),
\begin{equation}\label{63}
    \big|E_{rel}[\partial^*U_i,\Gamma_-;\phi_{y_1,y_2,\delta}\psi]\big|\leq C(|E_{rel}[\partial^*U_i,\Gamma_-;\phi_{y_1,y_2,\delta}]|+y_2)\|\psi\|_\mathfrak{Y},
\end{equation}
for any $\psi\in\mathfrak{Y}$, $0<\frac{y_1}{2}<y_1-\delta<y_1<y_2-\delta$ and $y_2+\delta<2y_2<\epsilon$, where $\epsilon=\epsilon(\Gamma_-,\Omega')$ is defined in Proposition \ref{35}. Letting $i\to\infty$ in (\ref{63}), since $E_{rel}[V]$ is well defined and finite, by Cauchy's criterion, we have proved the existence of $E_{rel}[V;\psi]$. Applying Proposition \ref{36} to each $U_i$ and letting $i\to\infty$, we get the estimate $\big|E_{rel}[V;\psi]\big|\leq C\|\psi\|_\mathfrak{Y}$.
\end{proof}

Let $\{U_i\}_i$ be a sequence in $\mathcal{C}(\Gamma'_-,\Gamma'_+;\Lambda)$ converging to $V\in\mathfrak{Y}^*_\mathcal{C}(\Lambda)$. (\ref{3}) shows that
\begin{align*}
    \mathcal{H}^n_{\mathbb{H}^{n+1}}(\partial^*U_i\cap\{y\geq y_2\})\leq& E_{rel}[\partial^*U_i,\Gamma_-;\chi_{\{y\geq y_2\}}]+\mathcal{H}^n_{\mathbb{H}^{n+1}}(\Gamma_-\cap\{y\geq y_2\})\\
    \leq& Cy_2+\Lambda+\mathcal{H}^n_{\mathbb{H}^{n+1}}(\Gamma_-\cap\{y\geq y_2\})
\end{align*}
is uniformly bounded above. The compactness for Radon measures implies the existence of a varifold $V_+$ over $G_n(\text{cl}(\Omega'))$ as the limit of a certain subsequence of $\{\partial^*U_i\}$. More precisely, we have

\begin{prop}\label{6}
Let $\Lambda>0$ be a fixed constant and $V$ be an element in $\mathfrak{Y}_\mathcal{C}^*(\Lambda)$. Then, there is a varifold $V_+$ over $G_n(\mathrm{cl}(\Omega'))$, so that for any $f\in C^0_c\big(G_n(\mathrm{cl}(\Omega'))\big)$,
\begin{equation}
    V[f]=\int f(p,\mathbf{v})\frac{1}{y^n(p)}dV_+(p,\mathbf{v})-\int_{\Gamma_-}f(p,\overbar{\mathbf{n}}_{\Gamma_-})d\mathcal{H}^n_{\mathbb{H}}(p).\label{5}
\end{equation}
\end{prop}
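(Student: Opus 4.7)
The plan is to produce $V_+$ as a subsequential weak* limit of the Euclidean $n$-varifolds associated to a realizing sequence $\{U_i\} \subset \mathcal{C}(\Gamma'_-,\Gamma'_+;\Lambda)$ with $\partial^* U_i \to V$ in the weak* topology of $\mathfrak{Y}^*$ (such a sequence exists by the definition of $\mathfrak{Y}^*_\mathcal{C}(\Lambda)$ as a weak* closure). For each $i$ I would set
\[
V_{i,+}[h] := \int_{\partial^*U_i} h(p,T_p\partial^*U_i)\, d\mathcal{H}^n(p) = \int_{\partial^*U_i} y^n(p)\, h(p,T_p\partial^*U_i)\, d\mathcal{H}^n_\mathbb{H}(p)
\]
for $h \in C^0_c(G_n(\text{cl}(\Omega')))$, using the pointwise identity $d\mathcal{H}^n = y^n\, d\mathcal{H}^n_\mathbb{H}$ on any $n$-rectifiable set.

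The central estimate is a uniform local Euclidean mass bound. Any compact $K \subset \text{cl}(\Omega')$ is automatically bounded away from $\{y=0\}$, so $y \geq y_K > 0$ on $K$; and since $\Omega'$ is bounded in $\mathbb{R}^{n+1}_+$, we also have $y \leq Y < \infty$ on $\Omega'$. Running exactly the argument used in the proof of Proposition \ref{60} (apply (\ref{3}) to $\partial^* U_i$ and send $y_1\to 0$, $\delta\to 0$) yields
\[
\mathcal{H}^n_\mathbb{H}\bigl(\partial^*U_i \cap \{y\geq y_K\}\bigr) \leq \Lambda + Cy_K + \mathcal{H}^n_\mathbb{H}\bigl(\Gamma_-\cap\{y\geq y_K\}\bigr),
\]
so $V_{i,+}(G_n(K)) \leq Y^n\bigl(\Lambda + Cy_K + \mathcal{H}^n_\mathbb{H}(\Gamma_-\cap\{y\geq y_K\})\bigr)$, which is finite and independent of $i$. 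Standard Radon-measure compactness on the locally compact space $G_n(\text{cl}(\Omega'))$ then extracts a subsequence (not relabeled) converging in the weak* topology to a Radon measure $V_+$, which is by definition an $n$-varifold.

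To verify (\ref{5}), fix $f \in C^0_c(G_n(\text{cl}(\Omega')))$. Since $y$ is bounded below on the compact support of $f$, the function $f/y^n$ also lies in $C^0_c(G_n(\text{cl}(\Omega')))$, so the weak* convergence from the previous step gives $V_{i,+}[f/y^n] \to \int (f/y^n)\, dV_+$. At the same time, viewing $f$ as an even function of the normal direction on the sphere bundle places $f$ in $\mathfrak{Y}_0 \subset \mathfrak{Y}$, whence $E_{rel}[\partial^*U_i,\Gamma_-;f] \to V[f]$ by definition of weak* convergence in $\mathfrak{Y}^*$. The elementary identity
\[
V_{i,+}[f/y^n] = \int_{\partial^*U_i} f\, d\mathcal{H}^n_\mathbb{H} = E_{rel}[\partial^*U_i,\Gamma_-;f] + \int_{\Gamma_-} f(p,\overbar{\mathbf{n}}_{\Gamma_-})\, d\mathcal{H}^n_\mathbb{H}(p)
\]
then passes to the limit and rearranges to (\ref{5}). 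The one nontrivial ingredient is the uniform local mass bound, which leans on Proposition \ref{35}; the rest is a routine Banach--Alaoglu/weak*-convergence argument.
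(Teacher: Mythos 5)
Your proof is correct and follows essentially the same route the paper takes: the paper's own justification is the short paragraph immediately preceding the proposition, which extracts the same realizing sequence $\{U_i\}$, derives the same uniform local mass bound from (\ref{3}) and membership in $\mathcal{C}(\Gamma'_-,\Gamma'_+;\Lambda)$, and appeals to Radon-measure compactness to produce $V_+$. Your write-up simply makes explicit the remaining bookkeeping — the pointwise identity $d\mathcal{H}^n = y^n\,d\mathcal{H}^n_{\mathbb H}$, the observation that $f/y^n \in C^0_c(G_n(\mathrm{cl}(\Omega')))$ because compact subsets of $\mathrm{cl}(\Omega')$ stay away from $\{y=0\}$, and the limiting identity that yields (\ref{5}) — which the paper leaves implicit.
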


\section{Non-compactly supported variational vector fields}

In this section, we study the variational theory for the relative entropy introduced in Section 2. Let $\Phi$ be a $C^2$ diffeomorphism from $\text{cl}(\Omega')$ into itself. For a function $\psi$ over $G_n(\text{cl}(\Omega'))$, the pull-back of $\psi$ under $\Phi$ is defined as
\begin{equation}\label{37}
    \Phi^\# \psi(p,\textbf{v})=\psi(\Phi(p),\nabla_\textbf{v}\Phi(p)).
\end{equation}
If $\psi\in\mathfrak{Y}$ and $\Phi$ is the identity map near the ideal boundary, then $\Phi^\# \psi\in\mathfrak{Y}$. For any $V\in\mathfrak{Y}^*(\Lambda)$, the push-forward of $V$ under $\Phi$ acting on $\psi$ is given by
\begin{equation}\label{101}
    \Phi_\# V[\psi]:=V[\Phi^\#\psi\mathcal{J}^E\Phi],
\end{equation}
where $\mathcal{J}^E\Phi$ the Jacobian of $\Phi$ with respect to the metric of $\mathbb{H}^{n+1}$. That is, $\mathcal{J}^E\Phi=\mathcal{J}\Phi(p,\textbf{v})\frac{p_{n+1}^n}{\Phi(p)_{n+1}^n}$, where the subcript ``$n+1$" means the $(n+1)$-th coordinate and $\mathcal{J}\Phi(p,\textbf{v})$ is the Jacobian computed for the $n$-plane perpendicular to $\textbf{v}$ with respect to the Euclidean metric.

As one may observe from Proposition \ref{60}, it might happen that $V[\mathbf{1}]$ does not coincide with $E_{rel}[V]$. As a counter-example, one can think of a sequence of hypersurfaces $\{\Gamma_i\}_i$, where each $\Gamma_i$ agrees with $\Gamma_-$ in most part, and $\Gamma_i\backslash\Gamma_-$ is a bump over $\Gamma_-\backslash\Gamma_i$ that moves towards the ideal boundary. By properly choosing a subsequence, we can assume that $E_{rel}[\Gamma_i,\Gamma_-]$ converges to a non-zero value (as $i\to\infty$). However, the limit varifold $V$ vanishes when acting on functions in $C^0_c\big(G_n(\text{cl}(\Omega'))\big)$, and hence $E_{rel}[V]=0<V[\mathbf{1}]$. To rule out this phenomenon, we follow \cite{BWMountainPass} (see also \cite{KetoverZhou}) to consider the variations of appropriate non-compactly supported vector fields.

Let $\textbf{N}_\epsilon:=y(q)\overbar{\textbf{e}}_{n+1}^\top\circ\Pi_{\Gamma_-}(q)\chi_\epsilon$, where $\Pi_{\Gamma_-}$ is the normal projection to $\Gamma_-$ and $\chi_\epsilon$ is a cut-off function in $y=y(q)$ which is identity near $\{y=0\}$ and vanishes in $\{y\geq\epsilon\}$. We choose $\epsilon$ so small that $\textbf{N}_\epsilon$ is well defined in $\text{cl}(\Omega')$. We will show in the next proposition that by choosing $\Gamma'_-$, $\Gamma'_+$ and $\epsilon$ properly, $\textbf{N}_\epsilon$ points into $\text{cl}(\Omega')$. So, the $1$-parameter family of diffeomorphisms, $\{\Phi^{\textbf{N}_\epsilon}_t\}_t$, generated by $\textbf{N}_\epsilon$ maps $\text{cl}(\Omega')$ into itself. We also show that $\text{cl}(\Omega')$ is contained in a mean-convex foliation. This is crucial when applying the maximum principle \cite{SolomonWhite} in the proof of Proposition \ref{70}.

\begin{prop}\label{20}
Let $M$ be an $(n-1)$-dimensional $C^{4,\alpha}$ closed manifold in $\mathbb{R}^n_+\times\{0\}$ and $\Gamma_-$, $\Gamma_+$ be two strictly stable minimal hypersurfaces that are both $C^{4,\alpha}$-asymptotic to $M$. Then, there are hypersurfaces $\Gamma'_-$ and $\Gamma'_+$ which bound a thin neighborhood, $\Omega'$, of $\Gamma_-$, such that
\begin{enumerate}
    \item $\Gamma'_-\preceq\Gamma_-\preceq\Gamma_+\preceq\Gamma'_+$;
    \item There is a mean-convex foliation $\{\Gamma_-^s\}_{s\in[-1,1]}$ $($resp. $\{\Gamma_+^s\}_{s\in[-1,1]})$ of $\Gamma_-$ $($resp. $\Gamma_+)$ with $\Gamma_-^{-s}\preceq\Gamma_-\preceq\Gamma_-^s$ $($resp. $\Gamma_+^{-s}\preceq\Gamma_+\preceq\Gamma_+^s)$ for any $s\in[-1,1]$;
    \item $\Gamma'_\pm\cap\Gamma_\pm^s\subset\subset\{y>0\}$;
    \item\label{31} If we choose $\epsilon$ sufficiently small, then $\mathbf{N}_\epsilon\cdot\mathbf{n}_{\partial\Omega'}\leq0$, where $\mathbf{n}_{\partial\Omega'}$ points out of $\Omega'$.
\end{enumerate}
\end{prop}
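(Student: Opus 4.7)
The plan is to construct the mean-convex foliations $\{\Gamma_\pm^s\}$ via a first-eigenfunction normal perturbation of $\Gamma_\pm$, and then take $\Gamma'_\pm$ to be specific leaves of these foliations. Since $\Gamma_\pm$ are strictly stable and $C^{4,\alpha}$-asymptotic to $M$, I would first extend the Alexakis-Mazzeo Fredholm theory cited after Proposition \ref{34} to higher dimensions, so that the stability operators $L_{\Gamma_\pm}\colon y^\mu \Lambda^{k+2,\alpha}_0 \to y^\mu \Lambda^{k,\alpha}_0$ are Fredholm of index zero for $\mu \in (-1,n)$, with any kernel lying in $y^n \Lambda^{k,\alpha}_0$. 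Strict stability together with an integration-by-parts argument (justified by the $y^n$ decay) rules out nontrivial kernel elements, so $L_{\Gamma_\pm}$ is invertible. A Rayleigh quotient argument over a compact exhaustion, combined with a standard Harnack-type maximum principle, then produces a positive first eigenfunction $\phi_\pm \in y^n \Lambda^{k,\alpha}_0$ satisfying $L_{\Gamma_\pm}\phi_\pm = -\lambda_\pm \phi_\pm$ with $\lambda_\pm > 0$.

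With $\phi_\pm$ in hand, I would define $\Gamma_\pm^s$ for $s \in [-1,1]$ as the normal graph of $s\phi_\pm$ over $\Gamma_\pm$, after rescaling $\phi_\pm$ so that the graph is admissible on $[-1,1]$. The first variation of mean curvature along this flow yields $H^s_\pm = s\lambda_\pm \phi_\pm + O(s^2)$, so for $s \neq 0$ small enough each $\Gamma_\pm^s$ is strictly mean-convex with mean-curvature vector pointing toward $\Gamma_\pm$, establishing item (2). I would then take $\Gamma'_- = \Gamma_-^{-1}$ and $\Gamma'_+ = \Gamma_+^{+1}$; item (1) is immediate from $\phi_\pm > 0$ and the foliation structure. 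Because $\phi_\pm \in y^n \Lambda^{k,\alpha}_0$, converting hyperbolic normal displacement to Euclidean distance introduces one extra factor of $y$, giving $\Gamma'_\pm \subset \overbar{\mathcal{N}}_{Cr^{n+1}}(\Gamma_\pm)$; combined with the fact proved in \cite{YaoRel} that $\Gamma_+$ itself lies in a thin neighborhood of $\Gamma_-$, the region $\Omega'$ bounded by $\Gamma'_\pm$ is indeed a thin neighborhood of $\Gamma_-$. For item (3), near the ideal boundary $\Gamma'_\pm$ lies strictly on the $\pm 1$ side of every leaf $\Gamma_\pm^s$ with $|s|<1$ (by the strict sign of $\phi_\pm$ and a direct comparison of the graphs), so any intersection $\Gamma'_\pm \cap \Gamma_\pm^s$ must occur in a compact subset of $\{y > 0\}$.

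The main obstacle is item (4), which requires a careful asymptotic computation near the ideal boundary. Near infinity, $\Gamma_-$ is Euclidean-asymptotic to the product $M \times (0,\infty)$, so $\overbar{\mathbf{e}}_{n+1}^\top$ along $\Gamma_-$ agrees with $\partial_y$ up to corrections that vanish as $y \to 0$; hence $\mathbf{N}_\epsilon \approx y\chi_\epsilon \partial_y$ to leading order. The outward unit normal $\mathbf{n}_{\partial\Omega'}$ on $\Gamma'_\pm$ coincides with $\pm \mathbf{n}_{\Gamma_\pm}$ up to a correction determined by the tangential gradient of $\phi_\pm$, and because $\phi_\pm$ grows like $y^n$ away from the ideal boundary this correction contributes a component of definite sign in the $-\partial_y$ direction. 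The leading-order contribution to $\mathbf{N}_\epsilon \cdot \mathbf{n}_{\partial\Omega'}$ is then of the form $-Cy^{n+1}\chi_\epsilon \leq 0$, and by shrinking $\epsilon$ I can absorb all lower-order terms while retaining the non-positive sign. The delicate point is to make this expansion rigorous: extracting the correct sign of the first correction to $\mathbf{n}_{\partial\Omega'}$ from the precise $y^n \Lambda^{k,\alpha}_0$ containment of $\phi_\pm$ and from the $C^{4,\alpha}$ boundary regularity of $\Gamma_\pm$ is where the full strength of the hypotheses enters.
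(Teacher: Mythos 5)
Your proposal for items (2) and (1) hinges on producing a positive first eigenfunction $\phi_\pm$ of $L_{\Gamma_\pm}$ with $L_{\Gamma_\pm}\phi_\pm=-\lambda_\pm\phi_\pm$, $\lambda_\pm>0$, and $\phi_\pm\in y^n\Lambda_0^{k,\alpha}$. This is exactly the step the paper's remark following the statement flags as unavailable in the present setting: because $\Gamma_\pm$ is non-compact and strictly stable, the Rayleigh infimum $\lambda_1>0$ need not be attained, and even when $L_{\Gamma_\pm}+\lambda$ does have nontrivial kernel on a weighted H\"older space, the corresponding solution has no reason to be signed. The Harnack--maximum-principle argument you invoke (which is what Proposition~\ref{14} uses) delivers a \emph{positive} eigenfunction only when $\lambda_1\leq0$; for $\lambda_1>0$ the compact exhaustion limit can degenerate to zero, and there is no Perron--Frobenius mechanism on the non-compact $\Gamma_\pm$ to force positivity. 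So the foliation you propose does not exist in general, and the rest of the argument (items (3)--(4) and the thin-neighborhood claim) has nothing to stand on.

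There is also a secondary inconsistency in the stated decay rate. You write $\phi_\pm\in y^n\Lambda_0^{k,\alpha}$ by appealing to the $y^n$ decay of $\mathrm{Ker}(L_{\Gamma_\pm})$, but an eigenfunction solves $(L_{\Gamma_\pm}+\lambda_\pm)\phi_\pm=0$ with $\lambda_\pm>0$, and the indicial roots of this perturbed operator are $\mu_\pm=\tfrac{(n-1)\pm\sqrt{(n+1)^2-4\lambda_\pm}}{2}$. The relevant decay rate is $\mu_+<n$, not $n$. That slower rate would place $\Gamma'_\pm$ at Euclidean distance roughly $y^{\mu_++1}$ from $\Gamma_-$ with $\mu_++1<n+1$, which fails the definition of a thin neighborhood and undermines the sign computation in item (4). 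The paper avoids both problems by moving the ideal boundary: it perturbs $M$ to $M_\psi$, builds an approximate solution $\Gamma_{-,\psi}$, and uses the invertibility of $L_{\Gamma_-}$ (from strict stability) together with the implicit function theorem to produce exact minimal hypersurfaces $\Gamma_{\psi,\mathcal{G}(\psi,u)}$. Choosing $\hat{\psi}\equiv1$ and $\hat{u}=sy^{\mu_0}$ yields a family with strictly positive normal velocity — the maximum principle for $L_{\Gamma_-}$ on $y^{\mu_0}\Lambda_0^{2,\alpha}$, rather than a signed eigenfunction, is what makes the foliating quantity positive — and the boundary-defining surfaces $\Gamma'_\pm$ are then pinned to $\pm Cy^{n+1}\overbar{\mathbf{n}}_{\Gamma_-}$ near $\{y=0\}$ so that item (4) becomes an explicit $O(y^{n+1})$ computation.
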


\begin{rem}
Unlike in \cite{BWMountainPass}, in our setting, the first eigenvalue of the stability operator may not coincide with the infimum of the corresponding energy functional, so the first eigenfunction may not have a sign. So, we instead follow \cite{AlexakisMazzeo} to construct the mean-convex foliations by moving the ideal boundary slightly and solving the appropriate equations.
\end{rem}

\begin{proof}
Let $L_{\Gamma_-}=\Delta_{\Gamma_-}+|A_{\Gamma_-}|^2-n$ be the stability operator over $\Gamma_-$. From the knowledge of Section 2, $L_{\Gamma_-}$ is Fredholm of index $0$, and $\text{Ker(}L_{\Gamma_-}\text{)}$ is contained in $y^n\Lambda^{2,\alpha}_0$. Hence, for $u\in\text{Ker(}L_{\Gamma_-}\text{)}$,
\begin{align*}
    0&=\int_{\Gamma_-}u\cdot(-L_{\Gamma_-}u)d\mathcal{H}^n_{\mathbb{H}}=\int_{\Gamma_-}u\cdot(-\Delta_{\Gamma_-}u)+(n-|A_{\Gamma_-}|^2)u^2d\mathcal{H}^n_{\mathbb{H}}\\
    &=\int_{\Gamma_-}|\nabla_{\Gamma_-}u|^2+(n-|A_{\Gamma_-}|^2)u^2d\mathcal{H}^n_{\mathbb{H}}.
\end{align*}
The  (strict) stability assumption on $\Gamma_-$ implies $u\equiv0$. So, $L_{\Gamma_-}:\ y^\mu\Lambda^{2,\alpha}_0\rightarrow y^\mu\Lambda^{0,\alpha}_0$ is an isomorphism.

The first step is to construct ``approximate" minimal hypersurfaces. Let $M_\psi=\{\textbf{x}(p)+\psi(p)\textbf{n}_{M}(p):\ p\in M\}$ be a perturbation of $M$ corresponding to $\psi\in C^{2,\alpha}(M)$. We define an approximate minimal hypersurface, $\Gamma_{-,\psi}$, asymptotic to $M_\psi$ as follows: it is a graph of function $u_\psi$ over $\Gamma_-$ near the ideal boundary, and $u_\psi(p,0)$, $\partial_y u_\psi(p,0)$ and $\partial^2_{yy}u_\psi(p,0)$ are determined by the formal expansion of a minimal hypersurface asymptotic to $M_\psi$, which are determined by $M_\psi$ as proved in \cite[Lemma 3.3]{YaoRel}. Let $\mathcal{E}(\psi)=\frac{1}{y}\chi u_\psi$, where $\chi$ is a cut-off function that is equal to $1$ near $\{y=0\}$. Then, it can be checked that the mean curvature of $\Gamma_{-,\psi}:=\{\textbf{x}(p)+\mathcal{E}(\psi)\textbf{n}_{\Gamma_-}(p):\ p\in\Gamma_-\}$, $H_\psi$, is in $y^{\mu_0}\Lambda^{0,\alpha}_0$ for some $\mu_0>0$.

Then, We make a small perturbation of the approximate minimal hypersurface, so the implicit function theorem can apply in a proper sense. Let $\Gamma_{\psi,\phi}:=\{\textbf{x}(p)+(\mathcal{E}(\psi)+\phi)\textbf{n}_{\Gamma_-}(p):\ p\in\Gamma_-\}$ for some $\phi\in y^{\mu_0}\Lambda^{2,\alpha}_0$. The mean curvature of $\Gamma_{\psi,\phi}$, $H_{\psi,\phi}$, is in $y^{\mu_0}\Lambda^{0,\alpha}_0$. Hence, $H_{\cdot,\cdot}$ maps from a neighborhood of $(0,0)$ in $C^{2,\alpha}(M)\times y^{\mu_0}\Lambda^{2,\alpha}_0(\Gamma_-)$ to $y^{\mu_0}\Lambda_0^{0,\alpha}(\Gamma_-)$. Moreover,
\begin{equation*}
    DH|_{(0,0)}(\hat{\psi},\hat{\phi})=-L_{\Gamma_-}\circ D\mathcal{E}|_0(\hat{\psi})-L_{\Gamma_-}(\hat{\phi})
\end{equation*}
By the implicit function theorem, there exists a map $\mathcal{G}$ mapping from a neighborhood of $(0,0)$ in $C^{2,\alpha}(M)\times y^{\mu_0}\Lambda^{0,\alpha}_0$ to $y^{\mu_0}\Lambda^{2,\alpha}_0$, such that $H_{\psi,\mathcal{G}(\psi,u)}\equiv u$. Differentiating this equation at $(0,0)$, one has
\begin{equation*}
    D\mathcal{G}|_{(0,0)}(\hat{\psi},\hat{u})=-L^{-1}_{\Gamma_-}(\hat{u})-L^{-1}_{\Gamma_-}\circ(L_{\Gamma_-}\circ D\mathcal{E}|_0)(\hat{\psi}).
\end{equation*}

Next, we show that by appropriately choosing $(\hat{\psi},\hat{u})$, with $\hat{\psi},\hat{u}\geq0$, the map
\begin{equation*}
    (p,t)\mapsto\Gamma_{t\hat{\psi},\mathcal{G}(t\hat{\psi},t\hat{u})}\text{, with }(p,t)\in \Gamma_-\times(-\delta_1,\delta_1)\text{ for some }\delta_1>0
\end{equation*}
forms a mean convex foliation. This is equivalent to show
\begin{equation}
    D\mathcal{E}|_{0}(\hat{\psi})+D\mathcal{G}|_{(0,0)}(\hat{\psi},\hat{u})=D\mathcal{E}|_{0}(\hat{\psi})-L^{-1}_{\Gamma_-}\circ(L_{\Gamma_-}\circ D\mathcal{E}|_0)(\hat{\psi})-L^{-1}_{\Gamma_-}(\hat{u})\label{differential}
\end{equation}
is positive for every $p\in\overbar{\Gamma}_-$.

We claim that the operator $L_{\Gamma_-}:\ y^{\mu_0}\Lambda^{2,\alpha}_0\to y^{\mu_0}\Lambda^{0,\alpha}_0$ satisfies a maximum principle. Let $y_0>0$ be so small that $n-|A_{\Gamma_-}|^2(p)>0$ for all $p$ with $y(p)\leq y_0$. Suppose $L_{\Gamma_-}(u)=f\geq0$. Let $y_1<y_0$ and $u_{y_1}=u-\max_{p\in\{y=y_1\}}u^+(p)$. Then, $L_{\Gamma_-}(u_{y_1})=f+(n-|A_{\Gamma_-}|^2)\max_{p\in\{y=y_1\}}u^+(p)\geq0$. The stability condition on $\Gamma_-\cap\{y\geq y_1\}$ together with the maximum principle \cite[Theorem 2.11]{HanLinEllipticPDE} gives $u_{y_1}\leq0$. That is, $u(q)\leq \max_{p\in\{y=y_1\}}u^+(p)$ for all $q\in\{y\geq y_1\}$. Let $y_1\to0$ and notice that $u(p)\big(\in y^{\mu_0}\Lambda^{2,\alpha}_0\big)$ decays to $0$ as $y(p)\to0$. This gives $u(q)\leq0$ for all $q\in\Gamma_-$. One can then infer from the strong maximum principle that $u(p)$ is strictly negative.

By choosing $\hat{u}=sy^{\mu_0}$ and $\hat{\psi}\equiv1$, (\ref{differential}) equals
\begin{equation*}
    D\mathcal{E}|_{0}(\hat{\psi})-L^{-1}_{\Gamma_-}\circ(L_{\Gamma_-}\circ D\mathcal{E}|_0)(\hat{\psi})-sL^{-1}_{\Gamma_-}(y^{\mu_0}).
\end{equation*}
It is automatically positive near the ideal boundary, since $D\mathcal{E}(\hat{\psi})\sim\frac{1}{y}$ near $\{y=0\}$. A large $s$ will ensure it is also positive away from the ideal boundary. Hence, $\Gamma_-^t:=\Gamma_{\delta_1t\hat{\psi},\mathcal{G}(\delta_1t\hat{\psi},\delta_1t\hat{u})}$ ($-1\leq t\leq1$) gives a mean-convex foliation of $\Gamma_-$. Similarly, we can construct a mean-convex foliation $\{\Gamma_+^t\}_{t\in[-1,1]}$ of $\Gamma_+$.

Finally, we show that a choice of $\Gamma'_\pm$ meets all the requirements. Let $\Gamma'_+$ be a hypersurface with $\Gamma_+\preceq\Gamma'_+\preceq\Gamma_+^1$. Since $\Gamma_+$ is in a thin neighborhood of $\Gamma_-$, we can choose constants $C$ and $\epsilon$, so that $\Gamma'_+\cap\{y<\epsilon\}\subset\{\textbf{x}(p)+Cy^{n+1}\overbar{\textbf{n}}_{\Gamma_-}(p):\ p\in\Gamma_-\}$. By direct computation, $\overbar{\textbf{n}}\circ\textbf{h}-\overbar{\textbf{n}}_{\Gamma_-}=-\overbar{\nabla}_{\Gamma_-} f+\textbf{Q}(f,\overbar{\nabla}_{\Gamma_-} f)$, where $\textbf{h}=\text{id}+f\overbar{\textbf{n}}_{\Gamma_-}=\text{id}+Cy^{n+1}\overbar{\textbf{n}}_{\Gamma_-}$ pararmetrizes $\Gamma'_+$ near the ideal boundary and $|\textbf{Q}(f,\overbar{\nabla}_{\Gamma_-}f)|\leq C(|f|+|\overbar{\nabla}_{\Gamma_-}f|)^2$. Then,
\begin{align*}
    \overbar{\textbf{n}}\circ\textbf{h}\cdot\overbar{\textbf{e}}_{n+1}^\top=(\overbar{\textbf{n}}\circ\textbf{h}-\overbar{\textbf{n}}_{\Gamma_-})\cdot\overbar{\textbf{e}}_{n+1}^\top=-C(n+1)y^n|\overbar{\textbf{e}}_{n+1}^\top|^2+\textbf{Q}(f,\overbar{\nabla}_{\Gamma_-} f)\cdot\overbar{\textbf{e}}_{n+1}^\top
\end{align*}
with
\begin{align*}
    |\textbf{Q}(f,\overbar{\nabla}_{\Gamma_-} f)\cdot\overbar{\textbf{e}}_{n+1}^\top|\leq|\textbf{Q}(f,\overbar{\nabla}_{\Gamma_-} f)|\leq C(y^{n+1}+y^n)^2=\mathcal{O}(y^{n+1}).
\end{align*}
By shrinking $\epsilon$ if necessary, we have $\overbar{\textbf{e}}_{n+1}^\top\circ\Pi(q)\cdot\textbf{n}_{\Gamma'_+}(q)\leq0$, for any $q\in\Gamma'_+\cap\{y<\epsilon\}$. This implies $\textbf{N}_\epsilon$ points into $\Omega'$ on $\Gamma'_+$. Finally, let $\Gamma'_-$ with $\Gamma_-^{-1}\preceq\Gamma'_-\preceq\Gamma_-$ be a hyersurface which coincides with $\{\textbf{x}(p)-Cy^{n+1}\overbar{\textbf{n}}_{\Gamma_-}(p):\ p\in\Gamma_-\}$ in $\{y\leq\epsilon\}$. Then, it can be proved in a similar way that $\textbf{N}_\epsilon$ points into $\Omega'$ on $\Gamma'_-$.
\end{proof}

Let $\mathcal{Y}^-=\{\textbf{Y}=\alpha\textbf{N}_\epsilon+\textbf{V}:\ \textbf{V}\text{ is smooth and supported in }\{y>0\}\text{, and }\textbf{Y}\cdot\textbf{n}_{\partial\Omega'}\leq0\}$. Let $\{\Phi^{\textbf{Y}}_t\}_{t\geq0}$ be the 1-parameter family of diffeomorphisms generated by $\textbf{Y}\in\mathcal{Y}^-$. Define
\begin{equation*}
    \|\textbf{V}\|_{\mathcal{Y}_c}=\|\frac{1}{y^2}\textbf{V}\|_\infty+\|\frac{1}{y}\overbar{\nabla}\textbf{V}\|_\infty+\|\overbar{\nabla}^2\textbf{V}\|_\infty,
\end{equation*}
and let $\|\textbf{Y}\|_{\mathcal{Y}^-}=|\alpha|+\|\textbf{V}\|_{\mathcal{Y}_c}$.

\begin{lem}\label{44}
For any vector field $\mathbf{Y}=\alpha\mathbf{N}_\epsilon+\mathbf{V}\in\mathcal{Y}^-$ with $\|\mathbf{Y}\|_{\mathcal{Y}^-}\leq M_0$, let $\{\Phi_t(\text{p})\}$ be the 1-parameter family of diffeomorphisms generated by $\mathbf{Y}$. If $V\in\mathfrak{Y}^*(\mathrm{cl}(\Omega'))$, then $\Phi_{t\#}V\in\mathfrak{Y}$ and $\|\Phi_{t\#}V\|_{\mathfrak{Y}^*}\leq C\|V\|_{\mathfrak{Y}^*}$ for $0\leq t\leq T$ and $C=C(\Gamma_-,\Omega',M_0,T)$.
\end{lem}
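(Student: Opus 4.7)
The plan is to reduce the claim to the dual statement that the pull-back operator $T_t : \mathfrak{Y} \to \mathfrak{Y}$ defined by $T_t \psi := \Phi_t^\# \psi \cdot \mathcal{J}^E \Phi_t$ is bounded, with operator norm at most $C(\Gamma_-, \Omega', M_0, T)$. Once this is established, the definition \eqref{101} immediately gives $|\Phi_{t\#} V[\psi]| = |V[T_t \psi]| \leq \|V\|_{\mathfrak{Y}^*} \|T_t \psi\|_{\mathfrak{Y}} \leq C \|V\|_{\mathfrak{Y}^*} \|\psi\|_{\mathfrak{Y}}$, which is the desired estimate.

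The foundation is an ODE analysis of the flow near the ideal boundary. Since $\textbf{V}$ is supported in $\{y>0\}$, only $\textbf{N}_\epsilon$ contributes to $\textbf{Y}$ near $\{y=0\}$; because $\Gamma_-$ is asymptotic to $M\subset\{y=0\}$, the tangent plane to $\Gamma_-$ contains $\overbar{\textbf{e}}_{n+1}$ as $y\to 0$, so the $(n+1)$-st component of $\overbar{\textbf{e}}_{n+1}^\top\circ\Pi_{\Gamma_-}$ tends to $1$, and the $y$-coordinate along the flow satisfies $\dot y=\alpha y (1+o(1))$ near the boundary (and is uniformly Lipschitz away from it). Gronwall's inequality then yields two-sided comparisons
\begin{equation*}
    c(M_0,T)\cdot y(p)\leq y(\Phi_t(p))\leq C(M_0,T)\cdot y(p),\qquad 0\leq t\leq T.
\end{equation*}
The same analysis, combined with the definition of $\|\textbf{Y}\|_{\mathcal{Y}^-}$, produces uniform bounds on $|\overbar{\nabla}\Phi_t|$ and $y(p)|\overbar{\nabla}^2\Phi_t(p)|$, and hence on $\mathcal{J}^E\Phi_t=\mathcal{J}\Phi_t(p,\textbf{v})\cdot y(p)^n/y(\Phi_t(p))^n$.

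With these flow estimates in hand, I exploit the infimum definition of the $\mathfrak{Y}'$-norm: for any decomposition $\psi=\psi_1+\psi_2$ with $\psi_1\in\mathfrak{X}$ and $\|y^{-n}\psi_2\|_\infty<\infty$, I estimate $T_t\psi_1$ and $T_t\psi_2$ separately. The $\psi_2$-piece is handled pointwise: the bound $|\psi_2(q,\cdot)|\leq y^n(q)\|y^{-n}\psi_2\|_\infty$ together with the $y$-comparison gives $\|y^{-n}\,T_t\psi_2\|_\infty\leq C\|y^{-n}\psi_2\|_\infty$. For $T_t\psi_1$, I verify the five seminorms in \eqref{weighted norm}; the key identity is $y(p)\overbar{\nabla}_{\mathbb{R}^{n+1}_+}(\psi_1\circ\Phi_t)(p)=\overbar{\nabla}\Phi_t(p)\cdot y(p)(\overbar{\nabla}_{\mathbb{R}^{n+1}_+}\psi_1)(\Phi_t(p))$, where by the $y$-comparison the factor $y(p)$ may be interchanged with $y(\Phi_t(p))$, and the higher-order terms introduce only the controlled quantities $y(p)\overbar{\nabla}^2\Phi_t$ and $\overbar{\nabla}\Phi_t$; the $\mathbb{S}^n$-derivatives transform linearly under $\overbar{\nabla}\Phi_t$. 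Taking the infimum over such decompositions yields $\|T_t\psi\|_{\mathfrak{Y}'}\leq C\|\psi\|_{\mathfrak{Y}'}$.

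Finally, I check that $T_t\psi$ actually lies in $\mathfrak{Y}$, not merely $\mathfrak{Y}'$. For $\psi=f+\pi^*g\in\mathfrak{Y}_0$, the image $T_t f$ stays in $C^0_c(\text{cl}(\Omega')\times\mathbb{S}^n)$ because $\Phi_t$ maps $\{y>0\}$ into itself (again by the $y$-comparison), while $T_t(\pi^*g)(p,\textbf{v})$ becomes a smooth function of $(p,\textbf{v})$ whose $\mathfrak{X}$-norm is controlled by $\|g\|_{C^2(\mathbb{S}^n)}$ via the step-two estimates. Density of $\mathfrak{Y}_0$ in $\mathfrak{Y}$ together with the norm bound completes the argument. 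The main technical obstacle is the verification of the second-derivative seminorms in step three; this reduces to carefully tracking how the factor $y(p)$ redistributes between base-point and fiber derivatives when composed with $\Phi_t$, all of which is controlled by the flow estimates of step two.
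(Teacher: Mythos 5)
Your overall strategy matches the paper's: reduce the dual estimate to boundedness of the pull-back operator $T_t\psi = \Phi_t^\#\psi\cdot\mathcal{J}^E\Phi_t$ on $\mathfrak{Y}$, establish flow estimates (your $y$-comparison and derivative bounds correspond to the paper's Lemmas \ref{a1}--\ref{a2}), and estimate the two pieces of the infimum decomposition separately (the paper packages this as Lemmas \ref{a3} and \ref{a5} plus the algebra property of $\mathfrak{Y}$). Steps one through three of your argument are sound.

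However, there is a genuine gap in your final step, the verification that $T_t\psi$ lies in $\mathfrak{Y}$ and not merely $\mathfrak{Y}'$. You handle $\psi = f + \pi^*g\in\mathfrak{Y}_0$ and assert that $T_t(\pi^*g)$ "becomes a smooth function of $(p,\mathbf{v})$ whose $\mathfrak{X}$-norm is controlled by $\|g\|_{C^2(\mathbb{S}^n)}$", intending that the density of $\mathfrak{Y}_0$ then finishes things. But a controlled $\mathfrak{X}$-norm does not place a function in $\mathfrak{Y}$: the inclusion $\mathfrak{X}\subset\mathfrak{Y}$ fails (a bounded locally Lipschitz function such as $\sin(\log y)$ has finite $\mathfrak{X}$-norm yet cannot be approximated in $\mathfrak{Y}'$-norm by $C^0_c + \pi^*C^2(\mathbb{S}^n)$, since near $\{y=0\}$ the compactly supported part vanishes and $\pi^*g$ carries no $y$-dependence). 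Hence the density argument cannot start, because you have not shown $T_t(\mathfrak{Y}_0)\subset\mathfrak{Y}$. What is actually needed — and what the paper supplies through equations \eqref{46}, \eqref{103}, and \eqref{104} — is that $\overbar{\nabla}_{\mathbf{v}}\Phi_t(p)$ and $\mathcal{J}^E\Phi_t(p,\mathbf{v})$ converge, in $\mathfrak{X}$-norm and uniformly as the cutoff scale tends to zero, to explicit functions of $\mathbf{v}$ alone near the ideal boundary. That shows $T_t(\pi^*g)$ differs from a $\pi^*(\cdot)$-type element of $\mathfrak{Y}_0$ by something approximable by compactly supported functions, which is precisely the membership criterion for $\mathfrak{Y}$. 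Your flow estimates do contain the ingredients for this refinement, but as written the proof substitutes the wrong condition and the conclusion does not follow.
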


\begin{proof}
By definition, for any $\psi\in\mathfrak{Y}$, $\Phi_{t\#} V[\psi]:=V[\Phi_t^\#\psi\mathcal{J}^E\Phi_t]$. By Lemma \ref{a3} and Lemma \ref{a5}, we know that $\Phi_t^\#\psi\mathcal{J}^E\Phi_t\in\mathfrak{Y}$ and $\|\Phi_t^\#\psi\mathcal{J}^E\Phi_t\|_\mathfrak{Y}\leq C\|\psi\|_\mathfrak{Y}$ for $C=C(\Gamma_-,\Omega',M_0,T)$ and $0\leq t\leq T$. This implies $\Phi_{t\#} V\in\mathfrak{Y}^*$ and $\|\Phi_{t\#}V\|_{\mathfrak{Y}^*}\leq C\|V\|_{\mathfrak{Y}^*}$.
\end{proof}

\begin{prop}\label{52}
For any vector field $\mathbf{Y}=\alpha\mathbf{N}_\epsilon+\mathbf{V}\in\mathcal{Y}^-$, let $\{\Phi^\mathbf{Y}_t(\text{p})\}$ be the 1-parameter family of diffeomorphisms generated by $\mathbf{Y}$. Then,
\begin{equation*}
    [0,\infty)\times\mathcal{Y}^-\times B^{\mathfrak{Y}^*}_{R_0}\ni(t,\mathbf{Y},V)\mapsto\Phi^{\mathbf{Y}}_{t\#}V
\end{equation*}
is continuous, where $B^{\mathfrak{Y}^*}_{R_0}$ is the closed ball of radius $R_0$ centered at the origin in $\mathfrak{Y}^*$ and endowed with the weak$^*$-topology. Furthermore, the map $t\mapsto\Phi^{\mathbf{Y}}_{t\#}V[\mathbf{1}]$ is differentiable with
\begin{equation}\label{48}
    \frac{d}{dt}\Big|_{t=0}\Phi^{\mathbf{Y}}_{t\#}V[\mathbf{1}]=\delta V[\mathbf{Y}]=V[\overbar{\mathrm{div}}\mathbf{Y}-Q_\mathbf{Y}-\frac{n}{y}\mathbf{Y}\cdot\overbar{\mathbf{e}}_{n+1}],
\end{equation}
where $Q_\mathbf{Y}(p,\mathbf{v})=\overbar{\nabla}_\mathbf{v}\mathbf{Y}(p)\cdot\mathbf{v}$.
\end{prop}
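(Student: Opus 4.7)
\smallskip

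\noindent\textbf{Proof proposal.} The proof naturally splits into the continuity statement and the first variation formula; the latter is a direct computation modulo a uniform convergence estimate, while the former requires continuous dependence of the flow on its generator and initial data in the norms at hand.

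For the formula \eqref{48}, note that by definition of $\Phi^\#$ we have $(\Phi^{\mathbf{Y}}_t)^\# \mathbf{1}\equiv\mathbf{1}$, so $\Phi^{\mathbf{Y}}_{t\#}V[\mathbf{1}]=V[\mathcal{J}^E\Phi^{\mathbf{Y}}_t]$. The plan is to write $\mathcal{J}^E\Phi^{\mathbf{Y}}_t(p,\mathbf{v})=\mathcal{J}\Phi^{\mathbf{Y}}_t(p,\mathbf{v})\cdot y^n/(\Phi^{\mathbf{Y}}_t(p))_{n+1}^n$ and Taylor-expand each factor around $t=0$. Differentiating the first factor at $t=0$ gives the Euclidean tangential divergence of $\mathbf{Y}$ on the $n$-plane orthogonal to $\mathbf{v}$, which rearranges to $\overbar{\mathrm{div}}\mathbf{Y}-Q_{\mathbf{Y}}(p,\mathbf{v})$; differentiating the second factor at $t=0$ gives $-\frac{n}{y}\mathbf{Y}(p)\cdot\overbar{\mathbf{e}}_{n+1}$. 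Adding these yields the integrand of the claimed formula. The nontrivial point is to justify commuting $\frac{d}{dt}|_{t=0}$ with the linear functional $V[\cdot]$; this reduces to showing that the difference quotient $t^{-1}\!\bigl(\mathcal{J}^E\Phi^{\mathbf{Y}}_t-\mathbf{1}\bigr)$ converges in $\mathfrak{Y}$-norm to its pointwise limit, after which Proposition~\ref{60} gives continuity of $V$ along the convergence. Because $\mathcal{J}^E\Phi^{\mathbf{Y}}_t-\mathbf{1}$ is $C^0$-bounded and supported in $\mathrm{cl}(\Omega')$ away from the ideal boundary outside the contribution of $\alpha\mathbf{N}_\epsilon$, this uniform convergence is contained in the same weighted estimates that drive Lemma~\ref{44}; the $\tfrac{1}{y^2}\mathbf{V}$ weighting in $\|\cdot\|_{\mathcal{Y}_c}$ and the explicit structure of $\mathbf{N}_\epsilon=y\,\overbar{\mathbf{e}}_{n+1}^\top\!\circ\Pi_{\Gamma_-}\chi_\epsilon$ are designed to keep $\frac{n}{y}\mathbf{Y}\cdot\overbar{\mathbf{e}}_{n+1}$ inside $\mathfrak{Y}$.

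For the continuity assertion, fix $\psi\in\mathfrak{Y}$ and $(t_k,\mathbf{Y}_k,V_k)\to (t,\mathbf{Y},V)$ in $[0,\infty)\times\mathcal{Y}^-\times B^{\mathfrak{Y}^*}_{R_0}$. Setting $\xi_k:=(\Phi^{\mathbf{Y}_k}_{t_k})^\#\psi\cdot\mathcal{J}^E\Phi^{\mathbf{Y}_k}_{t_k}$ and $\xi$ its limit analog, one decomposes
\begin{equation*}
\bigl|\Phi^{\mathbf{Y}_k}_{t_k\#}V_k[\psi]-\Phi^{\mathbf{Y}}_{t\#}V[\psi]\bigr|\;\le\;\|V_k\|_{\mathfrak{Y}^*}\,\|\xi_k-\xi\|_{\mathfrak{Y}}+\bigl|V_k[\xi]-V[\xi]\bigr|.
\end{equation*}
The second summand tends to zero by weak*-convergence (using separability of $\mathfrak{Y}$ to metrize $B^{\mathfrak{Y}^*}_{R_0}$), while the first is controlled by $\|\xi_k-\xi\|_{\mathfrak{Y}}\to 0$ combined with the uniform bound from Lemma~\ref{44}. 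The $\mathfrak{Y}$-norm convergence of $\xi_k\to\xi$ in turn follows by combining (a) continuous dependence of the flow $\Phi^{\mathbf{Y}}_t$ on $(t,\mathbf{Y})$, derived from standard ODE arguments applied to vector fields with the boundary-adapted regularity encoded in $\|\cdot\|_{\mathcal{Y}^-}$, and (b) the appendix lemmas (Lemmas~a3 and~a5) which provide the Lipschitz dependence of the pull-back $\psi\mapsto\Phi^\#\psi\cdot\mathcal{J}^E\Phi$ on the diffeomorphism $\Phi$ in $\mathfrak{Y}$-norm.

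The main obstacle is the non-compact support of the component $\alpha\mathbf{N}_\epsilon$ near the ideal boundary: away from $\{y>0\}$ standard diffeomorphism pull-back estimates are unavailable, and one must verify that the weights defining $\|\cdot\|_{\mathcal{Y}^-}$ and $\|\cdot\|_{\mathfrak{Y}}$ are matched so that the $y^{-n}$-weighted piece of $\psi$ is transported to another $y^{-n}$-weighted function, with norm bound depending only on $M_0$. Granted these bookkeeping estimates (which are the content of the preceding lemmas), both the continuity and the differentiability reduce to the standard smooth story on the compactly supported component $\mathbf{V}$.
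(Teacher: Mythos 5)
Your overall route matches the paper's: reduce to the dense subspace $\{f+\pi^*g\}$, exploit separability of $\mathfrak{Y}$ and uniform norm bounds to push the estimate to general $\psi$, and compute the first variation by differentiating $\mathcal{J}^E\Phi^{\mathbf{Y}}_t$ inside the pairing. The decomposition you use for continuity, $|\Phi^{\mathbf{Y}_k}_{t_k\#}V_k[\psi]-\Phi^{\mathbf{Y}}_{t\#}V[\psi]|\le\|V_k\|_{\mathfrak{Y}^*}\|\xi_k-\xi\|_{\mathfrak{Y}}+|V_k[\xi]-V[\xi]|$, is a perfectly legitimate alternative to the paper's three-term split; combined with a diagonal density argument it recovers the same conclusion.

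However, you misattribute the hardest estimate. You claim the $\mathfrak{Y}$-norm convergence $\xi_k\to\xi$ is ``provided'' by Lemmas~\ref{a3} and~\ref{a5}, described as Lipschitz dependence of the pull-back on the diffeomorphism, plus ``standard ODE continuous dependence.'' Lemmas~\ref{a3} and~\ref{a5} give only \emph{uniform bounds} ($\|\mathcal{J}^E\Phi_t\|_\mathfrak{Y}\le C$, $\|\Phi_t^\#\psi\|_\mathfrak{Y}\le C\|\psi\|_\mathfrak{Y}$); they say nothing about how these quantities vary as $(t,\mathbf{Y})$ varies. The actual proof of continuity cannot be reduced to smooth ODE theory because near $\{y=0\}$ the pullback $\Phi^{\mathbf{Y}\#}_t g(\mathbf{v})=g(\overbar{\nabla}_\mathbf{v}\Phi^{\mathbf{Y}}_t(p))$ does \emph{not} converge locally uniformly to anything under $(t,\mathbf{Y})\to(t_0,\mathbf{Y}_0)$; it only converges to the limiting dilation $g_1(\mathbf{v})=g(\mathbf{v}+(e^{\alpha_0t}-1)(\mathbf{v}\cdot\overbar{\mathbf{e}}_{n+1})\overbar{\mathbf{e}}_{n+1})$ modulo an error of size $\mathcal{O}(y)$. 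The paper therefore has to split by cut-off functions $\phi_{y_1,\delta}$, use the near-boundary expansion $(\ref{41})$ from Lemma~\ref{a2} to control the tail, and treat the compactly supported part separately; the same device is needed again for $\mathcal{J}^E\Phi^{\mathbf{Y}}_t$. Your proposal waves at these points but does not supply the argument that actually closes them. Similarly, for \eqref{48} the claim that ``the difference quotient converges in $\mathfrak{Y}$-norm'' is precisely Lemma~\ref{a4}, not a consequence of ``the weighted estimates that drive Lemma~\ref{44}''; Lemma~\ref{44} controls only the size of $\Phi_{t\#}V$, not the $t$-derivative of $\mathcal{J}^E\Phi_t$. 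So the plan is right, but the appendix lemmas you lean on do not give what you attribute to them — the cut-off/boundary-expansion machinery is the missing step, and it is the crux of the proposition.
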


\begin{proof}
Fix $\textbf{Y}_0=\alpha_0\textbf{N}_\epsilon+\textbf{V}_0\in\mathcal{Y}^-$, $t_0\geq0$ and $V_0\in B^{\mathfrak{Y}^*}_{R_0}$. For any $f\in C_c^\infty(\text{cl}(\Omega')\times\mathbb{S}^n)$ and ($t,\textbf{Y}$) in a neighborhood of ($t_0,\textbf{Y}_0$), since Spt($\Phi_t^{\textbf{Y}\#}f$) is contained in a compact subset of $\{y>0\}$, we have
\begin{equation}\label{43}
    \lim_{(t,\textbf{Y},V)\to(t_0,\textbf{Y}_0,V_0)}\Phi^{\textbf{Y}}_{t\#}V[f]=\Phi^{\textbf{Y}_0}_{t_0\#}V_0[f].
\end{equation}
On the other hand, for $g\in C^\infty(\mathbb{S}^n)$ with $g(\textbf{v})=g(-\textbf{v})$, since $\overbar{\nabla}\Phi^\textbf{Y}_t$ and $\overbar{\nabla}^2\Phi^\textbf{Y}_t$ vary in a continuous way in $(t,\textbf{Y},p)$, one has
\begin{equation}\label{42}
    \phi_{y_1,\delta}\Phi^{\textbf{Y}\#}_tg(\textbf{v})=\phi_{y_1,\delta}g(\overbar{\nabla}_\textbf{v}\Phi^\textbf{Y}_t(p))\text{ converges to }\phi_{y_1,\delta}\Phi^{\textbf{Y}_0\#}_{t_0}g(\textbf{v})
\end{equation}
in $\mathfrak{X}$-norm for any $0<\delta<y_1$. When $(t,\textbf{Y})$ is near $(t_0,\textbf{Y}_0)$, there are constants $T$ and $M_0$, so that $t\leq T$ and $\|\textbf{Y}\|_{\mathcal{Y}^-}\leq M_0$. So, it follows from (\ref{41}) and the uniform continuity of $g$ that as $y_1\to0$,
\begin{equation}\label{46}
    \|(1-\phi_{y_1,\delta})\big(g(\overbar{\nabla}_{\textbf{v}}\Phi^\textbf{Y}_t(p))-g_1(\textbf{v})\big)\|_\mathfrak{X}\to0,
\end{equation}
where $g_1(\textbf{v})=g(\textbf{v}+(e^{\alpha_0 t}-1)(\textbf{v}\cdot\overbar{\textbf{e}}_{n+1})\overbar{\textbf{e}}_{n+1})$. Combining (\ref{42}) and (\ref{46}), we see that $\Phi^{\textbf{Y}\#}_tg$ converges to $\Phi^{\textbf{Y}_0\#}_{t_0}g$ in $\mathfrak{X}$-norm.

We use cut-off functions in a similar way as in (\ref{42}) and (\ref{46}) to show that $\|\mathcal{J}^E\Phi^\textbf{Y}_t-\mathcal{J}^E\Phi^{\textbf{Y}_0}_{t_0}\|_\mathfrak{X}\to0$. Here, instead of using the function $g_1(\textbf{v})$, we use the corresponding functions that appear in (\ref{103}) and (\ref{104}). Hence,
\begin{equation}\label{45}
    \lim_{(t,\textbf{Y},V)\to(t_0,\textbf{Y}_0,V_0)}\Phi^{\textbf{Y}}_{t\#}V[g]=\Phi^{\textbf{Y}_0}_{t_0\#}V_0[g].
\end{equation}
From (\ref{43}) and (\ref{45}), we have $\Phi^{\textbf{Y}}_{t\#}V[\psi]\to\Phi^{\textbf{Y}_0}_{t_0\#}V_0[\psi]$ for $\psi=f+g$, where $f\in C^\infty_c(\text{cl}(\Omega')\times\mathbb{S}^n)$ and $g\in C^\infty(\mathbb{S}^n)$ with $g(\textbf{v})=g(-\textbf{v})$.

For a general $\psi\in\mathfrak{Y}$, we choose $\{\psi_n\}_{n=1}^\infty\subset\{f+g:f\in C_c^\infty(\text{cl}(\Omega')\times\mathbb{S}^n),g\in C^\infty(\mathbb{S}^n)\}$ with $\|\psi_n-\psi\|_\mathfrak{Y}\to0$. Then,
\begin{equation*}
    |\Phi^\textbf{Y}_{t\#}V[\psi]-\Phi^{\textbf{Y}_0}_{t_0\#}V_0[\psi]|\leq |\Phi^\textbf{Y}_{t\#}V[\psi_n]-\Phi^{\textbf{Y}_0}_{t_0\#}V_0[\psi_n]|+|\Phi^\textbf{Y}_{t\#}V[\psi_n-\psi]|+|\Phi^{\textbf{Y}_0}_{t_0\#}V_0[\psi_n-\psi]|
\end{equation*}
The second and the third terms are bounded by $C\|\psi_n-\psi\|_\mathfrak{Y}$ by Lemma \ref{44}. So, we have proved the continuous dependence for a general $\psi\in\mathfrak{Y}$.

Now, we prove that $t\mapsto\Phi^\textbf{Y}_{t\#}V[\mathbf{1}]$ is differentiable. Since $\Phi^\textbf{Y}_{t\#}V[\mathbf{1}]=V[\mathcal{J}^E\Phi^\textbf{Y}_{t\#}]$, the differentiability follows from Lemma \ref{a4}
\end{proof}

\section{Stationarity of relative entropy and its properties}
In this section, we introduce an appropriate notion of stationarity with the presence of obstacles for the relative entropy and discuss its properties. Let $\textbf{Y}\in\mathcal{Y}^-$ and $\{\Phi_t\}_{t\geq0}$ be the 1-parameter family of diffeomorphisms generated by $\textbf{Y}$. For any $V\in\mathfrak{Y}^*$, We set
\begin{equation*}
    \Phi_{t\#}^+V=\Phi_{t\#}V+V_{\Phi_{t}(\Gamma_-)},
\end{equation*}
where $V_{\Phi_t(\Gamma_-)}$ is the functional given by $E_{rel}[\Phi_t(\Gamma_-),\Gamma_-;\cdot]$. Since $\textbf{Y}$ fix $\Gamma_-$ near $\{y=0\}$, we see that $\Phi_t(\Gamma_-)$ coincides with $\Gamma_-$ outside a compact subset of $\{y>0\}$. Thus, $V_{\Phi_t(\Gamma_-)}$ is well defined. The advantage of this notion is that $\Phi^+_{t\#}$ maps $\mathfrak{Y}^*_\mathcal{C}(\Lambda)$ to $\mathfrak{Y}^*_\mathcal{C}(\Lambda')$ for some $\Lambda'$.

\begin{prop}
Let $U\in\mathcal{C}(\Gamma'_-,\Gamma'_+)$ with $\Gamma=\partial^*U$ and $V_{\Gamma}[\cdot]=E_{rel}[\Gamma,\Gamma_-;\cdot]$. If $E_{rel}[\Gamma,\Gamma_-]<\infty$, then we have
\begin{equation*}
    \Phi^+_{t\#}V_{\Gamma}=\Phi_{t\#}V_{\Gamma}+V_{\Phi_t(\Gamma_-)}=V_{\Phi_t(\Gamma)},
\end{equation*}
where $V_{\Phi_t(\Gamma)}[\cdot]=E_{rel}[\Phi_t(\Gamma),\Gamma_-;\cdot]$.
\end{prop}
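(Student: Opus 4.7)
My plan is to verify the identity by testing against an arbitrary $\psi\in\mathfrak{Y}$, using the smooth cutoffs $\phi_{y_1,\delta}$ from Section 2 to reduce everything to integrals supported in $\{y>0\}$, and then passing to the limit. Since $E_{rel}[\Gamma,\Gamma_-]<\infty$, Proposition \ref{36} ensures that all three functionals involved are bounded linear maps on $\mathfrak{Y}$, so the equality need only be verified on a dense subclass of test functions.

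The first step is to unfold the definitions. Writing
\begin{equation*}
    \Phi_{t\#}V_\Gamma[\psi]=V_\Gamma[\Phi_t^\#\psi\,\mathcal{J}^E\Phi_t]=\lim_{\epsilon\to0}\left(\int_{\Gamma\cap\{y\geq\epsilon\}}\Phi_t^\#\psi\,\mathcal{J}^E\Phi_t\,d\mathcal{H}^n_{\mathbb{H}}-\int_{\Gamma_-\cap\{y\geq\epsilon\}}\Phi_t^\#\psi\,\mathcal{J}^E\Phi_t\,d\mathcal{H}^n_{\mathbb{H}}\right),
\end{equation*}
I apply the area formula for $\Phi_t$ in the hyperbolic metric on each truncated piece; by the definition \eqref{101} of $\mathcal{J}^E\Phi_t$ this rewrites the integrals as $\int_{\Phi_t(\Gamma\cap\{y\geq\epsilon\})}\psi\,d\mathcal{H}^n_{\mathbb{H}}$ and similarly for $\Gamma_-$. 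Since $\textbf{Y}=\alpha\textbf{N}_\epsilon+\textbf{V}$ with $\textbf{N}_\epsilon\sim y\,\overbar{\textbf{e}}_{n+1}^\top$ near $\{y=0\}$ and $\textbf{V}$ compactly supported in $\{y>0\}$, the flow $\Phi_t$ sends $\{y\geq\epsilon\}$ to a set that differs from $\{y\geq\epsilon'(t,\epsilon)\}$, for some $\epsilon'(t,\epsilon)\to0$ as $\epsilon\to0$, only on a compact subset of $\{y>0\}$. Using the extension of \eqref{quant estimate} to $\mathfrak{Y}$ obtained in Proposition \ref{36}, the contribution of this thin shell is of order $O(\epsilon)$ and vanishes in the limit. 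The outcome is
\begin{equation*}
    \Phi_{t\#}V_\Gamma[\psi]=E_{rel}[\Phi_t(\Gamma),\Phi_t(\Gamma_-);\psi],
\end{equation*}
which is well defined because $\Phi_t(\Gamma_-)$ and $\Gamma_-$ coincide outside a compact subset of $\{y>0\}$, a consequence of the tangency of $\textbf{Y}$ to $\Gamma_-$ near $\{y=0\}$.

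The second step is then a telescoping identity. Since $V_{\Phi_t(\Gamma_-)}[\psi]=E_{rel}[\Phi_t(\Gamma_-),\Gamma_-;\psi]$ reduces to a compactly supported integral in $\{y>0\}$ by the same tangency argument, adding it to the previous display and cancelling the common $\Phi_t(\Gamma_-)$-contributions at the level of the cutoffs produces
\begin{equation*}
    \Phi_{t\#}V_\Gamma[\psi]+V_{\Phi_t(\Gamma_-)}[\psi]=E_{rel}[\Phi_t(\Gamma),\Gamma_-;\psi]=V_{\Phi_t(\Gamma)}[\psi].
\end{equation*}
The main obstacle I anticipate lies in the first step: rigorously matching the pulled-back truncation $\Phi_t(\{y\geq\epsilon\})$ with a natural truncation $\{y\geq\epsilon'(t,\epsilon)\}$ and showing that the near-boundary $y$-scaling behaviour of the flow of $\textbf{N}_\epsilon$ contributes no hidden mass to the relative entropy in the limit. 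This is precisely where the quantitative estimate \eqref{quant estimate} of Proposition \ref{35}, promoted to $\mathfrak{Y}$-valued weights via Proposition \ref{36}, is essential.
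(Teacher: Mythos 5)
Your telescoping decomposition is correct and matches the paper's overall strategy, but the justification for discarding the shell contribution has a small defect. The symmetric difference $\Phi_t(\{y\geq\epsilon\})\,\Delta\,\{y\geq\epsilon'(t,\epsilon)\}$ concentrates near $y\sim\epsilon$ and is \emph{not} contained in any fixed compact subset of $\{y>0\}$ as $\epsilon\to0$; moreover the relative-entropy contribution of that shell is not in general $O(\epsilon)$. What actually closes the gap is the tool you already cite: by the structure of $\textbf{N}_\epsilon$ near $\{y=0\}$ one has a sandwich $\{y\geq c_2\epsilon\}\subset\Phi_t(\{y\geq\epsilon\})\subset\{y\geq c_1\epsilon\}$ for constants $c_1\leq c_2$ depending only on $(\Gamma_-,\Omega',\textbf{Y},t)$, so the intermediate shell lies in an annulus $\{c_1\epsilon\leq y\leq c_2\epsilon\}$ to which the annular estimate $(\ref{quant estimate})$ of Propositions~\ref{35}--\ref{36} applies, and the finiteness $E_{rel}[\Gamma,\Gamma_-]<\infty$ then makes the shell contribution vanish by the Cauchy criterion, without a quantitative rate. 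With this repair your argument is sound and is essentially the same telescope the paper carries out with the pulled-back weight $f=\Phi_t^\#\psi\,\mathcal{J}^E\Phi_t$.
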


\begin{proof}
Fix any $\psi\in\mathfrak{Y}$. By definition, we have
\begin{equation}\label{49}
    \Phi_{t\#}V_\Gamma[\psi]=V_{\Gamma}[\mathcal{J}^E\Phi_t\Phi^\#_t\psi]=\lim_{\epsilon\to0}E_{rel}[\Gamma,\Gamma_-;\Phi^\#_t\psi\mathcal{J}^E\Phi_t\chi_{\{y\geq\epsilon\}}].
\end{equation}
Since $\Phi_t(\Gamma_-)$ coincides with $\Gamma_-$ near $\{y=0\}$, we know that $E_{rel}[\Phi_t(\Gamma_-),\Gamma_-]<\infty$. So,
\begin{align}
    V_{\Phi_t(\Gamma_-)}[\psi]&=\lim_{\epsilon\to0}E_{rel}[\Phi_t(\Gamma_-),\Gamma_-;\Phi^\#_t\psi\mathcal{J}^E\Phi_t\chi_{\{y\geq\epsilon\}}]\label{50}
\end{align}
On the other hand, by letting $f=\Phi^\#_t\psi\mathcal{J}^E\Phi_t$,
\begin{equation}\label{51}
    E_{rel}[\Gamma,\Gamma_-;f\chi_{\{y\geq\epsilon\}}]+E_{rel}[\Phi_t(\Gamma_-),\Gamma_-;f\chi_{\{y\geq\epsilon\}}]=E_{rel}[\Phi_t(\Gamma),\Gamma_-;f\chi_{\{y\geq\epsilon\}}].
\end{equation}
Combining (\ref{49}), (\ref{50}), and (\ref{51}), we get $V_{\Phi_t(\Gamma)}[\psi]=\Phi_{t\#}V_\Gamma[\psi]+V_{\Phi_t(\Gamma_-)}[\psi]$ is well-defined and finite valued for any $\psi\in\mathfrak{Y}$. In particular, $V_{\Phi_t(\Gamma)}[\mathbf{1}]=\Phi_{t\#}V_\Gamma[\mathbf{1}]+V_{\Phi_t(\Gamma_-)}[\mathbf{1}]<\infty$.
\end{proof}

Now, we introduce the notion of $E_{rel}$-minimizing.

\begin{defn}
An element $V\in\mathfrak{Y}^*$ is called $E_{rel}$-minimizing to the first order in $\text{cl}(\Omega')$, if for any $\textbf{Y}\in\mathcal{Y}^-$,
\begin{equation*}
    \delta^+ V[\textbf{Y}]=\frac{d}{dt}\Big|_{t=0}\Phi^+_{t\#}V[\mathbf{1}]=\frac{d}{dt}\Big|_{t=0}\left(\Phi_{t\#}V+V_{\Phi_{t}(\Gamma_-)}\right)[\mathbf{1}]\geq0.
\end{equation*}
\end{defn}

Notice that for any $\textbf{Y}\in\mathcal{Y}^-$, $V_{\Phi_t(\Gamma_-)}$ can be viewed as a compactly supported variation of $\Gamma_-$. Since $\Gamma_-$ is minimal, we have $\frac{d}{dt}\big|_{t=0}V_{\Phi_t(\Gamma_-)}[\mathbf{1}]=0$. So, $\delta^+V[\textbf{Y}]\geq0$ is equivalent to $\delta V[\textbf{Y}]=\frac{d}{dt}\Big|_{t=0}\Phi_{t\#}V[\mathbf{1}]\geq0$.

\begin{prop}\label{68}
For $\mathbf{Y}\in\mathcal{Y}^-$, let $\{\Phi^\mathbf{Y}_t\}_{t\geq0}$ be the 1-parameter family of diffeomorphisms. Let $B^{\mathfrak{Y}^*}_{R_0}$ be the closed ball of radius $R_0$ centered at the origin in $\mathfrak{Y}^*$ and endowed with the weak$^*$-topology. Then, we have
\begin{enumerate}
    \item the map\label{53}
    \begin{equation*}
        [0,\infty)\times\mathcal{Y}^-\times B^{\mathfrak{Y}^*}_{R_0}\ni(t,\mathbf{Y},V)\mapsto(\Phi^{\mathbf{Y}}_t)^+_{\#}V
    \end{equation*}
    is continuous;
    \item the map\label{57}
    \begin{equation*}
        [0,\infty)\times\mathcal{Y}^-\times\mathcal{Y}^-\times B^{\mathfrak{Y}^*}_{R_0}\ni(t,\mathbf{Y},\mathbf{Z},V)\mapsto\delta\Big((\Phi^{\mathbf{Y}}_t)^+_{\#}V\Big)[\mathbf{Z}]
    \end{equation*}
    is continuous.
\end{enumerate}
\end{prop}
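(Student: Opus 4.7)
The plan is to reduce both statements to Proposition \ref{52} together with the formula (\ref{48}) for the first variation and a careful bookkeeping of the compactly supported correction $V_{\Phi^{\mathbf{Y}}_t(\Gamma_-)}$.

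For part (\ref{53}), decompose
\begin{equation*}
(\Phi^{\mathbf{Y}}_t)^+_{\#}V=\Phi^{\mathbf{Y}}_{t\#}V+V_{\Phi^{\mathbf{Y}}_t(\Gamma_-)}.
\end{equation*}
The joint continuity of the first summand in $(t,\mathbf{Y},V)$ is exactly Proposition \ref{52}. The second summand does not depend on $V$, so it suffices to verify continuity of $(t,\mathbf{Y})\mapsto V_{\Phi^{\mathbf{Y}}_t(\Gamma_-)}$ in the weak$^*$-topology of $\mathfrak{Y}^*$. Here I use that every $\mathbf{Y}\in\mathcal{Y}^-$ agrees with $\alpha\mathbf{N}_\epsilon$ near $\{y=0\}$ while $\mathbf{N}_\epsilon$ itself is tangent to $\Gamma_-$ along $\Gamma_-$ (since $\mathbf{N}_\epsilon$ is built from $y\overbar{\mathbf{e}}_{n+1}^\top\!\circ\Pi_{\Gamma_-}$), so the perturbed hypersurface $\Phi^{\mathbf{Y}}_t(\Gamma_-)$ coincides with $\Gamma_-$ outside a fixed compact subset of $\{y>0\}$, independent of small perturbations of $(t,\mathbf{Y})$. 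For any test weight $\psi\in\mathfrak{Y}$ the pairing $V_{\Phi^{\mathbf{Y}}_t(\Gamma_-)}[\psi]$ is then the difference of two integrals over that compact region; since $\Phi^{\mathbf{Y}}_t$, its Jacobian and its normal depend continuously on $(t,\mathbf{Y},p)$ by the standard ODE theory, dominated convergence yields continuity in $(t,\mathbf{Y})$ for each fixed $\psi$, which is exactly weak$^*$-continuity.

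For part (\ref{57}), apply the identity (\ref{48}) to $W:=(\Phi^{\mathbf{Y}}_t)^+_{\#}V$ and the direction $\mathbf{Z}$. Because $\Gamma_-$ is a smooth minimal hypersurface and $\Phi^{\mathbf{Z}}_s$ leaves $\Gamma_-$ fixed outside a compact subset of $\{y>0\}$, the contribution of $\frac{d}{ds}\big|_{s=0}V_{\Phi^{\mathbf{Z}}_s(\Gamma_-)}[\mathbf{1}]$ vanishes, and hence
\begin{equation*}
\delta\!\left((\Phi^{\mathbf{Y}}_t)^+_{\#}V\right)[\mathbf{Z}]=W[f_{\mathbf{Z}}],\qquad f_{\mathbf{Z}}(p,\mathbf{v}):=\overbar{\mathrm{div}}\,\mathbf{Z}(p)-Q_{\mathbf{Z}}(p,\mathbf{v})-\frac{n}{y(p)}\mathbf{Z}(p)\cdot\overbar{\mathbf{e}}_{n+1}.
\end{equation*}
The continuity claim then reduces to showing two things: first, that the assignment $\mathbf{Z}\mapsto f_{\mathbf{Z}}$ is continuous from $\mathcal{Y}^-$ into $\mathfrak{Y}$, and second, that the map $(t,\mathbf{Y},V)\mapsto W$ has locally bounded $\mathfrak{Y}^*$-norm. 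The latter follows by combining Lemma \ref{44} with the local uniform bound on the compactly supported functional $V_{\Phi^{\mathbf{Y}}_t(\Gamma_-)}$, and then joint continuity of $W[f_{\mathbf{Z}}]$ follows by the standard estimate $|W_n[f_n]-W[f]|\le\|W_n\|_{\mathfrak{Y}^*}\|f_n-f\|_{\mathfrak{Y}}+|W_n[f]-W[f]|$ together with part (\ref{53}).

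The main technical obstacle I anticipate is verifying $f_{\mathbf{Z}}\in\mathfrak{Y}$ with continuous dependence on $\mathbf{Z}$ in $\mathfrak{Y}$-norm, specifically for the non-compactly supported component $\alpha\mathbf{N}_\epsilon$. Here the smooth compactly supported piece $\mathbf{V}$ contributes a term in $C^\infty_c(\mathrm{cl}(\Omega')\times\mathbb{S}^n)$ and poses no difficulty. For the $\mathbf{N}_\epsilon$-contribution one exploits that $\mathbf{N}_\epsilon\cdot\overbar{\mathbf{e}}_{n+1}$ vanishes like $y$ near $\{y=0\}$, so $\frac{n}{y}\mathbf{N}_\epsilon\cdot\overbar{\mathbf{e}}_{n+1}$ is bounded with bounded $\mathbb{S}^n$-derivatives, and the $y$-weighted derivative bounds in (\ref{weighted norm}) can be verified directly from the definition of $\mathbf{N}_\epsilon$. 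The quadratic-in-$\mathbf{v}$ term $Q_{\mathbf{N}_\epsilon}(p,\mathbf{v})$ is handled using the splitting $\mathfrak{Y}_0=C^0_c+\pi^*C^2(\mathbb{S}^n)$: one writes its limit as $y\to 0$ as a $\mathbf{v}$-only function and shows the remainder lies in $C^0_c(\mathrm{cl}(\Omega')\times\mathbb{S}^n)$. Continuous dependence on the scalar $\alpha$ is then linear, and continuous dependence on $\mathbf{V}$ is immediate from the definition of $\|\cdot\|_{\mathcal{Y}_c}$.
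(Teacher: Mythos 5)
Your reduction of both parts to Proposition \ref{52}, formula (\ref{48}), Lemma \ref{44}, and the continuity of $\mathbf{Z}\mapsto f_{\mathbf{Z}}$ in $\mathfrak{Y}$-norm is the same high-level strategy the paper uses, and your treatment of part (\ref{57}) is essentially the paper's argument (the paper writes the corresponding claim as (\ref{58}) and then splits the difference $\delta W[\mathbf{Z}]-\delta W_0[\mathbf{Z}_0]$ exactly as you do). However, your argument for part (\ref{53}) has a genuine gap.

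The problem is the assertion that $\Phi^{\mathbf{Y}}_t(\Gamma_-)$ coincides with $\Gamma_-$ outside a \emph{fixed} compact subset of $\{y>0\}$, \emph{independent of small perturbations of $(t,\mathbf{Y})$}. For each individual $\mathbf{Y}=\alpha\mathbf{N}_\epsilon+\mathbf{V}$ this is true (since $\mathbf{V}$ has compact support in $\{y>0\}$ and $\mathbf{N}_\epsilon$ is tangent to $\Gamma_-$), but the compact set depends on $\mathrm{Spt}(\mathbf{V})$, and the $\mathcal{Y}^-$-norm $\|\mathbf{Y}\|_{\mathcal{Y}^-}=|\alpha|+\|\tfrac{1}{y^2}\mathbf{V}\|_\infty+\|\tfrac{1}{y}\overbar{\nabla}\mathbf{V}\|_\infty+\|\overbar{\nabla}^2\mathbf{V}\|_\infty$ controls only weighted pointwise quantities, not supports. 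One can take $\mathbf{Y}_j\to\mathbf{Y}_0$ in $\mathcal{Y}^-$ with $\mathrm{Spt}(\mathbf{V}_j)$ reaching arbitrarily close to $\{y=0\}$ (a small bump localized near $y\sim 1/j$ with amplitude $\sim 1/j^2$ has small $\mathcal{Y}_c$-norm). Consequently the domain of integration in your "difference of two integrals over that compact region" is \emph{not} uniform over the sequence, and the dominated-convergence step does not close. The paper gets around this with a quantitative boundary estimate: using (\ref{41}) one bounds $|\overbar{A}_{\Phi^{\mathbf{Y}}_t(\Gamma_-)}|$ uniformly for $t\le T$, $\|\mathbf{Y}\|_{\mathcal{Y}^-}\le M_0$, and then invokes \cite[Lemma 5.2]{YaoRel} together with (\ref{quant estimate}) to get $\big|E_{rel}[\Phi^{\mathbf{Y}}_t(\Gamma_-),\Gamma_-;(1-\phi_{s_2,\delta})\psi]\big|\le C_1 s_2\|\psi\|_\mathfrak{Y}$ uniformly in $(t,\mathbf{Y})$. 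This supplies exactly the missing uniform tail bound: one first proves convergence against $\phi_{s_2,\delta}\psi$ (which is your dominated-convergence argument, valid since $\phi_{s_2,\delta}\psi$ is compactly supported), and then lets $s_2\to 0$. You need to insert such a uniform-in-$(t,\mathbf{Y})$ estimate on the contribution near $\{y=0\}$ before your argument for part (\ref{53}) is complete; the same estimate is also needed to justify the "local uniform bound on $V_{\Phi^{\mathbf{Y}}_t(\Gamma_-)}$" you invoke in part (\ref{57}), since that does not follow from Lemma \ref{44} alone.
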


\begin{proof}
By proposition \ref{52}, item (\ref{53}) is equivalent to the continuity of the map
\begin{equation}\label{56}
    [0,\infty)\times\mathcal{Y}^-\ni(t,\textbf{Y})\mapsto V_{\Phi^{\textbf{Y}}_t(\Gamma_-)}.
\end{equation}
Let $0\leq t\leq T$ and $\|\textbf{Y}\|_{\mathcal{Y}^-}\leq M_0$. Then, by (\ref{41}) and direct computation, we see that $|\overbar{A}_{\Phi^\textbf{Y}_t(\Gamma_-)}|\leq C(\Gamma_-,\Omega',T,M_0)$. Hence, by \cite[Lemma 5.2]{YaoRel}, there are constants $\epsilon_1=\epsilon_1(\Gamma_-,\Omega',M_0,T)$ and $C_1=C_1(\Gamma_-,\Omega',M_0,T)$, so that for $0<\frac{s_1}{2}<s_1-\delta<s_1<s_2<s_2+\delta<2s_2<\epsilon_1$,
\begin{equation*}
    \big|E_{rel}[\Phi^{\textbf{Y}}_t(\Gamma_-),\Gamma_-;\phi_{s_1,s_2,\delta}]\big|\leq C_1s_2.
\end{equation*}
Letting $s_1\to0$ and using (\ref{quant estimate}), one sees that for any $\psi\in\mathfrak{Y}$,
\begin{equation}\label{54}
    \big|E_{rel}[\Phi^{\textbf{Y}}_t(\Gamma_-),\Gamma_-;(1-\phi_{s_2,\delta})\psi]\big|\leq C_1s_2.
\end{equation}
Since $\phi_{s_2,\delta}\psi$ is compactly supported in $\{y>0\}$, one also has
\begin{equation}\label{55}
    \lim_{(t,\textbf{Y})\to(t_0,\textbf{Y}_0)}V_{\Phi^{\textbf{Y}}_t(\Gamma_-)}[\phi_{s_2,\delta}\psi]=\lim_{(t,\textbf{Y})\to(t_0,\textbf{Y}_0)}E_{rel}[\Phi^{\textbf{Y}}_t(\Gamma_-),\Gamma_-;\phi_{s_2,\delta}\psi]=V_{\Phi^{\textbf{Y}_0}_{t_0}(\Gamma_-)}[\phi_{s_2,\delta}\psi].
\end{equation}
Combining (\ref{54}) and (\ref{55}) proves (\ref{56}).

Now, we prove item (\ref{57}). Observe that if $\|\textbf{Z}-\textbf{Z}_0\|_{\mathcal{Y}^-}\to0$, then 
\begin{equation}\label{58}
    \overbar{\text{div}}(\textbf{Z}-\textbf{Z}_0)-Q_{(\textbf{Z}-\textbf{Z}_0)}-\frac{n}{y}(\textbf{Z}-\textbf{Z}_0)\cdot\overbar{\textbf{e}}_{n+1}\to0\text{ in }\mathfrak{Y}\text{-norm}.
\end{equation}
We write
\begin{equation*}
    \delta\Big((\Phi^{\textbf{Y}}_t)^+_{\#}V\Big)[\textbf{Z}]-\delta\Big((\Phi^{\textbf{Y}_0}_{t_0})^+_{\#}V_0\Big)[\textbf{Z}_0]=\delta\Big((\Phi^{\textbf{Y}}_t)^+_{\#}V\Big)[\textbf{Z}-\textbf{Z}_0]+\delta\Big(\Phi^{\textbf{Y}}_t)^+_{\#}V-(\Phi^{\textbf{Y}_0}_{t_0})^+_{\#}V_0\Big)[\textbf{Z}_0].
\end{equation*}
The first term on the right converges to $0$ by Lemma \ref{44} and (\ref{58}). The second term converges to $0$ by item (\ref{53}).
\end{proof}

\begin{prop}\label{70}
Fix $\Lambda>0$. Let $V\in\mathfrak{Y}^*_\mathcal{C}(\Lambda)$ be $E_{rel}$-minimizing to the first order and have the decomposition $V=\frac{1}{y^n}V_+-\mathcal{H}^n_\mathbb{H}\mres\Gamma_-$ as stated in Proposition \ref{6}. Then, $\mathrm{Spt}(V_+)$ lies in $\mathrm{cl}(U_{\Gamma_+})\backslash U_{\Gamma_-}$. Hence, $V_+$ is stationary. Moreover, $E_{rel}[V]=V[\mathbf{1}]$.
\end{prop}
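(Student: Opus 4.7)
The three assertions will be proved in order, with the latter two following quickly from the first once the appropriate variational inputs are set up. The first step is to establish a preliminary form of stationarity: for any smooth vector field $\mathbf{Y}$ compactly supported in $\Omega'$, both $\pm\mathbf{Y}$ belong to $\mathcal{Y}^-$ (the boundary condition $\mathbf{Y}\cdot\mathbf{n}_{\partial\Omega'}=0$ is trivial because $\mathbf{Y}$ vanishes near $\partial\Omega'$). Since $\Gamma_-$ is minimal and the variation is compactly supported in $\{y>0\}$, the term $\frac{d}{dt}\big|_{t=0}V_{\Phi_t^{\mathbf{Y}}(\Gamma_-)}[\mathbf{1}]$ vanishes, so the $E_{rel}$-minimizing condition $\delta^+V[\pm\mathbf{Y}]\geq 0$ reduces to $\delta V[\mathbf{Y}]=0$. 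Combining with the decomposition (\ref{5}) and the minimality of $\Gamma_-$, this is precisely the statement that $V_+$ has vanishing hyperbolic first variation against all vector fields compactly supported in $\Omega'$.

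With $V_+$ a stationary varifold on this open set, I would use the mean-convex foliations $\{\Gamma_-^s\}$ and $\{\Gamma_+^s\}$ produced by Proposition~\ref{20} as barriers, together with the maximum principle of Solomon-White \cite{SolomonWhite}. Suppose for contradiction that $\mathrm{Spt}(V_+)$ contains a point in $U_{\Gamma_-}$. The family $\{\Gamma_-^{-s}\}_{s\in[0,1]}$ sweeps out $U_{\Gamma_-}$ from outside $\Gamma_-$, so there is a smallest $s_*>0$ for which $\Gamma_-^{-s_*}$ touches $\mathrm{Spt}(V_+)$; the mean-convexity of $\Gamma_-^{-s_*}$ at this touching point combined with the stationarity of $V_+$ contradicts Solomon-White. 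The inclusion $\mathrm{Spt}(V_+)\subset\mathrm{cl}(U_{\Gamma_+})$ follows symmetrically using $\{\Gamma_+^s\}$. Because $\Gamma_-'\preceq\Gamma_-\preceq\Gamma_+\preceq\Gamma_+'$, the containment $\mathrm{Spt}(V_+)\subset\mathrm{cl}(U_{\Gamma_+})\setminus U_{\Gamma_-}$ now places the support strictly inside $\Omega'$, so the preliminary stationarity already amounts to full stationarity of $V_+$.

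For the identity $E_{rel}[V]=V[\mathbf{1}]$, the inequality $V[\mathbf{1}]\geq E_{rel}[V]$ is from Proposition~\ref{60}, so it suffices to prove the reverse. I would test the $E_{rel}$-minimizing condition against the non-compactly supported field $\mathbf{N}_\epsilon$: since $\mathbf{N}_\epsilon=y\overbar{\mathbf{e}}_{n+1}^\top\circ\Pi_{\Gamma_-}\cdot\chi_\epsilon$ is tangent to $\Gamma_-$, its flow preserves $\Gamma_-$ and hence $V_{\Phi_t^{\mathbf{N}_\epsilon}(\Gamma_-)}\equiv 0$, so $\delta^+V[\mathbf{N}_\epsilon]=\delta V[\mathbf{N}_\epsilon]\geq 0$. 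Using the decomposition (\ref{5}) and formula (\ref{48}), together with the stationarity of $V_+$ (which allows truncating $\mathbf{N}_\epsilon$ at $\{y\geq\delta\}$, where the compactly supported part contributes zero, and then letting $\delta\to 0^+$), the quantity $\delta V[\mathbf{N}_\epsilon]$ should reduce to a boundary-at-infinity flux equal, up to sign, to $V[\mathbf{1}]-E_{rel}[V]$. The inequality then forces this excess mass to vanish. The main difficulty is this last identification: one must carefully track the non-decaying integrand $\frac{n}{y}\mathbf{N}_\epsilon\cdot\overbar{\mathbf{e}}_{n+1}$ near $\{y=0\}$ and exploit the asymptotic compatibility of $V_+$ with $\Gamma_-$, inherited from being trapped between $\Gamma_-$ and $\Gamma_+$, both $C^{k,\alpha}$-asymptotic to $M$.
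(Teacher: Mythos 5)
Your treatment of the first two assertions is essentially the paper's: the paper also proves $\mathrm{Spt}(V_+)\subset\mathrm{cl}(U_{\Gamma_+})\setminus U_{\Gamma_-}$ by combining the stationarity of $V_+$ against compactly supported fields with the Solomon--White maximum principle applied to the mean-convex foliations from Proposition~\ref{20}, and then reads off full stationarity from the containment. (One small remark: the reduction $\delta^+V[\mathbf{Y}]=\delta V[\mathbf{Y}]$ holds for \emph{every} $\mathbf{Y}\in\mathcal{Y}^-$, not only for $\mathbf{N}_\epsilon$, because $V_{\Phi_t(\Gamma_-)}$ is always a compactly supported variation of the minimal $\Gamma_-$; you do not need the special tangency of $\mathbf{N}_\epsilon$ to $\Gamma_-$.)

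The gap is in the final step. Testing the first-variation inequality against $(1-\phi_{y_1,\delta})\mathbf{N}_\epsilon$ and letting $y_1\to 0$ does \emph{not} produce $V[\mathbf{1}]-E_{rel}[V]$ up to sign. Expanding $\overbar{\mathrm{div}}\mathbf{N}_\epsilon - Q_{\mathbf{N}_\epsilon}-\frac{n}{y}\mathbf{N}_\epsilon\cdot\overbar{\mathbf{e}}_{n+1}$ and passing to the limit, the surviving weight is $n-1+(\overbar{\mathbf{e}}_{n+1}\cdot\mathbf{v})^2$, so the inequality one actually obtains is
\begin{equation*}
E_{rel}\bigl[V;\,n-1+(\overbar{\mathbf{e}}_{n+1}\cdot\mathbf{v})^2\bigr]\;\geq\;V\bigl[\,n-1+(\overbar{\mathbf{e}}_{n+1}\cdot\mathbf{v})^2\,\bigr].
\end{equation*}
The extra $(\overbar{\mathbf{e}}_{n+1}\cdot\mathbf{v})^2$ weight — which comes from the $Q_{\mathbf{N}_\epsilon}$ term, i.e.\ from differentiating $\mathbf{N}_\epsilon$ in the direction $\mathbf{v}$ and dotting with $\mathbf{v}$ — is not a constant and does not cancel, so this single inequality does not by itself force $E_{rel}[V]\geq V[\mathbf{1}]$. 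The paper closes the loop with a \emph{second, independent} input: a calibration/divergence-theorem estimate (in the spirit of Hardt--Lin and of \cite[Theorem 3.1]{YaoRel}) using the vector field built from $\overbar{\mathbf{n}}_{\Gamma_-}\circ\Pi$, together with Cauchy--Schwarz and the fact that $\overbar{\mathbf{e}}_{n+1}\cdot\overbar{\mathbf{n}}_{\Gamma_-}\to 0$ as $y\to 0$ (because $\Gamma_-$ meets the ideal boundary orthogonally), to produce the reverse-type bound $V[(\overbar{\mathbf{e}}_{n+1}\cdot\mathbf{v})^2]\geq E_{rel}[V;(\overbar{\mathbf{e}}_{n+1}\cdot\mathbf{v})^2]-\tfrac12 V[\mathbf{1}]+\tfrac12 E_{rel}[V]$. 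Only after combining this with the displayed inequality, and using $n\geq 2$, does one get $E_{rel}[V]\geq V[\mathbf{1}]$. This auxiliary calibration step is a genuinely separate idea that your proposal does not anticipate, and without it the argument stalls at the $(\overbar{\mathbf{e}}_{n+1}\cdot\mathbf{v})^2$ weight.
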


\begin{proof}
The stationarity and Spt($V_+$)$\subset\text{cl}(U_{\Gamma_+})\backslash U_{\Gamma_-}$ follow from \cite{SolomonWhite} by using the mean-convex foliations constructed in Proposition \ref{20}. It remains to prove $E_{rel}[V]=V[\mathbf{1}]$. Proposition \ref{60} implies that $E_{rel}[V]\leq V[\mathbf{1}]$. To prove the opposite direction, we compute the first variation of $V$ with respect to $(1-\phi_{y_1,\delta})\textbf{N}_\epsilon\in\mathcal{Y}^-$ for $y_1<\epsilon$:
\begin{align}
    \delta V[(1-\phi_{y_1,\delta})\textbf{N}_\epsilon]=&V[\overbar{\text{div}}\Big((1-\phi_{y_1,\delta})\textbf{N}_\epsilon\Big)-\nabla_\textbf{v}\Big((1-\phi_{y_1,\delta})\textbf{N}_\epsilon\Big)\cdot\textbf{v}-\frac{n}{y}\Big((1-\phi_{y_1,\delta})\textbf{N}_\epsilon\Big)\cdot\overbar{\textbf{e}}_{n+1}]\nonumber\\
    =&V[y(1-\phi_{y_1,,\delta}(y))\Big(\overbar{\text{div}}\big(\overbar{\textbf{e}}^\top_{n+1}\circ\Pi(q)\big)-\nabla_\textbf{v}\overbar{\textbf{e}}^\top_{n+1}\circ\Pi(q)\cdot\textbf{v}\Big)]\label{115}\\
    &+V[\Big(\overbar{\textbf{e}}^\top_{n+1}\circ\Pi(q)\cdot\overbar{\textbf{e}}_{n+1}-\big(\overbar{\textbf{e}}^\top_{n+1}\circ\Pi(q)\cdot\textbf{v}\big)\big(\overbar{\textbf{e}}_{n+1}\cdot\textbf{v}\big)\Big)\frac{d}{dy}\Big(y(1-\phi_{y_1,,\delta}(y))\Big)]\label{116}\\
    &-nV[\big(1-\phi_{y_1,,\delta}(y)\big)(\overbar{\textbf{e}}^\top_{n+1}\circ\Pi(q)\cdot\overbar{\textbf{e}}_{n+1}-1)]-nV[1-\phi_{y_1,,\delta}(y)]\label{117}\geq0.
\end{align}
By letting $y_1\to 0^+$, one sees that
\begin{align*}
    &\left\|y(1-\phi_{y_1,,\delta}(y))\Big(\overbar{\text{div}}\big(\overbar{\textbf{e}}^\top_{n+1}\circ\Pi(q)\big)-\nabla_\textbf{v}\overbar{\textbf{e}}^\top_{n+1}\circ\Pi(q)\cdot\textbf{v}\Big)\right\|_\mathfrak{X}\to0,\\
    &\left\|\Big(\overbar{\textbf{e}}^\top_{n+1}\circ\Pi(q)\cdot\overbar{\textbf{e}}_{n+1}-\big(\overbar{\textbf{e}}^\top_{n+1}\circ\Pi(q)\cdot\textbf{v}\big)\big(\overbar{\textbf{e}}_{n+1}\cdot\textbf{v}\big)-1+(\overbar{\textbf{e}}_{n+1}\cdot\textbf{v})^2\Big)\frac{d}{dy}\Big(y(1-\phi_{y_1,,\delta}(y))\Big)\right\|_{\mathfrak{X}}\to0,\\
    &\left\|\big(1-\phi_{y_1,,\delta}(y)\big)(\overbar{\textbf{e}}^\top_{n+1}\circ\Pi(q)\cdot\overbar{\textbf{e}}_{n+1}-1)\right\|_\mathfrak{X}\to0\text{ and }V[1-\phi_{y_1,,\delta}(y)]\to V[1]-E_{rel}[V].
\end{align*}
As a result, (\ref{115}) and the first term in (\ref{117}) converges to $0$, and
\begin{equation*}
    (\ref{116})\to V[1-(\overbar{\textbf{e}}_{n+1}\cdot\textbf{v})^2]-E_{rel}[V;1-(\overbar{\textbf{e}}_{n+1}\cdot\textbf{v})^2];\text{ the second term of }(\ref{117})\to -nV[1]+nE_{rel}[V].
\end{equation*}
So, $\delta V[(1-\phi_{y_1,\delta})\textbf{N}_\epsilon]\geq0$ implies
\begin{align}\label{61}
    E_{rel}[V;n-1+(\overbar{\textbf{e}}_{n+1}\cdot\textbf{v})^2]\geq V[n-1+(\overbar{\textbf{e}}_{n+1}\cdot\textbf{v})^2].
\end{align}
Let $\overbar{\textbf{X}}=\overbar{\textbf{n}}_{\Gamma_-}\circ\Pi(q)$ and $\{\partial^*U_i\}_i$ be a sequence in $\mathcal{C}(\Gamma'_-,\Gamma'_+,\Lambda)$ with $E_{rel}[\partial^*U_i,\Gamma_-.\cdot]\to V[\cdot]$. Since $\overbar{X}=\overbar{\textbf{n}}_{\Gamma_-}$ on $\Gamma_-$, one has
\begin{align}
    &\int_{\partial^*U_i}\phi^2_{y_1,y_2,\delta}(\overbar{\textbf{e}}_{n+1}\cdot\textbf{X})\left\langle\overbar{\textbf{e}}_{n+1},\textbf{n}_{\partial^*U_i}\right\rangle d\mathcal{H}^n_{\mathbb{H}}- \int_{\Gamma}\phi^2_{y_1,y_2,\delta}(\overbar{\textbf{e}}_{n+1}\cdot\textbf{X})\left\langle\overbar{\textbf{e}}_{n+1},\textbf{n}_{\Gamma}\right\rangle d\mathcal{H}^n_{\mathbb{H}}\label{105}\\
    &\leq E_{rel}[\partial^*U_i,\Gamma_-;\phi^2_{y_1,y_2,\delta}\left(\frac{1}{2}(\overbar{\textbf{e}}_{n+1}\cdot\overbar{\textbf{X}})^2+\frac{1}{2}(\overbar{\textbf{e}}_{n+1}\cdot\overbar{\textbf{n}}_{\partial^*U_i})^2\right)],\label{108}
\end{align}
where $\left\langle\textbf{X},\textbf{Y}\right\rangle$ is the inner product in $\mathbb{H}^{n+1}$ and $\textbf{X}\cdot\textbf{Y}$ is the inner product in $\mathbb{R}^{n+1}_+$. Moreover,
\begin{align}
    &\int_{\partial^*U_i}\phi^2_{y_1,y_2,\delta}(\overbar{\textbf{e}}_{n+1}\cdot\textbf{X})\left\langle\overbar{\textbf{e}}_{n+1},\textbf{n}_{\partial^*U_i}\right\rangle d\mathcal{H}^n_{\mathbb{H}}- \int_{\Gamma_-}\phi^2_{y_1,y_2,\delta}(\overbar{\textbf{e}}_{n+1}\cdot\textbf{X})\left\langle\overbar{\textbf{e}}_{n+1},\textbf{n}_{\Gamma_-}\right\rangle d\mathcal{H}^n_{\mathbb{H}}\label{107}\\
    &\geq -2\int_{\Omega}\left|\text{div}\Big(\phi^2_{y_1,y_2,\delta}(\textbf{X}\cdot\overbar{\textbf{e}}_{n+1})\overbar{\textbf{e}}_{n+1}\Big)\right|d\mathcal{H}_{\mathbb{H}}\geq-C y_2,\label{118}
\end{align}
where in the last step, we used the same argument as in the proof of \cite[Theorem 3.1]{YaoRel}. Combining (\ref{105})-(\ref{118}), we see that
\begin{equation}\label{119}
    E_{rel}[\partial^*U_i,\Gamma_-;\phi^2_{y_1,y_2,\delta}\left(\frac{1}{2}(\overbar{\textbf{e}}_{n+1}\cdot\overbar{\textbf{X}})^2+\frac{1}{2}(\overbar{\textbf{e}}_{n+1}\cdot\overbar{\textbf{n}}_{\partial^*U_i})^2\right)]\geq -C y_2
\end{equation}
By (\ref{quant estimate}),
\begin{align}\label{106}
    \big|E_{rel}[\partial^*U_i,\Gamma_-;\phi^2_{y_1,y_2,\delta}(\overbar{\textbf{e}}_{n+1}\cdot\overbar{\textbf{X}})^2]\big|\leq \big(Cy_2+C|E_{rel}[\partial^*U_i,\Gamma_-;\phi_{y_1,y_2,\delta}]|\big)\|\phi_{y_1,y_2,\delta}(\overbar{\textbf{e}}_{n+1}\cdot\overbar{\textbf{X}})^2\|_\mathfrak{X}.
\end{align}
Since $\overbar{\textbf{e}}_{n+1}\cdot\overbar{\textbf{X}}\to0$ as $y_2\to0$, we have $\|\phi_{y_1,y_2,\delta}(\overbar{\textbf{e}}_{n+1}\cdot\overbar{\textbf{X}})^2\|_\mathfrak{X}\to0$. If we choose $y_2$ sufficiently small in (\ref{106}), then
\begin{equation}\label{109}
    \big|E_{rel}[\partial^*U_i,\Gamma_-;\phi^2_{y_1,y_2,\delta}(\overbar{\textbf{e}}_{n+1}\cdot\overbar{\textbf{X}})^2]\big|\leq \frac{1}{2}\big(y_2+|E_{rel}[\partial^*U_i,\Gamma_-;\phi_{y_1,y_2,\delta}]|\big).
\end{equation}
Plugging (\ref{109}) into (\ref{108}) and using (\ref{119}), we get
\begin{equation*}
    E_{rel}[\partial^*U_i,\Gamma_-;\phi^2_{y_1,y_2,\delta}(\overbar{\textbf{e}}_{n+1}\cdot\textbf{v})^2]\geq-Cy_2-\frac{1}{2}E_{rel}[\partial^*U_i,\Gamma_-;\phi_{y_1,y_2,\delta}].
\end{equation*}
Letting $\delta\to0$ and $y_1\to0$, we have $E_{rel}[\partial^*U_i,\Gamma_-;(1-\chi_{\{y\geq y_2\}})(\overbar{\textbf{e}}_{n+1}\cdot\textbf{v})^2]\geq-Cy_2-\frac{1}{2}E_{rel}[\partial^*U_i,\Gamma_-;1-\chi_{\{y\geq y_2\}}]$. This implies
\begin{align*}
    V[(1-\chi_{\{y\geq y_2\}})(\overbar{\textbf{e}}_{n+1}\cdot\textbf{v})^2]\geq -Cy_2-\frac{1}{2}\big|V[(1-\chi_{\{y\geq y_2\}})]\big|.
\end{align*}
Finally, we let $y_2\to0$ and get $ V[(\overbar{\textbf{e}}_{n+1}\cdot\textbf{v})^2]\geq E_{rel}[V;(\overbar{\textbf{e}}_{n+1}\cdot\textbf{v})^2]-\frac{1}{2}V[\mathbf{1}]+\frac{1}{2}E_{rel}[V]$. Combining this with (\ref{61}), one gets $E_{rel}[V]\geq V[\mathbf{1}]$.
\end{proof}

\section{Min-Max Theory}
We establish the min-max theory in hyperbolic space in this section. Let $\Gamma_-$, $\Gamma_+$ and $\Omega'$ be as stated in Theorem \ref{main1}. Let $\widetilde{\Omega}=\text{cl}(U_{\Gamma_+})\backslash U_{\Gamma_-}$.

\begin{defn}
A \textit{generalized smooth family of hypersurfaces} in $\text{cl}(\Omega')$ is a family of pairs, $\{(U_\tau,\Gamma_\tau)\}_{\tau\in[0,1]^k}$, where $U_\tau\in\mathcal{C}(\Gamma'_-,\Gamma'_+)$ and $\Gamma_\tau=\partial^*U_\tau$ satisfies the following:
\begin{enumerate}
    \item $E_{rel}[\Gamma_\tau,\Gamma_-]<\infty$;
    \item For each $\tau\in[0,1]$ there is a finite set $S_\tau\subset\text{cl}(\Omega')$, so that $\Gamma_\tau$ is a smooth hypersurface in $\text{cl}(\Omega')\backslash S_\tau$;
    \item The map $\tau\mapsto V_{\Gamma_\tau}$ is continuous in the weak$^*$-topology of $\mathfrak{Y}^*(\text{cl}(\Omega'))$;
    \item $\Gamma_\tau\rightarrow\Gamma_{\tau_0}$ in $C^\infty_{loc}(\mathbb{R}_+^{n+1}\backslash S_{\tau_0})$,as $\tau\to\tau_0$;
    \item The map $\tau\mapsto\mathbf{1}_{U_\tau}$ is continuous in $L^1_{loc}(\mathbb{R}^{n+1}_+)$.
\end{enumerate}
If $k=1$, $(U_0,\Gamma_0)=(U_{\Gamma_-},\Gamma_-)$ and $(U_1,\Gamma_1)=(U_{\Gamma_+},\Gamma_+)$, then we say that $\{(U_\tau,\Gamma_\tau)\}_{\tau\in[0,1]}$ is a \textit{sweep-out} of $\widetilde{\Omega}$. 
\end{defn}

We have the following result on the existence of a sweep-out. Notice that the argument of \cite[Section 2]{Milnor2015Cobordism} (see also \cite{DeLellisRamic} and \cite{BWMountainPass}) can be adapted in our setting. So, we omit the proof here.

\begin{prop}\label{97}
There is a sweep-out of $\widetilde{\Omega}$.
\end{prop}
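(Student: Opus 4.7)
The plan is to construct the sweep-out by assembling two separate pieces: a controlled graphical interpolation between $\Gamma_-$ and $\Gamma_+$ near the ideal boundary, together with a Morse-theoretic family on the compact remainder of the cobordism $\widetilde{\Omega}$, and then to glue them via a cut-off in the $y$-variable.

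Near the ideal boundary, since $\Gamma_+$ lies in a thin neighborhood of $\Gamma_-$ and both are $C^{k,\alpha}$-asymptotic to $M$, we may parametrize $\Gamma_+ \cap \{y \le \epsilon_0\}$ as a normal graph over $\Gamma_- \cap \{y \le \epsilon_0\}$ by a function $u$ with $|u| \le C y^{n+1}$ and correspondingly good decay for its derivatives, as established in \cite{YaoRel}. For $\tau \in [0,1]$ define $\Gamma^{\mathrm{near}}_\tau$ to be the normal graph of $\tau u$ over $\Gamma_-$. This is a smooth family connecting $\Gamma_-$ to $\Gamma_+$ in $\{y \le \epsilon_0\}$ whose geometry is controlled by the same estimates that govern $\Gamma_+$ itself.

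On the compact cobordism $K = \widetilde{\Omega} \cap \{y \ge \epsilon_0/2\}$, whose boundary consists of compact pieces of $\Gamma_\pm$ together with lateral faces in $\{y = \epsilon_0/2\}$, we choose a Morse function $h : K \to [0,1]$ with $h = 0$ on $\Gamma_-\cap K$, $h = 1$ on $\Gamma_+ \cap K$, and all critical points in the interior. The level sets $h^{-1}(\tau)$ form a family of hypersurfaces that are smooth except at finitely many values $\tau_1,\dots,\tau_N$ where isolated singular points arise from critical points of $h$; this is the standard cobordism construction used in \cite{Milnor2015Cobordism} and adapted in \cite{DeLellisRamic} and \cite{BWMountainPass}. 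Using a cut-off in $y$, glue $h$ to the function $\tau$ implicit in the near-boundary family, and set $\Gamma_\tau$ to be the resulting level sets; take $U_\tau$ to be the enclosed Caccioppoli set with $U_{\Gamma_-} \subset U_\tau \subset U_{\Gamma_+}$ and $\partial^* U_\tau = \Gamma_\tau$.

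It remains to verify the five conditions defining a generalized smooth family. The $C^\infty_{loc}$ convergence off the finite singular set and the $L^1_{loc}$ convergence of $\mathbf{1}_{U_\tau}$ are immediate from the construction. Finiteness of $E_{rel}[\Gamma_\tau,\Gamma_-]$ near the ideal boundary follows because $\Gamma_\tau$ is a graph of $\tau u$ with $|u| = O(y^{n+1})$ and so the integrands appearing in \eqref{110} are controlled uniformly in $\tau$ by the quantitative estimates in Proposition \ref{35} and Proposition \ref{36}. The main obstacle, and the item that requires the most care, is the weak$^*$-continuity $\tau \mapsto V_{\Gamma_\tau}$ in $\mathfrak{Y}^*$: at the Morse-critical times $\tau_i$, the hypersurface acquires isolated singularities, and one must show that the contribution to $V_{\Gamma_\tau}[\psi]$ coming from a small neighborhood of such a singular point vanishes as $\tau \to \tau_i$. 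This follows from local area bounds near an isolated conical singularity together with Proposition \ref{36} applied on the compactly supported part $\psi \cdot \chi_{\{y \ge \epsilon_0/4\}}$, while the near-boundary part is handled by the uniform-in-$\tau$ estimate \eqref{quant estimate}. Once this is done, the full five conditions are satisfied, and $\{(U_\tau,\Gamma_\tau)\}_{\tau \in [0,1]}$ is the desired sweep-out of $\widetilde{\Omega}$.
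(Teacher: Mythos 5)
Your construction follows essentially the same route the paper intends (by reference to the cobordism argument of Milnor as adapted in De Lellis--Ramić and Bernstein--Wang): a graphical interpolation $\tau u$ near the ideal boundary glued via a $y$-cutoff to a Morse function on the compact cobordism, with the five conditions checked by splitting test functions into a compactly supported piece and a near-boundary tail. One small imprecision: in the weak$^*$-continuity verification the contribution to $V_{\Gamma_\tau}[\psi]$ near a Morse critical point does not ``vanish'' as $\tau\to\tau_i$ --- the singular level set carries a conical piece of positive $n$-dimensional measure --- rather one shows, as in standard Morse theory, that the local level-set area varies continuously and is uniformly small in a small enough ball; and for the near-boundary tail the decisive technical fact is the uniform bound on $|\overbar{A}|$ of the interpolating graphs (not just the pointwise decay of $u$), which is what makes the thin-neighborhood estimates of \cite{YaoRel} applicable uniformly in $\tau$.
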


Using a calibration argument, we prove in the following proposition that any sweep-out must pass through a hypersurface with strictly larger relative entropy. It can be carried out in the same manner as in \cite[Section 8]{BWMountainPass}. We will sketch a proof in Appendix \ref{111}.

\begin{prop}\label{65}
There exists a positive constant $\delta_0=\delta_0(\Gamma_-,\Gamma_+,\Omega')$, so that for any sweep-out $\{(U_\tau,\Gamma_\tau)\}_{\tau\in[0,1]}$, we have
\begin{equation*}
    \max_{\tau\in[0,1]}E_{rel}[\Gamma_\tau,\Gamma_-]\geq\max\{E_{rel}[\Gamma_+,\Gamma_-],0\}+\delta_0.
\end{equation*}
\end{prop}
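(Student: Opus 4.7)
The plan is to adapt the calibration-based argument of \cite[Section 8]{BWMountainPass} to the relative-entropy framework developed in Sections 3--5. The mean-convex foliations $\{\Gamma_\pm^s\}_{s\in[-1,1]}$ supplied by Proposition \ref{20} are used to produce calibrations near $\Gamma_\pm$ that force any sweep-out to pay a definite energy cost.

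First I would build smooth unit vector fields $\xi_\pm$ on thin foliated neighborhoods of $\Gamma_\pm$ by letting $\xi_\pm(p)$ be the unit normal (in $g_P$) to the leaf of the corresponding foliation through $p$, oriented to agree with $\mathbf{n}_{\Gamma_\pm}$ on $\Gamma_\pm$. Mean-convexity of the foliations yields $\mathrm{div}_{\mathbb{H}}\,\xi_\pm \geq 0$, with strict inequality on the leaves $\Gamma_\pm^s$ for $s\neq 0$. Applying the divergence theorem to $\xi_-$ against a Caccioppoli competitor $U\in\mathcal{C}(\Gamma_-',\Gamma_+')$ with $U\subseteq U_{\Gamma_-^1}$, inserting the cut-offs $\phi_{y_1,\delta}$ to handle non-compactness near $\{y=0\}$, and using Proposition \ref{35} to show the ideal-boundary remainder vanishes in the limit, yields the calibration inequality
\begin{equation*}
    E_{rel}[\partial^*U,\Gamma_-]\;\geq\;\int_{U\,\triangle\,U_{\Gamma_-}}\mathrm{div}_{\mathbb{H}}\,\xi_-\,d\mathcal{H}^{n+1}_{\mathbb{H}}.
\end{equation*}
Together with the analogous bound for $\xi_+$ and the strict positivity of the divergence off $\Gamma_\pm$, this produces constants $\delta_1,\delta_2>0$ with $E_{rel}[\Gamma_-^1,\Gamma_-]\geq 2\delta_1$ and $E_{rel}[\Gamma_+^{-1},\Gamma_-]-E_{rel}[\Gamma_+,\Gamma_-]\geq 2\delta_2$; by shrinking the foliation parameters in Proposition \ref{20} we may arrange $U_{\Gamma_-^1}\subsetneq U_{\Gamma_+^{-1}}$ so the two foliations are disjoint.

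Given a sweep-out $\{(U_\tau,\Gamma_\tau)\}_{\tau\in[0,1]}$, let $\tau_1=\sup\{\tau : U_\tau\subseteq U_{\Gamma_-^1}\}$ and $\tau_2=\inf\{\tau : U_\tau\supseteq U_{\Gamma_+^{-1}}\}$. The $L^1_{loc}$-continuity of $\tau\mapsto\mathbf{1}_{U_\tau}$ combined with the endpoint conditions $U_0=U_{\Gamma_-}$ and $U_1=U_{\Gamma_+}$ yields $0\leq \tau_1<\tau_2\leq 1$. For $\tau$ just past $\tau_1$ (respectively just before $\tau_2$), $U_\tau$ must stick out past $\Gamma_-^1$ (respectively dip inside $\Gamma_+^{-1}$), and a limiting argument using the weak$^*$-continuity of $V_{\Gamma_\tau}$ in $\mathfrak{Y}^*$ together with the calibration inequality produces $\max\{E_{rel}[\Gamma_{\tau_1},\Gamma_-],\,E_{rel}[\Gamma_{\tau_2},\Gamma_-]\}\geq \max\{0,E_{rel}[\Gamma_+,\Gamma_-]\}+\delta_0$ for $\delta_0=\min\{\delta_1,\delta_2\}$.

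The main obstacle will be reconciling the two modes of continuity along the sweep-out---$L^1_{loc}$ for the enclosed sets and weak$^*$ in $\mathfrak{Y}^*$ for the varifolds $V_{\Gamma_\tau}$---at the critical parameters $\tau_1,\tau_2$: one must rule out mass escape to the ideal boundary so that the calibration lower bound survives in the limit. This is precisely where the quantitative estimate \eqref{quant estimate} of Proposition \ref{35} and the lower semi-continuity of $E_{rel}$ under weak$^*$-limits from Proposition \ref{60} are essential.
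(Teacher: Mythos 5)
The calibration from the mean-convex foliations of Proposition \ref{20} is the right opening move---the paper uses it too---but the way you then deploy it at the cut parameters $\tau_1,\tau_2$ has a genuine gap. At $\tau_1=\sup\{\tau: U_\tau\subseteq U_{\Gamma_-^1}\}$ the set $U_{\tau_1}$ is still contained in $U_{\Gamma_-^1}$ (the containment survives $L^1_{loc}$ limits), and the calibration inequality only yields $E_{rel}[\Gamma_{\tau_1},\Gamma_-]\geq\int_{U_{\tau_1}\triangle U_{\Gamma_-}}\mathrm{div}_\mathbb{H}\,\xi_-\,d\mathcal{H}^{n+1}_\mathbb{H}$, an integral that scales with the \emph{volume} of the symmetric difference. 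A sweep-out may brush $\Gamma_-^1$ with an arbitrarily thin finger, making $\mathrm{Vol}_\mathbb{H}(U_{\tau_1}\triangle U_{\Gamma_-})$ as small as one likes, so no uniform $\delta_1$ comes out of this; reaching a fixed leaf of the foliation is a positional statement, not an energy lower bound. (Separately, $\tau_1<\tau_2$ does not follow from $L^1_{loc}$-continuity of $\tau\mapsto\mathbf{1}_{U_\tau}$ alone, since the family is not assumed monotone and $\{\tau:U_\tau\subseteq U_{\Gamma_-^1}\}$ need not be an interval; and for $\tau$ past $\tau_1$ the set $U_\tau$ leaves the foliated region, where $\xi_-$ is not even defined, so the calibration inequality you quote no longer applies as stated.)

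The missing ingredient that converts the calibration into a uniform $\delta_0$ is a volume-constrained minimization. Since $\tau\mapsto\text{Vol}_\mathbb{H}(U_\tau\triangle U_{\Gamma_-})$ is continuous and runs from $0$ to $\text{Vol}_\mathbb{H}(U_{\Gamma_+}\triangle U_{\Gamma_-})$, any sweep-out meets $\mathcal{C}_m=\{U\in\mathcal{C}(\Gamma_-',\Gamma_+'):\text{Vol}_\mathbb{H}(U\triangle U_{\Gamma_-})=m\}$ for every small $m>0$. One then minimizes $E_{rel}[\partial^*\cdot,\Gamma_-]$ over $\mathcal{C}_m$ (existence via BV-compactness and the lower bound $E_{rel}\geq-E_0$ from Proposition \ref{35}), shows the minimum is strictly positive, and takes $\delta_0$ to be this minimum for a suitable $m_0$; the $\Gamma_+$ half follows by symmetry. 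The calibration handles the minimizer $U_0$ cleanly only when $\partial^*U_0$ stays inside the foliated neighborhood, where the fixed volume $m_0>0$ forces $E_{rel}[\partial^*U_0,\Gamma_-]>0$. When $\partial^*U_0$ exits $\overbar{\mathcal{N}}_\epsilon(\Gamma_-)$, a genuinely different argument is needed: the paper chooses a good annular slice by coarea, replaces $U_0$ outside $\overbar{\mathcal{N}}_{\epsilon_1}(\Gamma_-)$ by an area-minimizer, applies the monotonicity formula to extract a definite density $\gamma_0>0$ at a far-out point, and shows that for $m_0$ small this replacement would strictly decrease $E_{rel}$ unless $E_{rel}[\partial^*U_0,\Gamma_-]>0$. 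Neither the constrained minimization nor this exit-case replacement appears in your plan, and both are essential; the concern you flag (mass escape to the ideal boundary, handled by Propositions \ref{35} and \ref{60}) is real but secondary to these.
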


\begin{defn}
Two sweep-outs $\{(U_\tau,\Gamma_\tau)\}_{\tau\in[0,1]}$ and $\{(U_\tau',\Gamma_\tau')\}_{\tau\in[0,1]}$ are \textit{homotopic} if there is a generalized smooth family $\{(W_{\tau,\rho},\Xi_{\tau,\rho})\}_{(\tau,\rho)\in[0,1]^2}$ so that
\begin{enumerate}
    \item $(W_{\tau,0},\Xi_{\tau,0})=(U_\tau,\Gamma_\tau)$ for any $\tau\in[0,1]$;
    \item $(W_{\tau,1},\Xi_{\tau,1})=(U'_\tau,\Gamma'_\tau)$ for any $\tau\in[0,1]$;
    \item $(W_{0,\rho},\Xi_{0,\rho})=(U_{\Gamma_-},\Gamma_-)$ for any $\rho\in[0,1]$;
    \item $(W_{1,\rho},\Xi_{1,\rho})=(U_{\Gamma_+},\Gamma_+)$ for any $\rho\in[0,1]$.
\end{enumerate}
Let $X$ be a set of sweep-outs. $X$ is called \textit{homotopically closed} if it contains the homotopy class of each of its elements.
\end{defn}

\begin{defn}
Let $X$ be homotopically closed. The \textit{min-max value} of $X$ is
\begin{align*}
    m_{rel}(X)=\inf\left\{\max_{\tau\in[0,1]} E_{rel}[V_{\Gamma_\tau}]:\ \{(U_\tau,\Gamma_\tau)\}_{\tau\in[0,1]}\in X\right\}.
\end{align*}
The \textit{boundary-max value} is defined as $bM_{rel}=\max\{E_{rel}[\Gamma_+,\Gamma_-],0\}$. A \textit{minimizing sequence} is a sequence of sweep-outs $\{\{(U_\tau^\ell,\Gamma_\tau^\ell)\}_{\tau\in[0,1]}\}_{l=1}^\infty\subset X$ such that
\begin{equation*}
    \underset{\ell\to\infty}{\lim}\ \underset{\tau\in[0,1]}{\max}E_{rel}[V_{\Gamma_\tau^\ell}]=m_{rel}(X).
\end{equation*}
A \textit{min-max sequence} is obtained from a minimizing sequence by taking slices $\{(U_{\tau_\ell}^\ell,\Gamma^\ell_{\tau_\ell})\}$ for $\tau_\ell\in[0,1]$, which satisfies
\begin{equation*}
    \underset{l\to\infty}{\lim}E_{rel}[V_{\Gamma_{\tau_\ell}^\ell}]=m_{rel}(X).
\end{equation*}
\end{defn}

Applying Proposition \ref{65}, the following is immediate:

\begin{prop}
For any homotopically closed set $X$ of sweep-outs, we have $m_{rel}(X)\geq bM_{rel}+\delta_0>bM_{rel}$.
\end{prop}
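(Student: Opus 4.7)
The plan is to observe that this statement is essentially a direct corollary of Proposition \ref{65} combined with the definitions, so no new ideas are required; the work is purely bookkeeping.

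First I would reconcile the two different-looking quantities $E_{rel}[\Gamma_\tau,\Gamma_-]$ (appearing in Proposition \ref{65}) and $E_{rel}[V_{\Gamma_\tau}]$ (appearing in the definition of $m_{rel}(X)$). Since a sweep-out element satisfies $E_{rel}[\Gamma_\tau,\Gamma_-]<\infty$, we have $V_{\Gamma_\tau}[\psi]=E_{rel}[\Gamma_\tau,\Gamma_-;\psi]$ as a bounded linear functional on $\mathfrak{Y}$ by Proposition \ref{36}. In particular, testing against the cut-off $\chi_{\{y\geq\epsilon\}}$ and letting $\epsilon\to0^+$, Proposition \ref{35} gives
\begin{equation*}
E_{rel}[V_{\Gamma_\tau}]=\lim_{\epsilon\to0^+}V_{\Gamma_\tau}[\chi_{\{y\geq\epsilon\}}]=\lim_{\epsilon\to0^+}E_{rel}[\Gamma_\tau,\Gamma_-;\chi_{\{y\geq\epsilon\}}]=E_{rel}[\Gamma_\tau,\Gamma_-],
\end{equation*}
so the two notions coincide along any sweep-out.

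Next I would fix an arbitrary sweep-out $\{(U_\tau,\Gamma_\tau)\}_{\tau\in[0,1]}\in X$. By the identification just made and Proposition \ref{65},
\begin{equation*}
\max_{\tau\in[0,1]}E_{rel}[V_{\Gamma_\tau}]=\max_{\tau\in[0,1]}E_{rel}[\Gamma_\tau,\Gamma_-]\geq\max\{E_{rel}[\Gamma_+,\Gamma_-],0\}+\delta_0=bM_{rel}+\delta_0.
\end{equation*}
Taking the infimum over all sweep-outs in $X$ yields $m_{rel}(X)\geq bM_{rel}+\delta_0$, and since $\delta_0>0$ by Proposition \ref{65}, the strict inequality $bM_{rel}+\delta_0>bM_{rel}$ is automatic. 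There is no real obstacle; the nontrivial input is Proposition \ref{65}, whose calibration-type proof is deferred to Appendix \ref{111}, and the only subtlety worth flagging is the reconciliation of $E_{rel}[V_{\Gamma_\tau}]$ with $E_{rel}[\Gamma_\tau,\Gamma_-]$, which is harmless but deserves to be stated explicitly because the definitions of these two numbers pass through different limits.
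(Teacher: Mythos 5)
Your proposal is correct and follows exactly the route the paper intends; the paper states the result as ``immediate'' from Proposition \ref{65}, and the only content beyond the definitions is precisely the reconciliation $E_{rel}[V_{\Gamma_\tau}]=E_{rel}[\Gamma_\tau,\Gamma_-]$, which you identify and justify correctly via Propositions \ref{35} and \ref{36} together with the definition $V_\Gamma[\cdot]=E_{rel}[\Gamma,\Gamma_-;\cdot]$ introduced in Section 5. Spelling this out explicitly is a reasonable addition, since the definition of $E_{rel}[V]$ in Proposition \ref{60} passes through a different limiting procedure than the definition of $E_{rel}[\Gamma_\tau,\Gamma_-]$, and the paper leaves the identification implicit.
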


We now state the main theorem of this section, of which Theorem \ref{main1} is an easy corollary. Let $n$, $\alpha$, $k$, $M$ and $\Gamma_-\preceq\Gamma_+$ be as stated in Theorem \ref{main1}. By \cite[Lemma 3.4]{YaoRel}, $\Gamma_+$ is in a thin neighborhood of $\Gamma_-$ so the variational theory of relative entropy in the preceding sections can apply.

\begin{thm}\label{66}
Let $X$ be a homotopically closed set of sweep-outs. There is a minimizing sequence $\big\{\{(U^\ell_\tau,\Gamma^\ell_\tau)\}_{\tau\in[0,1]}\big\}_{\ell=1}^\infty$, so that there exists a min-max sequence satisfying $\Gamma^\ell_{\tau_\ell}\to \Gamma_0$ in the sense of varifolds, where $\Gamma_0=\partial^*U_0$ for $U_0\in\mathcal{C}(\Gamma_-,\Gamma_+)$ satisfies $E_{rel}[\Gamma_0,\Gamma_-]=m_{rel}(X)$. Moreover, $\Gamma_0$ is a minimal hypersurface except for a singular set of codimension-$7$ and is $C^{k,\alpha}$-asymptotic to $M$.
\end{thm}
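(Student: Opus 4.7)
The plan is to adapt the De Lellis-Tasnady reformulation of the Almgren-Pitts-Simon-Smith min-max theory to the hyperbolic setting with the relative entropy functional. The essential new ingredients are the non-compactly supported variational vector fields in $\mathcal{Y}^-$ from Section 4, which allow one to control behavior near the ideal boundary, and the $E_{rel}$-minimizing to first order property developed in Section 5. Proposition \ref{65} provides the fundamental nontriviality $m_{rel}(X) \geq bM_{rel} + \delta_0$, without which the argument would produce one of the $\Gamma_\pm$.

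First I would run a pull-tight procedure. Starting from an arbitrary minimizing sequence in $X$, I construct a continuous map on sweep-outs that replaces slices of near-maximal $E_{rel}$-value by their images under flows of carefully chosen vector fields $\textbf{Y} \in \mathcal{Y}^-$, decreasing $\delta V[\textbf{Y}]$ wherever it is negative. The continuity statements in Proposition \ref{68} and Lemma \ref{44} ensure the resulting families remain generalized smooth families and are homotopic to the originals within $X$. Iterating this in the standard fashion produces a minimizing sequence every min-max subsequential limit of which is $E_{rel}$-minimizing to first order. Since the corresponding functionals lie in $\mathfrak{Y}^*_{\mathcal{C}}(\Lambda)$ for some $\Lambda > m_{rel}(X)$, weak$^*$-compactness then yields a limit $V \in \mathfrak{Y}^*_{\mathcal{C}}(m_{rel}(X))$ along a subsequence.

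Proposition \ref{70} gives $E_{rel}[V] = V[\mathbf{1}] = m_{rel}(X)$ and that the associated varifold $V_+$ from Proposition \ref{6} is stationary with $\text{Spt}(V_+) \subset \text{cl}(U_{\Gamma_+}) \setminus U_{\Gamma_-}$. A Pitts-style combinatorial argument, carried out in the closely related Euclidean self-expander setting in \cite{BWMountainPass} and transferable essentially verbatim here, upgrades the convergence to almost-minimizing-in-small-annuli centered at any interior point of $\widetilde{\Omega}$. The replacement construction of Pitts-Schoen-Simon then shows $V_+$ is integer rectifiable and a stable minimal hypersurface smooth away from a singular set of codimension at least $7$, so $V_+ = \partial^* U_0$ for some Caccioppoli set $U_0 \in \mathcal{C}(\Gamma_-, \Gamma_+)$. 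The strict inequality $E_{rel}[\Gamma_0, \Gamma_-] = m_{rel}(X) > \max\{E_{rel}[\Gamma_\pm, \Gamma_-], 0\}$ from Proposition \ref{65} forces $\Gamma_0 \neq \Gamma_\pm$.

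The main obstacle is proving that $\Gamma_0$ is $C^{k,\alpha}$-asymptotic to $M$, the asymptotic regularity whose treatment is said in the introduction to follow Hardt-Lin. Because $\Gamma_0$ is trapped between $\Gamma_\pm$, both $C^{k,\alpha}$-asymptotic to $M$, the thin-neighborhood structure from Proposition \ref{20} and \cite{YaoRel} combined with a monotonicity and blow-up analysis at points of $M$ first shows that $\overbar{\Gamma_0} = \Gamma_0 \cup M$ is a $C^1$ hypersurface in $\overbar{\mathbb{H}}^{n+1}$ (with respect to a boundary defining function). The uniformly degenerate Schauder theory of Alexakis-Mazzeo \cite{AlexakisMazzeo}, applied to the asymptotic minimal graph equation that $\Gamma_0$ satisfies near $M$, then bootstraps this $C^1$ regularity up to $C^{k,\alpha}$ across $M$.
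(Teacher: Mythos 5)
Your high-level architecture matches the paper's: pull-tight to produce $E_{rel}$-minimizing limits (Proposition \ref{67}), weak$^*$ compactness, Proposition \ref{70} for stationarity and $E_{rel}[V]=V[\mathbf{1}]$, the Almgren--Pitts combinatorial lemma (Proposition \ref{69}), interior replacement and Schoen--Simon regularity, and Proposition \ref{65} to force $\Gamma_0\neq\Gamma_\pm$. The genuine gap is in the boundary regularity. The Schoen--Simon singular set of codimension $7$ could a priori accumulate at the ideal boundary $M$, and ``monotonicity and blow-up analysis at points of $M$'' by itself does not rule that out, because to apply Schoen--Simon after rescaling you need \emph{stability} of $\Gamma_0$ in balls $B_{\epsilon y_i}(\textbf{x}_i,y_i)$ whose radii are comparable to $y_i$. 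The paper's crucial observation is Proposition \ref{73}: one can select the min-max sequence so that it is almost minimizing in annuli $\mathcal{AN}_{\varrho(p)}(p)$ with $\varrho(p)\geq\delta\,p_{n+1}$, i.e., the annulus scale is proportional to the Euclidean distance to $\mathbb{R}^n\times\{0\}$. That quantitative lower bound, not present in the Euclidean self-expander setting, is precisely what makes the blow-up at a hypothetical boundary-approaching singular sequence fall inside the stable regime; your characterization of the combinatorial step as ``transferable essentially verbatim'' misses this.

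Two smaller omissions: you do not argue multiplicity one. De Lellis--Tasnady gives $V_+=\sum_j c_j\,\mathcal{H}^n\mres\Gamma^j$ with $c_j\in\mathbb{N}$, and the paper deduces $c_j=1$ by writing $E_{rel}[V_0]=E_{rel}[\sum_j c_j\Gamma^j,\Gamma_-]$ and noting this is finite only when every $c_j=1$, since a sheet of infinite hyperbolic volume counted with multiplicity $\geq 2$ cannot have finite relative entropy against a single copy of $\Gamma_-$. Without this, $E_{rel}[\Gamma_0,\Gamma_-]=m_{rel}(X)$ does not follow. Also, you need to show $\text{Spt}\,V_+=\text{cl}(\partial^*U_0)$ near $M$ before the boundary-graph argument makes sense; the paper does this via the $L^1_{loc}$ limit $U_0$ of $U^\ell_{\tau_\ell}$ and a test-function argument comparing $\int\phi\cdot\textbf{v}\,dV_+$ with $\int_{\partial^*U_0}\phi\cdot\overbar{\textbf{n}}$. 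The higher-order bootstrap is actually cited to \cite{LinHyperbolicMinimalGraph} and \cite{TonegawaCMCinHpSpace}, not directly to the Alexakis--Mazzeo Fredholm package, though the two are related.
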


The proof of Theorem \ref{66} is divided into several parts. The first ingredient we need is the \textit{pull-tight} argument, which proves the existence of a minimizing sequence, so that every min-max sequence of it has a limit in the weak sense.

Let $\Lambda_0=\max\{E_0,4|m_{rel}(X)|\}$, $\mathcal{V}=\mathfrak{Y}^*_\mathcal{C}(\Lambda_0)$ and
\begin{equation*}
    \mathcal{V}_s:=\{V\in\mathcal{V}: \text{V is }E_{rel}\text{-minimizing to the first order in }\text{cl}(\Omega')\}.
\end{equation*}
By Proposition \ref{36}, we know that $\mathcal{V}\subset B^{\mathfrak{Y}^*}_{\Lambda'}$ for $\Lambda'=\Lambda'(\Gamma_-,\Omega',\Lambda_0)$.

\begin{prop}\label{67}
Let $X$ be a homotopically closed set of sweep-outs with $m_{rel}(X)>bM_{rel}$. There exists a minimizing sequence $\{\{(U^\ell_\tau,\Gamma^\ell_\tau)\}_{\tau\in[0,1]}\}^\ell\subset X$ so that if $\{(U_{\tau_\ell}^\ell,\Gamma^\ell_{\tau_\ell})\}^\ell$ is a min-max sequence, then $\mathcal{D}(V_{\Gamma^\ell_{\tau_\ell}},\mathcal{V}_s)\rightarrow0$. Here, $\mathcal{D}$ is the metric for the weak*-topology on $\mathfrak{Y}^*$.
\end{prop}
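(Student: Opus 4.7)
The plan is to adapt the classical pull-tight procedure of Almgren--Pitts, as reformulated by De Lellis--Tasn\'ady and modified for the relative-entropy setting in \cite{BWMountainPass}. The starting observation is that for $\mathbf{Y}\in\mathcal{Y}^-$, since $V_{\Phi_t(\Gamma_-)}$ is a compactly supported variation of the minimal hypersurface $\Gamma_-$, we have $\frac{d}{dt}\big|_{t=0}V_{\Phi_t(\Gamma_-)}[\mathbf{1}]=0$, hence $\delta^+V[\mathbf{Y}]=\delta V[\mathbf{Y}]$. Thus for every $V\in\mathcal{V}\setminus\mathcal{V}_s$ there exists $\mathbf{Y}_V\in\mathcal{Y}^-$ with $\|\mathbf{Y}_V\|_{\mathcal{Y}^-}\leq1$ and $\delta V[\mathbf{Y}_V]=:-2\beta_V<0$. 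By Proposition \ref{68}(\ref{57}), the first-variation map is jointly continuous, so one can find $t_V>0$ and a weak$^*$-neighborhood $\mathcal{U}_V$ of $V$ in $\mathcal{V}$ such that $\delta\bigl((\Phi^{\mathbf{Y}_V}_s)^+_\#W\bigr)[\mathbf{Y}_V]\leq-\beta_V$ for every $s\in[0,t_V]$ and $W\in\mathcal{U}_V$. Integrating in $s$ then gives the quantitative drop $(\Phi^{\mathbf{Y}_V}_{t_V})^+_\#W[\mathbf{1}]\leq W[\mathbf{1}]-\beta_V t_V$.

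For each $\eta>0$, put $\mathcal{V}_\eta:=\{W\in\mathcal{V}:\mathcal{D}(W,\mathcal{V}_s)\geq\eta\}$. Since $\mathfrak{Y}$ is separable and $\mathcal{V}\subset B^{\mathfrak{Y}^*}_{\Lambda'}$, the space $(\mathcal{V},\mathcal{D})$ is compact and metrizable, so $\mathcal{V}_\eta$ can be covered by finitely many neighborhoods $\mathcal{U}_{V_1},\dots,\mathcal{U}_{V_N}$ as above. Take a continuous partition of unity $\{\chi_i\}$ subordinate to this cover (extended by zero past a slight enlargement of $\mathcal{V}_\eta$) and define the $W$-dependent vector field $\mathbf{Y}(W):=\sum_i\chi_i(W)\mathbf{Y}_{V_i}\in\mathcal{Y}^-$ with time parameter $T(W):=\sum_i\chi_i(W)t_{V_i}$. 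Setting $\Psi_\eta(W):=(\Phi^{\mathbf{Y}(W)}_{T(W)})^+_\#W$ produces, via Proposition \ref{68}(\ref{53}), a continuous map $\Psi_\eta:\mathcal{V}\to\mathcal{V}$ with three key properties: (i) $\Psi_\eta$ is the identity on $\mathcal{V}_s$ and wherever all $\chi_i$ vanish; (ii) there is $c_\eta>0$ with $\Psi_\eta(W)[\mathbf{1}]\leq W[\mathbf{1}]-c_\eta$ for all $W\in\mathcal{V}_\eta$; and (iii) $\Psi_\eta$ preserves $\mathcal{C}(\Gamma'_-,\Gamma'_+)$ by item (\ref{31}) of Proposition \ref{20}. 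Moreover, since $\Gamma_\pm$ are strictly stable minimal and so lie in $\mathcal{V}_s$, the boundary slices are fixed, and applying $\Psi_\eta$ slice by slice to a sweep-out yields a new sweep-out homotopic to the original, hence in $X$.

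To finish, choose any initial minimizing sequence $\{\{(U^\ell_\tau,\Gamma^\ell_\tau)\}_\tau\}_\ell\subset X$ and any sequence $\eta_\ell\to 0^+$, apply $\Psi_{\eta_\ell}$ to the $\ell$-th sweep-out slice by slice, and extract a diagonal subsequence. The max over $\tau$ of relative entropy cannot increase under $\Psi_{\eta_\ell}$, and is bounded below by $m_{rel}(X)$, so the result is still a minimizing sequence. If some min-max subsequence $V_{\Gamma^\ell_{\tau_\ell}}$ satisfied $\mathcal{D}(V_{\Gamma^\ell_{\tau_\ell}},\mathcal{V}_s)\geq\eta>0$ infinitely often, then for $\ell$ large enough that $\eta_\ell<\eta$, property (ii) together with Proposition \ref{70} (which gives $W[\mathbf{1}]\geq E_{rel}[W]$ in general and equality on $\mathcal{V}_s$) would force $E_{rel}[\Psi_{\eta_\ell}(V_{\Gamma^\ell_{\tau_\ell}})]\leq E_{rel}[V_{\Gamma^\ell_{\tau_\ell}}]-c_\eta$, contradicting the fact that $E_{rel}[V_{\Gamma^\ell_{\tau_\ell}}]\to m_{rel}(X)$. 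Thus $\mathcal{D}(V_{\Gamma^\ell_{\tau_\ell}},\mathcal{V}_s)\to0$, as required. The main technical obstacle lies in the construction of $\Psi_\eta$ in the second paragraph: unlike in the compactly-supported Euclidean setting, one must simultaneously control the weighted function space $\mathfrak{Y}$ with its weak$^*$ topology, the non-compactly supported vector fields in $\mathcal{Y}^-$, and the obstacle constraint; this is exactly what the continuity statements of Proposition \ref{52}, Proposition \ref{68}, and the mean-convex foliation of Proposition \ref{20} are built to deliver.
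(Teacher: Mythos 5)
Your overall strategy---pull-tight via non-compactly supported vector fields, using the joint continuity of Propositions \ref{52} and \ref{68} and the obstacle foliation from Proposition \ref{20}---is exactly what the paper intends: it delegates the proof to Proposition~7.7 of \cite{BWMountainPass} and indicates the needed substitutions, which you have made. However, there are two places where your write-up does not actually deliver the conclusion.

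First, the drop $\Psi_\eta(W)[\mathbf{1}]\leq W[\mathbf{1}]-c_\eta$ for $W\in\mathcal{V}_\eta$ does not follow from what you have established. Your estimate $\delta\bigl((\Phi^{\mathbf{Y}_{V_i}}_s)^+_\#W\bigr)[\mathbf{Y}_{V_i}]\leq-\beta_{V_i}$ controls the first variation along the flow of the \emph{individual} field $\mathbf{Y}_{V_i}$, but you then integrate along the flow of the \emph{mixed} field $\mathbf{Y}(W)=\sum_i\chi_i(W)\mathbf{Y}_{V_i}$, which is a different flow. This is fixable by a routine compactness argument: linearity of $\delta W[\cdot]$ in the vector field plus subordinacy of $\{\chi_i\}$ give $\delta W[\mathbf{Y}(W)]\leq-c'(\eta)$ at $s=0$ for $W\in\mathcal{V}_\eta$; then Proposition~\ref{68}(\ref{57}) together with compactness of $\mathcal{V}_\eta$ yields a \emph{uniform} $\tau_\eta>0$ such that $\delta\bigl((\Phi^{\mathbf{Y}(W)}_s)^+_\#W\bigr)[\mathbf{Y}(W)]\leq-c'(\eta)/2$ for all $s\in[0,\tau_\eta]$ and $W\in\mathcal{V}_\eta$, and one integrates with a flow time equal to $\tau_\eta$ on $\mathcal{V}_\eta$ (tapered off outside). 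You should state this step rather than take $T(W)=\sum\chi_i(W)t_{V_i}$.

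Second, and more seriously, the final paragraph does not close. The drop you can extract from $\Psi_{\eta_\ell}$ on $\mathcal{V}_{\eta_\ell}\supset\mathcal{V}_\eta$ is $c_{\eta_\ell}$, not the fixed $c_\eta$ you assert, and $c_{\eta_\ell}\to0$; moreover $m_{rel}(X)$ lower-bounds only $\max_\tau E_{rel}[V_{\Gamma^\ell_\tau}]$, not the value at the particular slice $\tau_\ell$, so applying $\Psi_{\eta_\ell}$ again yields no contradiction with $E_{rel}[V_{\Gamma^\ell_{\tau_\ell}}]\to m_{rel}(X)$. The standard remedy is to construct a \emph{single} continuous map $H:\mathcal{V}\to\mathcal{V}$ with $H|_{\mathcal{V}_s}=\mathrm{id}$ and $E_{rel}[H(V)]\leq E_{rel}[V]-L(\mathcal{D}(V,\mathcal{V}_s))$ for a fixed continuous $L$ with $L(0)=0$ and $L>0$ on $(0,\infty)$ (cover $\mathcal{V}\setminus\mathcal{V}_s$ by concentric annuli, use a partition of unity whose flow times vanish as one approaches $\mathcal{V}_s$, and exploit the compactness and metrizability of $\mathcal{V}$). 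Then, writing $V_{\Gamma^\ell_{\tau_\ell}}=H(\widetilde{V}^\ell)$ for the min-max slices of the tightened sequence, one has $L(\mathcal{D}(\widetilde{V}^\ell,\mathcal{V}_s))\leq E_{rel}[\widetilde{V}^\ell]-E_{rel}[V_{\Gamma^\ell_{\tau_\ell}}]\leq\max_\tau E_{rel}[V_{\widetilde{\Gamma}^\ell_\tau}]-E_{rel}[V_{\Gamma^\ell_{\tau_\ell}}]\to0$, so $\mathcal{D}(\widetilde{V}^\ell,\mathcal{V}_s)\to0$, and then $\mathcal{D}(V_{\Gamma^\ell_{\tau_\ell}},\mathcal{V}_s)\to0$ follows from continuity of $H$, $H|_{\mathcal{V}_s}=\mathrm{id}$, and compactness of $\mathcal{V}$. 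Replacing your family $\{\Psi_\eta\}$ with this single map $H$ and replacing the last paragraph accordingly gives a correct proof.
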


Proposition \ref{67} is similar to the setting in \cite[Proposition 7.7]{BWMountainPass}. In fact, the proof there can be adapted to our case once we replace the continuous dependence results that appear in Section 5 and 6 in \cite{BWMountainPass} by Proposition \ref{52} and Proposition \ref{68}. So, we do not include a proof here. The pull-tight argument is also carried out in \cite{DeLellisRamic}, \cite{ColdingdeLellisMinmax}, where they worked in compact ambient manifolds with the area functional instead of the relative entropy.

The second ingredient is the \textit{almost minimizing} property introduced in \cite{PittsExiReg}.

\begin{defn}
Fix a constant $\epsilon>0$ and an open set $W\subset\mathbb{R}_+^{n+1}$. A pair $(U,\partial^*U)$ with $U\in\mathcal{C}(\Gamma'_-,\Gamma'_+)$ is called $\epsilon$-\textit{almost minimizing} in $W$, if there is \textbf{no} generalized smooth family $\{(U_\tau,\Gamma_\tau)\}_{\tau\in[0,1]}$ with
\begin{enumerate}
    \item $(U_0,\Gamma_0)=(U,\partial^*U)$;
    \item There is an open subset $W'$ with $\text{cl}(W')\subset\subset W$, so that $U_\tau\backslash W'=U\backslash W'$ for any $\tau\in[0,1]$;
    \item $E_{rel}[\Gamma_\tau,\Gamma_-]\leq E_{rel}[\partial^*U,\Gamma_-]+\frac{\epsilon}{8}$ for any $\tau\in[0,1]$;
    \item $E_{rel}[\Gamma_1,\Gamma_-]\leq E_{rel}[\partial^*U,\Gamma_-]-\epsilon$.
\end{enumerate}
The pair $(U,\partial^*U)$ is said to be $\epsilon$-almost minimizing in a pair of open sets $(W^1,W^2)$ if it is $\epsilon$-almost minimizing in at least one of $W^1$ and $W^2$. A sequence $\{(U^i,\partial^*U^i)\}_i$ is called \textit{almost minimizing} (or \textit{a.m.} for short) in $W$ if each $(U^i,\partial^*U^i)$ is $\epsilon_i$-almost minimizing for a sequence of $\epsilon_i\rightarrow0$.
\end{defn}

Denote by $\mathcal{CO}$ the set of pairs $(W^1,W^2)$, where $W^1$, $W^2$ are open subsets of $\mathbb{R}_+^{n+1}$ with $\text{dist}_{\mathbb{R}^{n+1}}(W^1,W^2)\geq4\min\left\{\text{diam}_{\mathbb{R}^{n+1}}(W^1),\text{diam}_{\mathbb{R}^{n+1}}(W^2)\right\}$. We now state the Almgren-Pitts combinatorial lemma that appears in \cite[Section 3]{deLellisTasnady} (it is a variant of the original one by Almgrens and Pitts \cite{PittsExiReg}). The proof is identical to that of \cite[Section 3]{deLellisTasnady}.

\begin{prop}\label{69}
Let $X$ be a homotopically closed set of sweep-outs of $\widetilde{\Omega}$ with $m_{rel}(X)>bM_{rel}$. There exists an element $V_0\in\mathfrak{Y}^*_\mathcal{C}(\Lambda_0)$ that is $E_{rel}$-minimizing to the first order in $\mathrm{cl}(\Omega')$, and a min-max sequence $\{(U^\ell_{\tau_\ell},\Gamma^\ell_{\tau_\ell})\}_\ell$ so that
\begin{enumerate}
    \item $V_{\Gamma^\ell_{\tau_\ell}}$ converges in the weak*-topology to $V_0$ as $\ell\to\infty$;
    \item For any $(W^1,W^2)\in\mathcal{CO}$, $(U^\ell_{\tau_\ell},\Gamma^\ell_{\tau_\ell})$ is $\frac{1}{\ell}$-almost minimizing in $(W^1,W^2)$ for $\ell$ sufficiently large.
\end{enumerate}
\end{prop}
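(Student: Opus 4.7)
The plan is to adapt the Almgren--Pitts combinatorial argument in the form presented in \cite{deLellisTasnady} and \cite{DeLellisRamic} to the relative entropy setting of this paper. The starting point will be the tight minimizing sequence $\bigl\{\{(U^\ell_\tau,\Gamma^\ell_\tau)\}_{\tau\in[0,1]}\bigr\}_\ell\subset X$ supplied by Proposition \ref{67}. Proposition \ref{36} guarantees that the associated slices all lie in the fixed weak*-closed ball $B^{\mathfrak{Y}^*}_{\Lambda'}$, so sequential weak*-compactness of bounded sets in $\mathfrak{Y}^*$ allows me to extract, from any min-max sequence $\{(U^\ell_{\tau_\ell},\Gamma^\ell_{\tau_\ell})\}_\ell$, a subsequence whose $V_{\Gamma^\ell_{\tau_\ell}}$ converges in weak*-topology to some $V_0\in\mathfrak{Y}^*_\mathcal{C}(\Lambda_0)$. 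The conclusion of Proposition \ref{67} then forces $V_0\in\mathcal{V}_s$, so that $V_0$ is automatically $E_{rel}$-minimizing to the first order in $\mathrm{cl}(\Omega')$. This yields item (1) together with the $E_{rel}$-minimizing property of $V_0$.

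For item (2) I would argue by contradiction. Suppose that no matter which min-max sequence is extracted from the pull-tight minimizing sequence, there is some $(W^1,W^2)\in\mathcal{CO}$ and infinitely many indices $\ell_k\to\infty$ so that $(U^{\ell_k}_{\tau_{\ell_k}},\Gamma^{\ell_k}_{\tau_{\ell_k}})$ fails to be $\frac{1}{\ell_k}$-almost minimizing in both $W^1$ and $W^2$. By the definition of $\frac{1}{\ell_k}$-almost minimizing, each such failure furnishes a pair of generalized smooth families, supported in a compact subset of $W^i\subset\subset\{y>0\}$, that decrease $E_{rel}$ by $\frac{1}{\ell_k}$ via an intermediate step whose width in $E_{rel}$ is at most $\frac{1}{8\ell_k}$. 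The goal is to patch these local deformations into a global homotopic modification that lowers $\max_\tau E_{rel}[\Gamma_\tau]$ below $m_{rel}(X)$ at length scale $\sim 1/\ell_k$, contradicting the minimizing property of the sequence.

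The combinatorial step proceeds exactly as in \cite[Section~3]{deLellisTasnady}: the separation condition $\mathrm{dist}_{\mathbb{R}^{n+1}}(W^1,W^2)\ge 4\min\{\mathrm{diam}(W^1),\mathrm{diam}(W^2)\}$ built into $\mathcal{CO}$ lets one pass to a refined covering of the parameter interval on which the ``bad'' open sets occurring at different slices are pairwise disjoint. The local families produced above are then glued into the original sweep-out by freezing the modification near the bad slice $\tau_\ell$ and interpolating via the intermediate family over a small parameter window. Since every constituent family is supported in a compact subset of $\{y>0\}$, the modification preserves the asymptotic boundary, the finiteness of $E_{rel}$, and the continuity of $\tau\mapsto V_{\Gamma_\tau}$ in weak*-topology, so the modified family is again a generalized smooth family and is visibly homotopic to the original sweep-out, hence still in $X$. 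The gain $1/\ell_k$ dominates the ``bump'' cost $1/(8\ell_k)$ on each interpolation window, so the modified sweep-out achieves $\max_\tau E_{rel}\le \max_\tau E_{rel}[\Gamma^{\ell_k}_\tau]-c/\ell_k$ for a fixed $c>0$, giving the desired contradiction.

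The main obstacle I expect is verifying that the combinatorial patching respects the $\mathfrak{Y}^*$-structure and the relative entropy rather than the absolute area treated in \cite{deLellisTasnady}. The essential observation, which I would make carefully, is that because all surgeries are carried out in compact subsets of $\{y>0\}$, the modified Caccioppoli sets agree with the original ones outside a compact set of $\{y>0\}$, so Proposition \ref{35} and the linearity of $E_{rel}[\,\cdot\,,\Gamma_-;\mathbf{1}]$ imply that the change in relative entropy equals the difference of the corresponding compactly-supported area integrals. This reduces every energy comparison in the combinatorial argument to the Euclidean-type estimates of \cite{deLellisTasnady}, so the argument there goes through verbatim. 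A secondary bookkeeping point, also handled by compactness of supports, is that the continuous dependence statements of Proposition \ref{68} apply on each patched window, ensuring the glued family satisfies the weak*-continuity clause in the definition of a generalized smooth family.
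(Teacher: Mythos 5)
Your proposal matches the paper's approach exactly: the paper proves Proposition~\ref{69} by simply stating that "the proof is identical to that of \cite[Section 3]{deLellisTasnady}," so the content of a proof here is precisely the adaptation you spell out — start from the pull-tight minimizing sequence of Proposition~\ref{67}, extract $V_0\in\mathcal{V}_s$ by weak*-sequential compactness of the ball in the dual of the separable space $\mathfrak{Y}$, and then run the Almgren--Pitts combinatorial/contradiction argument, observing that all competitor surgeries are confined to sets $W'$ with $\mathrm{cl}(W')\subset\subset\{y>0\}$, so that changes in $E_{rel}$ reduce to compactly supported area differences and Propositions~\ref{35},~\ref{36},~\ref{68} guarantee the modified families remain generalized smooth families in the homotopy class. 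Your identification of the two load-bearing points — compact support of the surgeries making the relative-entropy bookkeeping purely local, and the weak*-continuity clause being preserved under such surgeries — is exactly what justifies the paper's claim that the De Lellis--Tasnady argument carries over verbatim.
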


The Almgren-Pitts lemma is used to show the existence of a min-max sequence that is almost minimizing in some annuli. Our observation is that the scales of such annuli can be made sufficiently large. This is crucial when proving the boundary regularity. More precisely,

\begin{prop}\label{73}
Let $\mathcal{AN}_\rho(p)$ be the set of all open annuli centered at $p\in\mathbb{R}_+^{n+1}$ with outer radius (measured in Euclidean topology) less than $\rho$. There are a min-max sequence $\{(U^\ell_{\tau_\ell},\Gamma^\ell_{\tau_\ell})\}_{\ell=1}^\infty$ and a function $\varrho:\ \text{cl}(\Omega')\to(0,\infty)$ satisfying
\begin{enumerate}
    \item $\{(U^\ell_{\tau_\ell},\Gamma^\ell_{\tau_\ell})\}_\ell$ is a.m. in every $An\in\mathcal{AN}_{\varrho(p)}(p)$ for all $p\in\text{cl}(\Omega')$;
    \item $V_{\Gamma^\ell_{\tau_\ell}}$ converges in the weak*-topology to $V_0$ as $\ell\to\infty$;
    \item $\varrho$ satisfies $\varrho(p)\geq\delta p_{n+1}$ for $\delta>0$ and all $p\in\text{cl}(\Omega')$.
\end{enumerate}
\end{prop}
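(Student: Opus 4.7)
The plan is to deduce Proposition \ref{73} from Proposition \ref{69} via the classical Almgren--Pitts combinatorial upgrade, executed in \cite{deLellisTasnady} and \cite{BWMountainPass}, with the explicit choice $\varrho(p) := \delta\, p_{n+1}$ for a uniform $\delta \in (0,1)$. First I would fix $\delta$ small enough that $B_{\varrho(p)}(p)$ is compactly contained in $\{y>0\}$ for every $p \in \text{cl}(\Omega')$; this makes item (3) immediate, ensures every $An \in \mathcal{AN}_{\varrho(p)}(p)$ lies inside $\{y>0\}$, and leaves item (2) as a direct inheritance from Proposition \ref{69}. The substance of the statement is item (1): upgrading $\tfrac{1}{\ell}$-a.m.~on arbitrary pairs in $\mathcal{CO}$ to a.m.~in every annulus of outer radius below $\varrho(p)$.

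For the upgrade, I fix $p$ and construct a countable collection $\mathcal{F}_p$ of concentric Euclidean annuli at $p$, all of outer radius $< \varrho(p)$, indexed by two parameters (scale and shape), so that (i) every annulus in $\mathcal{AN}_{\varrho(p)}(p)$ is compactly contained in some member of $\mathcal{F}_p$, and (ii) any two distinct members of $\mathcal{F}_p$ lie in $\mathcal{CO}$. A concrete choice is the family $An(p,\, c^{k+j} r,\, c^k r)$ for $k \in \mathbb{N}$, $j \in \{1, \dots, J\}$, and $r$ in a countable dense subset of $(0, \varrho(p))$, with $c \in (0, 1/10)$ and $J$ sufficiently large; a direct computation comparing mutual Euclidean distances to diameters verifies (i) and (ii). Proposition \ref{69} then gives $\tfrac{1}{\ell}$-a.m.~in at least one member of every pair from $\mathcal{F}_p$, and a pigeonhole-diagonal extraction over the resulting countable collection of pairs produces a subsequence of $\{(U^\ell_{\tau_\ell}, \Gamma^\ell_{\tau_\ell})\}$ which is a.m.~in every member of $\mathcal{F}_p$. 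The trivial monotonicity that $\varepsilon$-a.m.~in $W$ implies $\varepsilon$-a.m.~in every $W' \subset\subset W$ (direct from the definition of a.m., since the test families witnessing violation are compactly supported) then yields a.m.~in every annulus of $\mathcal{AN}_{\varrho(p)}(p)$.

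To produce a single subsequence that works at every $p$ simultaneously, I would perform a further diagonal extraction along a countable dense set $\{p_i\} \subset \text{cl}(\Omega')$, using the standard observation that the a.m.~property in $An(p, s, t)$ is stable under small perturbations of $p$ controlled by $s$. The main technical obstacle is the simultaneous enforcement of (i) and (ii) in the choice of $\mathcal{F}_p$: the $\mathcal{CO}$ separation forces the scales to be sparse, while containment of every possible shape requires rich variation in the ratio of inner to outer radius, and the balancing is the standard Pitts-style combinatorial bookkeeping. The bound $\varrho(p) \geq \delta p_{n+1}$ is preserved throughout since the construction is scale-invariant in $r$ and is carried out entirely inside $\{y > 0\}$, where the arguments of \cite{deLellisTasnady, BWMountainPass} apply verbatim, using Proposition \ref{69} in place of their analogous lemmas.
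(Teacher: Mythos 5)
Your combinatorial step has a genuine gap, and it is the heart of the proposition.

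You propose to build, for each $p$, a countable family $\mathcal{F}_p$ of concentric Euclidean annuli at $p$ with two properties: (i) every annulus in $\mathcal{AN}_{\varrho(p)}(p)$ is compactly contained in some member, and (ii) any two distinct members form a pair in $\mathcal{CO}$. These two demands are incompatible. A pair in $\mathcal{CO}$ has strictly positive mutual distance, so pairwise $\mathcal{CO}$ forces pairwise disjointness; a collection of pairwise disjoint concentric annuli at $p$ is linearly ordered by scale, say $An(p,a_1,b_1), An(p,a_2,b_2),\dots$ with $b_1\le a_2\le b_2\le a_3\le\dots$, and then a target annulus $An(p,s,t)$ with $s\in(a_1,b_1)$ and $t\in(a_2,b_2)$ is contained in no member, since the first does not reach out to $t$ and the second does not reach down to $s$. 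Your explicit candidate $An(p,c^{k+j}r,c^kr)$ with $j\in\{1,\dots,J\}$ also fails (ii) directly: for fixed $k$ and $r$, distinct $j$ give annuli sharing the same outer radius, which overlap and hence have distance zero. And with $J$ finite, the shape ratio is bounded by $c^{-J}$, so (i) fails for $t/s>c^{-J}$.

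Even setting aside $\mathcal{F}_p$, the claim that "a.m.~in at least one of every $\mathcal{CO}$-pair, after a pigeonhole-diagonal extraction, gives a.m.~in every member" is false. It is consistent with Proposition~\ref{69} that a.m.~fails in $A_1$ along the odd $\ell$'s, fails in $A_2$ along the even $\ell$'s, and holds in both otherwise: then no subsequence is a.m.~in both $A_1$ and $A_2$, yet for every $\ell$ a.m.~holds in one of the pair. This exceptional failure is not an artifact; it is exactly why the Almgren--Pitts combinatorics produce a.m.~only \emph{away from} a possible bad point, and why the proposition's $\varrho$ must be allowed to degenerate near that point. In particular, the ansatz $\varrho(p)=\delta p_{n+1}$ is not achievable by this kind of argument: if a.m.~fails near some $p_0\in\mathrm{cl}(\Omega')$, then for $p$ close to $p_0$ there are annuli of outer radius $<\delta p_{n+1}$ containing $p_0$ in their interior, for which item (1) would fail.

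The paper's proof is structured precisely to accommodate this. It takes, for each $p$ and each scale $r_i(p)=p_{n+1}/i$, the pair $\big(W^1_i(p),W^2_i(p)\big)=\big(B_{r_i(p)/16}(p),\ \mathbb{R}^{n+1}_+\setminus\overline{B}_{7r_i(p)/16}(p)\big)$ and applies Proposition~\ref{69} to this single one-parameter family rather than to a large family of annuli. The dichotomy is over scales: either some $i_0$ has a.m.~in $W^1_{i_0}(p)$ for every $p$, and one sets $\varrho(p)=r_{i_0}(p)/16$, or for every $i$ there is a bad center $p^i$ with a.m.~(along a subsequence) in the complement $W^2_i(p^i)$. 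Passing to a further subsequence so that $p^i\to p_0$ (to $\mathrm{cl}(\Omega')$ or to the ideal boundary) and noting $r_i(p^i)\to0$, every annulus whose closure avoids $p_0$ is eventually contained in $W^2_i(p^i)$. The function $\varrho$ is then chosen to shrink near $p_0$ (roughly $\varrho(p)=\tfrac12\min\{|p-p_0|,p_{n+1}\}$), so that annuli in $\mathcal{AN}_{\varrho(p)}(p)$ automatically avoid $p_0$ and the a.m.~property transfers. No per-point annulus family, no pigeonhole over scales and shapes, and no diagonalization over a dense set of centers is needed; the single complement-of-ball $W^2_i(p^i)$ simultaneously handles every center $p$.
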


\begin{proof}
Let $\{(U^\ell_{\tau_\ell},\Gamma^\ell_{\tau_\ell})\}_\ell$ and $V_0$ be the same as in Proposition \ref{69}. We show that a subsequence of it satisfies the desired properties. Write $\Gamma^\ell=\Gamma^\ell_{\tau_\ell}$ for convenience. Let $r(p)=p_{n+1}$ be the ($n+1$)-th coordinate of $\textbf{x}(p)$. Let $r_i(p)=\frac{r(p)}{i}$, $W^1_i(p)=B_{\frac{r_i(p)}{16}}(p)$ and $W^2_i(p)=\mathbb{R}^{n+1}_+\backslash \overbar{B_{\frac{7r_i(p)}{16}}(p)}$. Then $\left(W^1_i(p),W^2_i(p)\right)\in\mathcal{CO}$ for all $i$ and all $p\in\text{cl}(\Omega')$. Proposition \ref{69} implies, for any fix $i$, either of the following holds:
\begin{enumerate}
    \item $\Gamma^\ell$ is $\frac{1}{\ell}$-a.m. in $W^1_i(p)$, for any $p\in\text{cl}(\Omega')$ and $\ell$ sufficiently large.
    \item For any $K$, there is a $\ell_K\geq K$ and a $p^i_K\in\text{cl}(\Omega')$, so that $\Gamma^{\ell_K}$ is $\frac{1}{\ell_K}$-almost minimizing in $W^2_i(p)$.
\end{enumerate}
If there is no $i$ for which case(1) holds, then case(2) implies there is a sequence $\{p^i\}\subset\text{cl}(\Omega')$ such that $\Gamma^{\ell_i}$ is $\frac{1}{\ell_i}$-a.m. in $W^2_i(p)$. If $p^i$ converges to some point in $\mathbb{R}^n\times\{0\}$, then $r_i(p^i)\leq r(p^i)$ which converges to $0$. So, we can set $\varrho(p)=\frac{r(p)}{2}$ in this case. If $p^i$ converges to some point $p_0\in\text{cl}(\Omega')$, then $r_i(p^i)=\frac{r(p^i)}{i}$ also converges to $0$. So, we can choose $\varrho(p)=\frac{1}{2}\min\{|p-p_0|, r(p)\}$ for $\textup{p}\neq p_0$ and $\varrho(p_0)=\frac{r(p_0)}{2}$. It is easy to see that $\varrho$ satisfies $\varrho(p)\geq\delta p_{n+1}$ for a sufficiently small $\delta>0$.

Now, assume there is some $i_0$ for which case(1) holds. We simply take $\varrho(p)=\frac{r_{i_0}}{16}(p)$.
\end{proof}

Finally, we prove the following proposition which concludes the proof of Theorem \ref{66}.

\begin{prop}\label{23}
Let $\{(U^\ell_{\tau_\ell},\Gamma^\ell_{\tau_\ell})\}$ and $V_0$ be obtained in Proposition \ref{69}. Assume $V_0$ has the decomposition $V_0=\frac{1}{y^n}V_+-\mathcal{H}^n_{\mathbb{H}^{n+1}}\mres \Gamma_-$ as in Proposition \ref{6}. Then, $V_+=\mathcal{H}^n\mres\Gamma_0$, where $\Gamma_0$ is a minimal hypersurface that is $C^{k,\alpha}$-asymptotic to some components of $M$ and has codimension-$7$ singular set. Moreover, $E_{rel}[\Gamma_0,\Gamma_-]=m_{rel}(X)$.
\end{prop}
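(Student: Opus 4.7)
The plan is to combine three ingredients: the interior regularity theory for almost minimizing varifolds, a Hardt--Lin-type analysis at the ideal boundary, and the entropy identity from Proposition~\ref{70}. First, I would invoke Proposition~\ref{73} to pass to a min-max sequence $\{(U^\ell_{\tau_\ell},\Gamma^\ell_{\tau_\ell})\}$ that is almost minimizing in every $An\in\mathcal{AN}_{\varrho(p)}(p)$ with $\varrho(p)\geq\delta p_{n+1}$. Because $V_0$ is $E_{rel}$-minimizing to the first order, Proposition~\ref{70} ensures that $V_+$ is stationary and supported in $\mathrm{cl}(U_{\Gamma_+})\backslash U_{\Gamma_-}$; inside $\{y>0\}$ the pair (stationarity + a.m.) matches exactly the input of the Pitts / Schoen--Simon regularity theorem as reformulated in \cite{deLellisTasnady}, so $V_+$ is integer rectifiable and its support is smooth away from a closed singular set of Hausdorff codimension at least $7$. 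Since $\sup_\ell E_{rel}[\Gamma^\ell_{\tau_\ell},\Gamma_-]<\infty$, the sets $U^\ell_{\tau_\ell}$ have uniformly bounded BV norms on every slab $\{y\geq\epsilon\}\cap\mathrm{cl}(\Omega')$, so a subsequence of $\mathbf{1}_{U^\ell_{\tau_\ell}}$ converges in $L^1_{loc}$ to $\mathbf{1}_{U_0}$ for some $U_0\in\mathcal{C}(\Gamma'_-,\Gamma'_+)$. Setting $\Gamma_0=\partial^*U_0$, lower semicontinuity gives $|\partial U_0|\leq V_+$, while the a.m.\ condition together with the replacement construction in \cite{deLellisTasnady} rules out higher multiplicity, yielding $V_+=\mathcal{H}^n\mres\Gamma_0$.

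Next I would establish the boundary regularity by adapting Hardt--Lin \cite{HardtLinHyperbolicRegularity} to this setting. For each $p\in M$, the barriers $\Gamma_-\preceq\Gamma_+$ are both $C^{k,\alpha}$-asymptotic to $M$ and trap $\Gamma_0$ in a thin neighborhood; an asymptotic monotonicity argument, together with the precise density information available for $\Gamma_\pm$ near $M$, forces the density of $V_+$ at each point of $M$ to be either $0$ or $1$. A tangent cone analysis at the density-one points identifies the boundary tangent with a half-hyperplane, and this gives $C^{1,\alpha}$ regularity of $\overbar{\Gamma}_0$ up to the ideal boundary. Elliptic bootstrapping within the Alexakis--Mazzeo edge / uniformly degenerate calculus \cite{AlexakisMazzeo, MazzeoEdgeOperator} then upgrades this to $C^{k,\alpha}$ regularity, and the asymptotic boundary $\partial_\infty\Gamma_0$ is forced to be a union of components of $M$ (since it is a closed subset of $M$ with no interior boundary in $\partial_\infty\mathbb{H}^{n+1}$).

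Finally, Proposition~\ref{70} yields $E_{rel}[V_0]=V_0[\mathbf{1}]$; by the construction of $V_0$ as a weak*-limit of a min-max sequence, $V_0[\mathbf{1}]=\lim_\ell E_{rel}[\Gamma^\ell_{\tau_\ell},\Gamma_-]=m_{rel}(X)$. Once the identification $V_+=\mathcal{H}^n\mres\Gamma_0$ is in hand, the representation formula (\ref{5}) translates this equality into $E_{rel}[\Gamma_0,\Gamma_-]=m_{rel}(X)$. The hardest step is clearly the boundary regularity: the convergence supplied by the min-max construction is only weak* in $\mathfrak{Y}^*$, so one must carefully port the Hardt--Lin estimates (originally derived for area-minimizing currents, not for almost-minimizing varifolds) to the present limit, and then feed the resulting $C^{1,\alpha}$ graphicality into the edge-elliptic framework of \cite{AlexakisMazzeo} to reach the $C^{k,\alpha}$ regularity required in the statement.
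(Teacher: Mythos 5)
Your overall scaffolding matches the paper's: Proposition~\ref{73} for almost-minimizing in large annuli, Proposition~\ref{70} for stationarity and the equality $E_{rel}[V_0]=V_0[\mathbf{1}]$, interior regularity from \cite{deLellisTasnady}, boundary regularity via a Hardt--Lin type argument, and then translating $V_0[\mathbf{1}]=m_{rel}(X)$ into $E_{rel}[\Gamma_0,\Gamma_-]=m_{rel}(X)$. But there is a genuine gap in your handling of multiplicity. You assert that ``the a.m.\ condition together with the replacement construction in \cite{deLellisTasnady} rules out higher multiplicity, yielding $V_+=\mathcal{H}^n\mres\Gamma_0$.'' This is not what the replacement argument gives: De Lellis--Tasnady's output is $V_+=\sum_j c_j\mathcal{H}^n\mres\Gamma^j$ with \emph{integer} multiplicities $c_j\geq1$, and ruling out $c_j\geq2$ is the famously delicate multiplicity-one question, which the replacement argument alone does not settle. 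The paper sidesteps this entirely by a much cleaner mechanism specific to the relative-entropy setting: since $E_{rel}[V_0]=V_0[\mathbf{1}]=m_{rel}(X)<\infty$, and each $\Gamma^j$ has infinite hyperbolic volume while $\Gamma_-$ is subtracted only once in (\ref{110}), a multiplicity $c_j\geq 2$ near the ideal boundary would force $E_{rel}[\sum_jc_j\Gamma^j,\Gamma_-]=+\infty$, a contradiction. This finiteness argument is the correct route and is genuinely different from (and not substitutable by) anything in the Almgren--Pitts replacement machinery.

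A secondary difference: your boundary-regularity sketch invokes a tangent-cone analysis at points of $M$, treating the ideal boundary as a boundary of a free-boundary/area-minimizing problem. The paper instead never blows up at an ideal-boundary point; it blows up at interior singular points $(\mathbf{x}_i,y_i)$ with $y_i\to0$, uses the a.m.-at-scale-$\delta y_i$ property from Proposition~\ref{73} to get \emph{stability} in $B_{\epsilon y_i}(\mathbf{x}_i,y_i)$, uses the Hardt--Lin flatness $d(\mathbf{x})/y\to0$ and a uniform area-ratio bound (which comes from $\|\phi\|_{\mathfrak{X}}\leq C$ and the boundedness of $V_0$) to show the rescaled varifolds satisfy the hypotheses of Schoen--Simon, and concludes $V_+$ is regular near $\{y=0\}$ by contradiction. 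The paper also has an intermediate step you omit: showing $\mathrm{Spt}\,V_+$ coincides with $\mathrm{cl}(\partial^*U_0)$ in $\{0<y\leq\epsilon\}$ via a test-vector-field argument comparing the varifold and the Caccioppoli-set limits, which is needed before concluding $\partial_\infty\Gamma^j$ is a component of $M$. Finally, the paper gets $C^1$ boundary regularity from the same Schoen--Simon graph decomposition and then cites \cite{LinHyperbolicMinimalGraph} and \cite{TonegawaCMCinHpSpace} for the higher-order regularity, rather than bootstrapping directly in the Alexakis--Mazzeo edge calculus.
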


\begin{proof}
It follows from Proposition \ref{70} that $V_+$ is stationary and that Spt$V_+\subset\text{cl}(U_{\Gamma_+}\backslash U_{\Gamma_-})$. Since the argument in \cite{deLellisTasnady} is local, we obtain that $V_+=\sum_{j=1}^N c_j\mathcal{H}^n\mres\Gamma^j$, where $c_j\in\mathbb{N}$ and $\Gamma^j$ is a smooth minimal hypersurface with $\text{cl}(\Gamma^j)\backslash\Gamma^j$ is a set of codimension-$7$. Note that $V_+$ and $\Gamma^j$ are temporarily only defined in the interior of $\mathbb{R}^{n+1}_+$.

For $(\textbf{x},0)\in\mathbb{R}^n\times\{0\}$, let $d(\textbf{x}):=d_{\mathbb{R}^n\times\{0\}}(\textbf{x},M)$. By \cite[Section 1]{HardtLinHyperbolicRegularity}, as $(\textbf{x},y)\in\text{Spt}V_+$ approaches the ideal boundary, $\frac{d(\textbf{x})}{y}\to0$.

We claim there is an $\epsilon>0$, so that $V_+\cap\{y\leq\epsilon\}$ has no singularities. Assume, on the contrary, there is a sequence of points $\{(\textbf{x}_i,y_i)\}$ lying in the singular set of $V_+$ with $y_i\to0$. Notice that $\partial_\infty(\text{Spt}(V_+))$ (which is defined to be the topological boundary of $\text{Spt}(V_+)$ in $\overbar{\mathbb{R}^{n+1}_+}$) is contained in $M$. So, without loss of generality, we assume that $\textbf{x}_i\to\textbf{a}\in M$ as $i\to\infty$. Assume $\textbf{a}=\textbf{0}\in\mathbb{R}^n$ and $\textbf{n}_{M}(0)=(\textbf{0},1)$. From Section 4 and 5 of \cite{deLellisTasnady}, one can see that for any $p\in\text{cl}(\Omega')$, $\text{Spt}\,V_+$ is stable in any annulus in $\mathcal{AN}_{\varrho(p)}(p)$. Hence, there is an $\epsilon>0$, so that $\text{Spt}\,V_+$ is stable in $B^{\mathbb{R}^{n+1}}_{\epsilon p_{n+1}}(p)$ for any point $p$ near the ideal boundary. So, $V_+$ is stable in $B_{\epsilon y_i}(\textbf{x}_i,y_i)$. Let $(\textbf{x}_i,y_i)=(\textbf{x}'_i,x^{n}_i,y_i)$ and $(\textbf{z}',z^{n},y)\in B_{\epsilon y_i}(\textbf{x}_i,y_i)\cap\text{Spt}(V_+)$. We suppose $\textbf{x}_i=\textbf{a}_i\pm d(\textbf{x}_i)\textbf{n}_{M}(\textbf{a}_i)=\textbf{a}_i\pm d_i\nu_i$ and $\textbf{z}:=(\textbf{z}',z^n)=\textbf{a}_i'\pm d_i'\nu_i'$. Then $\textbf{a}_i\to0$, $\textbf{n}_{M}(\textbf{a}_i)\to(\textbf{0},1)$, and for arbitrary $\delta>0$,
\begin{equation*}
    |(\textbf{a}'_i-\textbf{a}_i)\cdot\overbar{\textbf{e}}_n|\leq\delta|\textbf{a}'_i-\textbf{a}_i|\leq C\delta|\textbf{z}-\textbf{x}_i|\leq C\delta\epsilon y_i
\end{equation*}
holds for $i$ large. This gives
\begin{equation*}
    \frac{|(z^n-x^n_i)|}{y_i}\leq\frac{|a'^n_i-a^n_i|}{y_i}+\frac{d'_i}{y_i}+\frac{d_i}{y_i}\leq C\epsilon\delta+\frac{d'_i}{y_i}+\frac{d_i}{y_i}.
\end{equation*}
Since $\frac{d_i}{y_i}\to0$ and $|y-y_i|\leq\epsilon y_i\leq\frac{1}{2}y_i$, we have $\frac{d'_i}{y_i}\leq\frac{2d'_i}{y}\to0$. Hence,
\begin{equation}\label{71}
   \sup_{(\textbf{x},y)\in\Gamma_0\cap B_{\epsilon y_i}(\textbf{x}_i,y_i)}\frac{|x^n-x^n_i|}{y_i}\to0,\ \text{as }i\to\infty.
\end{equation}
Set $V_i=T_{i\#}\big(V_+\cap B_{\epsilon y_i}(\textbf{x}_i,y_i)\big)$, where $T_{i\#}$ is the push-forward of the map $T_i:\ (\textbf{x},y)\mapsto\frac{1}{y_i}(\textbf{x}-\textbf{x}_i,y)$. Then, it is easy to check that $V_i$ is stable in $B_\epsilon(\textbf{0},1)$ with a codimension-$7$ singular set. (\ref{71}) implies for any $\delta>0$, Spt$\,V_i\subset B_\epsilon(\textbf{0},1)\cap \{|x^n|<\delta\}$ holds for $i$ large. Moreover,
\begin{equation}\label{72}
    \frac{V_i(B_\epsilon(\textbf{0},1))}{\epsilon^n}=\frac{V_+(B_{\epsilon y_i}(\textbf{x}_i,y_i))}{\epsilon^ny_i^n}\leq C_\epsilon|V_0(\phi)|+\frac{\mathcal{H}^n(\Gamma_-\cap B_{\epsilon y_i}(\textbf{x}_i,y_i))}{\epsilon^ny_i^n},
\end{equation}
where $\phi$ is identically $1$ on $B_{\epsilon y_i}(\textbf{x}_i,y_i)$,and $\|\phi\|_{\mathfrak{X}}\leq C$. Notice that $\Gamma_-$ is $C^{k,\alpha}$ near the boundary, so it can be extended in a $C^2$ way across $\mathbb{R}^n\times\{0\}$. Thus, $\frac{1}{\epsilon^ny_i^n}\mathcal{H}^n(\Gamma_-\cap B_{\epsilon y_i}(\textbf{x}_i,y_i))$ is bounded above by the monotonicity formula \cite[Section 17]{SimonBook}. Hence (\ref{72}) is bounded above independent of $i$. We apply \cite[Theorem 1]{SchoenSimonRegularity} and get, for $i$ large, $V_i\cap B_{\frac{\epsilon}{2}}(\textbf{0},1)$ is a disjoint union of graphs of functions. In particular, $V_i$ is regular at $(\textbf{0},1)$, which contradicts to the assumption that $(\textbf{x}_i,y_i)$ is singular. This proves the claim.

By compactness of Caccioppoli sets, we can assume that $U^\ell_{\tau_\ell}$ converges in $L^1_{loc}$ to a Caccioppoli set $U_0$ with $U_{\Gamma_-}\subset U_0\subset \text{cl}(U_{\Gamma_+})$. By the nature of convergence, $\mathcal{H}^n\mres\partial^*U_0\leq V_+$. So, $\text{cl}(\partial^*U_0)\subset\text{Spt}(V_+)$. From the preceding paragraph, $\text{cl}(\partial^*U_0)\cap\{0<y\leq\epsilon\}$ is smooth. If there is a point $p\in\text{Spt}V_+\cap\{0<y\leq\frac{\epsilon}{2}\}\backslash\text{cl}(\partial^*U_0)$, we take $B_r(p)\subset(\text{cl}(\partial^*U_0))^c$. For any function $\phi\in C_c(B_r(p);\mathbb{R}^{n+1})$, $\phi(p)\cdot\textbf{v}$ can be thought of as a function over $G_n(B_r(p))$. So, by the varifold convergence, $\int \phi\cdot\textbf{v}dV_+=\lim_{\ell\to\infty}\int_{\Gamma^\ell_{\tau_\ell}}\phi\cdot\overbar{n}_{\Gamma^\ell_{\tau_\ell}}d\mathcal{H}^n$. On the other hand, the convergence of Caccioppoli sets implies $0=\int_{\partial^*U_0}\phi\cdot\overbar{n}_{\partial^*U_0}d\mathcal{H}^n=\lim_{\ell\to\infty}\int_{\Gamma^\ell_{\tau_\ell}}\phi\cdot\overbar{n}_{\Gamma^\ell_{\tau_\ell}}d\mathcal{H}^n$. So, $\int \phi\cdot\textbf{v}dV_+=0$ for any $\phi\in C_c(B_r(p);\mathbb{R}^{n+1})$. However, since $V_+\cap\{0<y\leq\epsilon\}$ is smooth, we get a contradiction if we take $\phi=\eta\overbar{n}_{V_+}$, where $\eta$ is a cut-off function. Thus, we have proved $\text{Spt}V_+$ coincides with $\text{cl}(\partial^*U_0)$ in $\{0<y\leq\epsilon\}$. This implies $M^j:=\partial_\infty\Gamma^j$ (the topological boundary of $\text{cl}(\Gamma^j)$ in $\overbar{\mathbb{R}^{n+1}_+}$) is equal to some component of $M$.

Now, we deal with the boundary regularity. We first modify the argument in \cite{HardtLinHyperbolicRegularity} to show that each $\Gamma^j$ is $C^1$ up to the ideal boundary. Assume $\textbf{0}\in M^j\subset\mathbb{R}^n$ and $\textbf{n}_{M^j}(0)=(\textbf{0},1)$. We claim that
\begin{equation*}
    \lim_{\Gamma^j\ni(\textbf{x},y)\to(\textbf{0},0)}\overbar{\textbf{n}}_{\Gamma^j}(\textbf{x},y)=(\textbf{0},1,0).
\end{equation*}
Let $\{(\textbf{x}_i,y_i)\}\subset\Gamma^j$ be a sequence of points converging to $(\textbf{0},0)$. By (\ref{71}), (\ref{72}) and \cite[Theorem 1]{SchoenSimonRegularity}, for $i$ large, there is a function $x^n=u_i(\textbf{x}',y)$ whose graph is equal to the component of $\Gamma^j$ containing $(\textbf{x}_i,y_i)$. \cite[Theorem 1]{SchoenSimonRegularity} also indicates $|Du_i|+\epsilon y_i|D^2u_i|\leq C$ in $B_{\frac{\epsilon y_i}{2}}(\textbf{x}_i,y_i)$. Since $\text{Spt}(V_+)$ is contained in a thin neighborhood of $\Gamma_-$, we have $Du_i(\textbf{x}_i,y_i)\to\textbf{0}$, otherwise, the graph of $u_i$ would intersect $\Gamma'_-$ or $\Gamma'_+$. This proves the claim and hence the $C^1$ regularity of $\Gamma^j$ at $M^j$. The higher regularity follows from \cite{LinHyperbolicMinimalGraph} and \cite{TonegawaCMCinHpSpace}. (Notice that if $k>n$, since we have assumed the $C^{k,\alpha}$-asymptotic regularity of $\Gamma_\pm$, the assumption on $M$ in \cite[Theorem 5.1]{TonegawaCMCinHpSpace} is automatically satisfied.)

Since $E_{rel}[\Gamma^\ell_{\tau_\ell}]\to m_{rel}(X)$ and $V_{\Gamma^\ell_{\tau_\ell}}\to V_0$, it is immediate that $E_{rel}[V_0]=V_0[\mathbf{1}]=m_{rel}(X)$. In particular, $E_{rel}[V_0]<\infty$. By the definition of relative entropy, $E_{rel}[V_0]=\lim_{\epsilon\to0}E_{rel}[V_0;\chi_{\{y\geq\epsilon\}}]=E_{rel}[\sum_{j=1}^N c_j\Gamma^j,\Gamma_-]$. Since $E_{rel}[V_0]<\infty$, we see that $V_+$ is of multiplicity one. Now, write $V_+=\mathcal{H}^n\mres\Gamma_0$. Then, $\Gamma_0$ is a minimal hypersurface that is $C^{k,\alpha}$-asymptotic to $M$ and has codimension-7 singular set. Moreover, $E_{rel}[\Gamma_0,\Gamma_-]=m_{rel}(X)$.
\end{proof}

\section{Properties for asymptotic hypersurfaces in hyperbolic space}
In this section, we discuss some useful properties that will be used later on.

\begin{lem}\label{120}
Let $\Sigma$ be a hypersurface in $\mathbb{R}^{n+1}_+$ that has $C^2$-asymptotic boundary in $\mathbb{R}^n\times\{0\}$ and meets $\mathbb{R}^n\times\{0\}$ orthogonally. Then, $\lambda_\mathbb{H}[\Sigma]<\infty$.
\end{lem}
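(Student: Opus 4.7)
The plan is to exploit the asymptotic cylindrical structure of $\Sigma$ near the ideal boundary to obtain a uniform area-growth bound, and to combine this with the total-mass property of the heat kernel $K_n$ on $\mathbb{H}^n$. Since $\partial_\infty \Sigma = M$ is compact and contained in $\mathbb{R}^n \times \{0\}$ (in particular, does not contain the ideal point $\infty$), the closure $\overbar{\Sigma} = \Sigma \cup M$ is compact in $\overbar{\mathbb{H}}^{n+1}$, so $\Sigma$ itself is contained in a bounded Euclidean region. The orthogonality of $\Sigma$ to $\mathbb{R}^n\times\{0\}$ together with the $C^2$-regularity at $M$ forces $\Sigma$, near the ideal boundary, to be a $C^2$ normal graph over the vertical cylinder $\tilde M := M \times (0,\epsilon_0)$ whose graphing function $u$ satisfies $u = O(y^2)$ as $y \to 0$.

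The main step is to establish the uniform area-growth estimate
\begin{equation*}
    \mathcal{H}^n_\mathbb{H}\bigl(\Sigma \cap B^\mathbb{H}_R(p_0)\bigr) \leq C(\Sigma)\,\bigl(1 + \sinh^{n-1} R\bigr)
\end{equation*}
for every $p_0 \in \mathbb{H}^{n+1}$ and every $R \geq 0$. The bulk part $\Sigma \cap \{y \geq \epsilon_0\}$ lies in a Euclidean compact set and hence has finite total hyperbolic area, accounting for the constant term. For the near-boundary graphical part, I use the explicit formula $\cosh d_\mathbb{H}((x_0,y_0),(x,y)) = 1 + (|x-x_0|^2 + (y-y_0)^2)/(2yy_0)$ to parametrize $B^\mathbb{H}_R(p_0)\cap\tilde M$. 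Integrating first in the vertical variable gives $\int_{y_{\min}(x)}^{\epsilon_0} y^{-n}\,dy \leq y_{\min}(x)^{-(n-1)}/(n-1)$, together with the pointwise bound $y_{\min}(x)^{-(n-1)} \leq (2\cosh R)^{n-1}(1 + |x-x_0|^2/y_0^2)^{-(n-1)}$. The remaining horizontal integration then reduces to the Poisson-type integral $\int_M y_0^{n-1}(y_0^2 + |y-x_0|^2)^{-(n-1)}\,d\mathcal{H}^{n-1}(y)$, which is bounded uniformly in $(x_0,y_0)$ because $M$ is a compact $C^2$ $(n-1)$-submanifold of $\mathbb{R}^n$ and so enjoys the uniform density bound $\mathcal{H}^{n-1}(M \cap B_r(z)) \leq Cr^{n-1}$.

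Finally, I combine the area estimate with the total-mass identity $\omega_{n-1}\int_0^\infty K_n(\tau,R)\sinh^{n-1}R\,dR = 1$, which follows from the fact that $K_n$ is the radial profile of the heat kernel on $\mathbb{H}^n$. An integration by parts in $R = d_\mathbb{H}(p_0,\cdot)$ using the derivative estimate $|\partial_R K_n(\tau,R)| \lesssim K_n(\tau,R)(1 + R/\tau)$ then yields a uniform bound on the large-$R$ contribution; the small-$R$ regime is separately handled using the locally Euclidean area density $A(R) \lesssim R^n$ near $p_0$ against the Gaussian bound $K_n(\tau,R) \lesssim \tau^{-n/2} e^{-R^2/(4\tau)}$, giving a uniformly bounded contribution. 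The main obstacle will be establishing the uniform boundedness of the Poisson-type integral in $(x_0,y_0)$, especially when $x_0$ approaches $M$; this I handle by splitting $M$ into a near piece (where $M$ is locally a $C^2$ graph and the integral reduces to the classical Euclidean Poisson estimate) and a far piece (controlled by the total $\mathcal{H}^{n-1}$-measure of $M$ times a decay factor).
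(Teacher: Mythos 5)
Your proposal is correct in its essentials and takes a genuinely different route from the paper's. The paper splits the supremum over $p_0$ into two regimes: for $p_0$ bounded away from the ideal boundary it establishes a rough exponential area-growth bound (with constant depending on $\operatorname{dist}(p_0,\partial_\infty\mathbb{H}^{n+1})$), combines it with the Davies Gaussian upper bound for small $\tau$ via a dyadic decomposition in $d_\mathbb{H}(p,p_0)$, and defers the large-$\tau$ regime to Bernstein's argument; for $p_0$ near the ideal boundary it instead graphs $\Sigma$ over vertical \emph{hyperplanes}, projects onto those hyperplanes, and compares directly with $\lambda_\mathbb{H}[\mathbb{H}^n]$. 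By contrast, you prove the single uniform growth bound $\mathcal{H}^n_\mathbb{H}(\Sigma\cap B_R^\mathbb{H}(p_0))\leq C\min(R^n,\,1+\sinh^{n-1}R)$ valid for \emph{all} $p_0$, by graphing $\Sigma$ over the normal \emph{cylinder} $M\times(0,\epsilon_0)$ (using the orthogonality hypothesis, which forces $u=O(y^2)$), parametrizing the hyperbolic ball via the explicit $\cosh d_\mathbb{H}$ formula, and reducing the near-boundary area to a Poisson-type integral over $M$ which is uniformly bounded by upper Ahlfors regularity of the compact $C^2$ submanifold $M$; you then integrate against $K_n$ using the total-mass identity, the radial-derivative estimate, and the small-scale density $A(R)\lesssim R^n$. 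Your argument is more unified and yields the sharp exponential rate $n-1$ in the growth estimate, at the price of the Poisson-integral lemma; the paper avoids that lemma but pays by splitting cases and leaning on Bernstein for large $\tau$. Two minor issues to fix in the write-up: (i) the displayed intermediate bound should read $y_{\min}(x)^{-(n-1)}\leq(2\cosh R/y_0)^{n-1}(1+|x-x_0|^2/y_0^2)^{-(n-1)}$ (you dropped a factor $y_0^{-(n-1)}$, though your Poisson integrand $y_0^{n-1}(y_0^2+|\cdot|^2)^{-(n-1)}$ is consistent with the corrected bound); (ii) the small-$R$ density bound $A(R)\lesssim R^n$ requires noting that $\Sigma$ has uniformly bounded \emph{hyperbolic} second fundamental form — this holds because $C^2$-asymptotic regularity bounds $y|\overbar A_\Sigma|$ and the conformal correction term, and is what makes the bound uniform in $p_0$.
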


\begin{proof}
Let $B^+_R(p)=B_R(p)\cap\mathbb{R}^{n+1}_+$ and $B^+_R=B_R(0)\cap\mathbb{R}^{n+1}_+$. We choose $R$ sufficiently large, so that $\Sigma\subset B^+_{\frac{R}{2}}$. Denote $B^+_R\cap\{y\geq\epsilon\}$ by $K_\epsilon$. We fix $p_0\in\mathbb{H}^{n+1}$. Since $\Sigma$ is $C^2$-asymptotic to $\partial_\infty\mathbb{H}^{n+1}$, we get $\mathcal{H}^n_\mathbb{H}(B^\mathbb{H}_\rho(p_0))\leq Ce^{C\rho}$ for $C>0$ and $\rho$ sufficiently large. Notice that $\mathcal{H}^n_\mathbb{H}(B^\mathbb{H}_\rho(p_1))\leq\mathcal{H}^n_\mathbb{H}(B^\mathbb{H}_{\rho+d_\mathbb{H}(p_1,p_0)}(p_0))$. So, for each $\epsilon'>0$, there are constants $C',\rho_0>0$, so that $\mathcal{H}^n_\mathbb{H}(B^\mathbb{H}_\rho(p))\leq C'e^{C'\rho}$ for $p\in K_{\epsilon'}$ and $\rho\geq\rho_0$. We now claim that $\sup_{\tau>0,p_0\in K_{\epsilon'}}\int_\Sigma\Phi_n^{0,p_0}(-\tau,p)d\mathcal{H}^n_\mathbb{H}(p)<\infty$. By \cite[Theorem 3.1]{DaviesHpbolicHeatKB},
\begin{equation*}
    \Phi_n^{0,p_0}(-t,p)=K_n(t,d)\leq Ct^{-\frac{n}{2}}e^{-\frac{d^2}{4t}}e^{-\frac{1}{8}(n-1)^2t},
\end{equation*}
where $d=d_{\mathbb{H}}(p,p_0)$. Observe that
\begin{align}
    \int_{\Sigma\backslash\{d\leq\rho_0\}}\Phi_n^{0,p_0}(-t,p)d\mathcal{H}^n_\mathbb{H}(p)&\leq C\sum_{k=1}^\infty t^{-\frac{n}{2}}e^{-\frac{k^2\rho_0^2}{4t}}e^{-\frac{1}{8}(n-1)^2t}\mathcal{H}^n_{\mathbb{H}}(\Sigma\cap\{k\rho_0\leq d\leq(k+1)\rho_0\})\label{91}\\
    &\leq C\sum_{k=1}^\infty t^{-\frac{n}{2}}e^{-\frac{k^2\rho_0^2}{4t}}e^{-\frac{1}{8}(n-1)^2t}e^{C'k\rho_0}\nonumber.
\end{align}
For any $N>0$, (\ref{91}) is uniformly bounded for $t\in(0,N]$. For $t>N$, the boundedness of (\ref{91}) follows in the same manner as in \cite[Proposition 4.2]{BernsteinEntropy} if we choose $\rho_0$ sufficiently large. Moreover, since the area ratio is locally bounded by the monotonicity formula \cite[Section 17]{SimonBook},
\begin{equation*}
    \sup_{\tau>0,p_0\in K_{\epsilon'}}\int_{\Sigma\cap B^\mathbb{H}_{\rho_0}(p_0)}\Phi_n^{0,p_0}(-\tau,p)d\mathcal{H}^n_\mathbb{H}(p)<\infty
\end{equation*}
follows in the same manner as in Euclidean case. This proves the claim.

Finally, we show that $\sup_{\tau>0,p_0\in K_{\epsilon'}^c}\int_\Sigma\Phi_n^{0,p_0}(-\tau,p)d\mathcal{H}^n_\mathbb{H}(p)<\infty$. We choose $\epsilon_1$ so small that for any $p_1\in\partial_\infty\Sigma$, $\Sigma\cap B^+_{2\epsilon_1}(p_1)$ can be written as a graph over a hyperplane parallel to $\overbar{\textbf{e}}_{n+1}$. Let $\epsilon'=\frac{\epsilon_1}{2}$ and $p_0\in K^c_{\epsilon'}$.

Fix $p_1\in\partial_\infty\Sigma$. Assume $p_1=\textbf{0}$ and $\textbf{n}_{\partial_\infty\Sigma'}(\textbf{0})=(0,\cdots,0,1,0)$ in $\mathbb{R}^n\times\{0\}$. Letting $\delta=\frac{\epsilon_1}{\sqrt{n}}$,
\begin{equation}
    \overbar{\Sigma'}\cap\{(-\delta,\delta)^{n}\times[0,\epsilon_1)\}=\{(\textbf{x}',u,y):\,u=u(\textbf{x}',y),(\textbf{x}',y)\in(-\delta,\delta)^{n-1}\times[0,\epsilon_1)\},
\end{equation}
where $u$ is a $C^2$ function with $|Du|\leq C$. Hence,
\begin{align}
    \int_{\Sigma'\cap(-\delta,\delta)^{n}\times(0,\epsilon_1)}\Phi_n^{0,p_0}(-\tau_0,p)d\mathcal{H}^n_\mathbb{H}(p)&=\int_{(-\delta,\delta)^{n-1}\times(0,\epsilon_1)}\Phi_n^{0,p_0}\big(-\tau_0,(\textbf{x}',u,y)\big)\frac{\sqrt{1+|Du|^2}}{y^n}d\textbf{x}'dy\nonumber\\
    &\leq C\int_{(-\delta,\delta)^{n-1}\times(0,\epsilon_1)}\Phi_n^{0,\Pi_1(p_0)}\big(-\tau_0,(\textbf{x}',0,y)\big)\frac{1}{y^n}d\textbf{x}'dy\nonumber\\
    &\leq C\int_{\mathbb{R}^{n-1}\times(0,\infty)}\Phi_n^{0,\Pi_1(p_0)}\big(-\tau_0,(\textbf{x}',0,y)\big)\frac{1}{y^n}d\textbf{x}'dy\nonumber\\
    &\leq C\lambda_\mathbb{H}[\mathbb{H}^n]\nonumber,
\end{align}
where $\Pi_1: (\textbf{x}',x_n,y)\mapsto(\textbf{x}',0,y)$ is the projection to the plane $\{x_n=0\}$. Since $\partial_\infty\Sigma$ is compact, we write it as a finite union of graphs. The integral over each portion is bounded. Thus, we have proved that 
\begin{equation}
    \sup_{\tau>0,p_0\in K_{\epsilon'}^c}\int_{\Sigma'\cap\{y\leq2\epsilon'\}}\Phi_n^{0,p_0}(-\tau_0,p)d\mathcal{H}^n_\mathbb{H}(p)<\infty.
\end{equation}
On the other hand, $\int_{\Sigma\cap\{y\geq2\epsilon'\}}\Phi_n^{0,p_0}(-\tau,p)d\mathcal{H}^n_\mathbb{H}(p)$ is uniformly bounded since $d_\mathbb{H}(K_{\epsilon'}^c,\Sigma\cap\{y\geq2\epsilon'\})>0$. This completes the proof.
\end{proof}

\begin{lem}\label{83}
Let $\Sigma$ and $\Sigma'$ be two hypersurfaces in $\mathbb{R}^{n+1}_+$ that have $C^2$-asymptotic boundaries in $\mathbb{R}^n\times\{0\}$ and both meet $\mathbb{R}^n\times\{0\}$ orthogonally. Assume that $\overbar{\Sigma'}=\{\mathbf{x}(p)+f(p)\mathbf{v}(p):p\in\overbar{\Sigma}\}$, where $\mathbf{v}$ is a $C^2$ transverse section\footnote{See \cite[Section 2.4]{BWSpace} for the precise definition.} on $\overbar{\Sigma}$ satisfying $\mathbf{v}(p)$ is perpendicular to $\mathbf{e}_{n+1}$ on $\partial_\infty\Sigma$. Then, for any $\epsilon>0$, there exists a $\delta>0$, so that $\lambda_\mathbb{H}[\Sigma']\leq\lambda_\mathbb{H}[\Sigma]+\epsilon$ whenever $\|f\|_{C^2(\overbar{\Sigma})}\leq\delta$.
\end{lem}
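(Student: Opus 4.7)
I would prove the upper semi-continuity $\lambda_{\mathbb{H}}[\Sigma'] \leq \lambda_{\mathbb{H}}[\Sigma] + \epsilon$ by splitting the parameter space $\{(\tau, p_0): \tau > 0,\, p_0 \in \mathbb{H}^{n+1}\}$ into a compact piece $K$ (where I use continuity of the integral as a function of $\Sigma$) and its complement (where I use a uniform tail bound showing the integral cannot exceed $\lambda_{\mathbb{H}}[\Sigma] + \epsilon/2$).

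Off $K$, I would mimic the case analysis in the proof of Lemma~\ref{120}, using the Gaussian upper bound $K_n(\tau, d) \leq C \tau^{-n/2} e^{-d^2/(4\tau)} e^{-(n-1)^2 \tau/8}$ from \cite{DaviesHpbolicHeatKB}. The integral is uniformly small along small $C^2$-perturbations as $\tau\to\infty$ (the exponential prefactor beats the uniform volume growth of $\Sigma'$) and as $p_0$ escapes to infinity in $\mathbb{H}^{n+1}$ while staying bounded away from $\partial_\infty \Sigma$ (the hyperbolic distance $d_\mathbb{H}(p, p_0)$ tends to $\infty$ uniformly for $p \in \Sigma'$). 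As $\tau \to 0^+$, the integral tends to the $n$-density of $\Sigma'$ at $p_0$, which is at most $1$ at smooth points and hence at most $\lambda_{\mathbb{H}}[\Sigma]$, since $\lambda_{\mathbb{H}}[\Sigma] \geq 1$ (the density of $\Sigma$ at any of its smooth points is $1$).

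The delicate sub-case is when $p_0 \to q_0 \in \partial_\infty \Sigma$. The hypothesis $\mathbf{v}(p) \perp \mathbf{e}_{n+1}$ on $\partial_\infty \Sigma$ is used here: it guarantees that $\partial_\infty \Sigma' \subset \mathbb{R}^n \times \{0\}$ and that the tangent plane of $\Sigma'$ at boundary points still contains $\mathbf{e}_{n+1}$ up to an error of size $O(\|f\|_{C^2})$, so that near $q_0$ I can write $\Sigma'$ as a graph $x_n = u'(\mathbf{x}', y)$ over a vertical hyperplane with $|Du'| = O(y) + O(\|f\|_{C^2})$, uniformly in the perturbation. Repeating the projection computation at the end of the proof of Lemma~\ref{120}, the near-boundary contribution is bounded by $(1+o(1))\lambda_{\mathbb{H}}[\mathbb{H}^n] = 1 + o(1)$, where $o(1) \to 0$ as the parameters approach the boundary of $K$ and as $\|f\|_{C^2} \to 0$; since $\lambda_{\mathbb{H}}[\Sigma] \geq 1$, this is $\leq \lambda_{\mathbb{H}}[\Sigma] + \epsilon/2$. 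This sub-case is the main obstacle: the hypothesis on $\mathbf{v}$ is essential because without it the perturbation could destroy the orthogonal meeting of $\Sigma'$ with the ideal boundary at leading order, making the projection argument fail.

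On the compact parameter set $K$, the kernel $\Phi_n^{0, p_0}(-\tau, \cdot)$ is smooth with uniform Gaussian decay. Comparing $d\mathcal{H}^n_\mathbb{H}|_{\Sigma'}$ and $d\mathcal{H}^n_\mathbb{H}|_\Sigma$ via the area formula, once $\|f\|_{C^2}$ is small enough one has
\[
\sup_{(p_0, \tau) \in K} \left| \int_{\Sigma'} \Phi_n^{0,p_0}(-\tau, p)\, d\mathcal{H}^n_{\mathbb{H}}(p) - \int_{\Sigma} \Phi_n^{0,p_0}(-\tau, p)\, d\mathcal{H}^n_{\mathbb{H}}(p) \right| < \tfrac{\epsilon}{2},
\]
which combined with the tail bound above gives $\lambda_{\mathbb{H}}[\Sigma'] \leq \lambda_{\mathbb{H}}[\Sigma] + \epsilon$.
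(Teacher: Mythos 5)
Your outline is a genuinely different route from the paper's. The paper does not decompose the parameter set $\{(\tau,p_0)\}$: it fixes a single near-maximizing pair $(\tau_0,p_0)$ for $\Sigma'$ once and for all, splits the \emph{spatial} domain $\Sigma'$ into $\{y\geq\epsilon_1\}$ and $\{y<\epsilon_1\}$, compares the first piece to the corresponding $\Sigma$-integral (which is automatically $\leq\lambda_{\mathbb{H}}[\Sigma]$, using the supremum in the definition as a black box), and shows the second piece is $\leq\epsilon/4$ by the projection computation with $\epsilon_1<y(p_0)/2$. This is cheaper than your plan: you never have to verify any tail bound in $\tau$, never invoke a density argument, and never estimate the $\Sigma'$-integral directly over the full noncompact parameter range. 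Your parameter-space decomposition is closer in spirit to standard upper-semicontinuity arguments for Colding--Minicozzi entropy, so it is a reasonable instinct, but in this setting it commits you to establishing quantitative uniformity in several limiting regimes that the paper's trick avoids entirely.

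The concrete gap is in the tail bounds off $K$, specifically in the $\tau\to0^+$ regime and in the $p_0\to q_0\in\partial_\infty\Sigma$ regime, and in their interaction. The statement ``the integral tends to the $n$-density of $\Sigma'$, which is $\leq1$'' is a pointwise limit; to conclude $\sup_{\tau<\tau_1,\,p_0}\int_{\Sigma'}\leq\lambda_{\mathbb{H}}[\Sigma]+\epsilon/2$ you need a quantitative version uniform in $p_0$ and in the perturbation $f$, which requires controlling the local graphical geometry of $\Sigma'$ on hyperbolic balls of radius $\sim\sqrt{\tau}$ uniformly up to the ideal boundary; you state this only implicitly. More seriously, in the $p_0\to q_0\in\partial_\infty\Sigma$ case your argument bounds only the ``near-boundary contribution'' by $(1+o(1))\lambda_{\mathbb{H}}[\mathbb{H}^n]$ and then claims the total is $\leq\lambda_{\mathbb{H}}[\Sigma]+\epsilon/2$; but the integral over the complementary (away-from-boundary) part of $\Sigma'$ is not addressed, and adding a near-boundary piece of size close to $1$ to an away-from-boundary piece of unestimated size does not give the bound. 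To close this you would need to show that as $y(p_0)\to0$ the away-from-boundary contribution vanishes uniformly in $\tau$, and separately treat the coupled regimes $\tau\to0$ with $y(p_0)\to0$ and $\tau\to\infty$ with $y(p_0)\to0$. None of these is insurmountable, but they are exactly the estimates the paper's choice to fix $(\tau_0,p_0)$ and split spatially was designed to make unnecessary, so you should either fill them in or switch to the spatial decomposition.
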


\begin{proof}
Let $F_\textbf{v}:\,p\mapsto\textbf{x}(p)+f(p)\textbf{v}(p)$ be the map from $\overbar{\Sigma}$ to $\overbar{\Sigma'}$. Then, there is $\delta_1>0$ depending on $\sup |\overbar{A}_{\Sigma}|$, so that $F_{\textbf{v}}$ is an injective map when $\|f\|_{C^2}\leq\delta_1$. By shrinking $\delta_1$ if necessary, we assume that $F_\textbf{v}$ is an immersion. Hence, there are a function $g(q)=f(F_\textbf{v}^{-1}(q))$ and a tranverse section $\textbf{w}(q)=\textbf{v}(F_\textbf{v}^{-1}(q))$ on $\overbar{\Sigma'}$, so that $\overbar{\Sigma}=\{\textbf{x}(q)+g(q)\textbf{w}(q):\,q\in\overbar{\Sigma'}\}$.

From Lemma \ref{120}, we know that both $\lambda_\mathbb{H}[\Sigma]$ and $\lambda_\mathbb{H}[\Sigma']$ are finite. For any $\epsilon>0$, we choose $\tau_0>0$ and $p_0\in\mathbb{H}^{n+1}$ so that
\begin{equation}\label{87}
    \int_{\Sigma'}\Phi_n^{0,p_0}(-\tau_0,p)d\mathcal{H}^n_\mathbb{H}(p)\geq\lambda_\mathbb{H}[\Sigma']-\frac{\epsilon}{2}.
\end{equation}
For any $\epsilon_1>0$ (to be determined later), we can choose $\delta$ sufficiently small, so that 
\begin{equation}\label{88}
    \int_{\Sigma'\cap\{y\geq\epsilon_1\}}\Phi_n^{0,p_0}(-\tau_0,p)d\mathcal{H}^n_\mathbb{H}(p)\leq \int_{\Sigma\cap\{y\geq\epsilon_1\}}\Phi_n^{0,p_0}(-\tau_0,p)d\mathcal{H}^n_\mathbb{H}(p)+\frac{\epsilon}{4}\leq\lambda_\mathbb{H}[\Sigma]+\frac{\epsilon}{4}.
\end{equation}
We show that for appropriately chosen $\epsilon_1$, $\int_{\Sigma'\cap\{y\leq\epsilon_1\}}\Phi_n^{0,p_0}(-\tau_0,p)d\mathcal{H}^n_\mathbb{H}(p)$ can be made smaller than $\frac{\epsilon}{4}$. Fix any $p_1\in\partial_\infty\Sigma'$. Assume $p_1=\textbf{0}$ and $\textbf{n}_{\partial_\infty\Sigma'}(\textbf{0})=(0,\cdots,0,1,0)$ in $\mathbb{R}^n\times\{0\}$. We choose $\epsilon_1<\frac{y(p_0)}{2}$ so small that for some $\delta_2>0$,
\begin{equation}\label{92}
    \overbar{\Sigma'}\cap\{(-\delta_2,\delta_2)^{n}\times[0,\epsilon_1)\}=\{(\textbf{x}',u,y):\,u=u(\textbf{x}',y),(\textbf{x}',y)\in(-\delta_2,\delta_2)^{n-1}\times[0,\epsilon_1)\},
\end{equation}
where $u$ is a $C^2$ function with $|Du|\leq1$. Hence,
\begin{align}
    \int_{\Sigma'\cap(-\delta_2,\delta_2)^{n}\times(0,\epsilon_1)}\Phi_n^{0,p_0}(-\tau_0,p)d\mathcal{H}^n_\mathbb{H}(p)&=\int_{(-\delta_2,\delta_2)^{n-1}\times(0,\epsilon_1)}\Phi_n^{0,p_0}\big(-\tau_0,(\textbf{x}',u,y)\big)\frac{\sqrt{1+|Du|^2}}{y^n}d\textbf{x}'dy\nonumber\\
    &\leq2\int_{(-\delta_2,\delta_2)^{n-1}\times(0,\epsilon_1)}\Phi_n^{0,\Pi_1(p_0)}\big(-\tau_0,(\textbf{x}',0,y)\big)\frac{1}{y^n}d\textbf{x}'dy\nonumber\\
    &\leq2\int_{\mathbb{R}^{n-1}\times(0,\epsilon_1)}\Phi_n^{0,\Pi_1(p_0)}\big(-\tau_0,(\textbf{x}',0,y)\big)\frac{1}{y^n}d\textbf{x}'dy\label{89},
\end{align}
where $\Pi_1: (\textbf{x}',x_n,y)\mapsto(\textbf{x}',0,y)$ is the projection to the plane $\{x_n=0\}$. However, (\ref{89}) can be thought of as an integral over an $n$-dimensional hyperbolic plane and is independent of choice of $p_1$. So, it can be made less than $c\epsilon$ for any $c>0$ if we choose $\epsilon_1$ sufficiently small. Since $\partial_\infty\Sigma'$ is compact, we write it as a finite union of graphs. The integral over each portion is less than $c\epsilon$. Thus, we have proved that 
\begin{equation}\label{90}
    \int_{\Sigma'\cap\{y\leq\epsilon_1\}}\Phi_n^{0,p_0}(-\tau_0,p)d\mathcal{H}^n_\mathbb{H}(p)\leq\frac{\epsilon}{4}.
\end{equation}
Combining (\ref{87}), (\ref{88}) and (\ref{90}), we get $\lambda_\mathbb{H}[\Sigma']\leq\lambda_\mathbb{H}[\Sigma]+\epsilon$.
\end{proof}

We now prove a compactness result for stable minimal hypersurfaces with bounded hyperbolic entropy.

\begin{prop}\label{22}
Assume $2\leq n\leq 6$, $\alpha\in(0,1)$ and $k\geq2$. Let $\{\Sigma_i\}_{i=1}^\infty$ be a sequence of stable minimal hypersurfaces in $\mathbb{H}^{n+1}$ with the same $C^{k,\alpha}$-asymptotic boundary $M=\partial_\infty\Sigma_i$. Then, there is a subsequence of $\{\Sigma_i\}_{i=1}^\infty$ converging in $C^\infty_{loc}(\mathbb{R}^{n+1}_+)$ to a stable minimal hypersurface $\Sigma_0$ in $\mathbb{H}^{n+1}$ that is also $C^{k,\alpha}$-asymptotic to $M$.
\end{prop}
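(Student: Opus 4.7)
The plan is to combine uniform area bounds on compact subsets with the Schoen-Simon compactness theorem for stable minimal hypersurfaces, and then to promote interior smooth convergence to $C^{k,\alpha}$ convergence up to the ideal boundary via uniform boundary regularity.

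First I would establish uniform local area bounds. Since all the $\Sigma_i$ share the asymptotic boundary $M$, Lemma \ref{120} shows each $\lambda_\mathbb{H}[\Sigma_i]$ is finite, and the identity $\lambda_c[M]=\mathrm{Vol}(\mathbb{S}^{n-1})\lambda_\mathbb{H}[\Sigma_i]$ from \cite{BernsteinEntropy} shows these entropies are all equal to a single number determined by $M$. This hyperbolic entropy bound, together with the monotonicity formula \cite[Section 17]{SimonBook} applied to the ambient bounded-geometry manifold $\mathbb{H}^{n+1}$, controls $\mathcal{H}^n_{\mathbb{H}}(\Sigma_i\cap K)$ uniformly for every compact $K\subset\mathbb{H}^{n+1}$.

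Next, with uniform local area bounds and stability, and the assumption $2\leq n\leq 6$, the Schoen-Simon curvature estimate \cite{SchoenSimonRegularity} yields uniform interior second-fundamental-form bounds. A diagonal subsequence of $\{\Sigma_i\}$ therefore converges in $C^\infty_{loc}(\mathbb{H}^{n+1})$ to a smooth complete stable minimal hypersurface $\Sigma_0$.

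It then remains to show $\Sigma_0$ is $C^{k,\alpha}$-asymptotic to $M$. The strategy parallels the boundary regularity argument in the proof of Proposition \ref{23}. Fix $p\in M$, normalize so that $p=\textbf{0}$ and $\textbf{n}_M(p)=\overbar{\textbf{e}}_n$; then for every $\Sigma_i$, Hardt-Lin's estimate $d(\textbf{x})/y\to 0$ combined with the Schoen-Simon bound shows that near $p$ each $\Sigma_i$ is the graph $x_n=u_i(\textbf{x}',y)$ over the half-plane $\{x_n=0\}$ on a neighborhood whose size depends only on $M$. The boundary regularity theory of \cite{LinHyperbolicMinimalGraph} and \cite{TonegawaCMCinHpSpace} (alternatively, the weighted Schauder theory for uniformly degenerate operators from \cite{AlexakisMazzeo}) then yields bounds on $\|u_i\|_{C^{k,\alpha}}$ (in the appropriate weighted Hölder sense) depending only on $M$ and not on $i$. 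An Arzelà-Ascoli argument extracts a limit $u_\infty$ in $C^{k,\alpha'}$ for any $\alpha'<\alpha$, which by the interior smooth convergence must agree with $\Sigma_0$ near $p$; a standard diagonal refinement recovers full $C^{k,\alpha}$ regularity and identifies $\partial_\infty\Sigma_0$ with $M$.

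The main obstacle is the uniformity of the boundary $C^{k,\alpha}$ estimates: boundary regularity in the cited papers is typically formulated for a single minimal hypersurface, and one must verify that the constants depend only on $M$ and not on the particular $\Sigma_i$. This should follow by a careful inspection of those arguments, or more conceptually from the fact that the edge-operator Schauder theory of \cite{AlexakisMazzeo} is an ambient estimate for the linearized minimal surface operator and thus applies uniformly across the family $\{\Sigma_i\}$ once interior curvature and area are controlled; in any case, this is where the real work lies, with the remaining steps being essentially standard.
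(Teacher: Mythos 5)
Your proposal follows the paper's structure — entropy bound gives local area bounds, Schoen-Simon gives interior compactness, and then the boundary regularity scheme from the proof of Proposition \ref{23} (graphical structure near $M$, weighted Schauder estimates) carries the convergence up to the ideal boundary. However, there is a genuine gap: you treat the limit as a single-multiplicity hypersurface without justification. Schoen-Simon compactness produces a limit of the form $V = \sum_j c_j\,\mathcal{H}^n\mres\Sigma^j$ with integer multiplicities $c_j$, and nothing in your argument rules out $c_j \geq 2$. The paper handles this with a global entropy balance: since $\lambda_\mathbb{H}[\Sigma_0] \leq \liminf \lambda_\mathbb{H}[\Sigma_{i_k}] = \lambda_c[M]/\mathrm{Vol}(\mathbb{S}^{n-1})$ by lower semicontinuity, while $\lambda_\mathbb{H}[\Sigma_0] = \sum_j c_j\,\lambda_c[M^j]/\mathrm{Vol}(\mathbb{S}^{n-1})$ by Bernstein's theorem applied to each component, one forces $c_j = 1$. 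Without this, the conclusion that $\Sigma_i \to \Sigma_0$ (rather than to a multiplicity-$\geq 2$ varifold over $\Sigma_0$) is unsupported, and the later applications of this proposition (e.g., Proposition \ref{84}, which deduces single-sheeted $C^{\ell,\alpha/2}$ convergence in $\overbar{\mathbb{R}}^{n+1}_+$) would break.

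Two further points. First, you attribute the local area upper bound to the monotonicity formula; monotonicity gives area \emph{lower} bounds, not upper bounds. The upper bound $\mathcal{H}^n_\mathbb{H}(\Sigma_i\cap\{y\geq y_1\}) \leq C(y_1)$ follows directly from $\int_{\Sigma_i}\Phi_n^{0,p_0}(-1,p)\,d\mathcal{H}^n_\mathbb{H}\leq\lambda_\mathbb{H}[\Sigma_i]$ and the strict positivity of $\Phi_n^{0,p_0}(-1,\cdot)$ on compact sets. Second, to invoke Schoen-Simon near the ideal boundary you need a uniform Euclidean area ratio bound at scales comparable to $y(p)$; in the proof of Proposition \ref{23} this came from a relative-entropy bound, which is not available here. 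The paper substitutes the hyperbolic entropy bound for this: one shows $y(p_0)^{-n}\mathcal{H}^n\bigl(\Sigma_i\cap B_{c_0^{-1}y(p_0)}(p_0)\bigr) \leq C$ by comparing against $\int_{\Sigma_i}\Phi_n^{0,p_0}(-1,p)\,d\mathcal{H}^n_\mathbb{H}$. Your proposal gestures at this ("once interior curvature and area are controlled") but does not identify the mechanism; this is where the entropy hypothesis does real work.
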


\begin{proof}
From the knowledge of \cite{BryantConformal}, $\lambda_c[M]<\infty$. By \cite[Theorem 1.5]{BernsteinEntropy}, we have $\lambda_{\mathbb{H}}[\Sigma_i]=\lambda_c[M]/\text{Vol}(\mathbb{S}^{n-1})$ is uniformly bounded above. Notice that $\Phi^{0,p_0}(-1,p)$ is strictly positive. So, for any $y_1>0$, $\mathcal{H}^n_\mathbb{H}(\Sigma_i\cap\{y\geq y_1\})$ has a uniform upper bound independent of $i$. The standard compactness result \cite{SchoenSimonRegularity} implies the existence of a subsequence $\{\Sigma_{i_k}\}_k \subset\{\Sigma_i\}_i$ which converges to a stable minimal hypersurface $\Sigma_0$ (possibly with multiplicity) in $C^\infty_{loc}$ sense. Assume $\Sigma_0=\sum_{j=0}^Nc_j\Sigma^j$ with $N$, $c_j\in\mathbb{N}$ and each $\Sigma^j$ connected minimal.

It is easy to check that $\partial_\infty\Sigma^j$ (the topological boundary of $\Sigma^j$ in $\overbar{\mathbb{R}}^{n+1}_+$) is contained in $M$. Let $M^j$ be the component containing $\partial_\infty\Sigma^j$. Then, $\partial_\infty\Sigma^j$ is relatively closed in $M^j$. For any $p\in \partial_\infty\Sigma^j$, assume that $\textbf{x}(p)=\textbf{0}$ and $\textbf{n}_M(\textbf{0})=(0,\cdots,0,1,0)$. Since the argument in the proof of Proposition \ref{23} only relies on the geometry of ideal boundary $M$, there is a $\delta>0$, so that for any $k$,
\begin{equation*}
    \Sigma_{i_k}\cap(-\delta,\delta)^{n}\times[0,\delta)=\{(\textbf{x}',x_n,y):x_n=u_{i_k}(\textbf{x}',y),(\textbf{x}',y)\in(-\delta,\delta)^{n-1}\times[0,\delta)\}.
\end{equation*}
Since $\Sigma_{i_k}$ converges to $\Sigma^j$ in $C^\infty_{loc}$ sense, we see that $\partial_\infty\Sigma^j$ contains a neighborhood of $p$. This implies $\partial_\infty\Sigma^j$ is also relatively open in $M^j$. Hence, $\partial_\infty\Sigma^j=M^j$.

Next, we show that each $\Sigma^j$ is $C^{k,\alpha}$ up to $M$. For $(\textbf{x},0) \in \mathbb{R}^n \times\{0\}$, define $d(\textbf{x}):= d_{\mathbb{R}^n\times\{0\}}(\textbf{x},M)$. Then, for any $\epsilon<1$, the maximum principle \cite{SolomonWhite} implies $\Sigma_{i_k}$ and $\partial B_{(1-\epsilon)d(\textbf{x})}(\textbf{x},0)\cap\mathbb{R}^{n+1}_+$ are disjoint. Hence, $\Sigma^j$ and $\partial B_{(1-\epsilon)d(\textbf{x})}(\textbf{x},0)\cap\mathbb{R}^{n+1}_+$ are also disjoint. Since $\epsilon>0$ is arbitrary, we have $\Sigma^j\cap B_{d(\textbf{x})}(\textbf{x},0)=\emptyset$. Following the proof of Proposition \ref{23}, we see that (\ref{71}) also holds for $\Sigma^j$. To apply \cite[Theorem 1]{SchoenSimonRegularity}, it remains to show a uniform area ratio bound as in (\ref{72}). However, here we do not have any assumptions on the relative entropy. So, we instead use the uniform hyperbolic entropy bound. For any $p_0\in\Sigma^j$,
\begin{align*}
    \int_{\Sigma_{i_k}\cap B^{\mathbb{H}^{n+1}}_1(p_0)}K_n(1,d_{\mathbb{H}^{n+1}}(p-p_0))d\mathcal{H}^n_{\mathbb{H}}(p)\leq\int_{\Sigma_{i_k}}\Phi_n^{0,p_0}(-1,p)d\mathcal{H}^n_{\mathbb{H}}(p)\leq\lambda_c[M]/\text{vol}(\mathbb{S}^{n-1}),
\end{align*}
where $K_n(t,\rho)$ is defined in (\ref{74}). By direct computation, there is a constant $c_0>0$ independent of $p_0$ such that $B_{c_0^{-1}y(p_0)}(p_0)\subset B_1^{\mathbb{H}^{n+1}}(p_0)$. So, there is a constant $c_1>0$ depending only on $K_n(\tau,\rho)$, such that
\begin{equation*}
    c_1\frac{1}{y^n}\mathcal{H}^n(B_{c_0^{-1}y(p_0)}(p_0)\cap\Sigma_{i_k})\leq\int_{\Sigma_{i_k}\cap B^{\mathbb{H}^{n+1}}_1(p_0)}K(1,d_{\mathbb{H}^{n+1}}(p-p_0))d\mathcal{H}^n_{\mathbb{H}}(p).
\end{equation*}
By letting $i_k\to\infty$, we get $\frac{1}{y^n(p_0)}\mathcal{H}^n(B_{c_0^{-1}y(p_0)}(p_0)\cap\Sigma^j)$ has an upper bound independent of the choice of $p_0$. Now, we can proceed as in the proof Proposition \ref{23} to get the $C^{k,\alpha}$-asymptotic regularity of $\Sigma^j$.

Finally, by the nature of convergence, $\lambda_\mathbb{H}[\Sigma_0]\leq\liminf_{k\to\infty}\lambda_\mathbb{H}[\Sigma_{i_k}]=\lambda_c[M]/\text{vol}(\mathbb{S}^{n-1})$. On the other hand, by \cite[Theorem 1.5]{BernsteinEntropy}, $\lambda_\mathbb{H}[\Sigma_0]=\sum_{j=0}^{N}c_j\lambda_c[M^j]/\text{Vol}(\mathbb{S}^{n-1})$, where $M^j=\partial_\infty\Sigma^j$. This implies $\Sigma_0$ is of multiplicity one.
\end{proof}

We next adapt the argument in \cite[Section 4]{BWTopologicalUniqueness} to prove a partial ordering result.

\begin{prop}
Let $2\leq n\leq6$, $\alpha\in(0,1)$ and $k\geq1$. Fix a $C^{k,\alpha}$ closed submanifold $M\subset\mathbb{R}^n\times\{0\}$. For any minimal hypersurfaces $\Gamma_1$ and $\Gamma_2$ in $\mathbb{H}^{n+1}$ with the same $C^{k,\alpha}$-asymptotic boundary $M$, there exist two stable minimal hypersurfaces $\Gamma_-$, $\Gamma_+$ with $\partial_\infty\Gamma_\pm=M$, so that $\Gamma_-\preceq\Gamma_i\preceq\Gamma_+$ for $i=1,2$. Moreover, there exist stable minimal hypersurfaces $\Gamma_G$ and $\Gamma_L$ that are $C^{m,\alpha}$-asymptotic to $M$, where $m=k$ if $n$ is even and $m=\min\{n,k\}$ if $n$ is odd. For any minimal hypersurface $\Gamma$ with $\partial_\infty\Gamma=M$, we have $\Gamma_L\preceq\Gamma\preceq\Gamma_G$. $\Gamma_G$ and $\Gamma_L$ are called the greatest and the least minimal hypersurface with asymptotic boundary $M$.\label{11}
\end{prop}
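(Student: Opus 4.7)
The plan is to adapt the self-expander construction of Bernstein--Wang \cite[Section 4]{BWTopologicalUniqueness} to the hyperbolic minimal surface setting, using constrained minimization of the relative entropy combined with the compactness result Proposition \ref{22}.

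For the first claim, to construct $\Gamma_+$ I would fix $\Gamma_1$ as the reference hypersurface, set $U_0^+ = U_{\Gamma_1} \cup U_{\Gamma_2}$, and minimize $E_{rel}[\partial^* U, \Gamma_1]$ over the class of Caccioppoli sets $U \supset U_0^+$ with $\partial_\infty(\partial^* U) = M$. To show this class contains a competitor with finite relative entropy, I would build an upper barrier by applying the approximate-minimal-hypersurface perturbation from the proof of Proposition \ref{20}: pushing $M$ outward slightly along $\mathbf{n}_M$ and solving the resulting equation yields a smooth hypersurface $\Sigma_+$ with mean curvature of a definite sign, lying strictly above $\Gamma_1$ and $\Gamma_2$, and with $E_{rel}[\Sigma_+, \Gamma_1] < \infty$ by the thin-neighborhood results of \cite{YaoRel}. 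The direct method together with the bounds of Proposition \ref{35} and lower semicontinuity of the area produces a minimizer $U_+$ whose boundary $\Gamma_+ := \partial^* U_+$ is automatically stable, being a one-sided area-minimizer; the strong maximum principle \cite{SolomonWhite} forces $\Gamma_+$ to touch $\Gamma_1 \cup \Gamma_2$ only along $M$, so it is smooth in the interior (since $n \leq 6$) and $C^{k,\alpha}$-asymptotic to $M$ by the regularity argument of Proposition \ref{23} combined with \cite{HardtLinHyperbolicRegularity, LinHyperbolicMinimalGraph, TonegawaCMCinHpSpace}. The construction of $\Gamma_-$ is dual, minimizing over Caccioppoli sets $U \subset U_{\Gamma_1} \cap U_{\Gamma_2}$ against a lower barrier built by the same perturbation method.

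For the second claim, I would iterate the pairwise bracketing and extract a limit via Proposition \ref{22}. First I would observe that every $\Gamma \in \mathcal{M}_M := \{$minimal hypersurfaces with $\partial_\infty\Gamma = M\}$ is confined to a fixed bounded region of $\overbar{\mathbb{H}}^{n+1}$: large geodesic hemispheres straddling $M$ serve as minimal barriers, and the maximum principle \cite{SolomonWhite} yields the desired containment. Proposition \ref{22} then shows that $\mathcal{M}_M$ is sequentially compact, and in particular separable, in $C^\infty_{\mathrm{loc}}$; picking a $C^\infty_{\mathrm{loc}}$-dense countable family $\{\Gamma^i\}_{i\in\mathbb{N}} \subset \mathcal{M}_M$, inductively define $\Sigma^k = \Gamma_+(\Sigma^{k-1}, \Gamma^k)$ using the first part of the proposition, with $\Sigma^1 = \Gamma^1$. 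This produces an increasing sequence of stable minimal hypersurfaces, each dominating $\Gamma^1, \dots, \Gamma^k$, and with uniformly bounded hyperbolic entropy (by \cite[Theorem 1.5]{BernsteinEntropy}). Proposition \ref{22} yields a $C^\infty_{\mathrm{loc}}$-subsequential limit $\Gamma_G$ that is stable minimal and $C^{m,\alpha}$-asymptotic to $M$. Since the partial order $\preceq$ is preserved under $C^\infty_{\mathrm{loc}}$-convergence and $\{\Gamma^i\}$ is dense, $\Gamma_G$ dominates every element of $\mathcal{M}_M$. The construction of $\Gamma_L$ is symmetric.

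The main obstacle will be verifying that the iteration in the second part actually yields a limit dominating \emph{every} element of $\mathcal{M}_M$, not merely the chosen dense countable subfamily; this reduces to showing that the relation $\preceq$ is closed in $C^\infty_{\mathrm{loc}}$ among minimal hypersurfaces sharing the asymptotic boundary $M$, which follows from the strong maximum principle once one rules out tangential contact at $M$ via the unique formal expansion from \cite[Lemma 3.3]{YaoRel}. A secondary technical point is keeping the relative-entropy estimates uniform as the reference hypersurface changes through the iteration; I expect this to follow from Proposition \ref{35} applied with constants depending only on the fixed bounded region containing all of $\mathcal{M}_M$ and on the geometry of $M$.
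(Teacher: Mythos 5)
Your proposal diverges from the paper's argument in both halves, and the first half contains a genuine gap.

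For the construction of $\Gamma_\pm$, the paper does not minimize the relative entropy at all. It works at a fixed height $y=\delta$, constructing compact $(n-1)$-dimensional boundary data $M^\pm_\delta$ by pushing $\Gamma_1\cap\{y=\delta\}$ in and out of $U_{\Gamma_1}\cap U_{\Gamma_2}$, then minimizing the (finite!) hyperbolic area of currents with boundary $M^-_\delta$ inside $\mathrm{cl}(U_{\Gamma_1}\cap U_{\Gamma_2})\cap\{y\ge\delta\}$, using $\Gamma_1,\Gamma_2$ as one-sided barriers. Sending $\delta\to 0$ and invoking \cite{HardtLinHyperbolicRegularity,LinHyperbolicMinimalGraph} yields the locally minimizing $\Gamma_-$; $\Gamma_+$ is dual. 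Your proposed relative-entropy minimization runs into several problems. First, the well-definedness and coercivity of $E_{rel}$ (Proposition~\ref{35} and the thin-neighborhood/mean-convex foliation setup of Proposition~\ref{20}) are established only when the reference hypersurface is \emph{strictly stable}; here $\Gamma_1$ is an arbitrary minimal hypersurface, so you would need to re-derive compactness and a lower bound from scratch. Second, your upper barrier $\Sigma_+$ obtained by pushing $M$ to $M_\psi$ along $\mathbf{n}_M$ and solving for an approximate minimal hypersurface has $\partial_\infty\Sigma_+ = M_\psi \neq M$, so $\Sigma_+$ and $\Gamma_1$ separate at a rate that makes $E_{rel}[\Sigma_+,\Gamma_1]=+\infty$; it is not a competitor in your class $\{U:\partial_\infty(\partial^*U)=M\}$ and does not supply a finite starting value. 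Third, even setting the entropy issues aside, the approximate minimal hypersurface is a small graphical perturbation of $\Gamma_1$; away from the ideal boundary there is no reason it should lie on one side of $\Gamma_2$, so ``lying strictly above $\Gamma_1$ and $\Gamma_2$'' is unjustified. All three issues are sidestepped cleanly by the paper's $\delta\to0$ scheme, which never leaves the compact-boundary setting until the very last step.

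For $\Gamma_G$ and $\Gamma_L$, your route (a countable $C^\infty_{loc}$-dense subfamily, an increasing iterative bracketing, a compactness limit, and closure of $\preceq$ under $C^\infty_{loc}$-convergence) is genuinely different from the paper's, which simply sets $U_G=\bigcup_\Gamma U_\Gamma$, picks a boundary point $p_1\in\partial U_G$, extracts a convergent sequence of stable hypersurfaces via Proposition~\ref{22} that passes through $p_1$, and then uses the strong maximum principle to argue that any subsequent convergent sequence producing a larger $\Gamma_G^2$ must agree with $\Gamma_G^1$ on the component through $p_1$; since $M$ has finitely many components the process terminates after finitely many steps. Your version needs, as you yourself flag, a separability argument for $\mathcal{M}_M$ (Proposition~\ref{22} only gives compactness for \emph{stable} hypersurfaces, so you must first bracket every unstable $\Gamma$ via the first half and work only with the stable ones) and a closedness argument for $\preceq$. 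Both can presumably be supplied, but the paper's union-plus-component-count scheme is more direct and avoids introducing a metrization of $C^\infty_{loc}$ or any density claim at all.
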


\begin{rem}
As pointed out in \cite{TonegawaCMCinHpSpace}, in general we can not expect the minimal hypersurfaces to be as smooth as the ideal boundaries. However, if we are given a $C^{k,\alpha}$-asymptotic minimal hypersurface $\Gamma_1$ with $k>n$, the assumption on $\partial_\infty\Gamma_1$ stated in \cite[Theorem 5.1]{TonegawaCMCinHpSpace} is automatically satisfied. So, $\Gamma_\pm$ have $C^{k,\alpha}$-asymptotic regularity, while $\Gamma_G$ and $\Gamma_L$ in general only have at most $C^{n,\alpha}$ regularity if we do not assume the existence of a $C^{k,\alpha}$-asymptotic minimal hypersurface.
\end{rem}

\begin{proof}
Let $\Gamma_1$ and $\Gamma_2$ be two hypersurfaces as stated above. Let $M_\delta^\pm=\{p\pm f_\delta(p)\overbar{\textbf{n}}_{\Gamma_1}(p):\ p\in\Gamma_1\cap\{y=\delta\}\}$, where $f_\delta$ is chosen appropriately to satisfy $M^+_\delta\subset(\mathbb{R}^{n+1}_+\backslash \text{cl}(U_{\Gamma_1}))\cap(\mathbb{R}^{n+1}_+\backslash \text{cl}(U_{\Gamma_2}))$ and $M^-_\delta\subset U_{\Gamma_1}\cap U_{\Gamma_2}$. We further require that $M_\delta^\pm$ converges to $M$ as $\delta\to0$ and that $M_\delta^\pm\cap B^+_{d(\textbf{x})}(\textbf{x},0)=\emptyset$ for all $\textbf{x}\in\mathbb{R}^n$. Now, one can minimize the hyperbolic area functional in $\text{cl}(U_{\Gamma_1}\cap U_{\Gamma_2})\cap\{y\geq\delta\}$ with the fixed boundary $M^-_\delta$ and get an integral current $T_\delta$ with $\partial T_\delta=M^-_\delta$. Using $U_{\Gamma_1}$ and $U_{\Gamma_2}$ as barriers as in \cite[Section 6]{WhitePrescribedGenus}, one gets $T_\delta$ is $C^1$ away from a singular set with Hausdorff dimension less than $n-2$. It is then readily checked that the regular part of $T_\delta$ is stationary and that together with the singular part, it remains stationary. Now, the maximum principle \cite{SolomonWhite} implies the support of $T_\delta$ is away from $\Gamma_1$ and $\Gamma_2$. So, $T_\delta$ is locally minimizing and thus is a smooth hypersurface.

Taking a sequence $\delta_i\to0$, one gets a locally minimizing current $T$ as the limit of $T_{\delta_i}$ and $\Gamma_-:=\text{Spt}\,T\subset\text{cl}(U_{\Gamma_1})\cap\text{cl}(U_{\Gamma_2})$. Since $M^-_{\delta_i}\cap\big(\mathbb{R}^{n+1}_+\cap B_{d(\textbf{x})}(\textbf{x},0)\big)=\emptyset$ for any $\textbf{x}$, it is easy to check that $\Gamma_-\cap B_{d(\textbf{x})}(\textbf{x},0)=\emptyset$. Hence, applying \cite[Section 2,3]{HardtLinHyperbolicRegularity} and \cite{LinHyperbolicMinimalGraph}, one gets $\Gamma_-$ is smooth and $C^{k,\alpha}$ up to $M$. Similarly, with $U_{\Gamma_1}$ (resp. $U_{\Gamma_2}$) replaced by $\mathbb{R}^{n+1}_+\backslash \text{cl}(U_{\Gamma_1})$ (resp. $\mathbb{R}^{n+1}_+\backslash \text{cl}(U_{\Gamma_2})$), one gets another smooth and $C^{k,\alpha}$-asymptotic minimal hypersurface $\Gamma_+\subset(\mathbb{R}^{n+1}_+\backslash U_{\Gamma_1})\cap(\mathbb{R}^{n+1}_+\backslash U_{\Gamma_2})$.

Now, we prove the existence of $\Gamma_G$. Take $U_G=\cup_{\Gamma}U_{\Gamma}$, where the union is taken over all minimal hypersurfaces with asymptotic boundary $M$. By \cite[Theorem 3]{Anderson1982}, there exists at least one such hypersurface. Fix any point $p_1\in\partial U_G$. By the definition of $U_G$, there is a sequence of points $p^i_1$ with $p^i\in U_{\Gamma_i}$ for some minimal hypersurface $\Gamma_i$. Applying the preceding paragraph to $\Gamma_i$, we can, without loss of generality, assume each $\Gamma_i$ is stable. By Proposition \ref{22}, there is a subsequence, $\{\Gamma_i\}_i$, (without relabelling) that converges to a minimal hypersurface $\Gamma^1_G$. It is easy to check that $p_1\in\Gamma^1_G$. If $\Gamma^1_G$ is equal to $\partial U_G$, we choose $\Gamma_G=\Gamma^1_G$. If not, then there are a point $p_2\in\partial U_G\backslash\Gamma^1_G$ and a sequence of points $\widetilde{p}^i$, so that $\widetilde{p}^i\in\Gamma'_i$ for some minimal hypersurface $\Gamma'_i$ with $C^{m,\alpha}$-asymptotic boundary $M$. By the same reasoning for $\Gamma_i$, we assume $\Gamma'_i$ is stable and that $\Gamma'_i\succeq\Gamma^1_G$. Taking a convergent subsequence, one gets $\Gamma'_i$ converges to a minimal hypersurface $\Gamma^2_G$ with $\Gamma^2_G\succeq\Gamma^1_G$ and $p_2\in\Gamma^2_G$. Since $p_1\in\Gamma^1_G\cap\Gamma^2_G$, the maximum principle implies $\Gamma^1_G$ agrees with $\Gamma^2_G$ on the connected component containing $p_1$. Thus, $p_1$ and $p_2$ are in the different components of $\Gamma^2_G$. As $M$ has only finitely many components, we iterate the preceding argument and finally get $\Gamma^n_G=\partial U_G$ for some integer $n$. The proof of the existence of $\Gamma_L$ is similar.
\end{proof}

The next proposition is an asymptotic estimate for the distance between two minimal hypersurfaces that are asymptotic to the same ideal boundary.

\begin{prop}
Suppose $\Gamma_1\preceq\Gamma_2$ ($\Gamma_1\cap\Gamma_2=\emptyset$) are two minimal hypersurfaces that are both $C^3$-asymptotic to $M\subset\mathbb{R}^{n}\times\{0\}\subset\partial_\infty\mathbb{H}^{n+1}$. Then, there exist an $\epsilon_1>0$ and a function $h$, so that\label{10}
\begin{equation}\label{78}
    \Gamma_2\cap\{y\leq\frac{\epsilon_1}{2}\}\subset\big\{p+h(p)\overbar{\mathbf{n}}_{\Gamma_1}:p\in\Gamma_1\subset\{y\leq\epsilon_1\}\big\}\subset\Gamma_2\cap\{y\leq2\epsilon_1\}.
\end{equation}
On the components of $\Gamma_1$ where $h$ is not identically $0$, we have the estimate $\frac{1}{C}y^{n+1}(p)\leq h(p)\leq Cy^{n+1}(p)$ for some $C>0$ and $p\in\Gamma_1\cap\{y\leq\epsilon_1\}$.
\end{prop}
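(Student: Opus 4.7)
The plan is to write $\Gamma_2$ as a normal graph over $\Gamma_1$ near $M$, extract the leading order of the graphing function $h$ from the formal expansion of \cite[Lemma 3.3]{YaoRel}, and establish strict positivity of its leading coefficient via a boundary maximum principle in the edge calculus of \cite{MazzeoEdgeOperator, AlexakisMazzeo}. Since $\Gamma_1$ and $\Gamma_2$ are both $C^3$-asymptotic to $M$, in a neighbourhood of each component of $M$ each is a $C^3$ Euclidean normal graph over the perpendicular cylinder over that component. Composing these two descriptions produces $\epsilon_1 > 0$ and a $C^2$ function $h$ on $\Gamma_1 \cap \{y < \epsilon_1\}$ for which (\ref{78}) holds; the hypotheses $\Gamma_1 \preceq \Gamma_2$ and $\Gamma_1 \cap \Gamma_2 = \emptyset$ give $h \geq 0$ with $h > 0$ pointwise on any component of $\Gamma_1$ where $h \not\equiv 0$.

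Writing $\Gamma_i$ locally as $x_n = u_i(\mathbf{x}', y)$ and invoking \cite[Lemma 3.3]{YaoRel}, the Taylor coefficients of $u_i$ in $y$ through order $n$ are determined solely by $M$. Thus
\begin{equation*}
    u_2(\mathbf{x}', y) - u_1(\mathbf{x}', y) = \big(v_2(\mathbf{x}') - v_1(\mathbf{x}')\big)\,y^{n+1} + o(y^{n+1}),
\end{equation*}
which in terms of the normal graph gives $h(p) = c(p)\,y(p)^{n+1} + o(y^{n+1}(p))$ for a bounded function $c$ extending continuously up to $M$. The upper bound $h(p) \leq C y^{n+1}(p)$ is then immediate after possibly shrinking $\epsilon_1$.

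For the lower bound, fix a component $\Gamma_1^j$ of $\Gamma_1$ asymptotic to $M^j \subset M$ on which $h \not\equiv 0$; we must show $c \geq 1/C > 0$ on $M^j$. Since both hypersurfaces are minimal, $h$ solves a nonlinear equation of the form $L_{\Gamma_1} h = Q(h, \nabla_{\Gamma_1} h, \nabla^2_{\Gamma_1} h)$, where $L_{\Gamma_1} = \Delta_{\Gamma_1} + |A_{\Gamma_1}|^2 - n$ is the hyperbolic stability operator and $Q$ is a quadratic remainder vanishing to second order. As $\Gamma_1$ is $C^3$-close to a totally geodesic vertical hyperplane at $M$, $L_{\Gamma_1}$ is a uniformly degenerate elliptic operator in the sense of \cite{MazzeoEdgeOperator} with indicial roots $-1$ and $n$. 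Setting $w := y^{-(n+1)} h$, the edge calculus shows that $w$ satisfies a linear-at-leading-order edge equation whose normal operator at $M^j$ is a smooth second order linear elliptic operator on $M^j$ with boundary trace $c$. Since $w \geq 0$ and $w \not\equiv 0$, the strong maximum principle for this normal operator forces $c > 0$ strictly on $M^j$: the only alternative is $c \equiv 0$ on $M^j$, but then boundary unique continuation for edge operators would propagate $h \equiv 0$ throughout $\Gamma_1^j$, contradicting $h \not\equiv 0$. The \textbf{main obstacle} is precisely this final step, namely computing the normal operator of $w$ carefully enough to verify its ellipticity and to invoke the strong maximum principle and unique continuation within the edge calculus of \cite{MazzeoEdgeOperator}; this is a linear analysis on the compact boundary $M^j$, closely parallel to the implicit-function argument used to construct the mean-convex foliation in the proof of Proposition \ref{20}.
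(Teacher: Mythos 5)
Your reduction to a normal graph $h$ over $\Gamma_1$ and the observation that $h>0$ (strong maximum principle) with $h = O(y^{n+1})$ are the right first steps, but your route to the lower bound does not go through. The crucial claim is that the boundary trace $c$ of $w = y^{-(n+1)}h$ satisfies a second-order elliptic PDE on $M^j$ via the ``normal operator'' of the edge operator $L_{\Gamma_1}$. That is not what the normal operator is: in the Mazzeo calculus it acts on the model fiber (after Fourier transform in the tangential variables, a Bessel-type ODE in $y$), not on $M^j$, and the coefficient of $y^n$ in the expansion of $f=h/y$ --- your $c$ --- is precisely the free Cauchy/scattering datum associated with the indicial root $n$. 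It does not solve any tangential elliptic equation, so there is no strong maximum principle on $M^j$ to invoke. Your fallback, that $c\equiv 0$ plus ``boundary unique continuation for edge operators'' forces $h\equiv 0$, is a delicate statement you don't prove; it also presupposes a polyhomogeneous expansion that the stated $C^3$ asymptotic regularity does not give when $n\geq 3$. For the same reason, your derivation of the upper bound by matching Taylor coefficients through order $n$ is unavailable under the $C^3$ hypothesis; the paper gets $h\leq Cy^{n+1}$ from the thin-neighborhood estimate \cite[Lemma 3.4]{YaoRel}, which is a PDE estimate, not a Taylor expansion.

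The paper's argument for the lower bound is an explicit barrier comparison and sidesteps all of this. One computes that $f=h/y$ satisfies
\begin{equation*}
\Delta f + (|A_{\Gamma_1}|^2 - n)f = a f + y\langle \mathbf{b}, \nabla f\rangle, \qquad |a|+|\mathbf{b}|\leq C\big(|h|+|\overline{\nabla}h|+|\overline{\nabla}^2 h|\big),
\end{equation*}
so that after shrinking $\epsilon_1$ the zeroth-order coefficient of $\mathcal{L} := \Delta + (|A_{\Gamma_1}|^2-n) - a - y\,\mathbf{b}\cdot\nabla$ is nonpositive and the maximum principle applies. One then checks $\mathcal{L}(y^n+Ky^{n+1})\geq 0$ for appropriate $K$, sets $\lambda=\inf_{\{y=\epsilon_1\}} f>0$ (strict positivity by the strong maximum principle on the component where $h\not\equiv 0$), and compares $f$ against $\frac{\lambda}{\epsilon_1^n+K\epsilon_1^{n+1}}(y^n+Ky^{n+1})$ to obtain $f\geq c\,y^n$, hence $h = yf\geq c\,y^{n+1}$, on $\Gamma_1\cap\{y\leq\epsilon_1\}$. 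If you want to salvage your approach you would need to replace the normal-operator step with an honest argument that the scattering datum cannot vanish, but the barrier argument is simpler and already matches the available regularity.
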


\begin{proof}
The existence of the function $h$ and the upper bound follows from \cite[Lemma 3.4]{YaoRel}. Moreover, we have $h\big|_M=0$, $\overbar{\nabla}h\big|_M=0$ and $\overbar{\nabla}^2h\big|_M=0$. By computation, $f=\frac{h}{y}$ satisfies the equation
\begin{equation*}
    \Delta f+(|A_{\Gamma_1}|^2-n)f=a(p,h,\overbar{\nabla}h,\overbar{\nabla}^2h)f+y\left\langle\textbf{b}(p,h,\overbar{\nabla}h,\overbar{\nabla}^2h),\nabla f\right\rangle,
\end{equation*}
where $|a|+|\textbf{b}|\leq C(|h|+|\overbar{\nabla}h|+|\overbar{\nabla}^2h|)$. Since $|A_{\Gamma_1}|^2=\mathcal{O}(y^2)$, we choose $\epsilon_1$ so small that $|A_{\Gamma_1}|^2-n-a\leq0$ for $y(p)\leq\epsilon_1$. Let $g(p)=y^{n}(p)+Ky^{n+1}(p)$. By the same reasoning as in the proof of \cite[Lemma 3.4]{YaoRel}, we can choose $\epsilon_1$ and $K$ appropriately, so that $\mathcal{L}g=\mathcal{L}(y^n+Ky^{n+1})\geq0$. On the component of $\Gamma_1$ where $f$ is not identically $0$, let $\lambda=\inf_{\{y(p)=\epsilon_1\}}f$. If $\lambda=0$, then the strong maximum principle implies that $f\equiv0$ on this component of $\Gamma_1$, which leads to a contradiction. Now, we have $\mathcal{L}(f-\frac{\lambda}{\epsilon_1^n+K\epsilon_1^{n+1}}g)\leq0$, $f\big|_{y=0}=0=\frac{\lambda}{\epsilon_1^n+K\epsilon_1^{n+1}}g\big|_{y=0}$ and $f\big|_{y=\epsilon_1}\geq \frac{\lambda}{\epsilon_1^n+K\epsilon_1^{n+1}}g\big|_{y=\epsilon_1}$. By the maximum principle, $f\geq\frac{\lambda}{\epsilon_1^n+K\epsilon_1^{n+1}}g$ in $\Gamma_1\cap\{y\leq\epsilon_1\}$. This proves the lower bound.
\end{proof}

\section{Deformation of Unstable Minimal Hypersurfaces}\label{85}
In this section, we show that an unstable minimal hypersurface can be deformed into a stable one. Following \cite{BWTopologicalUniqueness}, we perturb the unstable minimal hypersurface by its first eigenfunciton, and show that the mean curvature flow starting from the perturbed hypersurface moves to a stable minimal hypersurface. To get an isotopy that is regular up to the ideal boundary, we modify the deformation near the ideal boundary in an explicit way.

As a starting point, we prove that for unstable or weakly stable minimal hypersurfaces, the infimum of the energy functional for the stability operator is attained by some function in the weighted H\"older space $y^\mu\Lambda^{k,\alpha}_0$ for $\mu>n$.

\begin{prop}\label{14}
Let $\Gamma$ be a minimal hypersurface that is $C^2$-asymptotic to $M$. Assume that
\begin{equation*}
    \lambda_1=\inf_{f\in C^\infty_c(\Gamma)\backslash\{0\}}\frac{\int_{\Gamma}\left\langle\nabla f,\nabla f\right\rangle+(n-|A_\Gamma|^2)f^2 d\mathcal{H}^n_\mathbb{H}}{\int_{\Gamma}f^2d\mathcal{H}^n_\mathbb{H}}\leq0.
\end{equation*}
Then, there is a non-zero function $f\in y^\mu\Lambda^{k,\alpha}_0(\Gamma)$ satisfying $\Delta f+(|A_{\Gamma}|^2-n+\lambda_1)f=0$ for any $k\in\mathbb{N}$. Here $\mu=\frac{n-1+\sqrt{(n+1)^2-4\lambda_1}}{2}$ is the larger root to the equation $t^2+(1-n)t+(\lambda_1-n)=0$. Moreover, on the component of $\Gamma$ where $f$ is not identically $0$, we have $\frac{1}{C}y^{\mu}\leq f\leq Cy^\mu$.
\end{prop}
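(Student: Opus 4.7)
The plan is to obtain $f$ as the limit of Dirichlet eigenfunctions on a compact exhaustion, establish the upper bound via a barrier argument, and then invoke the uniformly degenerate elliptic theory used in \cite{AlexakisMazzeo} to extract the leading boundary behavior together with the full $\Lambda^{k,\alpha}_0$ regularity. For existence, since $M$ is compact the sets $K_i := \Gamma \cap \{y \geq 1/i\}$ form a compact exhaustion of $\Gamma$, and on each $K_i$ the first Dirichlet eigenvalue $\lambda_1^i$ of $-L_\Gamma$ is attained by a positive eigenfunction $f_i$; domain monotonicity and density of $C^\infty_c$ test functions in the variational definition of $\lambda_1$ give $\lambda_1^i \searrow \lambda_1$. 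Fix a basepoint $p_0$ in a connected component where the infimum is realized and normalize $f_i(p_0) = 1$; the Harnack inequality together with interior elliptic estimates extracts a $C^{k,\alpha}_{\mathrm{loc}}$ limit $f \geq 0$ satisfying $(L_\Gamma + \lambda_1) f = 0$ with $f(p_0) = 1$, and the strong maximum principle makes $f > 0$ on the component of $p_0$.

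For the upper bound $f \leq Cy^\mu$ I use a barrier. Since $\Gamma$ is $C^2$-asymptotic to $M$ one has $|A_\Gamma|^2 = O(y^2)$ and, in a collar of $M$, the expansion $\Delta_\Gamma(y^t) = [t^2-(n-1)t]\,y^t + O(y^{t+c})$ for some $c > 0$. Writing $P(t) = t^2 - (n-1)t + \lambda_1 - n$ with roots $\mu_\pm$ (so $\mu = \mu_+$), and choosing $\sigma \in (0,c)$, the function
\[
\overline{g} := y^\mu - y^{\mu+\sigma}
\]
is positive on $\{0 < y < 1\}$ and satisfies
\[
(L_\Gamma + \lambda_1)\,\overline{g} = -P(\mu+\sigma)\,y^{\mu+\sigma} + O(y^{\mu+c}) \leq 0
\]
on $\Gamma \cap \{0 < y < \epsilon_0\}$ for some small $\epsilon_0 > 0$, since $P(\mu+\sigma) > 0$. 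Forming the analogous supersolution $\overline{g}_i$ for the shifted operator $L_\Gamma + \lambda_1^i$, I compare $f_i$ with $B\overline{g}_i$ on the annulus $\Gamma \cap \{1/i \leq y \leq \epsilon_0\}$: the inner boundary condition $f_i = 0 \leq B\overline{g}_i$ is automatic, while a uniform bound on $f_i$ at $\{y=\epsilon_0\}$ (from the Harnack step above) allows $B$ to be chosen independent of $i$. The zero-order coefficient $|A_\Gamma|^2 - n + \lambda_1^i$ is strictly negative on the annulus for $i$ large and $\epsilon_0$ small, so the classical maximum principle gives $f_i \leq B\overline{g}_i$; letting $i \to \infty$ yields $f \leq Cy^\mu$ near $M$.

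For the lower bound and the $\Lambda^{k,\alpha}_0$ regularity I invoke the theory of uniformly degenerate elliptic operators used in \cite{AlexakisMazzeo} (see also \cite{MazzeoEdgeOperator}). The operator $L_\Gamma + \lambda_1$ is uniformly degenerate with indicial roots $\mu_\pm$, and the upper bound from the previous step places $f$ in the decay class $O(y^\mu)$. The polyhomogeneous expansion theorem for edge elliptic equations then provides a leading asymptotic $f = a(x)\,y^\mu + y^\mu O(y^\sigma)$ near $M$ with H\"older-regular leading coefficient $a$; bootstrapping the expansion against the $C^{k,\alpha}$-asymptotic regularity of $\Gamma$ gives $f/y^\mu \in \Lambda^{k,\alpha}_0$ for every $k$, that is $f \in y^\mu \Lambda^{k,\alpha}_0$. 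Since $f > 0$ on the component of $p_0$, the coefficient $a$ is nonnegative there, and a strong maximum principle for the indicial (normal) operator, or equivalently a boundary Harnack chain for $y^{-\mu}f$, forces $a > 0$ on that component, yielding $\tfrac{1}{C}y^\mu \leq f \leq Cy^\mu$.

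The main obstacle is precisely this last step: the indicial root $\mu = \mu_+$ lies on the \emph{boundary} of the Fredholm range of $L_\Gamma + \lambda_1$ on the weighted H\"older scale $y^\mu \Lambda^{k,\alpha}_0$, so $f$ cannot be produced by direct Fredholm inversion, and the elementary barrier of Step~2 does not by itself supply a matching lower bound. The Mazzeo polyhomogeneous expansion result is essential in order to confirm that any decaying solution realizes the indicial rate $y^\mu$ with H\"older-regular leading coefficient; extending the dimension-three formulation in \cite{AlexakisMazzeo} to general $n$ is a routine verification of the same kind carried out in Section~9 of the paper.
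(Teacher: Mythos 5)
Your exhaustion-plus-Harnack construction of the limiting eigenfunction $f$ and your barrier for the upper bound $f\leq Cy^\mu$ match the paper's proof in spirit (the paper uses $y^\mu-K'y^{\mu+1}$ where you use $y^\mu-y^{\mu+\sigma}$; same idea). The divergence --- and the gap --- is in the lower bound. You frame the matching lower bound $f\geq \tfrac{1}{C}y^\mu$ and the $y^\mu\Lambda^{k,\alpha}_0$ membership as requiring Mazzeo's polyhomogeneous expansion together with a boundary Harnack chain for the indicial operator, and you call this machinery \emph{essential}. It is not, and as written the decisive step (why the leading coefficient $a$ is strictly positive on the component of $p_0$) is asserted rather than proved. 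Filling it in via a genuine boundary-Harnack estimate for a uniformly degenerate operator with only finitely regular coefficients is nontrivial and is not carried out.

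The paper's proof shows the lower bound is elementary and that you already had the ingredients. Take the subsolution mirror of your supersolution, $\underline{g}=y^\mu+y^{\mu+\sigma}$ (or the paper's $y^\mu+K'y^{\mu+1}$), for which $\mathcal{L}_{\lambda_1}\underline{g}=P(\mu+\sigma)y^{\mu+\sigma}+O(y^{\mu+c})\geq 0$ on $\{0<y<\epsilon_0\}$. Set $\lambda=\inf_{\{y=\epsilon_0\}}f$; this is positive because $f>0$ on that component (strong maximum principle) and $\{y=\epsilon_0\}$ is compact there, and one separately observes $f\in C_0(\Gamma\cup M)$ (the paper gets this from the auxiliary bound $f^i\leq Cy$). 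Then $\mathcal{L}_{\lambda_1}(f-c\underline{g})\leq 0$, $f-c\underline{g}\geq 0$ at $\{y=\epsilon_0\}$ for $c$ small, and $f-c\underline{g}\to 0$ as $y\to 0$; since the zero-order coefficient $|A_\Gamma|^2-n+\lambda_1$ is negative near the boundary, the maximum principle gives $f\geq c\underline{g}\geq cy^\mu$. No Fredholm range, no polyhomogeneous expansion: the Fredholm boundary concern is a red herring because one never inverts $L_\Gamma$ on $y^\mu\Lambda^{k,\alpha}_0$; one only compares an existing solution against explicit sub- and supersolutions. Finally, the $y^\mu\Lambda^{k,\alpha}_0$ regularity is also immediate once $0\leq f\leq Cy^\mu$ is in hand: rescaled interior Schauder estimates on Whitney cubes of scale comparable to $y$ give $|(y\overbar\nabla)^j(f/y^\mu)|\leq C$ for $j\leq k$ together with the required H\"older seminorms, which is exactly the $\Lambda^{k,\alpha}_0$ norm. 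You should replace the polyhomogeneous-expansion step with this barrier-plus-Schauder argument; as it stands, the crucial positivity of the leading coefficient is a gap.
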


\begin{proof}
Let $\Gamma^i=\Gamma\cap\{y\geq\frac{1}{i}\}$ and
\begin{equation*}
    \lambda^i_1=\inf_{f\in H^1_0(\Gamma^i)\backslash\{0\}}\frac{\int_{\Gamma^i}\left\langle\nabla f,\nabla f\right\rangle+(n-|A_\Gamma|^2)f^2 d\mathcal{H}^n_\mathbb{H}}{\int_{\Gamma^i}f^2d\mathcal{H}^n_\mathbb{H}}.
\end{equation*}
It is easy to see that $\lambda^i_1\to\lambda_1$ as $i\to\infty$. Since $\Gamma^i$ is a compact manifold with boundary, the standard calculus of variation theory gives the existence of a non-negative function $f^i\in H^1_0(\Gamma^i)\cap C^\infty(\Gamma^i)$, so that $\Delta f^i+(|A_\Gamma|^2-n+\lambda^i_1)f^i=0$ in $\Gamma^i$ and $f^i(p_0)=1$ for a fix point $p_0\in\Gamma^1\subset\Gamma^i$. By choosing $\epsilon_2$ sufficiently small, we assume that $|A_\Gamma|^2-n<0$ for $p\in\Gamma\cap\{y(p)\leq\epsilon_2\}$. For $i$ sufficiently large, the Harnack inequality implies that $f^i(p)\leq C(\epsilon_2)f(p_0)=C(\epsilon_2)$ for $p\in\Gamma\cap\{y\geq\epsilon_2\}$. On the other hand, the maximum principle is valid on $\Gamma\cap\{y\leq\epsilon_2\}$. So, $\sup_{\Gamma\cap\{y\leq\epsilon_2\}}f^i\leq\sup_{\Gamma\cap\{y=\frac{1}{i},\,\epsilon_2\}}f^i=\sup_{\Gamma\cap\{y=\epsilon_2\}}f^i\leq C(\epsilon_2)$. In other words, $f^i$ is uniformly bounded in $i$. Locally, we can write the Laplace operator over $\Gamma$ as $\Delta=y^2\overbar{\Delta}+(2-n)y\overbar{g}^{kj}\partial_ky\partial_j$. So, we can rewrite the equation for $f^i$ as
\begin{equation}
    \overbar{\Delta}f^i+\frac{2-n}{y}\overbar{g}^{kj}\partial_ky\partial_j(f^i)+\frac{|A_\Gamma|^2-n+\lambda^i_1}{y^2}f^i=0.
\end{equation}
The interior Schauder estimate \cite[Section 6.1]{GilbargTrudinger} gives
\begin{equation*}
\frac{1}{i}\|\overbar{\nabla}f^i\|_{C^0(\Gamma\cap\{y\geq\frac{2}{i}\})}+\frac{1}{i^2}\|\overbar{\nabla}^2f^i\|_{C^0(\Gamma\cap\{y\geq\frac{2}{i}\})}+\frac{1}{i^{2+\alpha}}[\overbar{\nabla}^2f^i]_{\alpha;\Gamma\cap\{y\geq\frac{2}{i}\}}\leq C\|f^i\|_{C^0(\Gamma^i)}\leq C(\epsilon_2).
\end{equation*}
Taking a convergent subsequence of $\{f^i\}$, we get the existence of a $C^{2,\alpha}_{loc}$ function $f$ satisfying $\Delta f+(|A_\Gamma|^2-n+\lambda_1)f=0$, $f(p_0)=1$ and $\|f\|_{\Lambda^{2,\alpha}_0}\leq C(\epsilon_2)$.

Direct computation shows
\begin{align*}
    \Delta\left(\frac{f^i}{y}\right)-2\langle\nabla\left(\frac{f^i}{y}\right),y\nabla\frac{1}{y}\rangle+\left(|A_\Gamma|^2-n+\lambda_1^i+2-n+\mathcal{O}(y)\right)\frac{f^i}{y}=0.
\end{align*}
By shrinking $\epsilon_2$ if necessary, $\frac{f^i}{y}$ satisfies the maximum principle on $\Gamma\cap\{\frac{1}{i}\leq y\leq\epsilon_2\}$. Hence, $f^i\leq Cy$ in $\Gamma\cap\{\frac{1}{i}\leq y\leq\epsilon_2\}$. Taking $i\to\infty$, we see that $f$ can be continuously extended to $0$ on $M$. We take $\mathcal{L}_{\lambda_1}=\Delta+(|A_\Gamma|^2-n+\lambda_1)$ and notice that $\mathcal{L}_{\lambda_1}(y^\mu+K'y^{\mu+1})\geq0$ and $\mathcal{L}_{\lambda_1}(y^\mu-K'y^{\mu+1})\leq0$ for some $K'>0$. Then, the estimate $\frac{1}{C}y^\mu\leq f\leq Cy^\mu$ follows from an argument similar to the proof of Proposition \ref{10} and the fact that $f\in C_0(\Gamma\cup M)$.

Finally, $f\in y^\mu\Lambda^{k,\alpha}_0$ for any $k$ follows from $0\leq f\leq Cy^\mu$ and standard elliptic PDE estimates.
\end{proof}

The main theorem of this section is the following:

\begin{thm}\label{76}
Let $2\leq n\leq6$, $\alpha\in(0,1)$ and $k\geq2$. Assume $M$ is of class $C^{k+1,\alpha}$. If $\Gamma$ is an unstable minimal hypersurface that is $C^{k+1,\alpha}$-asymptotic to $M\subset\mathbb{R}^n\times\{0\}$ with $\lambda_c[M]<\mathrm{Vol}(\mathbb{S}^{n-1})\lambda[\mathbb{S}^{n-1}\times\mathbb{R}^1]$, then there is a stable minimal hypersurface $\Gamma'$ which is also $C^{k+1,\alpha}$-asymptotic to $M$, so that $\Gamma$ and $\Gamma'$ are $C^{k,\alpha}$ isotopic. Moreover, if there are two minimal hypersurfaces $\Gamma_1$ and $\Gamma_2$ with $\Gamma_1\preceq\Gamma\preceq\Gamma_2$, then $\Gamma'$ can be chosen to satisfy $\Gamma_1\preceq\Gamma'\preceq\Gamma_2$.
\end{thm}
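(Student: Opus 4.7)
The plan is to follow \cite{BWTopologicalUniqueness}: perturb $\Gamma$ by its first eigenfunction to obtain a hypersurface whose mean curvature has a definite sign, then run a mean-curvature-type evolution that deforms it monotonically to a stable minimal limit, using the low-entropy hypothesis to rule out singular tangent flows. Since $\Gamma$ is unstable, Proposition \ref{14} provides a first eigenfunction $f\in y^\mu\Lambda^{k+1,\alpha}_0(\Gamma)$ with $\lambda_1<0$ and $\mu=\tfrac{1}{2}(n-1+\sqrt{(n+1)^2-4\lambda_1})>n$, satisfying $\tfrac{1}{C}y^\mu\leq f\leq Cy^\mu$ on every component where it is nontrivial. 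For small $\epsilon>0$, I form the perturbation $\Gamma^+_\epsilon=\{p+\epsilon f(p)\overbar{\textbf{n}}_\Gamma(p):p\in\Gamma\}$; since the linearisation of the mean-curvature operator at $\Gamma$ is $-L_\Gamma$, the sign of $H_{\Gamma^+_\epsilon}$ is determined by $-\epsilon\lambda_1 f>0$, giving $H$ a definite sign on each component where $f\not\equiv0$. Because $\epsilon f=\mathcal{O}(y^\mu)$ with $\mu>n$, the asymptotic jet of $\Gamma$ up to order $n$ is unchanged, so $\Gamma^+_\epsilon$ remains $C^{k+1,\alpha}$-asymptotic to $M$ with $\Gamma_1\preceq\Gamma^+_\epsilon\preceq\Gamma_2$ for $\epsilon$ sufficiently small.

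Next I would evolve $\Gamma^+_\epsilon$ by mean curvature flow in $\mathbb{H}^{n+1}$. The sign of $H$ is preserved by the parabolic maximum principle, so the flow is monotone in the normal direction away from $\Gamma$, and the avoidance principle of \cite{SolomonWhite} traps the evolving hypersurfaces between $\Gamma_1$ and $\Gamma_2$. The hyperbolic entropy bound $\lambda_\mathbb{H}\leq\lambda_c[M]/\mathrm{Vol}(\mathbb{S}^{n-1})<\lambda[\mathbb{S}^{n-1}\times\mathbb{R}^1]$ (from \cite{BernsteinEntropy}, preserved along the flow by monotonicity) rules out, in the range $2\leq n\leq 6$, any cylindrical tangent flow at a first singular time, so every tangent flow is a static plane and the flow remains smooth for all time. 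Combined with the compactness Proposition \ref{22}, the monotone flow subsequentially converges to a stable minimal hypersurface $\Gamma'$ which is $C^{k+1,\alpha}$-asymptotic to $M$ and satisfies $\Gamma_1\preceq\Gamma'\preceq\Gamma_2$. Concatenating $s\mapsto\{p+s\epsilon f\overbar{\textbf{n}}_\Gamma(p)\}$ for $s\in[0,1]$ with the flow then produces the candidate isotopy between $\Gamma$ and $\Gamma'$.

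The main obstacle is that a purely interior mean curvature flow need not be asymptotically regular at $\partial_\infty\mathbb{H}^{n+1}$ in the $C^{k,\alpha}$ topology of $(\overbar{\mathbb{H}}^{n+1},r^2 g_P)$ required by the statement. To remedy this I would modify the deformation in an explicit way near the ideal boundary, as hinted at in the opening of Section \ref{85}: in a collar $\{y<\delta\}$, replace the evolution by a graphical deformation over $\Gamma$ built from the asymptotic data of $\Gamma'$, obtained via the implicit function theorem applied to $L_\Gamma$ on the weighted H\"older spaces $y^\mu\Lambda^{k+2,\alpha}_0\to y^\mu\Lambda^{k,\alpha}_0$, exactly as in the construction of the mean-convex foliations in Proposition \ref{20}. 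Matching this explicit boundary deformation to the interior flow by a partition of unity, while preserving both the sign of $H$ and the barrier containment, is the delicate technical step: one must verify that the weighted H\"older decay of the perturbation $\mathcal{O}(y^\mu)$ with $\mu>n$ persists throughout the patching so that the final isotopy is continuous in the $C^{k,\alpha}$ topology up to $\partial_\infty\mathbb{H}^{n+1}$.
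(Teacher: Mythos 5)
Your proposal follows the same overall scheme as the paper's Section 8 (perturb by the first eigenfunction, run mean curvature flow, use the entropy bound to control singularities, converge to a stable limit, then upgrade to a $C^{k,\alpha}$ isotopy up to the ideal boundary). However, there is a genuine gap in the step where you rule out a finite-time singularity. You assert that the hyperbolic entropy bound $\lambda_{\mathbb{H}}<\lambda[\mathbb{S}^{n-1}\times\mathbb{R}^1]$ alone "rules out any cylindrical tangent flow, so every tangent flow is a static plane." This is not enough: the entropy bound forbids generalized cylinders $\mathbb{S}^{n-k}\times\mathbb{R}^{k}$ with $k\geq 1$, but it does not a priori constrain a tangent flow to be a generalized cylinder in the first place, and it does not exclude $\mathbb{S}^n$ (whose entropy is smaller than $\lambda[\mathbb{S}^{n-1}]$). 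The paper's route is: first establish the lower bound $H_{\Sigma_t}\geq c\,e^{-n(\mu+1)t}y^\mu$ (Lemma \ref{79}), then propagate a quantitative pinching inequality $|A_{\Sigma_t}|^2\leq C\,y^{-2\mu}e^{4nt}H_{\Sigma_t}^2$ via an evolution inequality for $|A|^2/H^2$ combined with pseudo-locality and a non-compact maximum principle (Lemma \ref{18}). It is this bound $|A|\leq CH$, passed to the parabolic blow-up by Brakke regularity, that lets one invoke \cite[Proposition 5.1]{CIMW} (using $\lambda<2$) to conclude the tangent flow is a \emph{smooth} self-shrinking flow, and only then \cite[Theorem 0.17]{CMGeneric} classifies it as $\mathbb{S}^n$ or $\mathbb{R}^n$, with $\mathbb{S}^n$ excluded because $\Sigma_t$ has no closed component. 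Without the pinching estimate, the smoothness of the tangent flow is not justified in the range $2\leq n\leq 6$.

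Two smaller remarks. First, your plan to fix the boundary regularity by solving an implicit-function-theorem problem on the weighted spaces $y^\mu\Lambda^{k,\alpha}_0$ is a legitimate alternative, but it differs from what the paper does for this theorem: since the asymptotic boundary $M$ is \emph{fixed} here, the paper simply expresses $\Sigma_t\cap\{y\leq 2\epsilon\}$ as a graph over $M\times(0,2\epsilon]$ (via \cite[Lemma 5.1]{YaoRel}), writes $\Sigma_{s_k}\cap\{y\geq\epsilon\}$ as a graph over $\Sigma_{s_{k+1}}\cap\{y\geq\epsilon\}$ along a partition of $[0,T_0]$ with well-chosen transverse sections, and extends these to $M$ to produce the isotopy; the IFT-on-weighted-spaces machinery is reserved for the proof of Theorem \ref{main2}, where $M$ itself is perturbed. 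Second, your convergence argument should be organized around Brakke compactness of the flows $\{\Sigma_{t+\tau_i}\}$ and one-sided minimization via \cite{SolomonWhite}, not directly via Proposition \ref{22}, which is about static stable hypersurfaces and enters only for the boundary regularity of the limit.
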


By \cite{BernsteinEntropy}, $\lambda_c[M]=\text{Vol}(\mathbb{S}^{n-1})\lambda_\mathbb{H}[\Gamma]$. So, $\lambda_c[M]< \text{Vol}(\mathbb{S}^{n-1})\lambda[\mathbb{S}^{n-1}\times\mathbb{R}^1]$ implies $\lambda_{\mathbb{H}}[\Gamma]<\lambda[\mathbb{S}^{n-1}\times\mathbb{R}^1]$. Let $\hat{\Gamma}_\epsilon=\{p+\epsilon f(p)\textbf{n}_{\Gamma}=p+\epsilon yf\overbar{\textbf{n}}_{\Gamma}:\,p\in\Gamma\}$, where $f$ is the first eigenfunction of $\Gamma$. Consider the mean curvature flow $\{\Sigma_t\}_t$ with $\Sigma_0=\hat{\Gamma}_\epsilon$. Since $\sup_{p\in\hat{\Gamma}_\epsilon}|\overbar{A}_{\hat{\Gamma}_\epsilon}|(p)<\infty$ if we choose $\epsilon$ sufficiently small, short time existence of the flow follows from classical theories. Let $T>0$ be the maximal existing time of the flow $\{\Sigma_t\}_t$. We will show in the following lemma that by shrinking $\epsilon$ if necessary, $\{\Sigma_t\}$ remains mean convex.

\begin{lem}\label{79}
By choosing $\epsilon$ sufficiently small, the mean curvature flow $\{\Sigma_t\}_{t\in[0,T)}$ starting from $\hat{\Gamma}_\epsilon$ remains mean convex. Moreover, $H_{\Sigma_t}\geq ce^{-n(\mu+1)t}y^\mu$ for some $c>0$ and $t\in[0,T)$.
\end{lem}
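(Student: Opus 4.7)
\textbf{Proof Plan for Lemma \ref{79}.}

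The plan is to combine a computation of the initial mean curvature of $\hat{\Gamma}_\epsilon$, the standard evolution equation for $H$ under mean curvature flow in $\mathbb{H}^{n+1}$, and an explicit subsolution argument with comparison function $\psi:=ce^{-n(\mu+1)t}y^\mu$. First I would compute the mean curvature of $\hat\Gamma_\epsilon$ using the first variation formula for $H$ in the ambient $\mathbb{H}^{n+1}$. Since $\Gamma$ is minimal and $f$ satisfies the eigenvalue equation $L_\Gamma f=-\lambda_1 f$ with $L_\Gamma=\Delta+|A_\Gamma|^2-n$, the normal perturbation $p\mapsto p+\epsilon f\mathbf{n}_\Gamma$ produces $H_{\hat\Gamma_\epsilon}=-\epsilon L_\Gamma f+O(\epsilon^2)=\epsilon\lambda_1 f+O(\epsilon^2)$ (up to the sign convention for $H$ adopted in the paper, which makes the expression positive). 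Combining this with the pointwise estimate $f\geq c_1 y^\mu$ from Proposition \ref{14}, we obtain $H_{\Sigma_0}\geq c_0 y^\mu$ for some $c_0>0$ provided $\epsilon$ is small. Short-time existence and asymptotic regularity of $\{\Sigma_t\}$ follow from standard parabolic theory and results of earlier sections.

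Next, I would use the fact that, because $\mathbb{H}^{n+1}$ has Ricci curvature $-n g_P$, the scalar mean curvature evolves by $(\partial_t-\Delta_{\Sigma_t})H=(|A|^2-n)H$. To build the desired subsolution, I would compute $(\partial_t-\Delta_{\Sigma_t})y^\mu$ explicitly using three identities: $\Delta_{\mathbb{H}^{n+1}}y^\mu=\mu(\mu-n)y^\mu$; the conformal Hessian computation $\mathrm{Hess}_{g_P}(y^\mu)(\mathbf{n},\mathbf{n})=\mu y^\mu[(\mu+1)\overline{n}_{n+1}^2-1]$; and $\partial_t y^\mu=-H\nabla_{\mathbf{n}}y^\mu=-\mu H y^\mu\overline{n}_{n+1}$. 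Combined via the Gauss-type formula $\Delta_{\Sigma_t}y^\mu=\Delta_{\mathbb{H}^{n+1}}y^\mu-\mathrm{Hess}(y^\mu)(\mathbf{n},\mathbf{n})+H\nabla_{\mathbf{n}}y^\mu$, these give
\begin{equation*}
(\partial_t-\Delta_{\Sigma_t})y^\mu-(|A|^2-n)y^\mu=y^\mu\bigl[n(\mu+1)-|A|^2-\mu(\mu+1)(1-\overline{n}_{n+1}^2)\bigr]-2\mu H y^\mu\overline{n}_{n+1}.
\end{equation*}
Orienting $\mathbf{n}_\Gamma$ (and hence $\mathbf{n}_{\Sigma_t}$) so that $\overline{n}_{n+1}\geq 0$ near the ideal boundary and assuming temporarily $H\geq 0$, the right-hand side is at most $n(\mu+1)y^\mu$. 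Direct substitution then shows that $\psi=c_0 e^{-n(\mu+1)t}y^\mu$ satisfies $(\partial_t-\Delta_{\Sigma_t})\psi\leq(|A|^2-n)\psi$.

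To close the argument I would run a continuity/bootstrap on $T^*:=\sup\{t\in[0,T):H_{\Sigma_s}\geq 0 \text{ for all } s\in[0,t]\}$, which is positive by the initial strict positivity from the first paragraph. On $[0,T^*)$ the function $u:=H-\psi$ is a supersolution of the parabolic equation for $H$, and $u(0,\cdot)\geq 0$ by choice of $c_0$. The parabolic maximum principle applied on the compact subdomains $\Sigma_t\cap\{y\geq\delta\}$, followed by passage to the limit $\delta\to 0$ using the matching rate $y^\mu$ at which both $H$ and $\psi$ vanish at the ideal boundary, yields $H\geq\psi>0$ on $[0,T^*)$. Continuity then forces $T^*=T$, giving both mean convexity and the quantitative bound.

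The main obstacle is the maximum principle step in this non-compact setting, since both $H$ and $\psi$ decay at the ideal boundary and there is no genuine spatial boundary on which to compare. To handle it I would apply the comparison to the ratio $H/y^\mu$ (which extends continuously to $M$ by the asymptotic regularity developed in Section 2 and an appropriate parabolic extension of the uniformly degenerate operator theory of \cite{AlexakisMazzeo,MazzeoEdgeOperator}), and use a barrier argument at $\{y=\delta\}$ before sending $\delta\to 0$. A secondary subtlety is ensuring $\overline{n}_{n+1}\geq 0$ globally; this is automatic near $M$ but may fail in the interior, in which case the missing region can be handled by a standard compact parabolic maximum principle applied to $H$ directly on $\{y\geq\delta_0\}$ for a fixed $\delta_0>0$ and glued to the asymptotic estimate near $\{y=0\}$.
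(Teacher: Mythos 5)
Your overall strategy — push $\Gamma$ by $\epsilon f$ to produce $H_{\Sigma_0}\geq c_0 y^\mu$, combine the evolution equation for $H$ with a computation of $(\partial_t-\Delta_{\Sigma_t})y^\mu$, and compare with the barrier $ce^{-n(\mu+1)t}y^\mu$ — is precisely what the paper does, and the eigenfunction/initial-estimate step is correct. However there are two problems.

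First, there is a sign error in your Gauss-type formula. You write $\Delta_{\Sigma_t}y^\mu=\Delta_{\mathbb{H}^{n+1}}y^\mu-\mathrm{Hess}(y^\mu)(\mathbf{n},\mathbf{n})+H\nabla_{\mathbf{n}}y^\mu$, but in the same convention in which $\partial_t y^\mu=-H\nabla_\mathbf{n}y^\mu$ the Gauss formula reads $\Delta_{\Sigma_t}y^\mu=\Delta_{\mathbb{H}^{n+1}}y^\mu-\mathrm{Hess}(y^\mu)(\mathbf{n},\mathbf{n})-H\nabla_{\mathbf{n}}y^\mu$. The two $H\nabla_\mathbf{n}y^\mu$ terms therefore cancel exactly, and the correct identity is $(\partial_t-\Delta_{\Sigma_t})y^\mu = n\mu y^\mu - \mu(\mu+1)y^\mu(1-\overline{n}_{n+1}^2)$, with no $H\overline{n}_{n+1}$ term. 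Consequently the barrier $\psi=ce^{-n(\mu+1)t}y^\mu$ satisfies $(\partial_t-\Delta)\psi\leq(|A|^2-n)\psi$ unconditionally; your extra hypotheses ($H\geq0$, $\overline{n}_{n+1}\geq 0$) and the attendant $T^*$ continuity argument and gluing of interior and boundary regions are all artifacts of the spurious $-2\mu Hy^\mu\overline{n}_{n+1}$ term and can be dropped.

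Second, and more substantively, the maximum-principle step is missing a crucial ingredient: a \emph{uniform curvature bound} $\sup_{0\leq t\leq T_0}|A_{\Sigma_t}|<\infty$ for each $T_0<T$. The paper obtains this by first observing that $|A_{\hat\Gamma_\epsilon}|=\mathcal{O}(y)$ (since $\hat\Gamma_\epsilon$ is $C^2$ up to the boundary) and then invoking the pseudo-locality theorem for mean curvature flow to propagate a uniform $|A|$ bound on $\Sigma_t\cap\{y\leq\epsilon\}$ forward in time; compactness of $\Sigma_t\cap\{y\geq\epsilon\}$ handles the rest. This bound is indispensable: it controls the coefficients of the parabolic operator (so the non-compact maximum principle, here via a monotonicity formula and a modification of Ecker--Huisken, is applicable) and it is what makes the asserted asymptotic behavior of $H/y^\mu$ near the ideal boundary for $t>0$ available in the first place. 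Your appeal to ``an appropriate parabolic extension'' of the Alexakis--Mazzeo uniformly degenerate operator theory is not the right mechanism — that theory is elliptic and about stationary minimal graphs, not moving hypersurfaces — and as stated the passage $\delta\to 0$ in your comparison on $\{y\geq\delta\}$ has no justification for why $u=H-\psi$ does not attain a negative value along $\{y=\delta\}$. Identifying and inserting the pseudo-locality step would close this gap.
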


\begin{proof}
By direct computation (see, e.g., \cite{HuiskenContractingCvxHypers}), $\frac{d}{dt}H_{\Sigma_t}=\Delta_{\Sigma_t}H_{\Sigma_t}+(|A_{\Sigma_t}|^2-n)H_{\Sigma_t}$. We also have $\frac{d}{dt}(y^\mu)=\Delta_{\Sigma_t}(y^\mu)+n\mu y^\mu-\mu(\mu+1)y^\mu(1-(\overbar{\textbf{n}}_{\Sigma_t}\cdot\overbar{\textbf{e}}_{n+1})^2)$. Hence, $\left(\frac{d}{dt}-\Delta_{\Sigma_t}\right)\left(e^{-n\mu t}y^\mu\right)\leq0$, and $\left(\frac{d}{dt}-\Delta_{\Sigma_t}\right)\left(e^{nt}H_{\Sigma_t}\right)=|A_{\Sigma_t}|^2H_{\Sigma_t}$. Combining these quantities, we get for any $c>0$,
\begin{align*}
    \left(\frac{d}{dt}-\Delta_{\Sigma_t}\right)\left(ce^{-n\mu t}y^\mu-e^{nt}H_{\Sigma_t}\right)\leq-|A_{\Sigma_t}|^2e^{nt}H_{\Sigma_t}\leq|A_{\Sigma_t}|^2\left(ce^{-n\mu t}y^\mu-e^{nt}H_{\Sigma_t}\right).
\end{align*}
At $t=0$, we know from direct computation that
\begin{align*}
    H_{\hat{\Gamma}_\epsilon}&=-(\Delta_\Gamma+|A_\Gamma|^2-n)(\epsilon f)+a(p,h,\overbar{\nabla}h,\overbar{\nabla}^2h)\epsilon f+y\left\langle\textbf{b}(p,h,\overbar{\nabla}h,\overbar{\nabla}^2h),\nabla \epsilon f\right\rangle\\
    &=\epsilon\big(\lambda_1 f+a(p,h,\overbar{\nabla}h,\overbar{\nabla}^2h)f+y\left\langle\textbf{b}(p,h,\overbar{\nabla}h,\overbar{\nabla}^2h),\nabla f\right\rangle\big)
\end{align*}
where $|a|+|\textbf{b}|\leq C(|h|+|\overbar{\nabla}h|+|\overbar{\nabla}^2h|)$ and $h=\frac{\epsilon f}{y}$. Hence, by Proposition \ref{14}, there is a small $\epsilon$, so that $\hat{\Gamma}_\epsilon$ is mean convex and $ce^{-n\mu t}y^\mu-e^{nt}H_{\Sigma_t}\leq0$ holds at $t=0$ for some $c>0$. On the other hand, since $\Gamma$ is a $C^3$ hypersurface up to the ideal boundary, the perturbed hypersurface $\hat{\Gamma}_\epsilon$ is $C^2$ up to the ideal boundary. Hence, $|A_{\hat{\Gamma}_\epsilon}|=\mathcal{O}(y)$. By the pseudo-locality theorem \cite[Theorem 7.5]{ChenPseudolcality}, we see that for any $T_0<T$, there are constants $C(T_0)>0$ and $\epsilon=\epsilon(T_0)>0$, so that
\begin{equation*}
    \sup_{0\leq t\leq T_0,p\in\Sigma_t\cap\{y\leq\epsilon\}}|A_{\Sigma_t}|(p)\leq C(T_0).
\end{equation*}
Since $\bigcup_{0\leq t\leq T_0}\big(\Sigma_t\cap\{y\geq\epsilon\}\big)\times\{t\}$ is compact, we see that $\sup_{0\leq t\leq T_0}|A_{\Sigma_t}|<\infty$.

Then, by a non-compact maximum principle (e.g., one can use a variant of the monotonicity formula introduced in \cite{BernsteinEntropy} and modify the proof of \cite[Corollary 1.1]{EHEntireGraph}), $ce^{-n\mu t}y^\mu-e^{nt}H_{\Sigma_t}\leq0$ for $t\in[0,T)$.
\end{proof}

\begin{lem}\label{18}
Let $\{\Sigma_t\}_{t}$ be a mean curvature flow which satisfies $H_{\Sigma_t}\geq ce^{-n(\mu+1)t}y^\mu$ for some $c>0$ and $\mu\in\mathbb{R}^1$. Suppose the inital hypersurface $\Sigma_0$ is $C^2$-asymptotic to the ideal boundary and meets the ideal boundary orthogonally. Then, for any $T_1<\infty$ with $T_1\leq T$(the maximal existing time), there is a constant $C_1=C_1(n,T_1,\Sigma_0)>0$, so that $|A_{\Sigma_t}|^2\leq C_1y^{-2\mu}e^{4nt}H_{\Sigma_t}^2$ for any $p\in\Sigma_t$ and $t<T_1$.
\end{lem}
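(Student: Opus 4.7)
The plan is to prove the estimate by a Huisken-type maximum-principle argument applied to
\[
F := \frac{|A_{\Sigma_t}|^2\, y^{2\mu}}{H_{\Sigma_t}^2},
\]
establishing the uniform bound $F \leq C_1 e^{4nt}$ for $t \in [0,T_1]$. First I would verify the initial bound $F(\cdot,0)\leq C_0$: the hypothesis that $\Sigma_0$ is $C^2$-asymptotic to $\partial_\infty\mathbb{H}^{n+1}$ and meets it orthogonally implies, via a direct conformal calculation relating the hyperbolic second fundamental form $A_{\Sigma_0}$ to its Euclidean counterpart $\overbar{A}_{\Sigma_0}$, that $|A_{\Sigma_0}|$ is uniformly bounded up to the ideal boundary; combined with the hypothesis $H_{\Sigma_0}\geq c\, y^\mu$ at $t=0$, this yields $F(\cdot,0)\leq C_0$.

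Next, using the standard Simons-type evolution formulas in a space form of sectional curvature $-1$,
\begin{align*}
(\partial_t - \Delta)|A|^2 &= -2|\nabla A|^2 + 2|A|^4 - 2n|A|^2 + 2H^2,\\
(\partial_t - \Delta) H^2 &= -2|\nabla H|^2 + 2(|A|^2 - n) H^2,
\end{align*}
together with the evolution $\partial_t y = -Hy\,\overbar{\mathbf{n}}\cdot\overbar{\mathbf{e}}_{n+1}$ for the ambient height function on the moving hypersurface, I would compute the evolution of $F$. Following Huisken's classical computation for $|A|^2/H^2$, the expected outcome is
\[
(\partial_t - \Delta) F \;\leq\; \langle \mathbf{b}, \nabla F\rangle \;-\; \frac{2 y^{2\mu}}{H^4}\bigl|H\nabla_k A_{ij} - \nabla_k H\cdot A_{ij}\bigr|^2 \;+\; 4n F,
\]
where $\mathbf{b}$ is a drift that is uniformly bounded on $[0,T_1]$ by the curvature bound recorded in Lemma \ref{79} and pseudolocality near the ideal boundary, and where the Kato-type term on the right is manifestly non-positive. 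The substitution $\widetilde F := e^{-4nt} F$ then converts this into a parabolic inequality without a zeroth-order term, to which one may apply the same non-compact maximum principle (a hyperbolic analogue of Ecker-Huisken's monotonicity) used to conclude Lemma \ref{79}, yielding $\widetilde F \leq C_1$ on $\Sigma_t$ for $t<T_1$, which is exactly the desired estimate.

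The main obstacle is the algebraic bookkeeping in the evolution equation for $F$. The weight $y^{2\mu}$ interacts with the mean-curvature vector through $\partial_t y$ to produce both drift and zeroth-order contributions, and the unfavourable $+2H^2$ term in the Simons identity for $|A|^2$ — generated by the sectional curvature $-1$ of the ambient space — must be absorbed by the $+2|A|^4$ term via the pointwise pinching $H^2\leq n|A|^2$. Verifying that all of these combine to produce a zeroth-order coefficient no larger than $4n$, with every remaining drift absorbable into $\langle\mathbf{b},\nabla F\rangle$, is what pins down the precise exponent $e^{4nt}$ in the statement; this is the step most likely to require the most delicate computation, and the precise cancellation is what distinguishes the hyperbolic case from the Euclidean one treated by Huisken.
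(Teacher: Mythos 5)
Your high-level idea — apply a Huisken-type maximum principle to a ratio quantity built from $|A|^2/H^2$ — is the right one, but two substantive issues separate your proposal from a correct proof and from what the paper actually does.

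First, the Simons-type evolution formula you wrote for $|A|^2$ has the wrong signs on the ambient-curvature terms. For sectional curvature $-1$, the formula cited from Huisken expands to
\begin{equation*}
(\partial_t-\Delta)|A|^2=-2|\nabla A|^2+2|A|^4+2n|A|^2-4H^2,
\end{equation*}
not $+2|A|^4-2n|A|^2+2H^2$ as you wrote. This is not cosmetic: with your signs the curvature terms cancel against each other when computing $\partial_t(|A|^2/H^2)$ and leave only a bounded constant, so the exponent $e^{4nt}$ could never emerge. Consequently your proposed ``absorption'' of a $+2H^2$ term via $H^2\le n|A|^2$ is a spurious step; in the correct computation one instead considers $\frac{|A|^2}{H^2}-\frac1n$, for which the pure constants cancel and the zeroth-order contribution is exactly $4n\big(\frac{|A|^2}{H^2}-\frac1n\big)$. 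After a gradient rearrangement (borrowed from Huisken, and used verbatim in the paper via the estimate $\frac{4}{H^3}\langle\nabla|A|^2,\nabla H\rangle\le\frac{2}{H}\langle\nabla(\frac{|A|^2}{H^2}),\nabla H\rangle+\frac{6|A|^2|\nabla H|^2}{H^4}+\frac{2|\nabla A|^2}{H^2}$), the rescaled quantity $g=e^{-4nt}\big(\frac{|A|^2}{H^2}-\frac1n\big)$ satisfies the clean inequality $\partial_t g\le\Delta g+\frac2H\langle\nabla g,\nabla H\rangle$ with \emph{no} zeroth-order term at all.

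Second, and more structurally: the paper does \emph{not} build the weight $y^{2\mu}$ into the maximum-principle quantity. Differentiating $y^{2\mu}$ along the flow produces additional zeroth-order terms (compare the computation $(\partial_t-\Delta)(y^\mu)=n\mu y^\mu-\mu(\mu+1)y^\mu(1-(\overbar{\mathbf n}\cdot\overbar{\mathbf e}_{n+1})^2)$ used in Lemma \ref{79}), which would alter the exponent and require a non-compact maximum principle with a priori growth control on $F$ near the ideal boundary — which is exactly what is hard to establish in advance. The paper avoids this entirely: pseudo-locality gives $\sup_{t<T_1}\sup_{\{y\le2\epsilon\}}|A_{\Sigma_t}|\le C$, which together with the hypothesis $H_{\Sigma_t}\ge ce^{-n(\mu+1)t}y^\mu$ yields a \emph{direct} pointwise bound $g\le Cy^{-2\mu}e^{2n(\mu+1)T_1}$ in $\{y\le\epsilon\}$, and then a plain compact-domain maximum principle on $W_\epsilon=\bigcup_{t<T_1}\big(\Sigma_t\cap\{y\ge\epsilon\}\big)\times\{t\}$ propagates this bound to the interior. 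Combining the two regions gives the lemma; the $y^{-2\mu}$ in the conclusion comes from the boundary estimate, not from the evolution of a weighted quantity, and no non-compact maximum principle is invoked in this lemma. Your route is not obviously salvageable without confronting both problems.
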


\begin{proof}
When it is clear from the context, we will omit the subscripts and simply write $A$ (resp. $H$) for $A_{\Sigma_t}$ (resp. $H_{\Sigma_t}$). By \cite[Corollary 3.5]{HuiskenContractingCvxHypers}, 
\begin{equation*}
    \frac{d}{dt}|A|^2=\Delta|A|^2-2|\nabla A|^2+2|A|^2(|A|^2-n)+4n(|A|^2-\frac{1}{n}H^2).
\end{equation*}
After computation, we see that
\begin{align*}
    \frac{d}{dt}\left(\frac{|A|^2}{H^2}-\frac{1}{n}\right)=&\Delta\left(\frac{|A|^2}{H^2}-\frac{1}{n}\right)+4n\left(\frac{|A|^2}{H^2}-\frac{1}{n}\right)+\frac{4}{H^3}\left\langle\nabla(|A|^2),\nabla H\right\rangle\\
    &-\frac{6|A|^2}{H^4}|\nabla H|^4-\frac{2|\nabla A|^2}{H^2}.
\end{align*}
Following the computation in \cite[Proof of Proposition 3.3]{BWTopology},
\begin{align*}
    \frac{4}{H^3}\left\langle\nabla(|A|^2),\nabla H\right\rangle\leq\frac{2}{H}\left\langle\nabla\left(\frac{|A|^2}{H^2}\right),\nabla H\right\rangle+\frac{6|A|^2|\nabla H|^2}{H^4}+\frac{2|\nabla A|^2}{H^2}.
\end{align*}
Hence, the function $g=\frac{|A|^2}{H^2}e^{-4nt}-\frac{1}{n}e^{-4nt}$ satisfies $\frac{d}{dt}g\leq\Delta g+ \frac{2}{H}\left\langle\nabla g,\nabla H\right\rangle$. From the assumptions on $\Sigma_0$, we see that $|A_{\Sigma_0}|=\mathcal{O}(y)$ as $y=y(p)\to0$. Since $T_1<\infty$, the pseudo-locality theorem \cite[Theorem 7.5]{ChenPseudolcality} implies that there is an $\epsilon>0$, so that
\begin{equation}\label{82}
    \sup_{t<T_1,p\in\Sigma_t\cap\{y\leq2\epsilon\}}|A|\leq C.
\end{equation}
Together with Lemma \ref{79},
\begin{equation}\label{80}
    \sup_{0\leq t<T_1,p\in\Sigma_t\cap\{y\leq\epsilon\}}g(t,p)\leq Cy^{-2\mu}e^{2n(\mu+1)T_1}.
\end{equation}

On the other hand, we can use a maximum principle for $g$ on $W_\epsilon=\Big(\bigcup_{0\leq t<T_1}\Gamma_t\times\{t\}\Big)\cap\{y\geq\epsilon\}$ to see that $g$ is bounded above by the maximum of $g$ over the parabolic boundary of $W_\epsilon$. Hence,
\begin{equation}\label{81}
    \sup_{0\leq t< T_1,p\in\Sigma_t\cap\{y\geq\epsilon\}}g(t,p)\leq Cy^{-2\mu}e^{2n(\mu+1)T_1}.
\end{equation}
Combining (\ref{80}) and (\ref{81}), we get the desired bound.
\end{proof}

\begin{lem}\label{13}
Let $2\leq n\leq6$ and $\{\Sigma_t\}_{t\in[0,T)}$ be the same flow as in Lemma \ref{18}. If we further assume that $\lambda_\mathbb{H}[\Sigma_0]<\lambda[\mathbb{S}^{n-1}\times\mathbb{R}^1]$, then the maximal time of existence is $T=\infty$.
\end{lem}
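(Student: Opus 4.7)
The plan is to argue by contradiction: assume $T<\infty$, so that by standard short-time existence $\limsup_{t\to T^-}\sup_{\Sigma_t}|A|=\infty$, and then derive a contradiction by showing via a blow-up analysis that the low-entropy hypothesis forbids any finite-time singularity. The first step is to transport the entropy control in time. By the hyperbolic Huisken-type monotonicity formula from \cite{BernsteinEntropy}, $\lambda_\mathbb{H}[\Sigma_t]$ is non-increasing along the mean curvature flow, so
\begin{equation*}
    \lambda_\mathbb{H}[\Sigma_t]\leq\lambda_\mathbb{H}[\Sigma_0]<\lambda[\mathbb{S}^{n-1}\times\mathbb{R}^1]\qquad\text{for every }t\in[0,T).
\end{equation*}

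The second step is to confine any potential singular set to a compact subset of $\{y>0\}$. The pseudo-locality theorem \cite[Theorem 7.5]{ChenPseudolcality}, applied exactly as in the proof of Lemma \ref{79}, yields a uniform bound on $|A_{\Sigma_t}|$ on $\Sigma_t\cap\{y\leq\epsilon\}$ for all $t<T$ and some $\epsilon>0$. Together with the avoidance principle, which traps $\Sigma_t$ between stationary minimal barriers enclosing $\Gamma$, this localizes any would-be singular set to a compact region $K\subset\{y\geq\epsilon_0\}$, on which the hyperbolic metric $g_P=y^{-2}g_{\mathbb{R}^{n+1}}$ is smoothly equivalent to the Euclidean metric.

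I would then choose a sequence $(p_i,t_i)\in K\times[0,T)$ with $t_i\to T^-$ and $Q_i:=|A_{\Sigma_{t_i}}|(p_i)\to\infty$, and parabolically rescale by $Q_i$ about $(p_i,t_i)$. Because the rescaled ambient metrics $Q_i^2g_P$ converge smoothly to the flat metric on every Euclidean ball, and because Huisken's monotonicity passes to the blow-up (the hyperbolic heat kernel density converging to the Euclidean Gaussian density on bounded scales), a subsequence of the rescaled flows converges as Brakke flows to a nontrivial integral Brakke flow in $\mathbb{R}^{n+1}$. Its tangent flow at the origin is a self-shrinker $\Sigma_\infty\subset\mathbb{R}^{n+1}$ whose Euclidean entropy satisfies $\lambda[\Sigma_\infty]\leq\lambda_\mathbb{H}[\Sigma_0]<\lambda[\mathbb{S}^{n-1}\times\mathbb{R}^1]$.

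The final step is classification: for $2\leq n\leq 6$, the Bernstein-Wang theorem \cite{BWSpace} (see also \cite{BWClosedHLowEnt}) shows that every smooth self-shrinker in $\mathbb{R}^{n+1}$ with entropy strictly below $\lambda[\mathbb{S}^{n-1}\times\mathbb{R}^1]$ must be a hyperplane; alternatively the same conclusion follows from White's regularity theory for mean convex flows combined with Stone's computation $\lambda[\mathbb{S}^k\times\mathbb{R}^{n-k}]\geq\lambda[\mathbb{S}^{n-1}\times\mathbb{R}^1]$. Brakke's $\epsilon$-regularity theorem then produces a uniform curvature bound in a space-time neighborhood of the presumed singularity, contradicting $Q_i\to\infty$. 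The main obstacle will be verifying that the hyperbolic Huisken density converges smoothly to its Euclidean analogue under parabolic rescaling about an interior point, so that both the entropy inequality and the weak mean convexity are faithfully transported to the Euclidean limit; since the blow-up centers remain in $\{y\geq\epsilon_0\}$, this reduces to smooth convergence of the conformally rescaled heat kernels together with standard Brakke flow compactness.
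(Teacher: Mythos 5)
Your high-level strategy matches the paper's: assume $T<\infty$, localize the blow-up away from the ideal boundary via pseudo-locality, blow up, transport the entropy bound via the hyperbolic monotonicity formula, classify the shrinker, and derive a contradiction. The paper takes a tangent flow at the fixed spacetime point $(\widetilde{p},T)$ (after pulling back through $\exp_{\widetilde p}$) rather than your curvature rescaling about $(p_i,t_i)$, which is slightly cleaner since the limit is then directly a self-shrinking Brakke flow rather than an ancient flow whose tangent flow you must take; but this is a cosmetic difference.

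The genuine gap is in the classification step. You treat the blow-up limit as a \emph{smooth} self-shrinker and then invoke a Bernstein--Wang type classification, but nothing in your argument establishes smoothness: an entropy bound below the cylinder does not by itself force a self-shrinking Brakke flow to be regular, even for $2\le n\le 6$. The paper closes exactly this gap by carrying the estimate $|A_{\Sigma_t}|\le C_1y^{-2\mu}e^{4nt}H_{\Sigma_t}$ from Lemma \ref{18} into the blow-up (via Brakke's local regularity), obtaining $|A|\le CH$ on the regular part of $\mu_{-1}$, and then applying \cite[Proposition 5.1]{CIMW}, which upgrades a shrinker with entropy $<2$ and $|A|\le\beta H$ to a smooth one. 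Only after that does it invoke \cite[Theorem 0.17]{CMGeneric} to conclude the limit is $\mathbb{S}^n$ or $\mathbb{R}^n$, and the entropy bound plus non-compactness forces a plane. Your alternative route via White's mean-convex regularity theory could in principle be made to work, but it likewise requires you to explicitly carry forward the mean convexity (Lemma \ref{79}) and the $|A|\le CH$ pinching (Lemma \ref{18}) into the rescaled flows --- neither is mentioned in your proposal, and without them the argument that only planar models arise is unsupported. A secondary point: the obstacle you flag (convergence of the rescaled hyperbolic heat kernels to the Euclidean Gaussian) is in fact the routine part, which the paper disposes of with a direct change of variables; the real work is precisely the regularity of the limit that you have elided.
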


\begin{proof}
Assume for now $T<\infty$. From (\ref{82}), we see that
\begin{equation*}
    \lim_{t\to T^-}\sup_{p\in\Sigma_t}|A_{\Sigma_t}|=\lim_{t\to T^-}\sup_{p\in\Sigma_t\cap\{y\geq\epsilon\}}|A_{\Sigma_t}|=\infty.
\end{equation*}
Pick a sequence $\{p_i\}\in\Sigma_{t_i}$ with $t_i\to T$ and $|A_{\Sigma_{t_i}}|(p_i)=\sup_{p\in\Sigma_{t_i}\cap\{y\geq\delta\}}|A_{\Sigma_{t_i}}|$. Without loss of generality, we may assume that $p_i\to \widetilde{p}$. Consider the exponential map $\text{exp}_{\widetilde{p}}:\,\mathbb{R}^{n+1}\to\mathbb{H}^{n+1}$. It is readily checked that $\text{exp}_{\widetilde{p}}$ is a diffeomorphism. Let $(\mathbb{R}^{n+1},g)$ be the pull-back of $(\mathbb{H}^{n+1},g_{\mathbb{H}^{n+1}})$ under the exponential map. With a slight abuse of notions, assume $\{\Sigma_t\}$ is also a flow in $(\mathbb{R}^{n+1},g)$. Let $\{\hat{\Sigma}^\lambda_s\}_{s\in[-T\lambda^{-2},0)}=\{\frac{1}{\lambda}\Sigma_{T+\lambda^2s}\}$. Here, the dilation is understood in $\mathbb{R}^{n+1}$. One can check that $\{\hat{\Sigma}_s^\lambda\}$ is a mean curvature flow in $(\mathbb{R}^{n+1},g^\lambda)$, where $g^\lambda_{ij}(\textbf{x})=g(\lambda\textbf{x})(\partial_i,\partial_j)=g_{ij}(\lambda\textbf{x})$. By \cite{BernsteinEntropy},
\begin{align*}
    \int_{\Sigma_t}\Phi^{0,\textbf{0}}_n(t-T,p)d\mathcal{H}^n_\mathbb{H}(p)
\end{align*}
is monotone decreasing in $t\in[0,T)$. After re-writing it into an integral over $\hat{\Sigma}^\lambda_s$ in $\mathbb{R}^{n+1}$, we have
\begin{equation}\label{16}
    \int_{\Sigma_t}\Phi^{0,\textbf{0}}_n(t-T,p)d\mathcal{H}^n_\mathbb{H}(p)=\int_{\hat{\Sigma}^\lambda_s}\lambda^n K_n(-\lambda^2s,\lambda\textbf{y})\sqrt{\text{det}\,g^\lambda(\textbf{y})}d\mathcal{H}^n(\textbf{y}).
\end{equation}
Since $g^\lambda\to g_{\mathbb{R}^{n+1}}$ in $C^\infty_{loc}(\mathbb{R}^{n+1})$ as $\lambda\to0$ and the arguments in \cite[Chapter 3]{Tonegawa} are local,  one can show with minor modifications that for any sequence $\lambda_{k}\to0^+$, there is a subsequence $\{\lambda_{k_i}\}\subset\{\lambda_k\}$, so that $\{\hat{\Sigma}^{\lambda_i}_s\}$ converges to a self-shrinking Brakke's flow $\{\mu_s\}_{s<0}$ in $(\mathbb{R}^{n+1},g_{\mathbb{R}^{n+1}})$. By the nature of convergence, $\lambda[\mu_s]<\lambda[\mathbb{S}^{n-1}\times\mathbb{R}^1]<2$.

By Lemma \ref{18} and the Brakke's regularity theorem, on the regular part of $\mu_{-1}$, $|A_{\mu_{-1}}|\leq CH_{\mu_{-1}}$. It then follows from \cite[Proposition 5.1]{CIMW} that $\{\mu_s\}$ is actually smooth and hence, by \cite[Theorem 0.17]{CMGeneric}, is either $\mathbb{S}^n$ or $\mathbb{R}^n$. Since $\Sigma_t$ doesn't have closed component, $\mu_s$ must be a plane. This implies $\{\Sigma_t\}$ is smooth at $(\widetilde{p},T)$, which contradicts our assumption.
\end{proof}

\begin{proof}[Proof of Theorem \ref{76}]
Combining Lemma \ref{79}, Lemma \ref{18} and Lemma \ref{13}, we see that for a sufficiently small $\epsilon$, the mean curvature flow $\{\Sigma_t\}$ starting from $\hat{\Gamma}_\epsilon$ exists for all $t\geq0$ and remains mean convex. We first show that as $t\to\infty$, $\Sigma_t\xrightarrow{C^\infty_{loc}}\Gamma'$ for some stable minimal hypersurface $\Gamma'$ that is $C^{k+1,\alpha}$-asymptotic to $M$.

Since $\Sigma_t$ is mean convex, there is a one-parameter family of open sets $\{U_t\}_{t\in[0,\infty)}$ so that $\partial U_t=\Sigma_t$ and $U_t\subset U_{t'}$ if $0\leq t'\leq t$. Let $K=\bigcap_{t\geq0}\text{cl}(U_t)$.
For any $\tau_i\to\infty$, consider the flow $\{\Sigma^i_t\}:=\{\Sigma_{t+\tau_i}\}$. By the compactness of Brakke's flow, there is a subsequence $\{\Sigma^{i_k}_t\}$ converging to a Brakke's flow $\{\nu_t\}_{t\in[0,\infty)}$. For each $t\geq0$, $\mathcal{H}^n_\mathbb{H}\mres\Sigma^{i_k}_t\to\nu_t$ implies $\text{Spt}(\nu_t)\subset\partial K$. On the other hand, the upper semicontinuity of the Gaussian density implies $\partial K\subset\text{Spt}(\nu_t)$. So, $\text{Spt}(\nu_t)=\partial K$ for all $t\geq0$. Furthermore, there is a subsequence of $\{\chi_{U_{t+i_k}}\}$ that converges as Caccioppoli sets. So, we see that $\nu_s$ is a multiplicity one rectifiable Radon measure. So, $\nu_s=\mathcal{H}^n_\mathbb{H}\mres\partial K$ for all $s\geq0$ and $\nu_s$ is thus stationary.

By the nature of convergence and the monotonicity formula \cite{BernsteinEntropy}, $\lambda_\mathbb{H}[\nu_s]<\lambda[\mathbb{S}^{n-1}\times\mathbb{R}^1]$. It then follows from this entropy bound and \cite{WhiteStratification} that the singular part of $\nu_0$ is at most of Hausdorff dimension $n-3$. As the mean curvature vector of $\Sigma_t$ points toward $\nu_0$, we see that $\{\Sigma_t\}_t$ serves as a foliation of $\nu_0$. So, by \cite{SolomonWhite}, $\nu_0$ is locally one-sided minimizing. By \cite{SchoenSimonRegularity}, Spt($\nu_0$)$:=\Gamma'$ is regular. Since $\lambda_\mathbb{H}[\Gamma']<\infty$, the regularity of $\Gamma'$ at the ideal boundary follows in the same manner as in the proof of Proposition \ref{22}. As $\Gamma'$ is smooth, \cite{WhiteLocalReg} implies that $\Sigma_t\xrightarrow{C^\infty_{loc}}\Gamma'$.

Fix $\epsilon>0$, so that $\Gamma\cap\{y\leq2\epsilon\}$ and $\Gamma'\cap\{y\leq2\epsilon\}$ are both $C^{k,\alpha}$ isotopic to $M\times(0,2\epsilon]$. By shrinking $\epsilon$ if necessary, we see from \cite[Lemma 5.1]{YaoRel} that $\Sigma_t\cap\{y\leq2\epsilon\}$ is a graph over $M\times(0,2\epsilon]$ for all $t$. Choose a transverse section $\textbf{v}$ on $\Gamma'$ with $\textbf{v}(p)$ perpendicular to $\overbar{\textbf{e}}_{n+1}$ on $\Gamma'\cap\{y=\epsilon\}$. Then, for $T_0>0$ sufficiently large, $\Sigma_{T_0}\cap\{y\geq\epsilon\}$ is a $\textbf{v}$-graph over $\Gamma'\cap\{y\geq\epsilon\}$. Let $\widetilde{\Sigma}_{T_0}$ coincide with $\Sigma_{T_0}$ on $\{y\geq\epsilon\}$ and extends to $M$ in a $C^{k,\alpha}$ way. We can further require that $\widetilde{\Sigma}_{T_0}$ and $\Gamma'$ are $C^{k,\alpha}$ isotopic. Now, partition $[0,T_0]$ into sub-intervals $[s_k,s_{k+1}]$, ($k=1,2,\cdots,N$). Since $\Sigma_t\cap\{y\leq2\epsilon\}$ is a graph over $M\times(0,2\epsilon]$, we may choose a trasnverse section $\textbf{v}_{k}$ over $\Sigma_{s_k}\cap\{y\geq\epsilon\}$ with $\textbf{v}_{k}$ perpendicular to $\overbar{\textbf{e}}_{n+1}$ on $\Sigma_{s_k}\cap\{y=\epsilon\}$ and that $\Sigma_{s_k}\cap\{y\geq\epsilon\}$ can be expressed as a $\textbf{v}_{k+1}$-graph over $\Sigma_{s_{k+1}}\cap\{y\geq\epsilon\}$. So, $\Sigma_{s_k}\cap\{y\geq\epsilon\}$ is isotopic to $\Sigma_{s_{k\pm1}}\cap\{y\geq\epsilon\}$. Finally, extend $\Sigma_{s_k}\cap\{y\geq\epsilon\}$ to $M$ in a $C^{k,\alpha}$ way with the extension of $\Sigma_{s_k}$ isotopic to that of $\Sigma_{s_{k\pm1}}$. Thus, we have shown that $\Sigma_0$ and $\Gamma'$ are $C^{k,\alpha}$ isotopic. The isotopy of $\Gamma$ and $\Sigma_0$ can be constructed in an obvious way.

Finally, we show that if there are minimal hypersurfaces $\Gamma_1$ and $\Gamma_2$ with $\Gamma_1\preceq\Gamma\preceq\Gamma_2$, then $\Gamma'$ is also trapped between $\Gamma_1$ and $\Gamma_2$. By Proposition \ref{10}, the distance between $\Gamma_1$ and $\Gamma_2$ at $y=r$ is bounded below by $cr^{n+1}$ for some constant $c>0$. By Proposition \ref{14}, we can shrink $\epsilon$ to make $\hat{\Gamma}_\epsilon$ satisfy $\Gamma_1\preceq\hat{\Gamma}_\epsilon\preceq\Gamma_2$. By the maximum principle, $\Gamma_1\preceq\Sigma_t\preceq\Gamma_2$ for all $t\geq0$. Hence, $\Gamma'$ is also trapped between $\Gamma_1$ and $\Gamma_2$.
\end{proof}

Following the arguments in Lemma \ref{18}, Lemma \ref{13} and the proof of Theorem \ref{76}, we have also proved the following:

\begin{thm}\label{86}
Assume $2\leq n\leq6$, $\alpha\in(0,1)$ and $k\geq2$. Let $\Sigma$ be a hypersurface that is $C^{k,\alpha}$-asymptotic to an $(n-1)$-dimensional $C^{k,\alpha}$ closed submanifold $M\subset\mathbb{R}^n\times\{0\}$ and meets $M$ orthogonally. If $\lambda_\mathbb{H}[\Sigma]<\lambda[\mathbb{S}^{n-1}\times\mathbb{R}^1]$, and $\Sigma$ satisfies $H_{\Sigma}\geq cy^\mu$ for some $c>0$ and $\mu\in\mathbb{R}^1$, then $\Sigma$ is $C^{k,\alpha}$ isotopic to a stable minimal hypersurface $\Sigma'$ that is also $C^{k,\alpha}$-asymptotic to $M$.
\end{thm}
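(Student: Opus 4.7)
\smallskip

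\noindent\textbf{Proof proposal.} The plan is to run the mean curvature flow $\{\Sigma_t\}_{t\geq 0}$ with $\Sigma_0=\Sigma$, and then argue exactly as in Theorem \ref{76}, with the key simplification that the hypothesis $H_\Sigma\geq cy^\mu$ replaces the role played there by the first-eigenfunction perturbation of an unstable minimal $\Gamma$. Short-time existence is standard since $|\overbar{A}_\Sigma|<\infty$ up to the ideal boundary (using that $\Sigma$ meets $M$ orthogonally).

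First I would show that the pointwise bound $H_{\Sigma_t}\geq ce^{-n(\mu+1)t}y^\mu$ propagates. This is exactly the content of Lemma \ref{79}: combining the evolution identities
\begin{equation*}
\left(\tfrac{d}{dt}-\Delta_{\Sigma_t}\right)\!\left(e^{nt}H_{\Sigma_t}\right)=|A_{\Sigma_t}|^2\,e^{nt}H_{\Sigma_t},\qquad
\left(\tfrac{d}{dt}-\Delta_{\Sigma_t}\right)\!\left(e^{-n\mu t}y^\mu\right)\leq 0,
\end{equation*}
the quantity $w_t:=ce^{-n\mu t}y^\mu-e^{nt}H_{\Sigma_t}$ satisfies $(\partial_t-\Delta_{\Sigma_t})w_t\leq |A_{\Sigma_t}|^2 w_t$, and $w_0\leq 0$ by the hypothesis $H_\Sigma\geq cy^\mu$. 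The pseudo-locality theorem of Chen controls $|A_{\Sigma_t}|$ uniformly near $\{y=0\}$ on any finite time interval so that the non-compact maximum principle of \cite{EHEntireGraph}-type applies, yielding $w_t\leq 0$ for all $t$ in the existence interval. In particular $\Sigma_t$ remains mean convex. Lemma \ref{18} then applies verbatim to give $|A_{\Sigma_t}|^2\leq C_1y^{-2\mu}e^{4nt}H_{\Sigma_t}^2$ on any finite time window.

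Next I would invoke the blow-up analysis of Lemma \ref{13} to rule out finite-time singularities: the monotonicity formula of \cite{BernsteinEntropy} gives $\lambda_\mathbb{H}[\Sigma_t]\leq \lambda_\mathbb{H}[\Sigma_0]<\lambda[\mathbb{S}^{n-1}\times\mathbb{R}^1]$, so any parabolic rescaling at a putative first singular time subconverges (via Brakke compactness and the fact that the rescaled metrics $g^\lambda$ converge to the Euclidean metric in $C^\infty_{\mathrm{loc}}$) to a self-shrinker in $\mathbb{R}^{n+1}$ with entropy strictly less than $\lambda[\mathbb{S}^{n-1}\times\mathbb{R}^1]<2$; the curvature pinching $|A|\leq CH$ from Lemma \ref{18} together with \cite[Proposition 5.1]{CIMW} and \cite[Theorem 0.17]{CMGeneric} forces this limit to be a flat plane, contradicting singularity formation. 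Hence the flow exists for all $t\in[0,\infty)$.

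Finally I would repeat the long-time convergence and isotopy construction from the last part of the proof of Theorem \ref{76}. Since the $U_t=\{\partial U_t=\Sigma_t\}$ are nested and decreasing in $t$, setting $K=\bigcap_{t\geq 0}\mathrm{cl}(U_t)$ and applying Brakke compactness to $\{\Sigma_{t+\tau_i}\}$ produces a multiplicity-one stationary integral varifold $\nu_\infty=\mathcal{H}^n_\mathbb{H}\mres\Sigma'$ with $\mathrm{Spt}(\nu_\infty)=\partial K$; the entropy bound plus \cite{WhiteStratification} and \cite{SchoenSimonRegularity} yield interior regularity, and the boundary regularity of $\Sigma'$ up to $M$ follows from the argument of Proposition \ref{22}. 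By \cite{WhiteLocalReg}, $\Sigma_t\to \Sigma'$ in $C^\infty_{\mathrm{loc}}$, and $\Sigma'$ is stable as a one-sided limit of a mean-convex foliation via \cite{SolomonWhite}. The $C^{k,\alpha}$ isotopy between $\Sigma$ and $\Sigma'$ is assembled as at the end of the proof of Theorem \ref{76}: fix $\epsilon>0$ so that $\Sigma_t\cap\{y\leq 2\epsilon\}$ is a graph over $M\times(0,2\epsilon]$ throughout the flow, partition $[0,T_0]$ into subintervals on which $\Sigma_{s_k}\cap\{y\geq\epsilon\}$ is a transverse-section graph over $\Sigma_{s_{k+1}}\cap\{y\geq\epsilon\}$, and extend each slice to $M$ in a $C^{k,\alpha}$ manner so that consecutive extensions are isotopic.

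The main obstacle is the same one encountered in Theorem \ref{76}: the interior regularity theory only provides $C^\infty_{\mathrm{loc}}$ convergence to $\Sigma'$ away from $M$, so to upgrade this to a \emph{$C^{k,\alpha}$ isotopy up to the ideal boundary} one must graft the prescribed $C^{k,\alpha}$ boundary behavior (inherited through the graphical description near $\{y=0\}$, which uses in an essential way that $\Sigma$ meets $M$ orthogonally) onto the smooth interior deformation via the partition-and-extension argument above. Everything else is a direct transcription of the corresponding steps for the unstable-to-stable deformation in Theorem \ref{76}, with the hypothesis $H_\Sigma\geq cy^\mu$ replacing the mean convexity produced there by the first-eigenfunction perturbation.
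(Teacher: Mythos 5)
Your proposal follows exactly the scheme the paper indicates for Theorem \ref{86}: run the mean curvature flow from $\Sigma$, propagate the pointwise lower bound $H_{\Sigma_t}\geq ce^{-n(\mu+1)t}y^\mu$ by the parabolic maximum-principle argument of Lemma \ref{79} (with the hypothesis $H_\Sigma\geq cy^\mu$ supplying the initial sign that the first-eigenfunction perturbation supplies in Theorem \ref{76}), invoke Lemma \ref{18} for the pinching $|A|\leq Cy^{-\mu}e^{2nt}H$ and Lemma \ref{13} for long-time existence via the low-entropy blow-up analysis, and then repeat the convergence, regularity, and graft-the-boundary isotopy construction from the end of the proof of Theorem \ref{76}. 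This matches the paper's own (implicit) proof.
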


\section{Topological Uniqueness for Minimal Hypersurfaces with Small Entropy}
In this final section, we prove Theorem \ref{main2}. Let $2\leq n\leq 6$, $\alpha\in(0,1)$, $k\geq4$ and $m=\min\{k,n\}$ throughout this section. Let $M$ be a $C^{k+1,\alpha}$ closed submanifold in $\partial_\infty\mathbb{H}^{n+1}$ with $\lambda_c[M]< \text{Vol}(\mathbb{S}^{n-1})\lambda[\mathbb{S}^{n-1}\times\mathbb{R}^1]$. After suitable transformations, we assume that $M\subset\mathbb{R}^n\times\{0\}$. By a minor modification of Proposition \ref{21}, Proposition \ref{34}, we can apply \cite{SmaleSard} to get $M'\in\mathcal{M}^{k+1,\alpha}$ being a regular value of $\Pi$ with $M'$ close to $M$ in the $C^{k+1,\alpha}$ topology in $\mathbb{R}^n\times\{0\}$.

We first prove Theorem \ref{main2} when $M$ is a regular value of $\Pi$. As a starting point, we prove

\begin{prop}\label{84}
Let $\ell\geq2$ and $M\in\mathcal{E}^{\ell,\alpha}$ be a regular value of $\Pi$. Then, there are only finitely many stable minimal hypersurfaces in $\mathcal{M}^{\ell,\alpha}$ that are $C^{\ell,\alpha}$-asymptotic to M.
\end{prop}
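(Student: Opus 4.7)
The plan is to argue by contradiction, combining the compactness result of Proposition~\ref{22} with the local diffeomorphism provided by the regular-value hypothesis via the inverse function theorem for $\Pi$. Suppose there are infinitely many pairwise distinct stable minimal hypersurfaces $\{\Gamma_i\}_{i=1}^\infty\subset\mathcal{M}^{\ell,\alpha}$ with $\partial_\infty\Gamma_i=M$. By Bernstein's identity $\lambda_{\mathbb{H}}[\Gamma_i]=\lambda_c[M]/\mathrm{Vol}(\mathbb{S}^{n-1})$, the hyperbolic entropies are uniformly bounded, so Proposition~\ref{22} yields a subsequence converging in $C^\infty_{loc}(\mathbb{R}^{n+1}_+)$ to a stable minimal hypersurface $\Gamma_\infty\in\mathcal{M}^{\ell,\alpha}$ with $\partial_\infty\Gamma_\infty=M$.

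Since $M$ is a regular value of the Fredholm index-zero map $\Pi$, the differential $D\Pi_{\Gamma_\infty}$ is surjective, hence an isomorphism; equivalently, the stability operator $L_{\Gamma_\infty}$ has trivial kernel in the weighted H\"older spaces introduced in Section~2. The inverse function theorem then produces an open neighborhood $\mathcal{U}\subset\mathcal{M}^{\ell,\alpha}$ of $\Gamma_\infty$ on which $\Pi$ is a diffeomorphism onto a neighborhood of $M$; in particular $\Pi^{-1}(M)\cap\mathcal{U}=\{\Gamma_\infty\}$. It therefore suffices to show that $\Gamma_i\in\mathcal{U}$ for all sufficiently large $i$, since then $\Gamma_i=\Gamma_\infty$, contradicting that the $\Gamma_i$ are pairwise distinct.

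The main obstacle is upgrading the $C^\infty_{loc}$ convergence to convergence in the Banach-manifold topology of $\mathcal{M}^{\ell,\alpha}$, that is, in the weighted H\"older norm on normal-graph functions over $\Gamma_\infty$. Here the fact that all $\Gamma_i$ are $C^{\ell,\alpha}$-asymptotic to the \emph{same} ideal boundary $M$ is essential: combining the uniform hyperbolic-entropy bound, which yields a uniform local area-ratio bound as in the proof of Proposition~\ref{22}, with the boundary regularity theory of Hardt--Lin \cite{HardtLinHyperbolicRegularity}, Lin \cite{LinHyperbolicMinimalGraph} and Tonegawa \cite{TonegawaCMCinHpSpace}, each $\Gamma_i$ admits weighted $C^{\ell,\alpha}$ bounds up to $M$ with constant depending only on $M$. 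Together with the decay estimate of Proposition~\ref{10}, which prevents $\Gamma_i$ from oscillating transversally near $M$, and the $C^\infty_{loc}$ convergence in the interior, one concludes that for $i$ large, $\Gamma_i$ is a normal graph over $\Gamma_\infty$ whose graph function tends to $0$ in the weighted H\"older norm. This places $\Gamma_i\in\mathcal{U}$ and closes the contradiction.
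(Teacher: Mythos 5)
Your proposal follows the same overall scheme as the paper: argue by contradiction, extract a $C^\infty_{loc}$-convergent subsequence via Proposition~\ref{22}, upgrade the convergence to a topology in which the regular-value hypothesis forces the sequence to eventually equal the limit, and contradict distinctness. The overall logic is sound, but two technical points in your last paragraph need attention.

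First, the H\"older exponent. A uniform weighted $C^{\ell,\alpha}$-up-to-boundary bound on the graph functions gives \emph{compactness in $C^{\ell,\alpha'}$ only for $\alpha'<\alpha$}; it does not give convergence in $C^{\ell,\alpha}$ itself. You want $\Gamma_i\in\mathcal{U}\subset\mathcal{M}^{\ell,\alpha}$, but what you can actually obtain is $\Gamma_i\to\Gamma_\infty$ in $\mathcal{M}^{\ell,\alpha/2}$. The paper resolves this precisely by invoking the inverse function theorem for $\Pi$ viewed as a map $\mathcal{M}^{\ell,\alpha/2}\to\mathcal{E}^{\ell,\alpha/2}$ (the Fredholm index-$0$ property and the regular-value hypothesis hold for any exponent), and then contradicting $C^{\ell,\alpha/2}$-convergence. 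Your argument as written has a gap here; replace $\mathcal{M}^{\ell,\alpha}$ by $\mathcal{M}^{\ell,\alpha/2}$ throughout the inverse-function step and it closes.

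Second, the invocation of Proposition~\ref{10} is not the right tool. That proposition estimates the separation between a fixed \emph{ordered, disjoint} pair $\Gamma_1\preceq\Gamma_2$ of minimal hypersurfaces; it does not by itself control a sequence, nor does it give up-to-boundary $C^{\ell,\alpha}$ bounds. What is actually needed (and what the paper uses) is the uniform boundary Schauder estimate of \cite[Theorem 5.2]{LinHyperbolicMinimalGraph}: after writing each $\Gamma_i$ near a boundary point as a graph $x_n=u_i(\textbf{x}',y)$ over a half-disk with $u_i(\textbf{x}',0)$ parametrizing $M$, one gets $\|u_i\|_{C^{\ell,\alpha}}\leq C(n,\ell,\alpha,\delta)(1+\|M\|_{C^{\ell,\alpha}})$ uniformly in $i$. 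This, combined with $C^\infty_{loc}$ interior convergence and Arzel\`a--Ascoli, gives the $C^{\ell,\alpha/2}$-convergence of $\Gamma_i$ to $\Gamma_\infty$ up to $\overbar{\mathbb{R}}^{n+1}_+$.
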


\begin{proof}
Let $\mathcal{S}:=\{\Gamma\in\mathcal{M}^{\ell,\alpha}:\Pi(\Gamma)=M,\,\Gamma\text{ is stable}\}$. If $\mathcal{S}$ contains infinitely many elements, we choose $\{\Gamma_j\}_{j=1}^\infty\subset\mathcal{S}$ which are distinct to each other. By Proposition \ref{22}, we get a convergent subsequence. With a slight abuse of notions, we assume that $\Gamma_j$ converges (in the $C^{\infty}_{loc}$ sense) to a stable minimal hypersurface $\widetilde{\Gamma}\in\mathcal{S}$. We claim that $\Gamma_j$ converges to $\widetilde{\Gamma}$ in the $C^{\ell,\frac{\alpha}{2}}$ topology in $\overbar{\mathbb{R}}^{n+1}_+$. Fix $p\in M$. Without loss of generality, assume $p=(\textbf{0},0)\in\mathbb{R}^n\times\{0\}$ and $\textbf{n}_{M}(p)=(0,\cdots,0,1,0)$. Since the argument in the proof of Proposition \ref{23} only relies on the geometry of ideal boundary $M$, there is a $\delta>0$, so that for any $j$,
\begin{equation*}
    \Gamma_j\cap(-\delta,\delta)^{n}\times[0,\delta)=\{(\textbf{x}',x_n,y):x_n=u_j(\textbf{x}',y),(\textbf{x}',y)\in(-\delta,\delta)^{n-1}\times[0,\delta)\},
\end{equation*}
where $u_j\in C^{\ell,\alpha}\Big((-\delta,\delta)^{n-1}\times[0,\delta)\Big)$ with $u_j(\textbf{x}',0)=\varphi(\textbf{x}')$ parametrize $M\cap\Big((-\delta,\delta)^{n-1}\times\{0\}\Big)$. By \cite[Theorem 5.2]{LinHyperbolicMinimalGraph},
\begin{equation*}
    \|u_j\|_{C^{\ell,\alpha}\big((-\frac{\delta}{2},\frac{\delta}{2})^{n-1}\times[0,\frac{\delta}{2})\big)}\leq C(n,\ell,\alpha,\delta)\left(1+\|\varphi\|_{C^{\ell,\alpha}\big((-\delta,\delta)^{n-1}\times[0,\delta)\big)}\right).
\end{equation*}
This implies every subsequence of $u_j$ has a sub-subsequence that converges in the $C^{\ell,\frac{\alpha}{2}}_{loc}\big((-\frac{\delta}{2},\frac{\delta}{2})^{n-1}\times[0,\frac{\delta}{2})\big)$ sense to a function $\widetilde{u}$. Since $\Gamma_j\xrightarrow{C^\infty_{loc}}\widetilde{\Gamma}$, the graph of $\widetilde{u}$ coincides with $\widetilde{\Gamma}$. Hence, $\Gamma_j\to\widetilde{\Gamma}$ in the $C^{\ell,\frac{\alpha}{2}}$-topology of $\overbar{\mathbb{R}}^{n+1}_+$.

On the other hand, since $M$ is a regular value, $\Pi$ maps a neighborhood of $\widetilde{\Gamma}$ in $\mathcal{M}^{\ell,\frac{\alpha}{2}}$ diffeomorphically to a neighborhood of $M$ in $\mathcal{E}^{\ell,\frac{\alpha}{2}}$, which contradicts the fact that $\Gamma_k\to\widetilde{\Gamma}$ in $C^{\ell,\frac{\alpha}{2}}$-topology of $\overbar{\mathbb{R}}^{n+1}_+$.
\end{proof}

\begin{prop}\label{96}
Let $M\in\mathcal{E}^{k+1,\alpha}$ be a regular value of $\Pi$ with $\lambda_\mathbb{H}[M]<\mathrm{Vol}(\mathbb{S}^{n-1})\lambda[\mathbb{S}^{n-1}\times\mathbb{R}^1]$. Then, all the minimal hypersurfaces with $C^{k+1,\alpha}$-asymptotic boundary $M$ are $C^{k,\alpha}$-isotopic to each other.
\end{prop}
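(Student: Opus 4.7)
The plan is to combine the mountain-pass existence from Theorem \ref{main1} with the deformation result of Theorem \ref{76} to reduce the statement to an isotopy claim about \emph{stable} minimal hypersurfaces, which is then dispatched by finiteness (Proposition \ref{84}) and a poset argument.

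First I would reduce to the stable case. Since $M$ is a regular value of $\Pi$, the differential $d\Pi$ is surjective at every $\Gamma \in \Pi^{-1}(M)$; because $\Pi$ is Fredholm of index $0$ (Proposition \ref{34}, whose generalization to higher dimension is recorded in the paper), surjectivity implies that $\ker(L_\Gamma)$ is trivial, i.e.\ every such $\Gamma$ is non-degenerate, so every stable minimal hypersurface asymptotic to $M$ is in fact strictly stable. By Theorem \ref{76}, every unstable minimal hypersurface with asymptotic boundary $M$ is $C^{k,\alpha}$-isotopic to some stable one, so it suffices to prove that any two strictly stable minimal hypersurfaces with asymptotic boundary $M$ are $C^{k,\alpha}$-isotopic.

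Next I would use Proposition \ref{11} to sandwich any two given stable hypersurfaces $\Gamma_1,\Gamma_2$ between a pair of stable hypersurfaces $\Gamma_-\preceq\Gamma_i\preceq\Gamma_+$. Let $\mathcal{S}$ denote the finite poset (by Proposition \ref{84}) of stable minimal hypersurfaces $\Gamma$ with $\Gamma_-\preceq \Gamma\preceq\Gamma_+$, ordered by $\preceq$. Choosing a maximal chain from $\Gamma_-$ to each $\Gamma_i$ in $\mathcal{S}$, consecutive elements $\Gamma'\preceq\Gamma''$ of such a chain are \emph{adjacent} in the sense that no stable minimal hypersurface lies strictly between them (any such intermediate would lie in $\mathcal{S}$ and contradict maximality). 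Thus the problem reduces to showing: any adjacent pair of strictly stable minimal hypersurfaces $\Gamma'\preceq\Gamma''$ asymptotic to $M$ is $C^{k,\alpha}$-isotopic.

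To handle the adjacent case, I would apply Theorem \ref{main1} to $(\Gamma',\Gamma'')$ to produce a minimal hypersurface $\Gamma_0$, distinct from $\Gamma',\Gamma''$, strictly between them and $C^{k,\alpha}$-asymptotic to $M$. Adjacency forces $\Gamma_0$ to be unstable. Using Proposition \ref{14} (possibly component-by-component), pick a first eigenfunction $f\geq 0$ of the stability operator on $\Gamma_0$ and form the two perturbations $\hat{\Gamma}_0^{\pm}=\{p\pm\epsilon f(p)\mathbf{n}_{\Gamma_0}(p):p\in\Gamma_0\}$ for small $\epsilon>0$. The computation in Lemma \ref{79} shows that both $\hat{\Gamma}_0^{\pm}$ are mean convex with the mean curvature vector of $\hat{\Gamma}_0^{+}$ (resp.\ $\hat{\Gamma}_0^{-}$) pointing towards $\Gamma''$ (resp.\ $\Gamma'$), and by choosing $\epsilon$ small one has $\Gamma'\preceq\hat{\Gamma}_0^{-}\preceq\Gamma_0\preceq\hat{\Gamma}_0^{+}\preceq\Gamma''$. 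Running the mean curvature flow from each as in the proof of Theorem \ref{76}, the avoidance principle keeps the flows trapped between $\Gamma_0$ and $\Gamma''$ (resp.\ $\Gamma'$ and $\Gamma_0$), and the long-time convergence argument there produces stable limits $\Gamma_\infty^{\pm}$. Adjacency forces $\Gamma_\infty^{+}=\Gamma''$ and $\Gamma_\infty^{-}=\Gamma'$. Concatenating the resulting $C^{k,\alpha}$-isotopies $\Gamma'\rightsquigarrow\Gamma_0\rightsquigarrow\Gamma''$ gives the desired isotopy, and chaining over the maximal chains above yields $\Gamma_1$ and $\Gamma_2$ isotopic.

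The main obstacle is the last identification step: showing the MCF limit from $\hat{\Gamma}_0^+$ is \emph{exactly} $\Gamma''$ (and not some stable hypersurface caught strictly between $\Gamma_0$ and $\Gamma''$ that happens to be incomparable to $\Gamma_-,\Gamma_+$, or $\Gamma_0$ itself). This uses (i) strict stability of $\Gamma''$ together with strong maximum principle to forbid the flow from leaving $\Gamma_0\prec\cdot\preceq\Gamma''$, (ii) the adjacency hypothesis applied to a slightly enlarged pair if the limit drops into $\mathcal{S}$, and (iii) careful handling of the case where the first eigenfunction $f$ vanishes on some connected components of $\Gamma_0$ (on such components $\Gamma_0$ is weakly stable and already coincides with the corresponding component of $\Gamma'$ or $\Gamma''$ by the maximum principle, so those components can be handled separately by a trivial isotopy). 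A secondary technical point is to ensure the concatenated isotopy is genuinely $C^{k,\alpha}$ up to the ideal boundary; this follows from the $C^{k,\alpha}$-asymptotic regularity built into Theorem \ref{main1}, Theorem \ref{76}, and the smoothness of the MCF trajectory near $M$ guaranteed by the pseudo-locality estimate used in Lemma \ref{18}.
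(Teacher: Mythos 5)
Your proposal is essentially the paper's proof: reduce to strictly stable hypersurfaces via Theorem \ref{76} and the regular-value hypothesis, use finiteness (Proposition \ref{84}) together with the ordering from Proposition \ref{11} to reduce to adjacent stable pairs, produce an unstable intermediate by Theorem \ref{main1}, and deform it to both neighbours by Theorem \ref{76}. The paper organizes the poset reduction as an induction climbing from a given $\Gamma$ up to $\Gamma_G$ rather than via maximal chains, but that is a cosmetic repackaging of the same argument.

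Two small inaccuracies in your ``main obstacle'' paragraph are worth correcting. In~(i), the flow is confined to $[\Gamma_0,\Gamma'']$ by the avoidance principle against the minimal hypersurfaces $\Gamma_0$ and $\Gamma''$, which are static barriers for mean curvature flow; strict stability of $\Gamma''$ plays no role there. In~(iii), on a component of $\Gamma_0$ where the first eigenfunction $f$ of $L_{\Gamma_0}$ vanishes, $\Gamma_0$ is \emph{not} automatically weakly stable and need not coincide with the corresponding piece of $\Gamma'$ or $\Gamma''$: such a component merely has first eigenvalue strictly larger than $\lambda_1(\Gamma_0)$, and it can still be unstable. The correct fix, implicit in the paper's invocation of Theorem \ref{76}, is to run the perturbation and flow component-by-component, using on each unstable component its own first eigenfunction (and nothing on the strictly stable components); since $M$ is a regular value of $\Pi$, $\Gamma_0$ is non-degenerate and hence each component has nonzero first eigenvalue, so the weakly stable case you worry about does not arise here.
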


\begin{proof}
By Theorem \ref{76}, any unstable minimal hypersurface is isotopic to a stable one. So, it suffices to show that all the stable minimal hypersurfaces are isotopic to each other.

Fix a stable minimal hypersurface $\Gamma$ that is $C^{k+1,\alpha}$-asymptotic to $M$. By Proposition \ref{11}, there is a maximal stable minimal hypersurface $\Gamma_G$ with $\Gamma\preceq\Gamma_G$. Let $\mathcal{S}_{\Gamma,\Gamma_G}:=\{\Gamma'\in\mathcal{M}^{k+1,\alpha}:\Pi(\Gamma')=M, \Gamma\preceq\Gamma'\preceq\Gamma_G\}$ and $\Gamma_1$ be a minimal element in $\mathcal{S}_{\Gamma,\Gamma_G}$. That is, there's no $\widetilde{\Gamma}\in\mathcal{S}_{\Gamma,\Gamma_G}$ with $\Gamma\preceq\widetilde{\Gamma}\preceq\Gamma_1$. Since $M$ is a regular value, $\Gamma$ and $\Gamma_1$ are both strictly stable. So, Theorem \ref{main1} implies the existence of a third minimal hypersurface $\Gamma''$ with $\Gamma\preceq\Gamma''\preceq\Gamma_1$. Obviously, $\Gamma''$ is unstable. By Theorem \ref{76}, we see that $\Gamma''$ is $C^{k,\alpha}$ isotopic to both $\Gamma_1$ and $\Gamma$. Replace $\Gamma$ by $\Gamma_1$, and repeat the same argument to $\mathcal{S}_{\Gamma_1,\Gamma_G}$. Since there are only finitely many stable minimal hypersurfaces, an induction argument would imply $\Gamma$ is eventually isotopic to $\Gamma_G$. Since $\Gamma$ is arbitrary chosen, we have proved that all the stable minimal hypersurfaces are isotopic.
\end{proof}

\begin{proof}[Proof of Theorem \ref{main2}]
Notice that all the unstable minimal hypersurfaces with $C^{k+1,\alpha}$-asymptotic boundary $M$ are isotopic to stable ones. It suffices to show that, by appropriately choosing $M'$ which is a regular value of $\Pi$, any stable minimal hypersurface $\Gamma$ with $\Pi(\Gamma)=M$ is $C^{m,\alpha}$-isotopic to another minimal hypersurface $\Gamma'$ with $\Pi(\Gamma')=M'$.

When $\Gamma$ is strictly stable, the map $\Pi$ is a local diffeomorphism from a neighborhood $\mathcal{U}$ of $\Gamma$ in $\mathcal{M}^{m,\alpha}$ to a neighborhood of $M$ in $\mathcal{E}^{m,\alpha}$. Let $\psi\in C^{m,\alpha}(M)$ be a function with
\begin{equation*}
    M_\psi=\big\{\textbf{x}(p)+\psi(p)\nu_{M}(p):\,p\in M\big\},
\end{equation*}
where $\nu_{M}$ is a unit normal of $M$ in $\mathbb{R}^n\times\{0\}$. By \cite[Theorem 3]{Anderson1982}, there is a minimal hypersurface $\Upsilon_\psi$ asymptotic to $M_\psi$. If $\psi$ is sufficiently small in $C^2$-norm, by \cite[Theorem 2.2]{HardtLinHyperbolicRegularity}, there is a $\delta>0$ independent of $\psi$, so that for any $p\in M_\psi$, $\big(B^{n}_\delta(p)\times[0,\delta)\big)\cap\Upsilon_\psi$ can be locally written as a graph over a hyperplane that is parallel to $\textbf{e}_{n+1}$. Similar to the proof of Proposition \ref{20}, we choose an approximate minimal hypersurface asymptotic to $M_\psi$ by letting $\Gamma_\psi:=\{\textbf{x}(p)+\mathcal{E}(\psi)\textbf{n}_{\Gamma}(p):\ p\in\Gamma\}$ coincide with $\Upsilon_\psi$ in $\{y\leq\frac{\delta}{4}\}$ and coincide with $\Gamma$ in $\{y\geq\frac{\delta}{2}\}$. We can patch them together by a cut-off function in $y$ since both of them can be expressed as graphs near the ideal boundary. We see that the mean curvature of $\Gamma_\psi$, $H_\psi$, is in $y^{\mu_0}\Lambda^{m-2,\alpha}_0$ for \textit{any} $\mu_0>0$. To apply the theory of uniformly degenerate operators, we assume $\mu_0\in(0,n)$. Let $\Gamma_{\psi,\phi}:=\{\textbf{x}(p)+(\mathcal{E}(\psi)+\phi)y(p)\overbar{\textbf{n}}_{\Gamma}(p):\ p\in\Gamma\}$ for $\phi\in y^{\mu_0}\Lambda_0^{m,\alpha}(\Gamma)$. Note that for $\mu_0$ close to $n$, $y^{\mu_0+1}\Lambda^{\ell,\alpha}_0$ is continuously embedded into $C^{\ell,\alpha}(\overbar{\Gamma})$ whenever $\ell\leq n$. Since $\phi\in y^{\mu_0}\Lambda^{m,\alpha}_0$, we see that $\Gamma_{\psi,\phi}:=\{\textbf{x}(p)+(\mathcal{E}(\psi)+\phi)y(p)\overbar{\textbf{n}}_{\Gamma}(p):\ p\in\Gamma\}$ is $C^{m,\alpha}$ up to the ideal boundary.

The mean curvature of $\Gamma_{\psi,\phi}$, $H_{\psi,\phi}$, is in $y^{\mu_0}\Lambda^{m-2,\alpha}_0$. Hence, $H_{\cdot,\cdot}$ maps a neighborhood of $(0,0)$ in $C^{m,\alpha}(M)\times y^{\mu_0}\Lambda_0^{m,\alpha}(\Gamma)$ to $y^{\mu_0}\Lambda_0^{m-2,\alpha}(\Gamma)$. By the implicit function theorem, there exists a map $\mathcal{K}$ mapping from a neighborhood of the origin in $C^{m,\alpha}(M)$ to $y^{\mu_0}\Lambda_0^{m,\alpha}(\Gamma)$, such that $H_{\psi,\mathcal{K}(\psi)}\equiv 0$. Now, pick $\psi$ so that $M_\psi=M'\in\mathcal{U}$ is a regular value. Then, $t\mapsto \Gamma^i_{t\psi,\mathcal{K}(t\psi)}$ is a $C^{m,\alpha}$ isotopy between $\Gamma$ and another minimal hypersurface $\Gamma_{\psi,\mathcal{K}(\psi)}$.

Now, assume $\Gamma$ is weakly stable. Write $\Gamma$ into disjoint components $\Gamma=\sqcup\Gamma^i$. Let $M^i=\partial_\infty\Gamma^i$. We proceed as in the preceding paragraph with $\Gamma$ replaced by $\Gamma^i$. Hence, for functions $\psi\in C^{m,\alpha}(\Gamma^i)$ and $\phi\in y^{\mu_0}\Lambda_0^{m,\alpha}$, there are a closed submanifold $M^i_\psi$ with
\begin{equation*}
    M^i_\psi=\big\{\textbf{x}(p)+\psi(p)\nu_{M^i}(p):\,p\in M^i\big\}
\end{equation*}
and an approximate minimal hypersurface $\Gamma^i_{\psi,\phi}:=\{\textbf{x}(p)+(\mathcal{E}(\psi)+\phi)y(p)\overbar{\textbf{n}}_{\Gamma^i}(p):\ p\in\Gamma^i\}$ with the mean curvature, $H_{\psi,\phi}$, lying in $y^{\mu_0}\Lambda^{m-2,\alpha}_0$. If $\Gamma^i$ is strictly stable, the isotopy follows from the preceding paragraph.

If $\Gamma^i$ is weakly stable, since $\lambda=0$ corresponds to the first eigenvalue of $\Gamma^i$, we know that $\text{Ker}(L_{\Gamma^i})=\text{Span}\{f\}$. By Proposition \ref{14}, $f$ is positive with $\frac{1}{C}y^n\leq f\leq Cy^n$. Let $X_{\Gamma^i}:=\{\phi\in y^{\mu_0}\Lambda^{m,\alpha}_0:\,\int_{\Gamma^i} \phi fd\mathcal{H}^n_\mathbb{H}=0\}$. Since $f\in y^n\Lambda^{m,\alpha}_0$, the integral of $\phi f$ over $\Gamma^i$ is well defined. Moreover, $X_{\Gamma^i}$ is a closed subspace of $y^{\mu_0}\Lambda^{m,\alpha}_0$. Let
\begin{equation*}
    P_{\Gamma^i}:\,y^{\mu_0}\Lambda_0^{m-2,\alpha}\to X_{\Gamma^i},\,g\mapsto g-\frac{1}{\|f\|_{L^2(\Gamma^i)}^2}\left\langle f,g\right\rangle_{L^2(\Gamma^i)}f
\end{equation*}
be the projection map. It is readily checked that $P_{\Gamma^i}$ is a bounded linear map. By implicit function theorem, there exists a map $\mathcal{K}$ mapping from a neighborhood of the origin in $C^{m,\alpha}(M^i)$ to $X_{\Gamma^i}$, such that $P_{\Gamma^i}\circ H_{\psi,\mathcal{K}(\psi)}\equiv 0$. Now, pick $\psi$ so that $M^i_\psi$ is a regular value. Then, $t\mapsto \Gamma^i_{t\psi,\mathcal{K}(t\psi)}$ is a $C^{m,\alpha}$ isotopy between $\Gamma^i$ and $\Gamma^i_{\psi,\mathcal{K}(\psi)}$, where $m=\min\{k,n\}$. By Lemma \ref{83}, we can assume $\lambda_\mathbb{H}[\Gamma^i_{\psi,\mathcal{K}(\psi)}]<\lambda[\mathbb{S}^{n-1}\times\mathbb{R}^1]$ by choosing $\psi$ sufficiently small in $C^{m,\alpha}$-norm. Since $P_{\Gamma^i}\circ H_{\psi,\mathcal{K}(\psi)}=0$, the mean curvature $H_{\psi,\mathcal{K}(\psi)}=cf$ for some constant $c\in\mathbb{R}^1$. Pick an appropriate direction of normal vector, we assume that $c>0$. Applying Theorem \ref{86}, we get an isotopy between $\Gamma^i_{\psi,\mathcal{K}(\psi)}$ and a minimal hypersurface $\widetilde{\Gamma}^i$ with $\partial_\infty\widetilde{\Gamma}^i=M^i_\psi$.

In either case, we have proved that $\Gamma$ is isotopic to another minimal hypersurface $\Gamma'$ with $M'=\partial_\infty\Gamma'$ being a regular value. This finishes the proof Theorem \ref{main2}.
\end{proof}

\appendix
\section{Computations for non-compactly supported vector fields}
Let $\textbf{Y}=\alpha\textbf{N}_\epsilon+\textbf{V}\in\mathcal{Y}^-$ and $\{\Phi_t(p)\}$ be the $1$-parameter family of diffeomorphisms generated by $\textbf{Y}$ throughout the appendix. Assume $\|\textbf{Y}\|_{\mathcal{Y}^-}\leq M_0$.

\begin{lem}\label{a1}
There is a constant $C=C(\Gamma_-,\Omega',M_0)$, so that
\begin{align}
    &|\mathbf{Y}(p)-(\alpha\mathbf{x}(p)\cdot\overbar{\mathbf{e}}_{n+1})\overbar{\mathbf{e}}_{n+1}|\leq C(\mathbf{x}(p)\cdot\overbar{\mathbf{e}}_{n+1})^2,\label{47}\\
    &|\overbar{\nabla}\mathbf{Y}-\alpha\overbar{\mathbf{e}}_{n+1}\otimes\overbar{\mathbf{e}}_{n+1}|\leq C\mathbf{x}(p)\cdot\overbar{\mathbf{e}}_{n+1}\text{ and }|\overbar{\nabla}^2\mathbf{Y}|\leq C\label{102}
\end{align}
\end{lem}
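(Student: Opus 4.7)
The plan is to split $\mathbf{Y}=\alpha\mathbf{N}_\epsilon+\mathbf{V}$ and treat the two pieces independently. The bounds on $\mathbf{V}$ are immediate from the definition of $\|\mathbf{V}\|_{\mathcal{Y}_c}\le M_0$, which directly gives $|\mathbf{V}|\le M_0 y^2$, $|\overbar{\nabla}\mathbf{V}|\le M_0 y$, and $|\overbar{\nabla}^2\mathbf{V}|\le M_0$; these are precisely the contributions $\mathbf{V}$ is allowed to make to the three inequalities.

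The substantive work is an asymptotic expansion of $\mathbf{N}_\epsilon(q)=y(q)\,\overbar{\mathbf{e}}_{n+1}^\top\circ\Pi_{\Gamma_-}(q)\,\chi_\epsilon(y(q))$ as $y(q)\to 0$. The structural input I would exploit is that, because $\Gamma_-$ is $C^{k,\alpha}$-asymptotic to $M\subset\mathbb{R}^n\times\{0\}$, the compactified hypersurface $\overbar{\Gamma}_-$ meets $\mathbb{R}^n\times\{0\}$ orthogonally in the Euclidean sense; hence $\overbar{\mathbf{e}}_{n+1}\cdot\overbar{\mathbf{n}}_{\Gamma_-}$ vanishes on $M$ and is $O(y)$ on $\Gamma_-$. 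This gives, on $\Gamma_-$,
\[
\overbar{\mathbf{e}}_{n+1}^\top = \overbar{\mathbf{e}}_{n+1}-(\overbar{\mathbf{e}}_{n+1}\cdot\overbar{\mathbf{n}}_{\Gamma_-})\overbar{\mathbf{n}}_{\Gamma_-}=\overbar{\mathbf{e}}_{n+1}+O(y),
\]
with analogous statements for its first and second $\overbar{\nabla}$-derivatives, controlled by $\overbar{A}_{\Gamma_-}$ and $\overbar{\nabla}\overbar{A}_{\Gamma_-}$ (bounded since $k\ge 4$). Additionally, using the thin-neighborhood choice of $\Omega'$ from Proposition \ref{20}, the Euclidean nearest-point projection $\Pi_{\Gamma_-}$ is $C^{k-1}$ throughout $\mathrm{cl}(\Omega')$ with uniformly bounded derivatives, and $y(\Pi_{\Gamma_-}(q))=y(q)+O(y(q)^2)$ near the ideal boundary.

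Putting these together in the region $\{y\le\epsilon\}$ where $\chi_\epsilon\equiv 1$ gives
\[
\mathbf{N}_\epsilon(q)=y(q)\overbar{\mathbf{e}}_{n+1}+y(q)\bigl(\overbar{\mathbf{e}}_{n+1}^\top\circ\Pi_{\Gamma_-}(q)-\overbar{\mathbf{e}}_{n+1}\bigr)=y(q)\overbar{\mathbf{e}}_{n+1}+O(y(q)^2),
\]
which is (\ref{47}). Differentiating once, the leading term $\overbar{\nabla}y\otimes(\overbar{\mathbf{e}}_{n+1}^\top\circ\Pi_{\Gamma_-})$ equals $\overbar{\mathbf{e}}_{n+1}\otimes\overbar{\mathbf{e}}_{n+1}+O(y)$, while the other terms of the product rule all carry an extra factor of $y$ times a bounded quantity and are therefore $O(y)$, giving the first half of (\ref{102}). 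A second differentiation no longer meets a singular factor of $y^{-1}$, so $|\overbar{\nabla}^2\mathbf{N}_\epsilon|\le C$ follows from the uniform $C^{k-1}$ control on $\overbar{\mathbf{n}}_{\Gamma_-}$ and on $\Pi_{\Gamma_-}$. In the transition region $\{y\sim\epsilon\}$ where $\chi_\epsilon$ varies, $y$ is bounded both above and below, so all derivatives of $\mathbf{N}_\epsilon$ are trivially dominated by a constant depending on $\Gamma_-$, $\Omega'$, and $M_0$; this region contributes $O(y)=O(1)$-type terms that are absorbed into the constants in (\ref{47})--(\ref{102}).

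The main obstacle I anticipate is the uniform control of the Euclidean projection $\Pi_{\Gamma_-}$ all the way down to the ideal boundary, since the Euclidean tubular neighborhood of $\Gamma_-$ shrinks like $y$ as $y\to 0$. The key point is that the thin neighborhood $\Omega'$ is chosen precisely so that $\Omega'\cap\{y=r\}\subset\overbar{\mathcal{N}}_{Cr^{n+1}}(\Gamma_-)$, which is much thinner than the natural rate $r$ of degeneration of the normal geometry of $\Gamma_-$; this forces $\Pi_{\Gamma_-}$ and all its derivatives of order $\le k-1$ to remain bounded on $\mathrm{cl}(\Omega')$ and forces the expansion $y\circ\Pi_{\Gamma_-}=y+O(y^2)$ mentioned above. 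Once this uniform control is in place, the three estimates amount to a careful bookkeeping of factors of $y$ produced by the Taylor expansion of $\overbar{\mathbf{e}}_{n+1}^\top\circ\Pi_{\Gamma_-}$ along the vertical direction.
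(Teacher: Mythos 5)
Your proposal is correct and follows essentially the same route as the paper: decompose $\mathbf{Y}=\alpha\mathbf{N}_\epsilon+\mathbf{V}$, read off the bounds on $\mathbf{V}$ directly from $\|\mathbf{V}\|_{\mathcal{Y}_c}\le M_0$, and Taylor-expand $\mathbf{N}_\epsilon=y\,\overbar{\mathbf{e}}_{n+1}^\top\circ\Pi_{\Gamma_-}$ near $\{y=0\}$ using that $\overbar{\mathbf{e}}_{n+1}^\top=\overbar{\mathbf{e}}_{n+1}$ on $\overbar{\Gamma}_-\cap\{y=0\}$ (orthogonality at the ideal boundary) and the $C^{k,\alpha}$-asymptotic regularity of $\Gamma_-$. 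The only difference is that you spell out, rather than leave implicit as the paper does, the uniform control of $\Pi_{\Gamma_-}$ on $\mathrm{cl}(\Omega')$ via the thin-neighborhood property.
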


\begin{proof}
From the estimate on $\textbf{V}$, it suffices to show that $|\textbf{N}_\epsilon-(\textbf{x}(p)\cdot\overbar{\textbf{e}}_{n+1})\overbar{\textbf{e}}_{n+1}|\leq C(\textbf{x}(p)\cdot\overbar{\textbf{e}}_{n+1})^2$, that $|\overbar{\nabla}\textbf{N}_\epsilon-\overbar{\textbf{e}}_{n+1}\otimes\overbar{\textbf{e}}_{n+1}|\leq C\textbf{x}(p)\cdot\overbar{\textbf{e}}_{n+1}$ and that $|\overbar{\nabla}^2\textbf{N}_\epsilon|\leq C$. The estimate $|\overbar{\nabla}^2\textbf{N}_\epsilon|\leq C$ follows from the fact that $\Gamma_-$ is $C^3$-asymptotic. In $\text{cl}(\Omega')\cap\{y<\frac{\epsilon}{2}\}$, $\textbf{N}_\epsilon=(\textbf{x}(p)\cdot\overbar{\textbf{e}}_{n+1})\overbar{\textbf{e}}^\top_{n+1}\circ\Pi$. So, near the ideal boundary, we have
\begin{align*}
    |\textbf{N}_\epsilon-(\textbf{x}(p)\cdot\overbar{\textbf{e}}_{n+1})\overbar{\textbf{e}}_{n+1}|=(\textbf{x}(p)\cdot\overbar{\textbf{e}}_{n+1})|\overbar{\textbf{e}}^\top_{n+1}\circ\Pi-\overbar{\textbf{e}}_{n+1}|.
\end{align*}
Since $\overbar{\textbf{e}}^\top_{n+1}=\overbar{\textbf{e}}_{n+1}$ on $\overbar{\Gamma_-}\cap\{y=0\}$, (\ref{47}) follows easily. On the other hand, near the ideal boundary, we have
\begin{align*}
    \overbar{\nabla}\textbf{N}_\epsilon=\overbar{\textbf{e}}_{n+1}\otimes\overbar{\textbf{e}}_{n+1}+(\textbf{x}(p)\cdot\overbar{\textbf{e}}_{n+1})\overbar{\nabla}(\overbar{\textbf{e}}^\top_{n+1}\circ\Pi).
\end{align*}
Since $\Gamma_-$ is $C^3$-asymptotic, $\overbar{\textbf{e}}^\top_{n+1}\circ\Pi$ is $C^2$ up to the ideal boundary. So, $|\overbar{\nabla}(\overbar{\textbf{e}}^\top_{n+1}\circ\Pi)|\leq C$. This completes the proof.
\end{proof}

\begin{lem}\label{a2}
Let $0\leq t\leq T<\infty$. There is a constant $C=C(\Gamma_-,\Omega',M_0,T)$, so that
\begin{align}
    \left|\frac{\mathbf{x}(p)\cdot\overbar{\mathbf{e}}_{n+1}}{\Phi_t(p)\cdot\overbar{\mathbf{e}}_{n+1}}-e^{-\alpha t}\right|&\leq C\mathbf{x}(p)\cdot\overbar{\mathbf{e}}_{n+1}\label{38}\\
    |\overbar{\nabla}\Phi_t-I_{n+1}-(e^{\alpha t}-1)\overbar{\mathbf{e}}_{n+1}\otimes\overbar{\mathbf{e}}_{n+1}|&\leq C \mathbf{x}(p)\cdot\overbar{\mathbf{e}}_{n+1}\text{ and }|\overbar{\nabla}^2\Phi_t|\leq C\label{41}.
\end{align}
\end{lem}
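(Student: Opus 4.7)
The plan is to treat all three estimates as consequences of the ODE characterization $\frac{d}{dt}\Phi_t(p) = \mathbf{Y}(\Phi_t(p))$ with $\Phi_0(p) = p$, combined with the pointwise control on $\mathbf{Y}$ and its derivatives from Lemma \ref{a1}. Write $y(q) := \mathbf{x}(q)\cdot\overbar{\mathbf{e}}_{n+1}$ and set $y_t := y(\Phi_t(p))$. Since $\Phi_t$ maps $\mathrm{cl}(\Omega')$ into itself, $y_t$ is uniformly bounded on $[0,T]$. Taking the $\overbar{\mathbf{e}}_{n+1}$-component of the flow equation and using (\ref{47}) gives $\big|\frac{d}{dt}y_t - \alpha y_t\big| \leq Cy_t^2$, so a standard Gronwall argument yields $c e^{-C_1 T}y_0 \leq y_t \leq C e^{C_1 T}y_0$ for a constant $C_1 = C_1(\Gamma_-, \Omega', M_0)$. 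In particular, $y_t$ is comparable to $y_0$ uniformly in $t \in [0,T]$.

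For estimate (\ref{38}), set $\xi_t := y_t e^{-\alpha t}$. Then $\frac{d}{dt}\xi_t = e^{-\alpha t}\big(\frac{d}{dt}y_t - \alpha y_t\big)$, which by the above is of size $O(y_t^2) = O(y_0^2)$. Integrating from $0$ to $t$ gives $|\xi_t - y_0| \leq Cy_0^2$, i.e.\ $|y_t e^{-\alpha t} - y_0| \leq Cy_0^2$. Dividing by $y_t$, which is bounded below by $cy_0$, yields the desired $\big|y_0/y_t - e^{-\alpha t}\big| \leq Cy_0$.

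For estimate (\ref{41}), differentiate the flow equation in the initial position to obtain the matrix ODE $\frac{d}{dt}F_t = (\overbar{\nabla}\mathbf{Y})(\Phi_t(p))\cdot F_t$ with $F_0 = I_{n+1}$, where $F_t := \overbar{\nabla}\Phi_t$. By (\ref{102}), $\overbar{\nabla}\mathbf{Y} = \alpha\overbar{\mathbf{e}}_{n+1}\otimes\overbar{\mathbf{e}}_{n+1} + R$ with $|R| \leq Cy_t \leq C' y_0$, and the global bound $|\overbar{\nabla}\mathbf{Y}| \leq C$ gives $|F_t| \leq e^{CT}$ via Gronwall. Writing $F_t = I_{n+1} + (e^{\alpha t}-1)\overbar{\mathbf{e}}_{n+1}\otimes\overbar{\mathbf{e}}_{n+1} + E_t$ and noting that the first two terms solve the model ODE with $R \equiv 0$, one obtains a linear inhomogeneous equation for $E_t$ with source $R\cdot F_t$ of size $O(y_0)$ and $E_0 = 0$; Gronwall then gives $|E_t| \leq Cy_0$. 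For the Hessian bound, differentiate once more to obtain $\frac{d}{dt}(\overbar{\nabla}^2\Phi_t) = \overbar{\nabla}^2\mathbf{Y}(F_t,F_t) + \overbar{\nabla}\mathbf{Y}\cdot\overbar{\nabla}^2\Phi_t$; using $|\overbar{\nabla}^2\mathbf{Y}| \leq C$, the bound on $F_t$, and $|\overbar{\nabla}\mathbf{Y}| \leq C$, Gronwall applied to $|\overbar{\nabla}^2\Phi_t|$ with initial value $0$ gives $|\overbar{\nabla}^2\Phi_t| \leq C$.

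The main obstacle is the initial comparability step: establishing $y_t \asymp y_0$ uniformly on $[0,T]$. This is essential both to ensure $\Phi_t$ remains in the regime where Lemma \ref{a1} applies and to upgrade the quadratic remainder $Cy_t^2$ in the ODE for $y_t$ into a clean $Cy_0^2$ source in estimate (\ref{38}). Once this comparability is in place, the remaining estimates are standard Gronwall arguments on linear matrix ODEs with bounded coefficients.
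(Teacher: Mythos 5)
Your argument is correct and follows essentially the same route as the paper: reduce to Lemma \ref{a1}, then treat the flow quantities as solutions of ODEs and apply Gronwall. The only cosmetic difference is that you work with $\xi_t = y_t e^{-\alpha t}$ and then divide by $y_t$, which forces you to establish the two-sided comparability $y_t \asymp y_0$, whereas the paper differentiates $e^{\alpha t}\,y_0/y_t$ directly and only needs the one-sided bound $y_t \leq C y_0$ (its (\ref{113})); both are valid.
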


\begin{proof}
Since $\frac{d}{dt}\Phi_t(p)=\textbf{Y}(\Phi_t(p))$, using (\ref{47}), one sees that
\begin{align*}
    \left|\frac{d}{dt}\left(\frac{\Phi_t(p)\cdot\overbar{\textbf{e}}_{n+1}}{\textbf{x}(p)\cdot\overbar{\textbf{e}}_{n+1}}\right)\right|\leq C\left|\frac{\Phi_t(p)\cdot\overbar{\textbf{e}}_{n+1}}{\textbf{x}(p)\cdot\overbar{\textbf{e}}_{n+1}}\right|.
\end{align*}
So,
\begin{equation}\label{113}
    |\Phi_t(p)\cdot\overbar{\textbf{e}}_{n+1}|\leq C\textbf{x}(p)\cdot\overbar{\textbf{e}}_{n+1}
\end{equation}
Similarly, one can compute that
\begin{align*}
    \frac{d}{dt}\left(\frac{\textbf{x}(p)\cdot\overbar{\textbf{e}}_{n+1}}{\Phi_t(p)\cdot\overbar{\textbf{e}}_{n+1}}\right)=-\frac{\textbf{x}(p)\cdot\overbar{\textbf{e}}_{n+1}}{(\Phi_t(p)\cdot\overbar{\textbf{e}}_{n+1})^2}\textbf{Y}(\Phi_t(p))\cdot\overbar{\textbf{e}}_{n+1}=-\alpha\frac{\textbf{x}(p)\cdot\overbar{\textbf{e}}_{n+1}}{\Phi_t(p)\cdot\overbar{\textbf{e}}_{n+1}}+\mathcal{O}(\textbf{x}(p)\cdot\overbar{\textbf{e}}_{n+1}),
\end{align*}
where in the last step, we used (\ref{47}). This implies that
\begin{equation*}
    \left|\frac{d}{dt}\left(e^{\alpha t}\frac{\textbf{x}(p)\cdot\overbar{\textbf{e}}_{n+1}}{\Phi_t(p)\cdot\overbar{\textbf{e}}_{n+1}}\right)\right|\leq C(\textbf{x}(p)\cdot\overbar{\textbf{e}}_{n+1}).
\end{equation*}
Taking the integral and noticing that $\Phi_0$ is the identity map, we get (\ref{38}). To prove (\ref{41}), notice that $\frac{d}{dt}\overbar{\nabla}\Phi_t=\overbar{\nabla}\frac{d}{dt}\Phi_t=\overbar{\nabla}(\textbf{Y}(\Phi_t(p)))=\overbar{\nabla}\textbf{Y}\circ\overbar{\nabla}\Phi_t$. Combining this with (\ref{102}),
\begin{align*}
    \left|\frac{d}{dt}\overbar{\nabla}\Phi_t-\alpha(\overbar{\mathbf{e}}_{n+1}\otimes\overbar{\mathbf{e}}_{n+1})\circ\overbar{\nabla}\Phi_t\right|\leq C\Phi_t(p)\cdot\overbar{\mathbf{e}}_{n+1}
\end{align*}
That is,
\begin{align}\label{112}
    \left|\frac{d}{dt}\left(e^{-\alpha tA}\overbar{\nabla}\Phi_t\right)\right|\leq C\Phi_t(p)\cdot\overbar{\mathbf{e}}_{n+1}e^{-\alpha tA},
\end{align}
where $A=\overbar{\mathbf{e}}_{n+1}\otimes\overbar{\mathbf{e}}_{n+1}$. As a result, $e^{-\alpha tA}=\sum_{k=0}^\infty(-\alpha tA)^k/k!=I_{n+1}+(e^{-\alpha t}-1)A$. Taking the integral in (\ref{112}),
\begin{align*}
    |\overbar{\nabla}\Phi_t-I_{n+1}-(e^{\alpha t}-1)\overbar{\mathbf{e}}_{n+1}\otimes\overbar{\mathbf{e}}_{n+1}|\leq C\Phi_t(p)\cdot\overbar{\mathbf{e}}_{n+1}\leq C\textbf{x}(p)\cdot\overbar{\textbf{e}}_{n+1},
\end{align*}
where in the last step, we used (\ref{113}). Finally, we show $|\overbar{\nabla}^2\Phi_t|\leq C$. Since
\begin{align*}
    \frac{d}{dt}\overbar{\nabla}^2\Phi_t=\overbar{\nabla}^2\frac{d}{dt}\Phi_t=\overbar{\nabla}^2\textbf{Y}(\overbar{\nabla}\Phi_t,\overbar{\nabla}\Phi_t)+\overbar{\nabla}\textbf{Y}\circ\overbar{\nabla}^2\Phi_t,
\end{align*}
we have $|\frac{d}{dt}\overbar{\nabla}^2\Phi_t|\leq C(1+|\overbar{\nabla}^2\Phi_t|)$. This implies $|\overbar{\nabla}^2\Phi_t|\leq C$.
\end{proof}

\begin{lem}\label{a3}
$\mathcal{J}^E\Phi_t(p,\mathbf{v})\in\mathfrak{Y}$. Moreover, $\|\mathcal{J}^E\Phi_t(p,\mathbf{v})\|_{\mathfrak{Y}}\leq C$, where $C=C(\Gamma_-,\Omega',M_0,T)$.
\end{lem}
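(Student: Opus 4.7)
My plan is to isolate the asymptotic behavior of $\mathcal{J}^E\Phi_t(p,\mathbf{v})$ as $y\to0$, which depends only on $\mathbf{v}$, and absorb it into the $\pi^* g$ piece of $\mathfrak{Y}_0$, leaving a remainder that can be shown to lie in the $\mathfrak{Y}'$-closure of $C^0_c(\mathrm{cl}(\Omega')\times\mathbb{S}^n)$. Specifically, using Lemma \ref{a2} I would write $\overbar{\nabla}\Phi_t=A_t+E_t(p)$ with $A_t=I_{n+1}+(e^{\alpha t}-1)\overbar{\mathbf{e}}_{n+1}\otimes\overbar{\mathbf{e}}_{n+1}$ and $|E_t|\le Cy$, and use (\ref{38}) to expand the dilation factor $y^n/(\Phi_t(p)\cdot\overbar{\mathbf{e}}_{n+1})^n=e^{-n\alpha t}+\mathcal{O}(y)$. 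If $\mathcal{J}(A;\mathbf{v})$ denotes the Euclidean Jacobian of a linear map $A$ restricted to $\mathbf{v}^\perp$, then $\mathbf{v}\mapsto e^{-n\alpha t}\mathcal{J}(A_t;\mathbf{v})$ is a smooth even function of $\mathbf{v}$, which I denote $g_t(\mathbf{v})$; hence $\pi^* g_t\in\mathfrak{Y}_0\subset\mathfrak{Y}$.

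Setting $R_t(p,\mathbf{v}):=\mathcal{J}^E\Phi_t(p,\mathbf{v})-g_t(\mathbf{v})$, the first task is to show $R_t\in\mathfrak{X}$ with norm bounded by a constant $C=C(\Gamma_-,\Omega',M_0,T)$. The bound on $\|R_t\|_\infty$ and on $\|\nabla_{\mathbb{S}^n}R_t\|_\infty$, $\|\nabla_{\mathbb{S}^n}^2R_t\|_\infty$ follows from the smoothness of $\mathbf{v}\mapsto\mathcal{J}(A;\mathbf{v})$ combined with the uniform bound $|\overbar{\nabla}\Phi_t-A_t|\le Cy$ from (\ref{41}); the limit $R_t\to0$ as $y\to0$ is in fact $O(y)$. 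For the weighted $p$-derivatives I would expand $\nabla_p\mathcal{J}^E\Phi_t$ using $|\overbar{\nabla}^2\Phi_t|\le C$ and check the apparent $1/y$ singularity arising from $\partial_y(y^n/(\Phi_t(p)\cdot\overbar{\mathbf{e}}_{n+1})^n)$; this cancels because $(\overbar{\nabla}\Phi_t)_{n+1,n+1}=e^{\alpha t}+O(y)$, combined with $y/(\Phi_t(p)\cdot\overbar{\mathbf{e}}_{n+1})=e^{-\alpha t}+O(y)$, yielding the bounded factor $1-\tfrac{y}{\Phi_t(p)\cdot\overbar{\mathbf{e}}_{n+1}}\partial_y(\Phi_t(p)\cdot\overbar{\mathbf{e}}_{n+1})=O(y)$. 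The term $\|y(p)\nabla_{\mathbb{R}^{n+1}_+}\nabla_{\mathbb{S}^n}R_t\|_\infty$ is handled the same way since derivatives in $\mathbf{v}$ do not affect the scaling in $y$.

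Next, I would show $R_t$ lies in the $\mathfrak{Y}'$-closure of $C^0_c$ via the cutoffs $\chi_\delta=\phi_{\delta,\delta/2}$ from before Proposition \ref{35}. Each $\chi_\delta R_t$ is compactly supported in $\{y>0\}$, so $\chi_\delta R_t\in C^0_c\subset\mathfrak{Y}_0$; since $\|\cdot\|_{\mathfrak{Y}'}\le\|\cdot\|_{\mathfrak{X}}$ via the trivial decomposition with $\varphi=0$, it suffices to show $\|(1-\chi_\delta)R_t\|_{\mathfrak{X}}\to 0$. The sup norms of $(1-\chi_\delta)R_t$ and its $\mathbf{v}$-derivatives are bounded by $\sup_{y\le\delta}|R_t|$ and $\sup_{y\le\delta}|\nabla_{\mathbb{S}^n}^jR_t|$, all of which go to zero since $R_t$ and its $\mathbf{v}$-derivatives vanish as $y\to 0$ (and are in fact $O(y)$). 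For the mixed weight, I would split $y\nabla((1-\chi_\delta)R_t)=y(1-\chi_\delta)\nabla R_t-yR_t\chi'_\delta$: the first term is dominated by $\sup_{y\le\delta}|y\nabla R_t|=O(\delta)$ from the $\mathfrak{X}$-bound on $R_t$, while $|y\chi'_\delta|\le C$ and $|R_t|\le Cy\le C\delta$ on the support of $\chi'_\delta$ force the second term to $O(\delta)$ as well. Thus $R_t\in\mathfrak{Y}$, and $\mathcal{J}^E\Phi_t=R_t+\pi^* g_t\in\mathfrak{Y}$ with $\|\mathcal{J}^E\Phi_t\|_{\mathfrak{Y}}\le\|R_t\|_{\mathfrak{X}}+\|\pi^* g_t\|_{\mathfrak{Y}}\le C(\Gamma_-,\Omega',M_0,T)$.

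The main technical obstacle is the careful $p$-derivative computation: showing that the individually singular factor $\partial_y(y^n/(\Phi_t(p)\cdot\overbar{\mathbf{e}}_{n+1})^n)=O(1/y)\cdot O(y)=O(1)$ only after using the matching asymptotics (\ref{38}) and (\ref{41}) simultaneously, and analogous bookkeeping for cross terms in $\nabla_p\mathcal{J}(\overbar{\nabla}\Phi_t;\mathbf{v})$. Once these cancellations are recorded, the rest of the proof is a routine truncation argument.
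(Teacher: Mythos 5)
Your proposal is correct and follows essentially the same strategy as the paper's proof: isolate the $y\to0$ limit as a smooth even function of $\mathbf{v}$ (lying in $\mathfrak{Y}_0$), show the remainder lies in $\mathfrak{X}$ with bounded norm, and use a cutoff to exhibit the remainder as a $\mathfrak{Y}'$-limit of compactly supported functions. The only cosmetic difference is bookkeeping: the paper factors $\mathcal{J}^E\Phi_t=\mathcal{J}\Phi_t\cdot(y/(\Phi_t\cdot\overbar{\mathbf{e}}_{n+1}))^n$, treats each factor separately, and invokes the algebra property of $\mathfrak{Y}$, whereas you extract the boundary limit of the whole product at once — the underlying estimates (\ref{38}), (\ref{41}) and the cancellation in $\partial_y$ of the dilation factor are used identically in both.

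One small inaccuracy worth flagging: in the truncation step you write that $\sup_{y\le\delta}|y\nabla R_t|=O(\delta)$ ``from the $\mathfrak{X}$-bound on $R_t$.'' The $\mathfrak{X}$-bound only gives $|y\nabla R_t|\le C$ uniformly, which is not $O(\delta)$ on $\{y\le\delta\}$. What you actually need (and what your earlier cancellation analysis implicitly delivers) is the stronger estimate $|\nabla_p R_t|\le C$ with no $1/y$ weight, i.e.\ $y\nabla_p R_t=O(y)$. The paper records exactly this refined decay in the analogue of (\ref{39}). So the conclusion is right, but the cited justification should be the $O(1)$ bound on the unweighted gradient, not the $\mathfrak{X}$-norm bound.
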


\begin{proof}
Since $\mathfrak{Y}$ is an algebra and $\mathcal{J}^E\Phi_t=\mathcal{J}\Phi_t\left(\frac{\textbf{x}(p)\cdot\overbar{\textbf{e}}_{n+1}}{\Phi_t(p)\cdot\overbar{\textbf{e}}_{n+1}}\right)^n$, it suffices to show that $\mathcal{J}\Phi_t\in\mathfrak{Y}$ and $\frac{\textbf{x}(p)\cdot\overbar{\textbf{e}}_{n+1}}{\Phi_t(p)\cdot\overbar{\textbf{e}}_{n+1}}\in\mathfrak{Y}$.
By direct computations,
\begin{align*}
    (\textbf{x}(p)\cdot\overbar{\textbf{e}}_{n+1})\overbar{\nabla}\left(\frac{(\textbf{x}(p)\cdot\overbar{\textbf{e}}_{n+1})}{\left(\Phi_t(p)\cdot\overbar{\textbf{e}}_{n+1}\right)}\right)&=\frac{(\textbf{x}(p)\cdot\overbar{\textbf{e}}_{n+1})}{\left(\Phi_t(p)\cdot\overbar{\textbf{e}}_{n+1}\right)}\left(\overbar{\textbf{e}}_{n+1}-(\textbf{x}(p)\cdot\overbar{\textbf{e}}_{n+1})\frac{\overbar{\nabla}\Phi_t(p)\cdot\overbar{\textbf{e}}_{n+1}}{\Phi_t(p)\cdot\overbar{\textbf{e}}_{n+1}}\right)\\
    &=-\frac{(\textbf{x}(p)\cdot\overbar{\textbf{e}}_{n+1})}{\left(\Phi_t(p)\cdot\overbar{\textbf{e}}_{n+1}\right)}\int_0^t\big(\textbf{x}(p)\cdot\overbar{\textbf{e}}_{n+1}\big)\frac{d}{d\tau}\Big(\frac{\overbar{\nabla}\Phi_\tau(p)\cdot\overbar{\textbf{e}}_{n+1}}{\Phi_\tau(p)\cdot\overbar{\textbf{e}}_{n+1}}\Big)d\tau.
\end{align*}
Notice that
\begin{align*}
     \frac{d}{d\tau}\Big(\frac{\overbar{\nabla}\Phi_\tau(p)\cdot\overbar{\textbf{e}}_{n+1}}{\Phi_\tau(p)\cdot\overbar{\textbf{e}}_{n+1}}\Big)=\frac{\overbar{\nabla}\textbf{Y}(\Phi_\tau(p))\circ\overbar{\nabla}\Phi_\tau(p)\cdot\overbar{\textbf{e}}_{n+1}}{\Phi_\tau(p)\cdot\overbar{\textbf{e}}_{n+1}}-\frac{\overbar{\nabla}\Phi_\tau(p)\cdot\overbar{\textbf{e}}_{n+1}}{\big(\Phi_\tau(p)\cdot\overbar{\textbf{e}}_{n+1}\big)^2}\big(\textbf{Y}(\Phi_\tau(p))\cdot\overbar{\textbf{e}}_{n+1}\big).
\end{align*}
From (\ref{47}), (\ref{102}), (\ref{38}) and (\ref{41}), one sees that
\begin{equation}\label{39}
    \left|(\textbf{x}(p)\cdot\overbar{\textbf{e}}_{n+1})\overbar{\nabla}\left(\frac{(\textbf{x}(p)\cdot\overbar{\textbf{e}}_{n+1})}{\left(\Phi_t(p)\cdot\overbar{\textbf{e}}_{n+1}\right)}\right)\right|\leq C\textbf{x}(p)\cdot\overbar{\textbf{e}}_{n+1}.
\end{equation}
Write 
\begin{equation}\label{103}
    \frac{\textbf{x}(p)\cdot\overbar{\textbf{e}}_{n+1}}{\Phi_t(p)\cdot\overbar{\textbf{e}}_{n+1}}-e^{-\alpha t}=\phi_{y_1,\delta}(\frac{\textbf{x}(p)\cdot\overbar{\textbf{e}}_{n+1}}{\Phi_t(p)\cdot\overbar{\textbf{e}}_{n+1}}-e^{-\alpha t})+(1-\phi_{y_1,\delta})(\frac{\textbf{x}(p)\cdot\overbar{\textbf{e}}_{n+1}}{\Phi_t(p)\cdot\overbar{\textbf{e}}_{n+1}}-e^{-\alpha t}).
\end{equation}
Denote the second term on the right hand side of (\ref{103}) by $g_{y_1}$. Combining (\ref{38}) and (\ref{39}), one sees that $\|g_{y_1}\|_\mathfrak{X}\leq C(\Gamma_-,\Omega',M_0,T)y_1$. Hence, $\frac{\textbf{x}(p)\cdot\overbar{\textbf{e}}_{n+1}}{\Phi_t(p)\cdot\overbar{\textbf{e}}_{n+1}}-e^{-\alpha t}$ can be approximated by compactly supported functions in $\mathfrak{X}$-norm. This implies $\frac{\textbf{x}(p)\cdot\overbar{\textbf{e}}_{n+1}}{\Phi_t(p)\cdot\overbar{\textbf{e}}_{n+1}}\in\mathfrak{Y}$.

We next prove $\mathcal{J}\Phi_t(p,\textbf{v})\in\mathfrak{Y}$. For $\textbf{v}\in\mathbb{S}^n$, we choose a local orthogonal frame $\{\tau_1(\textbf{v}),\cdots,\tau_n(\textbf{v})\}$ near $\textbf{v}$ such that $\tau_i$ is smooth and $|\nabla_{\mathbb{S}^n}\tau_i|+|\nabla^2_{\mathbb{S}^n}\tau_i|\leq C$, where $C$ is a constant independent of $p$. Combining (\ref{47}), (\ref{102}) and (\ref{41}), one can prove in a similar way that
\begin{align}\label{104}
    \mathcal{J}\Phi_t-\sqrt{1+(e^{\alpha t}-1)(1-(\overbar{\textbf{e}}_{n+1}\cdot\textbf{v})^2)}=\phi_{y_1,\delta}\left(\mathcal{J}\Phi_t-\sqrt{1+(e^{\alpha t}-1)(1-(\overbar{\textbf{e}}_{n+1}\cdot\textbf{v})^2)}\right)+h_{y_1},
\end{align}
where $h_{y_1}$ satisfies $\|h_{y_1}\|_{\mathfrak{X}}\leq C(\Gamma_-,\Omega',M_0,T)y_1$. This implies $\mathcal{J}\Phi_t\in\mathfrak{Y}$.
\end{proof}

\begin{lem}\label{a4}
The map $t\mapsto\mathcal{J}^E\Phi_{t}(p,\textbf{v})$ is in $C^1([0,\infty);\mathfrak{Y})$. Moreover, 
\begin{equation*}
    \frac{d}{dt}\big|_{t=0}\mathcal{J}^E\Phi_{t}=\overbar{\mathrm{div}}\mathbf{Y}-Q_\mathbf{Y}-\frac{n}{y}\mathbf{Y}\cdot\overbar{\mathbf{e}}_{n+1},
\end{equation*}
where $Q_\mathbf{Y}(p,\mathbf{v})=\overbar{\nabla}_\mathbf{v}\mathbf{Y}(p)\cdot\mathbf{v}$.
\end{lem}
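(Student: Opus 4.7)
The plan has four steps: (i) compute the pointwise derivative at $t=0$ by chain rule, (ii) verify that the resulting expression lies in $\mathfrak{Y}$, (iii) upgrade pointwise differentiability to differentiability in $\mathfrak{Y}$-norm at $t=0$, and (iv) extend to arbitrary $t_0\geq 0$ via the one-parameter group property $\Phi_{t_0+s}=\Phi_s\circ\Phi_{t_0}$. This mirrors the structure of Lemma \ref{a3}, which already placed $\mathcal{J}^E\Phi_t$ itself in $\mathfrak{Y}$.

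For step (i), I would write
\[
\mathcal{J}^E\Phi_t(p,\textbf{v}) = \mathcal{J}\Phi_t(p,\textbf{v})\cdot\left(\frac{y(p)}{\Phi_t(p)\cdot\overbar{\textbf{e}}_{n+1}}\right)^n
\]
and differentiate each factor at $t=0$. Since $\Phi_0$ is the identity, the classical first variation formula for the Euclidean $n$-area element on the plane $\textbf{v}^\perp$ yields $\tfrac{d}{dt}\big|_{t=0}\mathcal{J}\Phi_t = \overbar{\text{div}}\textbf{Y}-\overbar{\nabla}_\textbf{v}\textbf{Y}\cdot\textbf{v} = \overbar{\text{div}}\textbf{Y}-Q_\textbf{Y}$, while $\tfrac{d}{dt}\big|_{t=0}(\Phi_t\cdot\overbar{\textbf{e}}_{n+1}) = \textbf{Y}\cdot\overbar{\textbf{e}}_{n+1}$ gives $\tfrac{d}{dt}\big|_{t=0}(y/y_{\Phi_t})^n = -(n/y)\textbf{Y}\cdot\overbar{\textbf{e}}_{n+1}$; summing produces the asserted formula. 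For step (ii), write $\textbf{Y}=\alpha\textbf{N}_\epsilon+\textbf{V}$: the contribution of $\textbf{V}$ is smooth and, by the $\mathcal{Y}_c$-bounds on $\textbf{V}$, lies in the continuous part of $\mathfrak{Y}'$. For the $\alpha\textbf{N}_\epsilon$-piece, substitute $\textbf{N}_\epsilon = y(\overbar{\textbf{e}}_{n+1}^\top\circ\Pi_{\Gamma_-})\chi_\epsilon$ and expand: the boundary trace reduces to the pure angular function $g(\textbf{v})=\alpha\bigl(1-n-(\overbar{\textbf{e}}_{n+1}\cdot\textbf{v})^2\bigr)$, which is smooth and satisfies $g(\textbf{v})=g(-\textbf{v})$, so $\pi^*g\in\mathfrak{Y}_0$. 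The remainder $D-\alpha\pi^*g$ is controlled by $\overbar{\textbf{e}}_{n+1}^\top\circ\Pi_{\Gamma_-}-\overbar{\textbf{e}}_{n+1}$, which vanishes at the ideal boundary since $\Gamma_-$ meets $\mathbb{R}^n\times\{0\}$ orthogonally; combining with the cut-off decomposition used in the proof of Lemma \ref{a3} places the remainder in $\mathfrak{Y}$.

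For step (iii), I would write
\[
\frac{\mathcal{J}^E\Phi_t-1}{t}-D = \frac{1}{t}\int_0^t\Bigl(\tfrac{d}{d\tau}\mathcal{J}^E\Phi_\tau-D\Bigr)\,d\tau
\]
and bound the integrand in $\mathfrak{Y}$-norm uniformly for $\tau\in[0,t]$, using the second-order Euclidean estimates on $\Phi_\tau$, $\overbar{\nabla}\Phi_\tau$, $\overbar{\nabla}^2\Phi_\tau$ from Lemma \ref{a2} together with the cut-off scheme from Lemma \ref{a3}, namely splitting into a compactly-supported-in-$\{y>0\}$ piece controlled in $\mathfrak{X}$-norm and a tail controlled in $\|y^{-n}\cdot\|_\infty$. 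For step (iv), the group property together with multiplicativity of the Jacobian gives a chain-rule identity reducing $\tfrac{d}{ds}\big|_{s=0}\mathcal{J}^E\Phi_{t_0+s}$ to the $s=0$ derivative precomposed with $\Phi_{t_0}$ and multiplied by $\mathcal{J}^E\Phi_{t_0}$; continuity in $t_0$ then follows by combining Proposition \ref{52}, Lemma \ref{a3}, and the continuous dependence of $\overbar{\nabla}\Phi_{t_0}$ and $\overbar{\nabla}^2\Phi_{t_0}$ on $t_0$. The hard part will be step (iii): obtaining $\mathfrak{Y}$-norm rather than merely pointwise convergence requires carefully matching the angular $\pi^*$-piece with a $y$-decaying remainder strong enough to make $\|y^{-n}(\cdot)\|_\infty$ small as $\tau\to 0$, and this relies crucially on the orthogonal meeting of $\Gamma_-$ with the ideal boundary together with the explicit product structure $\textbf{N}_\epsilon = y(\overbar{\textbf{e}}_{n+1}^\top\circ\Pi_{\Gamma_-})\chi_\epsilon$.
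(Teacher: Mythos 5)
Your proposal is correct and follows essentially the same route as the paper: factor $\mathcal{J}^E\Phi_t$ into the Euclidean Jacobian times the $n$-th power of $y/(\Phi_t\cdot\overbar{\textbf{e}}_{n+1})$, use the integral representation of the difference quotient, and upgrade from pointwise to $\mathfrak{Y}$-norm convergence via the cut-off decomposition (compactly supported piece in $\mathfrak{X}$, tail controlled by Lemma~\ref{a2}'s estimates). The one organizational difference is that you reduce to $t_0=0$ via the semigroup identity $\Phi_{t_0+s}=\Phi_s\circ\Phi_{t_0}$ and Jacobian multiplicativity, whereas the paper treats general $t_0$ directly by writing the difference at times $t,t_0$ as $\int_{t_0}^t$ and showing the integrand converges in $\mathfrak{Y}$; both work, and your explicit identification of the angular boundary trace $g(\textbf{v})=\alpha\bigl(1-n-(\overbar{\textbf{e}}_{n+1}\cdot\textbf{v})^2\bigr)$ is a nice clarification the paper leaves implicit.
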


\begin{proof}
To show the differentiability of the map $t\mapsto\mathcal{J}^E\Phi_{t}(p,\textbf{v})$, it suffices to show the differentiability of the maps $t\mapsto\frac{\textbf{x}(p)\cdot\overbar{\textbf{e}}_{n+1}}{\Phi_t(p)\cdot\overbar{\textbf{e}}_{n+1}}$ and $t\mapsto\mathcal{J}\Phi_{t}(p,\textbf{v})$.

Fix $t_0\in[0,\infty)$. Notice that 
\begin{align*}
    \frac{\textbf{x}(p)\cdot\overbar{\textbf{e}}_{n+1}}{\Phi_t(p)\cdot\overbar{\textbf{e}}_{n+1}}-\frac{\textbf{x}(p)\cdot\overbar{\textbf{e}}_{n+1}}{\Phi_{t_0}(p)\cdot\overbar{\textbf{e}}_{n+1}}=-\int_{t_0}^t\frac{\textbf{x}(p)\cdot\overbar{\textbf{e}}_{n+1}}{(\Phi_\tau(p)\cdot\overbar{\textbf{e}}_{n+1})^2}\left(\textbf{Y}(\Phi_\tau(p))\cdot\overbar{\textbf{e}}_{n+1}\right)d\tau.
\end{align*}
Let
\begin{equation*}
    g_{t_0,t}(p)=\frac{\textbf{x}(p)\cdot\overbar{\textbf{e}}_{n+1}}{(\Phi_t(p)\cdot\overbar{\textbf{e}}_{n+1})^2}\left(\textbf{Y}(\Phi_t(p))\cdot\overbar{\textbf{e}}_{n+1}\right)-\frac{\textbf{x}(p)\cdot\overbar{\textbf{e}}_{n+1}}{(\Phi_{t_0}(p)\cdot\overbar{\textbf{e}}_{n+1})^2}\left(\textbf{Y}(\Phi_{t_0}(p))\cdot\overbar{\textbf{e}}_{n+1}\right).
\end{equation*}
To show the differentiability of $t\mapsto\frac{\textbf{x}(p)\cdot\overbar{\textbf{e}}_{n+1}}{\Phi_t(p)\cdot\overbar{\textbf{e}}_{n+1}}$, it suffices to show $\|g_{t_0,t}\|_\mathfrak{Y}\to0$ as $t\to t_0$. Firstly, we have $\|\phi_{y_1,\delta}g_{t_0,t}\|_\mathfrak{X}\to0$ as $t\to t_0$ for any fixed $y_1>\delta>0$. Then, from (\ref{47})-(\ref{41}), $\|(1-\phi_{y_1,\delta})g_{t_0,t}\|_\mathfrak{Y}\leq Cy_1$. So, $\|g_{t_0,t}\|_\mathfrak{Y}\to0$ is proved.

Similarly, if we let $\{\tau_1(\textbf{v}),\cdots,\tau_n(\textbf{v})\}$ be a local orthogonal frame in $\mathbb{S}^n$ near $\textbf{v}$, then
\begin{equation*}
    \mathcal{J}\Phi_t(p,\textbf{v})-\mathcal{J}\Phi_{t_0}(p,\textbf{v})=\int_{t_0}^t\frac{1}{2\mathcal{J}\Phi_r}\frac{d}{dr}\left(\det(\overbar{\nabla}_{\tau_i}\Phi_r\cdot\overbar{\nabla}_{\tau_j}\Phi_r)\right)dr.
\end{equation*}
Since $\frac{d}{dr}\left(\det(\overbar{\nabla}_{\tau_i}\Phi_r\cdot\overbar{\nabla}_{\tau_j}\Phi_r)\right)$ is a polynomial of $(\overbar{\nabla}_{k}\textbf{Y}\cdot\overbar{\nabla}_{\tau_j}\Phi_r)(\overbar{\nabla}_{\tau_i}\Phi_r\cdot\overbar{\textbf{e}}_k)$ ($1\leq i,j\leq n$ and $1\leq k\leq n+1$). So, the differentiability of $t\mapsto\mathcal{J}\Phi_{t}(p,\textbf{v})$ is equivalent to the following
\begin{align*}
    &\|(\overbar{\nabla}_{k}\textbf{Y}\cdot\overbar{\nabla}_{\tau_j}\Phi_t)(\overbar{\nabla}_{\tau_i}\Phi_t\cdot\overbar{\textbf{e}}_k)-(\overbar{\nabla}_{k}\textbf{Y}\cdot\overbar{\nabla}_{\tau_j}\Phi_{t_0})(\overbar{\nabla}_{\tau_i}\Phi_{t_0}\cdot\overbar{\textbf{e}}_k)\|_\mathfrak{X}\to0;\\
    &\|\frac{1}{\mathcal{J}\Phi_t(p,\textbf{v})}-\frac{1}{\mathcal{J}\Phi_{t_0}(p,\textbf{v})}\|_\mathfrak{X}\to0\text{ as }t\to t_0.
\end{align*}
They can be verified using a cut-off function similar to the proof of the differentiability of $t\mapsto\frac{\textbf{x}(p)\cdot\overbar{\textbf{e}}_{n+1}}{\Phi_t(p)\cdot\overbar{\textbf{e}}_{n+1}}$. Finally, letting $t_0=0$, one sees that
\begin{equation*}
    \frac{d}{dt}\Big|_{t=0}\frac{\textbf{x}(p)\cdot\overbar{\textbf{e}}_{n+1}}{\Phi_t(p)\cdot\overbar{\textbf{e}}_{n+1}}=\frac{\textbf{Y}(p)\cdot\overbar{\textbf{e}}_{n+1}}{\textbf{x}(p)\cdot\overbar{\textbf{e}}_{n+1}},\ \frac{d}{dt}\Big|_{t=0}\mathcal{J}\Phi_t(p,\textbf{v})=\sum_{i=1}^n\overbar{\nabla}_{\tau_i}\textbf{Y}\cdot\tau_i=\overbar{\text{div}}(\textbf{Y})-\overbar{\nabla}_\textbf{v}\textbf{Y}\cdot\textbf{v}.
\end{equation*}
This implies $\frac{d}{dt}\big|_{t=0}\mathcal{J}^E\Phi_{t}=\overbar{\mathrm{div}}\mathbf{Y}-Q_\mathbf{Y}-\frac{n}{y}\mathbf{Y}\cdot\overbar{\mathbf{e}}_{n+1}$.
\end{proof}

\begin{lem}\label{a5}
Let $\psi\in\mathfrak{Y}$. There is a constant $C=C(\Gamma_-,\Omega',M_0,T)$, so that for $0\leq t\leq T$, $\|\Phi_t^\#\psi\|_\mathfrak{Y}\leq C\|\psi\|_\mathfrak{Y}$.
\end{lem}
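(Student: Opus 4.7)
The plan is to prove the bound by exploiting the decomposition structure $\psi=\psi_1+\varphi$ from the definition of the $\mathfrak{Y}'$-norm (with $\psi_1\in\mathfrak{X}$ and $\|y^{-n}\varphi\|_\infty<\infty$), estimating each piece via the derivative and scaling estimates in Lemmas \ref{a1}--\ref{a2}, and then upgrading to $\mathfrak{Y}$-membership by a density argument. The quantitative $\mathfrak{Y}'$-bound splits into two parts corresponding to $\psi_1$ and $\varphi$.

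For the $\varphi$-part, I would invoke (\ref{38}) of Lemma \ref{a2}, which says $y(\Phi_t(p))/y(p)$ is uniformly bounded above and below on $0\leq t\leq T$, to estimate
\[
y(p)^{-n}\bigl|\Phi_t^\#\varphi(p,\mathbf{v})\bigr|=\Bigl(\tfrac{y(\Phi_t(p))}{y(p)}\Bigr)^{\!n}y(\Phi_t(p))^{-n}\bigl|\varphi(\Phi_t(p),\nabla_\mathbf{v}\Phi_t(p))\bigr|\leq C\|y^{-n}\varphi\|_\infty.
\]
For the $\psi_1$-part, I would show $\|\Phi_t^\#\psi_1\|_\mathfrak{X}\leq C\|\psi_1\|_\mathfrak{X}$ by controlling each seminorm in (\ref{weighted norm}) via chain rule. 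The sup and $\mathbb{S}^n$-derivative terms follow immediately from $|\overbar{\nabla}\Phi_t|,|\overbar{\nabla}^2\Phi_t|\leq C$ in (\ref{41}) and the smoothness of $\mathbf{v}\mapsto\nabla_\mathbf{v}\Phi_t(p)$. The crucial weighted derivative rests on the identity
\[
y(p)\,\overbar{\nabla}_p\bigl(\Phi_t^\#\psi_1\bigr)=\tfrac{y(p)}{y(\Phi_t(p))}\bigl[y(\Phi_t(p))(\overbar{\nabla}_{\!p}\psi_1)(\Phi_t(p),\cdot)\circ\overbar{\nabla}\Phi_t(p)\bigr]+y(p)(\nabla_{\mathbb{S}^n}\psi_1)(\Phi_t(p),\cdot)\cdot\overbar{\nabla}_p\bigl(\nabla_\mathbf{v}\Phi_t(p)\bigr),
\]
where (\ref{38}) bounds the ratio and (\ref{41}) bounds the derivatives of $\Phi_t$; the mixed term $\|y\overbar{\nabla}_p\nabla_{\mathbb{S}^n}\Phi_t^\#\psi_1\|_\infty$ is analogous. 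Taking infimum over decompositions yields $\|\Phi_t^\#\psi\|_{\mathfrak{Y}'}\leq C\|\psi\|_\mathfrak{Y}$.

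To conclude $\Phi_t^\#\psi\in\mathfrak{Y}$, I would approximate $\psi$ in $\mathfrak{Y}'$-norm by $\psi^k=f_k+\pi^*g_k\in\mathfrak{Y}_0$. Since $\mathbf{Y}$ vanishes on $\{y=0\}$, the flow $\Phi_t$ fixes the ideal boundary and restricts to a diffeomorphism of $\{y>0\}$, so $\Phi_t^\#f_k\in C^0_c(\mathrm{cl}(\Omega')\times\mathbb{S}^n)$. For $\Phi_t^\#\pi^*g_k(p,\mathbf{v})=g_k(\nabla_\mathbf{v}\Phi_t(p))$, I would use (\ref{41}) to identify the boundary limit $T_t:\mathbb{S}^n\to\mathbb{S}^n$ of $\mathbf{v}\mapsto\nabla_\mathbf{v}\Phi_t(p)$, which is smooth and odd in $\mathbf{v}$ since it is induced by a linear map; then write $\Phi_t^\#\pi^*g_k=\pi^*(g_k\circ T_t)+r_k$, where $\pi^*(g_k\circ T_t)\in\mathfrak{Y}_0$ and the remainder $r_k$ is continuous on $\mathrm{cl}(\Omega')\times\mathbb{S}^n$, vanishes at $\{y=0\}$, and has uniformly bounded derivatives by Lemma \ref{a2}, placing $r_k\in\mathfrak{X}$. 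Hence $\Phi_t^\#\psi^k\in\mathfrak{Y}$, and the $\mathfrak{Y}'$-bound transfers the convergence to $\Phi_t^\#\psi$, giving $\Phi_t^\#\psi\in\mathfrak{Y}$.

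The main technical obstacle will be the weighted spatial derivative estimate for the $\mathfrak{X}$-piece: the entire argument hinges on the fact that the dilation in $y$ generated by the $\alpha\mathbf{N}_\epsilon$ component of $\mathbf{Y}$ scales $y$-coordinates compatibly with the $y$-weights appearing in the $\mathfrak{X}$-norm, as quantified precisely by (\ref{38}). If this compatibility were to fail (for instance if $\mathbf{N}_\epsilon$ were not proportional to $y$ near the ideal boundary), the weighted derivative estimate would break, and no straightforward bound would be available.
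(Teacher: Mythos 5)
Your core argument — decompose $\psi=\psi_1+\varphi$ via the infimum defining the $\mathfrak{Y}'$-norm, estimate $\|y^{-n}\Phi_t^\#\varphi\|_\infty$ via the boundedness of $y(\Phi_t(p))/y(p)$ from (\ref{38}), and bound each $\mathfrak{X}$-seminorm of $\Phi_t^\#\psi_1$ by the chain rule using (\ref{38}) and (\ref{41}) — is exactly the paper's proof, and the chain-rule identity you display for the weighted gradient, together with your observation about the compatibility of the $\mathbf{N}_\epsilon$-induced $y$-dilation with the $y$-weights, is on target.

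The added density argument for $\mathfrak{Y}$-membership (which the paper's own proof omits, though it is implicitly required when Lemma \ref{44} invokes it) is the right idea and your identification of the boundary map $T_t$ is correct, but the final inference has a gap. You place the remainder $r_k$ in $\mathfrak{X}$ and conclude ``Hence $\Phi_t^\#\psi^k\in\mathfrak{Y}$,'' yet $\mathfrak{X}\not\subset\mathfrak{Y}$ — a bounded locally Lipschitz function of $y$ alone that oscillates near the ideal boundary lies in $\mathfrak{X}$ but cannot be approximated in $\mathfrak{Y}'$-norm by elements of $\mathfrak{Y}_0$ — so membership in $\mathfrak{X}$ is not enough, and ``vanishes at $\{y=0\}$'' with ``bounded derivatives'' is qualitatively too weak. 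What is actually needed, and what (\ref{41}) supplies, is the quantitative decay $|r_k|\leq Cy$ together with $|\overbar{\nabla}r_k|\leq C$ and analogous $O(y)$ bounds on the $\mathbb{S}^n$-derivatives; these give $\|(1-\phi_{y_1,\delta})r_k\|_{\mathfrak{X}}\leq Cy_1\to0$, which is precisely the mechanism the paper uses in its proof of Lemma \ref{a3} to place $\mathcal{J}^E\Phi_t$ and $y(p)/y(\Phi_t(p))$ into $\mathfrak{Y}$. Once you state that truncation estimate explicitly, the density argument closes and your proof is a more complete version of the paper's.
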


\begin{proof}
Let $\psi=f+g$ with $\|f\|_{\mathfrak{X}}+\|y^{-n}g\|_\infty\leq 2\|\psi\|_\mathfrak{Y}$. By definition, $\Phi_t^\# \psi=\Phi_t^\#f+\Phi_t^\#g=f(\Phi_t(p),\nabla_\textbf{v}\Phi_t(p))+g(\Phi_t(p),\nabla_\textbf{v}\Phi_t(p))$. So,
\begin{align}\label{114}
    \|y^{-n}\Phi_t^\#g\|_\infty\leq \left\|\frac{\Phi_t(p)\cdot\overbar{\textbf{e}}_{n+1}}{\textbf{x}(p)\cdot\overbar{\textbf{e}}_{n+1}}\right\|_\infty^n\|(\Phi_t(p)\cdot\overbar{\textbf{e}}_{n+1})^{-n}\Phi_t^\#g\|_\infty\leq C\|y^{-n}g\|_\infty,
\end{align}
where in the last step, we used (\ref{113}). Similarly, using (\ref{38}), (\ref{41}) and (\ref{113}), we have
\begin{align*}
    \|y\nabla_{\mathbb{R}^{n+1}_+}(\Phi_t^\#f)\|_\infty&\leq C(\|y\nabla_{\mathbb{R}^{n+1}_+}f\|_\infty+\|\nabla_{\mathbb{S}^n}f\|_\infty);\\
    \|\nabla_{\mathbb{S}^n}(\Phi_t^\#f)\|_\infty&\leq C\|\nabla_{\mathbb{S}^n}f\|_\infty;\\
     \|\nabla_{\mathbb{S}^n}\nabla_{\mathbb{S}^n}(\Phi_t^\#f)\|_\infty&\leq C(\|\nabla_{\mathbb{S}^n}f\|_\infty+\|\nabla_{\mathbb{S}^n}\nabla_{\mathbb{S}^n}f\|_\infty);\\
     \|y\nabla_{\mathbb{R}^{n+1}_+}\nabla_{\mathbb{S}^n}(\Phi_t^\#f)\|_\infty&\leq C(\|y\nabla_{\mathbb{R}^{n+1}_+}\nabla_{\mathbb{S}^n}f\|_\infty+\|\nabla_{\mathbb{S}^n}\nabla_{\mathbb{S}^n}f\|_\infty).
\end{align*}
This implies $\|\Phi_t^\#f\|_\mathfrak{X}\leq C\|f\|_\mathfrak{X}$. Combining this with (\ref{114}), we see that
\begin{equation*}
    \|\Phi_t^\# \psi\|_\mathfrak{Y}\leq\|\Phi_t^\#f\|_{\mathfrak{X}}+\|y^{-n}\Phi_t^\#g\|_\infty\leq C(\|f\|_\mathfrak{X}+\|y^{-n}g\|_\infty)\leq 2C\|\psi\|_\mathfrak{Y}.
\end{equation*}
\end{proof}

\section{Proof of Proposition \ref{65}\label{111}}
A proof to Proposition \ref{65} is included here. We adapt the argument in \cite[Section 8]{BWMountainPass}.

\begin{proof}[Proof of Proposition \ref{65}]
It suffices to prove the existence of a $\delta_0$, such that $\max_{\tau\in[0,1]}E_{rel}[\Gamma_\tau,\Gamma_-]\geq\delta_0$, since if we swap $\Gamma_-$ with $\Gamma_+$, it can be proved in a similar way that $\max_{\tau\in[0,1]}E_{rel}[\Gamma_\tau,\Gamma_+]\geq\delta_0$, and this implies $\max_{\tau\in[0,1]}E_{rel}[\Gamma_\tau,\Gamma_-]\geq E_{rel}[\Gamma_+,\Gamma_-]+\delta_0$.

For any $U\in\mathcal{C}(\Gamma'_-,\Gamma'_+)$, let $V_U=U\Delta U_{\Gamma_-}$ be the symmetric difference. Since $\tau\mapsto\mathbf{1}_{U_\tau}$ is continuous in $L^1_{loc}$ and $\text{Vol}_\mathbb{H}\big((U_{\Gamma_+}\backslash U_{\Gamma_-})\cap\{y\leq\epsilon\}\big)$ tends to $0$ as $\epsilon\to0$, we see that $\text{Vol}_\mathbb{H}(V_{U_\tau})$ is a continuous function in $\tau\in[0,1]$. Let $M_0=\text{Vol}_\mathbb{H}(V_{U_1})$ and
\begin{equation*}
    \mathcal{C}_m=\{U\in\mathcal{C}(\Gamma'_-,\Gamma'_+):\,\text{Vol}_\mathbb{H}(V_U)=m\}.
\end{equation*}
By Proposition $\ref{97}$, there is a sweep-out from $\Gamma_-$ to $\Gamma_+$. So, $\mathcal{C}_m$ is non-empty for each $m\in[0,M_0]$. We will show that for sufficiently small $m_0$, there is a $\delta_0=\delta_0(\Gamma_-,\Gamma_+,\Gamma'_-,\Gamma'_+)$, so that $E_{rel}[\partial^*U,\Gamma_-]\geq\delta_0$ for all $U\in\mathcal{C}_{m_0}$. And this proves the proposition.

Fix $m\in[0,M_0]$. Let $\{U^i\}_{i=1}^\infty$ be a sequence of Caccioppoli sets in $\mathcal{C}_m$, which minimizes the functional $E_{rel}[\partial^*\cdot,\Gamma_-]$. By Proposition \ref{35}, there exists an $E_0=E_0(\Gamma_-,\Omega)$, so that $E_{rel}[\partial^*U,\Gamma_-]\geq -E_0$ for all $U\in\mathcal{C}(\Gamma'_-,\Gamma'_+)$. So, $E_{m,\inf}:=\lim_{i\to\infty} E_{rel}[\partial^*U^i,\Gamma_-]>-\infty$. If $E_{m,\inf}=\infty$, we take $m_0$ to be this specific $m$ and $\delta_0$ to be arbitrary positive real number. So, we assume that $E_{m,\inf}\in(-\infty,\infty)$. In this case, by the compactness of BV-functions, there is a subsequence of $\{U^i\}$ converging in $L^1_{loc}$ to a Caccioppoli set $U_0$. Then, $\text{Vol}_\mathbb{H}(V_{U_0})=m$ and $E_{rel}[\partial^*U_0,\Gamma_-]=E_{m,\inf}$. From Proposition \ref{20}, we know that there is a mean-convex foliation, $\{\Gamma_-^s\}_{s\in[-1,1]}$, of $\Gamma_-$ that covers $\overbar{\mathcal{N}_{\epsilon}}(\Gamma_-)$ for some $\epsilon>0$. We make a distinction between two cases.
\begin{itemize}
    \item[(1)] If $\overbar{\partial^*U_0}\subset\overbar{\mathcal{N}_{\epsilon}}(\Gamma_-)$, let $U_+=U_0\cup U_{\Gamma_-}$ and $U_-=U_0\cap U_{\Gamma_-}$. In $U_+\backslash U_{\Gamma_-}$, consider a vector field $\textbf{X}_+$ that is equal to $n_{\Gamma_-^s}$ on $\Gamma_-^s$. Since $U_+\backslash U_{\Gamma_-}\subset\overbar{\mathcal{N}_{\epsilon}}(\Gamma_-)$, $\textbf{X}_+$ is well defined in $U_+\backslash U_{\Gamma_-}$. Direct computation shows $\text{div}(\textbf{X}_+)=H_{\Gamma_-^s}\geq0$. Applying the divergence theorem to $\phi_{y_1,\delta}\textbf{X}_+$,
    \begin{align}\label{98}
        E_{rel}[\partial^*U_+,\Gamma_-;\phi_{y_1,\delta}]\geq&\int_{\partial^*U_+}\left\langle\phi_{y_1,\delta}\textbf{X}_+,n_{\partial^*U_+}\right\rangle d\mathcal{H}^n_\mathbb{H}-\int_{\Gamma_-}\left\langle\phi_{y_1,\delta}\textbf{X}_+,n_{\Gamma_-}\right\rangle d\mathcal{H}^n_\mathbb{H}\\
        =&\int_{U_+\backslash U_{\Gamma_-}}\text{div}(\phi_{y_1,\delta}\textbf{X}_+)d\text{Vol}_\mathbb{H}\geq\int_{U_+}\left\langle\nabla\phi_{y_1,\delta},\textbf{X}_+\right\rangle d\text{Vol}_\mathbb{H}\nonumber\\
        \geq& -\int_{U_+\backslash U_{\Gamma_-}\cap\{y_1-\delta\leq y\leq y_1\}}\frac{C}{\delta}yd\text{Vol}_\mathbb{H}\geq -Cy_1.\nonumber
    \end{align}
    Letting $y_1$, $\delta\to0$, we get $E_{rel}[\partial^*U_+,\Gamma_-]\geq0$. Similarly, we can prove $E_{rel}[\partial^*U_-,\Gamma_-]\geq0$. Notice that $E_{rel}[\partial^*U_+,\Gamma_-]+E_{rel}[\partial^*U_-,\Gamma_-]=E_{rel}[\partial^*U_0,\Gamma_-]$. Hence, $E_{rel}[\partial^*U_0,\Gamma_-]\geq0$. The equality holds only if $U_0\Delta U_{\Gamma_-}$ has measure zero. Since $\text{Vol}_{\mathbb{H}}(V_{U_0})=m>0$, the equality can not hold. Thus, $E_{rel}[\partial^*U_0,\Gamma_-]>0$.
    \item[(2)] If $\overbar{\partial^*U_0}\backslash\overbar{\mathcal{N}_{\epsilon}}(\Gamma_-)\neq\emptyset$, then $\big(\overbar{\partial^*U_0}\backslash\overbar{\mathcal{N}_{\epsilon}}(\Gamma_-)\big)\subset U^c_{\Gamma_-}$ since $U_{\Gamma_-}\backslash U_{\Gamma'_-}\subset\overbar{\mathcal{N}_{\epsilon}}(\Gamma_-)$. Since $\Omega'$ is a bounded in $\mathbb{R}^{n+1}_+$, there is a $y_0>0$, so that $V_{U_0}\subset\{y\leq y_0\}$. Let $\overbar{d}_{\Gamma_-}$ be the distance to $\Gamma_-$ in Euclidean topology. By the coarea formula,
    \begin{equation*}
        \frac{1}{y_0}\int_{\frac{\epsilon}{4}}^{\frac{\epsilon}{2}}\mathcal{H}^{n}_\mathbb{H}(V_{U_0}\cap\{\overbar{d}_{\Gamma_-}=r\})dr\leq\text{Vol}(V_{U_0})=m.
    \end{equation*}
    By the mean value theorem, we can pick an $\epsilon_1\in(\frac{\epsilon}{4},\frac{\epsilon}{2})$, so that $\mathcal{H}^{n}_\mathbb{H}(V_{U_0}\cap\{\overbar{d}_{\Gamma_-}=\epsilon_1\})\leq\frac{8y_0m}{\epsilon}$. Denote $\text{cl}(\Omega')\backslash\overbar{\mathcal{N}}_{\epsilon_1}(\Gamma_-)$ by $W_{\epsilon_1}$. Let $U'\in\mathcal{C}(\Gamma'_-,\Gamma'_+)$ satisfy that $\mathcal{H}^n_{\mathbb{H}}(\partial^*U'\cap W_{\epsilon_1})$ minimizes the hyperbolic area among all the sets in $\mathcal{C}(\Gamma'_-,\Gamma'_+)$ that coincide with $U_0$ in $\overbar{\mathcal{N}}_{\epsilon_1}(\Gamma_-)\cap\text{cl}(\Omega')$. We claim that $U'\subset\text{cl}(U_{\Gamma_+})$. Otherwise, an calibration argument similar to (\ref{98}) would imply $E_{rel}[\partial^*U',\Gamma_-]> E_{rel}[\partial^*(U'\cap U_{\Gamma_+}),\Gamma_-]$, which gives a contradiction. Then, the maximal principle \cite{SolomonWhite} implies that once $\partial^*U'$ touches $\Gamma_+$, it coincides with $\Gamma_+$ in a neighborhood. So, $\partial^*U'$ is stationary in $W_{\epsilon_1}$. If $\text{Spt}(\partial^*U')\subset\overbar{\mathcal{N}_{\epsilon}}(\Gamma_-)$, then applying case (1) to $\partial^*U'$, we get $E_{rel}[\partial^*U_0,\Gamma]\geq E_{rel}[\partial^*U',\Gamma_-]>0$. Otherwise, we pick a point $p_0\in\text{Spt}(\partial^*U')\backslash\overbar{\mathcal{N}_{\epsilon}}(\Gamma_-)$. Then, $B_{\frac{\epsilon}{2}}(p_0)\subset W_{\epsilon_1}$. The monotonicity formula \cite[Section 17]{SimonBook} implies $\mathcal{H}^n_{\mathbb{H}}(\partial^*U'\cap W_{\epsilon_1})\geq\gamma_0$, where $\gamma_0=\gamma_0(\epsilon,n)$. Hence, by letting $U''=U'\cap\overbar{\mathcal{N}}_{\epsilon_1}(\Gamma_-)$, we have
    \begin{align*}
        E_{rel}[\partial^*U'',\Gamma_-]&\leq E_{rel}[\partial^*U',\Gamma_-]+\mathcal{H}^n_\mathbb{H}(V_{U_0}\cap\{\overbar{d}_{\Gamma_-}=r\})-\mathcal{H}^n_{\mathbb{H}}(\partial^*U'\cap W_{\epsilon_1})\\
        &\leq E_{rel}[\partial^*U_0,\Gamma_-]+\frac{8y_0m}{\epsilon}-\gamma_0.
    \end{align*}
    $E_{rel}[\partial^*U'',\Gamma_-]\geq0$ follows from a calibration argument used in case (1). By choosing $m_0=m$ sufficiently small, we see that $E_{rel}[\partial^*U_0,\Gamma_-]>0$.
\end{itemize}
So, in either case, there is an $m_0\in[0,M_0]$, such that  $\delta_0=E_{rel}[\partial^*U_0,\Gamma_-]=E_{m_0,\inf}>0$.
\end{proof}

\bibliographystyle{alpha}
\bibliography{minmax.bib}


\end{document}